\def\M{\mathcal{M}}
\def\L{\mathcal{L}}
\def\D{\mathrm{D}}
\def\bzero{{\mathbf 0}}
\def\bone{{\mathbf 1}}
\def\ba{{\mathbf a}}
\def\bb{{\mathbf b}}
\def\bc{{\mathbf c}}
\def\bp{{\mathbf p}}
\def\bq{{\mathbf q}}
\def\bx{{\mathbf x}}
\def\by{{\mathbf y}}
\def\bA{\mathbf A}
\def\bB{\mathbf B}
\def\bC{\mathbf C}
\def\bD{\mathbf D}
\def\bI{{\mathbf I}}
\def\bL{\mathbf L}
\def\bP{{\mathbf P}}
\def\bQ{{\mathbf Q}}
\def\bR{{\mathbf R}}
\def\bS{\mathbf S}
\def\bT{\mathbf T}
\def\bU{{\mathbf U}}
\def\bV{{\mathbf V}}
\def\bX{{\mathbf X}}
\def\bY{{\mathbf Y}}
\def\bZ{{\mathbf Z}}
\def\sR{{\mathbb R}}
\def\sS{{\mathbb S}}
\def\gD{{\mathcal{D}}}
\def\gV{{\mathcal{V}}}
\def\gX{{\mathcal{X}}}
\def\gY{{\mathcal{Y}}}
\def\grad{{\mathrm{grad}}}
\def\hess{{\mathrm{Hess}}}
\newcommand{\trace}{\mathrm{tr}}
\def\bPi{{\boldsymbol \Pi}}
\def\bLambda{{\mathbf \Lambda}}
\def\bGamma{{\mathbf \Gamma}}
\def\bTheta{{\mathbf \Theta}}
\def\bPsi{{\mathbf \Psi}}
\DeclareMathOperator*{\argmin}{arg\,min}
\def\bgamma{{\boldsymbol \gamma}}
\def\bDelta{{\mathbf \Delta}}
\def\bomega{{\boldsymbol \omega}}
\def\bxi{{\boldsymbol \xi}}
\def\sone{{\mathbbm{1}}}
\definecolor{blue-violet}{rgb}{0.00,0.75,0.90}
\newlength{\tempdima}
\newcommand{\rowname}[1]% #1 = text
{\rotatebox{90}{\makebox[\tempdima][c]{{#1}}}}
\theoremstyle{plain}
\newtheorem{theorem}{Theorem}[section]
\newtheorem{proposition}[theorem]{Proposition}
\theoremstyle{definition}
\newtheorem{assumption}[theorem]{Assumption}
\theoremstyle{remark}
\newcommand{\algname}{RMOT}
\newcommand{\manname}{block SPD coupling manifold}
\newcommand{\Manname}{Block SPD coupling manifold} % capitalized name
\definecolor{orange}{rgb}{1,0.5,0}
\newcommand{\revision}[1]{{\color{black} #1}}
\title{Riemannian block SPD coupling manifold \\ and its application to optimal transport}
\author{Andi Han\footnote{Discipline of Business Analytics, University of Sydney. Email: \{andi.han, junbin.gao\}@sydney.edu.au. \protect\label{usyd}} \quad Bamdev Mishra\footnote{Microsoft India. Email: \{bamdevm, pratik.jawanpuria\}@microsoft.com. \protect\label{microsoft}} \quad Pratik Jawanpuria\footref{microsoft} \quad Junbin Gao\footref{usyd}}
\date{}
\begin{document}

\maketitle

%% Abstract
\begin{abstract}
In this work, we study the optimal transport (OT) problem between symmetric positive definite (SPD) matrix-valued measures. We formulate the above as a generalized optimal transport problem where the cost, the marginals, and the coupling are represented as block matrices and each component block is a SPD matrix. The summation of row blocks and column blocks in the coupling matrix are constrained by the given block-SPD marginals. We endow the set of such block-coupling matrices with a novel Riemannian manifold structure. This allows to exploit the versatile Riemannian optimization framework to solve generic SPD matrix-valued OT problems. We illustrate the usefulness of the proposed approach in several applications.

% In this work, we study the optimal transport (OT) problem between symmetric positive definite (SPD) valued measures. We formulate the above as a generalized optimal transport problem where the cost, the marginals, and the coupling are represented as block matrices and each component block is a SPD matrix. The summation of row blocks and column blocks in the coupling matrix are constrained by the given block-SPD marginals. We endow the set of such block-coupling matrices with a novel Riemannian manifold structure. This allows to exploit the versatile Riemannian optimization framework to solve generic SPD matrix-valued OT problems. We illustrate the usefulness of the proposed approach in several applications.
%Optimal transport (OT) has seen its popularity in various fields of applications. We start by observing that the OT problem can be viewed as an instance of a general symmetric positive definite (SPD) matrix-valued OT problem, where the cost, the marginals, and the coupling are represented as block matrices and each component block is a SPD matrix. The summation of row blocks and column blocks in the coupling matrix are constrained by the given block-SPD marginals. We endow the set of such block-coupling matrices with a novel Riemannian manifold structure. This allows to exploit the versatile Riemannian optimization framework to solve generic SPD matrix-valued OT problems. We illustrate the usefulness of the proposed approach in several applications.
\end{abstract}

\section{Introduction}

Optimal transport (OT) offers a systematic approach to compare probability distributions by finding a transport plan (coupling) that minimizes the cost of transporting mass from one distribution to another. It has been successfully applied in a wide range of fields, such as computer graphics \cite{solomon2015convolutional,solomon2014earth}, graph representation learning \cite{chen2020graph,petric2019got}, text classification \cite{yurochkin2019hierarchical}, domain adaptation \cite{courty2016optimal,courty2014domain,nath20a}, cross-lingual translation~\cite{melis18a,jawanpuria20a} and prototype selection~\cite{gurumoorthy21a} to name a few. OT based distances have also used as loss functions in both discriminative and generative learning settings~\cite{frogner15a,arjovsky17a,genevay18a,jawanpuria21a}

%\alertPJ{Main comments: 1) we should only talk about vector/matrix/SPD-valued measures in introduction. We should talk about how they are used in different applications. We should talk about doing OT for such class of measures (perhaps even explain why OT is useful for such measures). Then we should also discuss different works that do OT for such measures and what are the main ideas. Then we should talk about our contribution.}

% There exists a number of efficient solvers for classic optimal transport problem \cite{cuturi2013sinkhorn,ferradans2014regularized,genevay2016stochastic,peyre2019computational}. Particularly, by adding an entropy regularization, the OT problem can be solved efficiently via Sinkhorn algorithm \cite{sinkhorn1964relationship} and is known to achieve faster convergence \cite{cuturi2013sinkhorn} and improved sample complexity \cite{genevay2019sample}. 

Despite the popularity of OT, existing OT formulations are mostly limited to scalar-valued distributions. 
%scalar-valued costs and marginals. 
% which we find it to be an instance of a general OT problem with matrix-valued cost. 
% while sometimes a vector-valued and (positive semidefinite) matrix-valued costs are able to capture more information \cite{lopez2021vector}. 
% such as diffusion tensor imaging \cite{le2001diffusion}, image set classification~\cite{huang2015log,harandi2014manifold}, 
% anisotropic diffusion \cite{weickert1998anisotropic}, involve symmetric positive definite (SPD) matrix-valued distributions. 
\revision{
On the other hand, many applications involve symmetric positive definite (SPD) matrix-valued distributions. In diffusion tensor imaging \cite{le2001diffusion}, the local diffusion of water molecules in human brain are encoded in fields of SPD matrices \cite{assaf2008diffusion}. In image processing, region information of an image can be effectively captured through several SPD covariance descriptors \cite{tuzel2006region}. For the application of image set/video classification, each set of images/frames can be represented by its covariance matrix, which has shown great promise in modelling the intra-set variations \cite{huang2015log,harandi2014manifold}. In addition, fields of SPD matrices are also important in computer graphics for anisotropic diffusion \cite{weickert1998anisotropic}, remeshing \cite{alliez2003anisotropic} and texture synthesis \cite{galerne2010random}, just to name a few. In all such cases, being able to compare fields represented by SPD matrices is crucial.
}
This, however, requires a nontrivial generalization of existing (scalar-valued) optimal transport framework with careful construction of cost and transport plan. 

% Hence a generalization to matrix-valued OT is needed. \alertPJ{I think we need to change/improve this as SPD/matrix valued data points is not our focus. Our focus is SPD-valued marginals. SPD/matrix valued data can have scalar-valued marginals. Maybe we can take inspiration from the motivation from existing works. } 

In the quantum mechanics setting, existing works~\cite{jiang2012distances,carlen2014analog,chen2017matrix,chen2018vector} have explored geodesic formulation of the Wasserstein distance between vector and matrix-valued densities. In \cite{ning2013thesis,ning2014matrix}, the Monge-Kantorovich optimal mass transport problem has been studied for comparing matrix-valued power spectra measures. Recently, \cite{peyre2016quantum} proposed to solve an unbalanced optimal transport problem for SPD-valued distributions of unequal masses. 

%In \cite{jiang2012distances,carlen2014analog,chen2017matrix,chen2018vector}, a geodesic formulation of the Wasserstein distance between vector and matrix-valued densities is given, inspired by quantum mechanics. 
%General Matrix-valued OT formulations are developed in \cite{ning2014matrix, ning2013thesis}. 
%\citet{peyre2016quantum} for SPD-valued OT problems and show the usefulness in various applications. It should be emphasized all of the above works rely on \emph{unbalanced} OT formulations.

% where one considers a trace-constrained relaxation for balancing the trace of the marginals \cite{ning2014matrix} and the other \cite{peyre2016quantum} applies a quantum relative entropy regularization to solve an {unbalanced} OT problem (i.e., the transport plan only approximately satisfy the coupling constraint).  

% \alertPJ{This paragaph should be expanded. No need to give dynamic vs static classification}
% \alertAH{The dynamic vs static are discussed in many papers we are referencing. This helps better compare our work to the others because our formulation is static and balanced.}

% \alertAH{When $F(\bGamma)$ is linear, such as in \eqref{reformulated_mw}, one can solve an unbalanced relaxation of the problem with quantum Sinkhorn algorithm \cite{peyre2016quantum}. However, optimization of \eqref{mot_define} in its generic form is challenging, particularly in the balanced setting. }

% \alertBM{The manifold motivation is missing. Why not cvx and why Riemannian.}

In this paper, we propose a general framework for solving the balanced OT problem between SPD-valued distributions, where the cost and the coupling are represented as block SPD matrices. We discuss a Riemannian manifold structure for the set of such block coupling matrices, and we are able to use the Riemannian optimization framework \cite{absil2009optimization, boumal2020intromanifolds} to solve various generalized OT problems. Specifically, our {contributions} \revision{are} as follows. 
% This generalizes the matrix coupling manifold for scalar-valued densities \cite{shi2021coupling,mishra2021manifold}. 
% which proves its versatility in various non-classic scalar-valued optimal transport problems, such as Gromov-Wasserstein OT \cite{memoli2011gromov,peyre2016gromov}, Co-OT \cite{redko2020co}, regularized OT \cite{ferradans2014regularized}.   
\begin{enumerate}
    \item We introduce the general SPD matrix-valued balanced OT problem for SPD matrix-valued marginals and study its metric properties for a specific setting. 
    %We study the metric properties of the proposed formulation and discuss its relationship with the $2$-Wasserstein distance.
    %In the particular case when the SPD marginals are lifted versions of probability measures, the proposed formulation leads to a well-defined distance metric and that it generalizes the classic $2$-Wasserstein distance.
    
    \item We propose a novel manifold structure for the set of block matrix coupling matrices, which generalizes the manifold structures studied in \cite{douik2019manifold,shi2021coupling,mishra2021manifold,mishra2019riemannian}. We discuss optimization-related ingredients like Riemannian metric, Riemannian gradient, Hessian, and retraction.
    
    \item We extend our SPD-valued balanced OT formulation to block SPD Wasserstein barycenter and Gromov-Wasserstein OT.
    
    \item We empirically illustrate the benefit of the proposed framework in domain adaptation, tensor-valued shape interpolation, and displacement interpolation between tensor fields.
\end{enumerate}

{\bf Organizations.} We start with a brief review of Riemannian optimization and SPD matrix-valued optimal transport problem in Section \ref{preliminary_sect}. In Section \ref{matrix_generalized_ot_sect}, we introduce the generalized SPD matrix-valued OT problem and define the proposed {\manname}. Section \ref{manifold_geometry_sect} discusses the Riemannian structure of the proposed manifold and derives the necessary optimization-related ingredients. Section \ref{sec:applications} presents two additional OT related applications of the proposed {\Manname}. In Section \ref{Experiment_sect}, we empirically evaluate the proposed approach in various applications. Section~\ref{sec:conclusion} concludes the paper. In the appendix sections, we provide the proofs and present additional experiments. 
%In Section \ref{Experiment_sect}, we discuss the empirical results of the proposed framework in various applications. 
%In the appendix sections, we provide the proofs and show additional experiments. 

%\alertAH{preliminary as part of introduction?}

\section{Preliminaries}
\label{preliminary_sect}

\subsection{Riemannian optimization}%geometry and optimization}
A matrix manifold $\M$ is a smooth subset of the ambient vector space $\gV$ with local bijectivity to the Euclidean space. A Riemannian manifold is a manifold endowed with a Riemannian metric (a smooth, symmetric positive definite inner product structure $\langle \cdot, \cdot\rangle_x$) on every tangent space $T_x\M$. The induced norm on the tangent space is thus $\| u\|_x = \sqrt{\langle u,u \rangle_x}$. 
% {\color{red} Optimization of an objective function over a smooth Riemannian manifold $\M$ has gained attraction lately \cite{absil2009optimization, boumal2020intromanifolds}. Conceptually, the Riemannian optimization framework treats a constrained optimization problem as an unconstrained optimization problem over the constraint manifold. Consequently, all the standard unconstrained optimization methods such as steepest descent, conjugate gradient, and trust-region have their Riemannian variants \cite{boumal2020intromanifolds}. } \alertAH{Can this be removed or merged somehow?}

The orthogonal projection operation for an embedded matrix manifold ${\rm P}_{x}: \gV \xrightarrow{} T_x\M$ is a projection that is orthogonal with respect to the Riemannian metric $\langle \cdot, \cdot \rangle_x$. Retraction is a smooth map from tangent space to the manifold
% , which allows iterates to move on the manifold along some direction on the tangent space.
That is, for any $x \in \M$, retraction $R_x: T_x\M \xrightarrow{} \M$ such that 1) $R_x(0) = x$ and 2) $\D R_x(0)[u] = u$, where $\D f(x)[u]$ is the derivative of a function at $x$ along direction $u$. 

The Riemannian gradient of a function $F:\M \xrightarrow{} \sR$ at $x$, denoted as $\grad F(x)$, generalizes the notion of the Euclidean gradient $\nabla F(x)$. It is defined as the unique tangent vector satisfying $\langle \grad F(x) , u \rangle_x = \D F(x)[u] = \langle \nabla F(x) ,u\rangle_2$ for any $u \in T_x\M$, where $\langle \cdot, \cdot\rangle_2$ denotes the Euclidean inner product. To minimize the function, Riemannian gradient descent \cite{absil2009optimization} and other first-order solvers apply retraction to update the iterates along the direction of negative Riemannian gradient while staying on the manifold, i.e., $x_{t+1} = R_{x_t}(-\eta \, \grad F(x_t))$, where $\eta$ is the step size. 
Similarly, the Riemannian Hessian $\hess F(x) : T_x\M \xrightarrow{} T_x\M$ is defined as the covariant derivative of Riemannian gradient. Popular second-order methods, such as trust regions and cubic regularized Newton's methods have been adapted to Riemannian optimization \cite{absil2007trust,agarwal2018adaptive}. 
% \alertPJ{This can be shortened to save space if required.}

% \subsection{Optimal transport and Riemannian geometry}
% Many work has constructed manifold geometry for either the coupling or the marginals. The set of (strictly positive) histograms form the so-called multinomial manifold with the Fisher information Riemannian metric \cite{sun2015heterogeneous}. This has later been generalized to simplex of SPD matrices where each simplex element is a SPD matrix \cite{mishra2019riemannian}. In \cite{douik2019manifold,shi2021coupling}, they construct a coupling manifold for the set of positive doubly stochastic couplings, further generalizing the multinomial manifold. All these works can be regarded as special cases of our proposed block SPD coupling manifold.

\subsection{Scalar-valued optimal transport}
\label{ot_preliminary_sect}

Consider two discrete measures supported on $\sR^d$, $\mu = \sum_{i = 1}^m p_i \delta_{\bx_i}$, $\nu = \sum_{j = 1}^n q_j \delta_{\by_j}$, where $\bx_i, \by_j \in \sR^d$ and $\delta_\bx$ is the Dirac at $\bx$. The weights $\bp \in \Sigma_m, \bq \in \Sigma_n$ are in probability simplex \revision{where $\Sigma_k \coloneqq \{ \bp \in \sR^k : p_i \geq 0, \sum_i p_i = 1 \}$}.
% i.e., $\sum_i p_i = \sum_j q_j = 1$, for $p_i, q_j \geq 0$. 
The $2$-Wasserstein distance between $\mu, \nu$ is given by solving the Monge-Kantorovich optimal transport problem: 
% \alertPJ{Here source/target terminology and symbols $\bx_i$ and $\by_i$ are not defined.}
\begin{equation}
\label{W22}
    {\mathrm W}_2^2(\bp,\bq) = \min_{\bgamma \in \Pi(\bp,\bq)} \sum_{i,j} \|\bx_i-\by_j\|^2 \gamma_{i,j},
\end{equation}
where $\Pi(\bp,\bq) \coloneqq \{\bgamma \in \sR^{m \times n} : \bgamma \geq 0, \bgamma \bone = \bp, \bgamma^\top \bone = \bq  \}$ is the space of joint distribution between the source and
the target marginals. An optimal solution of (\ref{W22}) is referred to as an optimal transport plan (or coupling).  %the set of couplings with marginals $\bp$ and $\bq$. 
Recently, \cite{cuturi2013sinkhorn} proposed the Sinkhorn-Knopp  algorithm~\cite{sinkhorn1964relationship,knight2008sinkhorn} for entropy-regularized OT formulation. In case $\mu$ and $\nu$ are measures (i.e., the setting is not restricted to probability measures), it may happen that they are of unequal masses. OT in this case is termed as unbalanced optimal transport \cite{chizat18a,Liero2018}. 
For a recent survey of OT literature and related machine learning applications, please refer to~\cite{peyre2019computational}. 

%Although \eqref{W22} can be solved via linear programming, the computational cost scales poorly with dimension $d$. In \cite{cuturi2013sinkhorn}, an entropy regularization is added to largely reduce the computational cost while pushing the solution away from the boundary. 
% Define the entropy of the coupling as $h(\bgamma) = - \sum_{i,j} \gamma_{i,j} \log(\gamma_{i,j}) + \gamma_{i,j}$. The entropy-regularized problem is 
% \begin{equation}
% \label{W22_reg}
%         _{\bgamma \in \Pi(\bp,\bq)} \sum_{i,j} \|\bx_i-\by_j\|^2 \gamma_{i,j} - \epsilon \, h(\bgamma).
% \end{equation}
%The solution to the regularized problem is unique and can be efficiently solved via the Sinkhorn-Knopp algorithm, originally designed for matrix balancing \cite{sinkhorn1964relationship}. 
% \alertPJ{This can be shortened to save space if required.}

% \paragraph{Matrix OT} brief literature review

% \alertAH{Already give a review in intro. No need to do it again because we do not rely on their formulation.}

\subsection{SPD matrix-valued optimal transport}\label{sec:preliminaries_spdOT}
A SPD matrix-valued measure is a generalization of the (scalar-valued) probability measure (discussed in Section~\ref{ot_preliminary_sect}). Let us consider a SPD matrix-valued measure $M$ and a scalar-valued measure $\mu$  defined on a space $\mathcal{X}$. Let $A$ be a measurable subset of $\mathcal{X}$. Then, while $\mu(A)$ is a non-negative scalar, the ``mass'' $M(A)\in\sS_{+}^d$, where $\sS_{+}^d$ denotes the set of $d\times d$ positive semi-definite matrices. SPD matrix-valued measures have been employed in applications such as diffusion tensor imaging \cite{le2001diffusion}, image set classification~\cite{huang2015log,harandi2014manifold}, 
anisotropic diffusion \cite{weickert1998anisotropic}, and brain imaging~\cite{assaf2008diffusion}, to name a few. 

Recent works~\cite{carlen2014analog,chen2017matrix,ryu18a,peyre2016quantum} have explored optimal transport formulations for SPD matrix-valued measures. While the works \cite{carlen2014analog,chen2017matrix,ryu18a} discuss dynamical (geodesic) OT framework, \cite{peyre2016quantum} studies the ``static'' OT formulation that learns a suitable joint coupling between the input SPD matrix-valued measures. However, \cite{peyre2016quantum} explores an unbalanced OT setup for SPD matrix-valued measures and term it as quantum optimal transport (QOT). Thus, the marginals of the (learned) joint coupling in QOT is not equal to the input SPD matrix-valued measures. As in case of unbalanced (scalar-valued) OT \cite{chizat18a,Liero2018}, the discrepancy between marginals of the joint and the input measures in QOT is penalized via the Kulback-Leibler divergence (for SPD matrix-valued measures). 
%and propose a generalized Sinkhorn algorithm for their problem. 

%In the quantum mechanics setting, existing works~\cite{jiang2012distances,carlen2014analog,chen2017matrix,chen2018vector} have explored geodesic formulation of the Wasserstein distance between vector and matrix-valued densities. In \cite{ning2013thesis,ning2014matrix}, the Monge-Kantorovich optimal mass transport problem has been studied for comparing matrix-valued power spectra measures. Recently, \cite{peyre2016quantum} proposed to solve an unbalanced optimal transport problem for SPD-valued distributions of unequal masses. 

%\section{{\Manname}}
\section{Block SPD optimal transport} 
\label{matrix_generalized_ot_sect}
%\alertPJ{Consider the same setting as in Section \ref{ot_preliminary_sect}.} 
%Let $\bp \in \Sigma_m$ and $\bq \in \Sigma_n$ be the given marginal distributions with supports given by the sets $\{\bx_i\}_{i=1}^m \in \sR^d$ and $\{\by_j\}_{j=1}^n \sR^d$, respectively. 
In this section, we study a balanced OT formulation for SPD matrix-valued measures. Consider $\bP$ and $\bQ$ to be ($d$-dimensional) SPD matrix-valued input measures. Let $\bP\coloneqq\{[\bP_i]_{m\times 1}:\bP_i \in \sS_{++}^d\}$ and $\bQ\coloneqq\{[\bQ_j]_{n\times 1}: \revision{\bQ_j} \in \sS_{++}^d\}$ and $\bP$ and $\bQ$ have the same total mass. Without loss of generality, we assume $\sum_i \bP_i = \sum_j \bQ_j = \bI$. Here, $[\cdot]_{m \times n}$ denotes a collection of $mn$ matrices organized as a block matrix and $\bI$ represents the identity matrix. 
The cost of transporting a positive definite matrix-valued mass $\bA$ from position $\bx_i$ (in source space) to $\by_j$ (in target space) is parameterized by a (given) positive semi-definite matrix $\bC_{i,j}$ and is computed as $\trace(\bC_{i,j}\bA)$. Under this setting, we propose the block SPD matrix-valued balanced OT problem as 
\begin{equation}\label{reformulated_mw}
    {\rm MW}^2(\bP, \bQ) \coloneqq \min_{\bGamma \in \bPi(m,n,d, \bP, \bQ)  } \sum_{i,j} \trace( \bC_{i,j} \bGamma_{i,j}), 
\end{equation}
where $\bGamma = [\bGamma_{i,j}]_{m \times n}$ is a block-matrix coupling of size $m \times n$ and the set of such couplings are defined as $\bPi(m,n,d,\bP, \bQ) \coloneqq \{ [\bGamma_{i,j}]_{m \times n} : \bGamma_{i,j} \in \sS_{+}^d, \sum_j \bGamma_{i,j} = \bP_i, \sum_i \bGamma_{i,j} = \bQ_j, \forall i \in [m], j \in [n] \}$. Here $\sS_{+}^d$ is used to denote the set of $d\times d$ positive semi-definite matrices and $\trace(\cdot)$ is the matrix trace. The problem is well-defined provided that the corresponding coupling constraint set $\bPi(m,n,d,\bP, \bQ)$ is non-empty. For arbitrary SPD marginals $\bP, \bQ$, there is no guarantee that the set $\bPi(m,n,d,\bP, \bQ)$ defined in (\ref{reformulated_mw}) is not empty \cite{ning2014matrix}. Hence, in this work, we assume that the given marginals $\bP$ and $\bQ$ are such that $\bPi(m,n,d,\bP, \bQ)$ is not empty. In Section \ref{retraction_sinkhorn} later, we discuss a block matrix balancing algorithm which can be used to check whether $\bPi(m,n,d,\bP, \bQ)$ is empty or not for given marginals $\bP$ and $\bQ$.

\subsection{Metric properties of ${\rm MW}(\bP, \bQ)$} 
%\paragraph{Metric properties of ${\rm MW}(\bP, \bQ)$.} 
In the following result, we show that ${\rm MW}(\bP, \bQ)$ is a valid distance metric for a special case of block SPD marginals.
\begin{proposition}
\label{mw_distance_prop}
Suppose the input SPD matrix-valued marginals have the same support size $n$ and the costs $\{\bC_{i,j}\}_{i,j=1}^n$ satisfy
\begin{enumerate}
    \item $\bC_{i,j}=\bC_{j,i}$ and
    \item $\bC_{i,j}\succ\bzero$ for $i\neq j$ and $\bC_{i,j}=\bzero$ for $i=j$,
%    \item $\bC_{i,j}=\bzero$ if and only if $i=j$
    \item $\forall (i,j,k)\in [n]^3,$ and $ \bA\succeq \bzero,$ $\sqrt{\trace( \bC_{i,j} \bA)} \leq \sqrt{\trace( \bC_{i,k} \bA)} + \sqrt{\trace(  \bC_{j,k}  \bA)}$.
\end{enumerate}
Then, ${\rm MW}(\bP, \bQ)$ is a metric between the SPD matrix-valued marginals $\bP$ and $\bQ$ defined as $\bP\coloneqq\{[\bP_i]_{m\times 1}:\bP_i = p_i \bI\}$ and $\bQ\coloneqq\{[\bQ_j]_{n\times 1}:\bQ_j = q_i \bI\}$, where $\bp, \bq \in \Sigma_n$ and $\bI$ is the $d\times d$ identity matrix. 
\end{proposition}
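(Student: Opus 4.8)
The plan is to check the four metric axioms for ${\rm MW}$ on the family of marginals of the form $\bP=\{[p_i\bI]_{n\times1}\}$, $\bQ=\{[q_j\bI]_{n\times1}\}$ with $\bp,\bq\in\Sigma_n$ (and a third such marginal $\bR=\{[r_k\bI]_{n\times1}\}$, $\br\in\Sigma_n$, for the triangle inequality). Non-negativity is immediate: $\bC_{i,j}\succeq\bzero$ and $\bGamma_{i,j}\succeq\bzero$ force $\trace(\bC_{i,j}\bGamma_{i,j})\ge 0$ for every feasible $\bGamma$, and the feasible set, being nonempty (by assumption), closed and bounded, guarantees the minimum is attained.

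\emph{Symmetry and identity of indiscernibles.} Given $\bGamma\in\bPi(n,n,d,\bP,\bQ)$, the block-swapped $\bGamma'$ with $\bGamma'_{i,j}:=\bGamma_{j,i}$ lies in $\bPi(n,n,d,\bQ,\bP)$ and, by $\bC_{i,j}=\bC_{j,i}$, attains the same objective; hence ${\rm MW}(\bP,\bQ)={\rm MW}(\bQ,\bP)$. If $\bP=\bQ$, the block-diagonal coupling $\bGamma_{i,i}=p_i\bI$, $\bGamma_{i,j}=\bzero$ $(i\neq j)$ is feasible with objective $\sum_i p_i\trace(\bC_{i,i})=0$ since $\bC_{i,i}=\bzero$, so ${\rm MW}(\bP,\bP)=0$. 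Conversely, if ${\rm MW}(\bP,\bQ)=0$ then some feasible $\bGamma$ has all summands zero; for $i\neq j$, $\trace(\bC_{i,j}\bGamma_{i,j})=\trace(\bC_{i,j}^{1/2}\bGamma_{i,j}\bC_{i,j}^{1/2})=0$ with $\bC_{i,j}\succ\bzero$ and $\bGamma_{i,j}\succeq\bzero$ forces $\bGamma_{i,j}=\bzero$, so $\bGamma$ is block-diagonal and the marginal constraints give $p_i\bI=\bGamma_{i,i}=q_i\bI$, i.e.\ $\bP=\bQ$.

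\emph{Triangle inequality (the crux).} Let $\bGamma$ and $\bLambda$ be optimal couplings for $(\bP,\bQ)$ and $(\bQ,\bR)$. Mimicking the gluing lemma of scalar OT, I would construct a three-way block coupling $\bPi=[\bPi_{i,j,k}]$ with $\bPi_{i,j,k}\succeq\bzero$, $\sum_k\bPi_{i,j,k}=\bGamma_{i,j}$ and $\sum_i\bPi_{i,j,k}=\bLambda_{j,k}$. Here the assumption $\bQ_j=q_j\bI$ is essential: e.g.\ the sandwich $\bPi_{i,j,k}=q_j^{-1}\bGamma_{i,j}^{1/2}\bLambda_{j,k}\bGamma_{i,j}^{1/2}$ is PSD and satisfies $\sum_k\bPi_{i,j,k}=q_j^{-1}\bGamma_{i,j}^{1/2}(q_j\bI)\bGamma_{i,j}^{1/2}=\bGamma_{i,j}$. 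Then $\bTheta_{i,k}:=\sum_j\bPi_{i,j,k}$ satisfies $\sum_k\bTheta_{i,k}=\sum_j\bGamma_{i,j}=p_i\bI$ and $\sum_i\bTheta_{i,k}=\sum_j\bLambda_{j,k}=r_k\bI$, so $\bTheta\in\bPi(n,n,d,\bP,\bR)$. Applying condition (3) with $\bA=\bPi_{i,j,k}$ (and $\bC_{i,k}=\bC_{k,i}$) gives $\sqrt{\trace(\bC_{i,k}\bPi_{i,j,k})}\le\sqrt{\trace(\bC_{i,j}\bPi_{i,j,k})}+\sqrt{\trace(\bC_{j,k}\bPi_{i,j,k})}$; squaring, summing over $(i,j,k)$ and using Minkowski's inequality in $\ell^2$ over the index set $\{(i,j,k)\}$ yields
\[
\Big(\textstyle\sum_{i,k}\trace(\bC_{i,k}\bTheta_{i,k})\Big)^{1/2}
\le\Big(\textstyle\sum_{i,j}\trace(\bC_{i,j}\bGamma_{i,j})\Big)^{1/2}+\Big(\textstyle\sum_{j,k}\trace(\bC_{j,k}\bLambda_{j,k})\Big)^{1/2}
={\rm MW}(\bP,\bQ)+{\rm MW}(\bQ,\bR),
\]
and since ${\rm MW}(\bP,\bR)\le(\sum_{i,k}\trace(\bC_{i,k}\bTheta_{i,k}))^{1/2}$ the claim follows.

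\emph{Main obstacle.} The delicate step is the gluing: $\bPi$ must be PSD \emph{and} carry \emph{both} prescribed block-marginals $\bGamma_{i,j}$ and $\bLambda_{j,k}$ at once. The symmetric sandwich above only enforces $\sum_k\bPi_{i,j,k}=\bGamma_{i,j}$; simultaneously enforcing $\sum_i\bPi_{i,j,k}=\bLambda_{j,k}$ while preserving positivity is where the special structure must be used --- either through a more symmetric construction of $\bPi$ tailored to the fact that the middle marginal $\bQ_j$ is a multiple of $\bI$, or by first showing that, for marginals that are multiples of $\bI$, the optimal value of ${\rm MW}^2$ reduces to a scalar-valued optimal transport cost (with scalar cost $\trace(\bC_{i,j})$), for which the gluing is classical. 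The remaining steps are routine bookkeeping.
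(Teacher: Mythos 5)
Your treatment of non-negativity, symmetry, and definiteness is correct and matches the paper's. Where the proposal breaks down is exactly where you flag it yourself: the gluing. You never actually produce a valid glued coupling, so the triangle inequality is not proved. Your sandwich $\bPi_{i,j,k}=q_j^{-1}\bGamma_{i,j}^{1/2}\bLambda_{j,k}\bGamma_{i,j}^{1/2}$ gives PSD summands and one of the two marginals, but not the other, and (as you note) there is no obvious fix; the final sentence ``the remaining steps are routine bookkeeping'' is therefore not accurate --- the remaining step is the crux and you have not supplied it. Your fallback suggestion --- that for marginals $p_i\bI$, $q_j\bI$ the problem reduces to scalar OT with cost $\trace(\bC_{i,j})$ --- is also unsound: the couplings $\bGamma_{i,j}$ are still arbitrary PSD blocks subject to block-marginal constraints, not scalar multiples of $\bI$, so $\min_\bGamma\sum\trace(\bC_{i,j}\bGamma_{i,j})$ need not equal the scalar-OT optimum and the classical gluing does not transfer.

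The paper's resolution is different from the sandwich: it glues via the symmetrized product (anticommutator),
\begin{equation*}
\bT_{i,j} \;=\; \sum_k \frac{1}{2b_k}\bigl(\bGamma_{i,k}\bDelta_{k,j}+\bDelta_{k,j}\bGamma_{i,k}\bigr),
\end{equation*}
where $\bGamma$ couples $(\ba,\bb)$ and $\bDelta$ couples $(\bb,\bc)$, $\bb\in\Sigma_n$. Because the middle marginal is $b_k\bI$ (a scalar times the identity), the row and column block sums of $\bT$ collapse to $a_i\bI$ and $c_j\bI$ by exactly the commutation you already used when checking the marginals of your $\bTheta$. The paper then applies condition~(iii) termwise with $\bA=\frac{1}{2b_k}(\bGamma_{i,k}\bDelta_{k,j}+\bDelta_{k,j}\bGamma_{i,k})$ and finishes with Minkowski over $(i,j,k)$, which is the same $\ell^2$ bookkeeping you sketch for your unrealized $\bPi$. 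So the high-level strategy is identical, but the decisive construction --- the symmetrized product rather than the sandwich --- is the one you were missing, and without it your proof of the triangle inequality is incomplete.

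One further point worth being aware of: the paper asserts $\bT_{i,j}\in\sS_+^d$ (and, implicitly, that each anticommutator summand qualifies as an admissible $\bA\succeq\bzero$ in condition~(iii)), but the symmetric part of a product of two PSD matrices need not be PSD in general, so this step deserves more justification than either you or the paper give it; your sandwich was, in fact, the ``safer'' object PSD-wise, even though it fails the marginal constraint. Identifying that tension is useful, but it does not rescue the proposal as written.
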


We remark that the conditions on $\bC_{i,j}$ in Proposition \ref{mw_distance_prop} generalize the conditions required for ${\rm W}_2(\bp, \bq)$ in \eqref{W22} to be a metric. See for example \cite[Proposition 2.2]{peyre2019computational}. In Appendix \ref{cost_choice_appendix}, we discuss some particular constructions of the cost that satisfy the conditions.

\subsection{Manifold structure for the coupling set $\bPi(m,n,d,\bP,\bQ)$}

We next analyze the coupling constraint set $\bPi(m,n,d,\bP, \bQ)$ and show that it can be endowed with a manifold structure. This allows to exploit the versatile Riemannian optimization framework to solve (\ref{reformulated_mw}) and any more general problem \cite{absil2009optimization}. 

We propose the following manifold structure,
termed as the {\manname}, 
\begin{equation}\label{eq:manifold_proposed}
    \M_{m,n}^d(\bP, \bQ) 
    \coloneqq \{ \bGamma : \bGamma_{i,j} \in \sS_{++}^d, \sum_{j} \bGamma_{i,j} = \bP_i, 
     \sum_i \bGamma_{i,j} =\bQ_j \},
\end{equation}
where $\sum_i \bP_i = \sum_j \bQ_j = \bI$. Particularly, we restrict $\bP_i, \bQ_j,\bGamma_{i,j} \in \sS_{++}^d$, the set of SPD matrices. This ensures that the proposed manifold $\M_{m,n}^d(\bP, \bQ)$ in (\ref{eq:manifold_proposed}) is the interior of the set $\bPi(m,n,d,\bP, \bQ)$.

As discussed earlier $\bPi(m,n,d,\bP,\bQ)$ is not guaranteed to be non-empty for arbitrary choices of block SPD marginals $\bP$ and $\bQ$~\cite{ning2013thesis}. To this end, we assume that the marginals $\bP$ and $\bQ$ that are given ensure feasibility of the set $\bPi(m,n,d,\bP,\bQ)$. In particular, the manifold $ \M_{m,n}^d(\bP, \bQ)$ inherits the following assumption.
\begin{assumption}\label{assumption:existence}
In this work, we consider block-SPD marginals $\bP$ and $\bQ$ such that the set $\M_{m,n}^d(\bP, \bQ)$ is not empty. 
\end{assumption}
It should be noted that Assumption \ref{assumption:existence} is trivially satisfied for diagonal SPD marginals, i.e., when $\bP_i$ and $\bQ_j$ are diagonal. However, non-diagonal SPD marginals may also satisfy Assumption \ref{assumption:existence} for many problem instances. In Section~\ref{Experiment_sect}, we discuss empirical settings where non-diagonal SPD marginals satisfying Assumption~\ref{assumption:existence} are considered. The following proposition implies that we can endow $\M^d_{m,n}(\bP, \bQ)$ with a differentiable structure.  
\begin{proposition}
\label{smooth_set_prop}
Under Assumption \ref{assumption:existence}, the set $\M^d_{m,n}(\bP, \bQ)$ is smooth, i.e., differentiable.
\end{proposition}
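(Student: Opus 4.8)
The plan is to exhibit $\M^d_{m,n}(\bP,\bQ)$ as the preimage of a regular value of a smooth map, and then invoke the regular value (submersion) theorem. Concretely, I would work inside the open ambient manifold $\gV \coloneqq (\sS_{++}^d)^{mn}$, the set of block matrices $\bGamma = [\bGamma_{i,j}]_{m\times n}$ with every block SPD; this is an open subset of the linear space $(\sS^d)^{mn}$ of symmetric blocks, hence a smooth manifold of dimension $mn\cdot d(d+1)/2$. Define the constraint map $\Phi : \gV \to (\sS^d)^{m}\times(\sS^d)^{n}$ by $\Phi(\bGamma) = \big( (\sum_j \bGamma_{i,j})_{i\in[m]},\ (\sum_i \bGamma_{i,j})_{j\in[n]} \big)$. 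Then $\M^d_{m,n}(\bP,\bQ) = \Phi^{-1}(\bP,\bQ)$, which is nonempty by Assumption~\ref{assumption:existence}. It remains to show $\Phi$ is a submersion at every point of this fiber — but in fact $\D\Phi$ is a constant linear map (since $\Phi$ is affine, indeed linear), so it suffices to compute its rank once.

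The main subtlety is that $\Phi$ is \emph{not} a submersion onto its full codomain: the row sums and column sums both add up to $\sum_{i,j}\bGamma_{i,j}$, so the image of $\D\Phi$ lies in the subspace $\gW \coloneqq \{ ((\bR_i)_i,(\bS_j)_j) : \sum_i \bR_i = \sum_j \bS_j \}$, a codimension-$d(d+1)/2$ subspace. Accordingly I would redefine the target of $\Phi$ to be $\gW$ (which contains $(\bP,\bQ)$ precisely because $\sum_i\bP_i = \sum_j \bQ_j = \bI$) and show $\D\Phi$ is surjective onto $\gW$. The key step is thus a rank computation: given any $((\bR_i)_i,(\bS_j)_j)\in\gW$, produce a tangent perturbation $\bXi = [\bXi_{i,j}]_{m\times n}$ with symmetric blocks such that $\sum_j \bXi_{i,j} = \bR_i$ and $\sum_i \bXi_{i,j} = \bS_j$. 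This is just the classical transportation-polytope feasibility statement applied blockwise: one explicit choice is $\bXi_{i,j} = \bR_i\star\bS_j$ built from a rank-one-style combination (e.g.\ using that the scalar system $\xi_{ij} = r_i + s_j - \tfrac{1}{?}(\cdots)$ works, applied entry-by-entry to the symmetric matrices, or more cleanly: pick any feasible scalar coupling $\gamma_{ij}>0$ with row sums $1/?$ ... ), which reduces the matrix problem to the scalar one. Since the blocks $\bXi_{i,j}$ are unconstrained in $\sS^d$ (we are in the tangent space, and the open condition $\bGamma_{i,j}\in\sS_{++}^d$ imposes nothing on $\D\Phi$), surjectivity onto $\gW$ follows.

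With $\D\Phi$ of constant rank $\dim\gW$ on all of $\gV$, the regular value theorem gives that $\M^d_{m,n}(\bP,\bQ) = \Phi^{-1}(\bP,\bQ)$ is an embedded smooth submanifold of $\gV$ of dimension $mn\cdot\frac{d(d+1)}{2} - \big((m+n-1)\frac{d(d+1)}{2}\big) = (m-1)(n-1)\frac{d(d+1)}{2}$, and in particular it is differentiable, as claimed. I expect the only real work to be the explicit verification that $\D\Phi$ surjects onto $\gW$ (the blockwise transportation feasibility argument); everything else is bookkeeping. One should also note that nonemptiness (Assumption~\ref{assumption:existence}) is genuinely needed — the regular value theorem statement about $\Phi^{-1}(y)$ being a manifold is vacuous but the Proposition asserts a nonempty smooth set, so Assumption~\ref{assumption:existence} is exactly what rules out the empty case.
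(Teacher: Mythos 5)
Your proof is correct and takes a genuinely different — and more rigorous — route than the paper. The paper's own argument is a one-line perturbation observation: for a feasible $\bGamma$, pick $0 \le \zeta < \min_{i,j}\lambda_{\min}(\bGamma_{i,j})$ and add/subtract $\zeta\bI$ to a balanced pattern of blocks (so each row and column sum is unchanged); the result stays feasible, showing the set contains a small ball around $\bGamma$. That argument is really just verifying nonempty relative interior of the feasible set inside the affine subspace cut out by the (linear) marginal constraints; it is intuitive but does not formally exhibit the manifold charts. Your approach — write $\M^d_{m,n}(\bP,\bQ) = \Phi^{-1}(\bP,\bQ)$ for the affine constraint map $\Phi$ on the open set $(\sS_{++}^d)^{mn}$, corestrict to the subspace $\gW$ of compatible pairs, check $\D\Phi$ surjects onto $\gW$, and invoke the regular value theorem — is the textbook route and additionally yields, as a byproduct, the dimension count $(m-1)(n-1)d(d+1)/2$ that the paper states later without derivation. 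The only loose end in your write-up is the explicit preimage in the surjectivity step, which you leave as a placeholder ``$\bR_i\star\bS_j$''; a clean choice is
\begin{equation*}
\bXi_{i,j} \;=\; \tfrac{1}{n}\bR_i + \tfrac{1}{m}\bS_j - \tfrac{1}{mn}\textstyle\sum_k \bR_k,
\end{equation*}
which one checks directly satisfies $\sum_j\bXi_{i,j}=\bR_i$ and $\sum_i\bXi_{i,j}=\bS_j$ whenever $\sum_i\bR_i=\sum_j\bS_j$. With that filled in, your proof is complete. Your closing remark — that Assumption~\ref{assumption:existence} is needed only to rule out the vacuous empty fiber, since $\D\Phi$ has constant rank everywhere — is exactly right and is a clearer statement of the role of the assumption than the paper gives.
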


%\paragraph{Relationship with existing manifolds.} 
It should be emphasized that the proposed manifold $\M_{m,n}^d(\bP, \bQ)$ can be regarded as a generalization to existing manifold structures. For example, when $d =1$ and either $m=1$ or $n=1$, $\M_{m,n}^d(\bP, \bQ)$ reduces to the multinomial manifold of probability simplex  \cite{sun2015heterogeneous}. When $d=1$ and $m, n \neq 1$, it reduces the so-called doubly stochastic manifold \cite{douik2019manifold} with uniform marginals or the more general matrix coupling manifold \cite{shi2021coupling}. When $d > 1$ and either $m =1$ or $n =1$, our proposed manifold simplifies to the simplex manifold of SPD matrices \cite{mishra2019riemannian}.

In the next section, we derive various optimization-related ingredients on $\M_{m,n}^d(\bP, \bQ)$ that allow optimization of an arbitrary differentiable objective function on the manifold. In particular, we propose a Riemannian optimization approach following the general treatment by \cite{absil2009optimization, boumal2020intromanifolds}. It allows employing the proposed approach not only for (\ref{reformulated_mw}) but also for other OT problems as discussed in Section~\ref{sec:applications}.

\begin{algorithm}[t]
 \caption{Riemannian optimization for solving (\ref{eq:manifold_problem}).}
 \label{Riemannian_optimizer}
 \begin{algorithmic}[1]
 \STATE Initialize a feasible $\bGamma_0 \in \M_{m,n}^d$.
 \STATE $\bGamma = \bGamma_0$.
  \WHILE{not converging}
  \STATE Compute Riemannian gradient (and Hessian) at $\bGamma$.
  \STATE Compute the update step $\bxi \in T_{\bGamma} \M_{m,n}^d$.
   \STATE Update $\bGamma \leftarrow R_{\bGamma}(\bxi)$.
  \ENDWHILE
\STATE \textbf{Output:} $\bGamma$.
 \end{algorithmic} 
\end{algorithm}

% We define the \manname \, as 
% \begin{align*}
%     &\M_{m,n}^d(\bP, \bQ) \\
%     &:= \{ \bGamma : \bGamma_{i,j} \in \sS_{++}^d, \sum_{j} \bGamma_{i,j} = \bP_i, \sum_i \bGamma_{i,j} =\bQ_j \},
% \end{align*}
% where we restrict $\bGamma_{i,j} \in \sS_{++}^d$, the set of symmetric positive definite (SPD) matrices. This ensures that $\M_{m,n}^d(\bP, \bQ)$ is the interior of the set $\bPi(m,n,d,\bP, \bQ)$. Without loss of generality, we assume $\sum_i \bP_i = \sum_j \bQ_j = \bI$.

\section{Riemannian geometry and optimization over $\M_{m,n}^d(\bP, \bQ)$}
\label{Riem_opt_sec} \label{manifold_geometry_sect}

% \alertPJ{(this subsection should focus on optimization only)}
We consider the general optimization problem
\begin{equation}
    \min_{\bGamma \in \M_{m,n}^d(\bP, \bQ)} F(\bGamma), \label{eq:manifold_problem}
\end{equation}
where $F:{\M_{m,n}^d(\bP, \bQ)} \rightarrow \mathbb{R}$ is a differentiable objective function. The proposed manifold ${\M_{m,n}^d(\bP, \bQ)}$ can be endowed with a smooth Riemannian manifold structure \cite{absil2009optimization, boumal2020intromanifolds}. Consequently, (\ref{eq:manifold_problem}) is an optimization problem on a Riemannian manifold. We solve the problem via the Riemannian optimization framework. It provides a principled class of optimization methods and computational tools for manifolds, both first order and second order, as long as the ingredients such as Riemannian metric, orthogonal projection, retraction, and Riemannian gradient (and Hessian) of a function are defined \cite{absil2009optimization, boumal2014manopt, boumal2020intromanifolds}. Conceptually, the Riemannian optimization framework treats (\ref{eq:manifold_problem}) as an ``unconstrained'' optimization problem over the constraint manifold $\M_{m,n}^d$ (omitted marginals $\bP$, $\bQ$ for clarity).

In Algorithm \ref{Riemannian_optimizer}, we outline the skeletal steps involved in optimization over $\M_{m,n}^d$, where the step $\bxi$ can be computed from different Riemannian methods. In Riemannian steepest descent, $\bxi = -\eta \, \grad F(\bGamma)$, where $\grad F(\bGamma)$ is the Riemannian gradient at $\bGamma$. Also, $\bxi$ is given by the ``conjugate'' direction of $\grad F(\bGamma)$ in the Riemannian conjugate gradient method. And, for the Riemannian trust-region method, $\bxi$ computation involves minimizing a second-order approximation of the objective function in a trust-region ball
\cite{absil2009optimization}. Below, we show the computations of these ingredients.

\subsection{Riemannian metric}
% \paragraph{Riemannian metric.} 
% \alertPJ{Can this be moved up to before Section 4.1?} 
The manifold $\M^d_{m,n}$ is a submanifold of the Cartesian product of $m \times n$ SPD manifold of size $d \times d$, which we denote as $\bigtimes_{m, n} \sS_{++}^d$. The dimension of the manifold $\M^d_{m,n}$ is $(m-1)(n-1)d(d+1)/2$. The {tangent space} characterization of $\M_{m,n}^d$ at $\bGamma$ is obtained as
\begin{equation*}
    T_\bGamma \M_{m,n}^d = \{ [\bU_{i,j}]_{m \times n}: \bU_{i,j} \in \sS^d, \sum_{j} \bU_{i,j} = \bzero, \sum_{i} \bU_{i,j} = \bzero\},
\end{equation*}
where $\sS^d$ is the set of $d \times d$ symmetric matrices. The expression for the tangent space is obtained by linearizing the constraints. We endow each SPD manifold with the affine-invariant Riemannian metric \cite{bhatia2009positive}, which induces a {Riemannian metric} for the product manifold $\M_{m,n}^d$ as
\begin{equation}
    \langle \bU, \bV \rangle_\bGamma = \sum_{i,j} \trace(\bGamma_{i,j}^{-1} \bU_{i,j} \bGamma^{-1}_{i,j} \bV_{i,j}), \label{Riem_metric}
\end{equation}
for any $\bU, \bV \in T_{\bGamma} \M_{m,n}^d$. 

%\GaoC{Better to use $\Gamma_{i,j}$ instead of $\mathbf X_{i,j}$ for notation consistence?}

\subsection{Orthogonal projection, Riemannian gradient, and Riemannian Hessian}
% \paragraph{Orthogonal projection, Riemannian gradient, and Riemannian Hessian.}
As an embedded submanifold, the orthogonal projection plays a crucial role in deriving the Riemannian gradient (as orthogonal projection of the Euclidean gradient in the ambient space). 

\begin{proposition}
\label{orthogonal_proj}
The orthogonal projection of any $\bS \in \bigtimes_{m, n} \sS^d$ to $T_\bGamma\M_{m,n}^d$ with respect to the Riemannian metric \eqref{Riem_metric} is given by 
\begin{equation*}
     {\rm P}_{\bGamma}(\bS) = \bU, \text{ with } \bU_{i,j} = \bS_{i,j} + \bGamma_{i,j} (\bLambda_i+ \bTheta_j) \bGamma_{i,j},
\end{equation*}
where auxiliary variables $\bLambda_i, \bTheta_j$ are solved from the system of matrix linear equations:
\begin{equation*}
\begin{cases}
    -\sum_i \bS_{i,j} = \sum_{i} \bGamma_{i,j} (\bLambda_i + \bTheta_j) \bGamma_{i,j}, & \forall j \\
    -\sum_j \bS_{i,j} = \sum_{j} \bGamma_{i,j} (\bLambda_i + \bTheta_j) \bGamma_{i,j}, & \forall i.
\end{cases}
\end{equation*}
\end{proposition}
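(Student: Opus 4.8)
The plan is to characterize the orthogonal projection variationally: for a given ambient point $\bS \in \bigtimes_{m,n}\sS^d$, the projection ${\rm P}_\bGamma(\bS)$ is the unique tangent vector $\bU \in T_\bGamma\M_{m,n}^d$ such that the residual $\bS - \bU$ is orthogonal (with respect to the affine-invariant metric \eqref{Riem_metric}) to the whole tangent space. So first I would write down the orthogonality conditions $\langle \bS - \bU, \bW\rangle_\bGamma = 0$ for all $\bW \in T_\bGamma\M_{m,n}^d$, which means $\sum_{i,j}\trace\!\big(\bGamma_{i,j}^{-1}(\bS_{i,j}-\bU_{i,j})\bGamma_{i,j}^{-1}\bW_{i,j}\big) = 0$. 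Since the tangent space is the kernel of the linear map $\bW \mapsto (\sum_j \bW_{i,j}, \sum_i \bW_{i,j})$, its orthogonal complement (in the ambient metric) is spanned by the ``gradients'' of the constraint functions. Concretely, the ambient-metric-orthogonal complement consists of blocks of the form $\bGamma_{i,j}(\bLambda_i + \bTheta_j)\bGamma_{i,j}$ for symmetric matrices $\bLambda_i, \bTheta_j$ — this is the natural Lagrange-multiplier ansatz, and one checks it is orthogonal to $T_\bGamma\M_{m,n}^d$ by plugging into \eqref{Riem_metric} and using $\sum_j \bW_{i,j} = \bzero$, $\sum_i \bW_{i,j} = \bzero$.

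Next I would set $\bU_{i,j} = \bS_{i,j} + \bGamma_{i,j}(\bLambda_i+\bTheta_j)\bGamma_{i,j}$ — this guarantees the residual $\bS - \bU = -[\bGamma_{i,j}(\bLambda_i+\bTheta_j)\bGamma_{i,j}]$ lies in the orthogonal complement just described — and then impose the requirement that $\bU$ actually belongs to $T_\bGamma\M_{m,n}^d$, i.e., $\sum_j \bU_{i,j} = \bzero$ for all $i$ and $\sum_i \bU_{i,j} = \bzero$ for all $j$. Substituting the ansatz for $\bU_{i,j}$ gives exactly the two families of linear matrix equations in the statement:
\begin{equation*}
-\sum_j \bS_{i,j} = \sum_j \bGamma_{i,j}(\bLambda_i+\bTheta_j)\bGamma_{i,j},\qquad -\sum_i \bS_{i,j} = \sum_i \bGamma_{i,j}(\bLambda_i+\bTheta_j)\bGamma_{i,j}.
\end{equation*}
Since $\bS$ is symmetric and the $\bGamma_{i,j}$ are SPD, the right-hand sides are automatically symmetric, so it is consistent to seek symmetric $\bLambda_i, \bTheta_j$.

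Finally I would address existence and uniqueness of the projection. Existence is immediate because $T_\bGamma\M_{m,n}^d$ is a finite-dimensional subspace, so the orthogonal projection of $\bS$ onto it exists and is unique; the content is only that it has the stated closed form, which follows from the decomposition of the ambient space into $T_\bGamma\M_{m,n}^d$ and its orthogonal complement together with the fact that the orthogonal complement is exactly parametrized (not necessarily injectively) by pairs $(\bLambda_i,\bTheta_j)$. The one subtlety — and the main technical point — is that the linear system for $(\bLambda_i,\bTheta_j)$ is not uniquely solvable: adding a constant shift $\bLambda_i \mapsto \bLambda_i + \bX$, $\bTheta_j \mapsto \bTheta_j - \bX$ leaves every $\bLambda_i + \bTheta_j$ unchanged, hence leaves $\bU$ unchanged, so there is a $d(d+1)/2$-dimensional gauge freedom. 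One should argue that, modulo this gauge, the system is solvable and yields a well-defined $\bU$; concretely, summing either family of equations over the free index shows both reduce to the same global constraint $\sum_{i,j}\bGamma_{i,j}(\bLambda_i+\bTheta_j)\bGamma_{i,j} = -\sum_{i,j}\bS_{i,j}$, consistent with the range restriction, and a dimension count ($m+n$ matrix unknowns, $m+n$ matrix equations, one redundancy, one gauge) confirms that the solution set is nonempty and all solutions give the same $\bU$. That solvability/redundancy bookkeeping is the part I expect to require the most care; everything else is substitution into \eqref{Riem_metric}.
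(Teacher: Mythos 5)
Your argument is essentially the same as the paper's: the paper writes the projection as $\argmin_{\bU\in T_\bGamma\M_{m,n}^d}\big(-g_\bGamma(\bU,\bS)+\tfrac12 g_\bGamma(\bU,\bU)\big)$, forms the Lagrangian with multipliers $\bLambda_i,\bTheta_j$ for the row/column constraints, and reads off the stationarity conditions, which are exactly your orthogonality-to-normal-space ansatz plus the tangency constraints. The only difference is that you spell out the normal-space parametrization and the gauge freedom $\bLambda_i\mapsto\bLambda_i+\bX$, $\bTheta_j\mapsto\bTheta_j-\bX$ together with the solvability/redundancy bookkeeping, which the paper leaves implicit.
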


Subsequently, the Riemannian gradient and Hessian are derived as the orthogonal projection of the gradient and Hessian from the ambient space. 

\begin{proposition}
\label{Riem_grad_hess}
The Riemannian gradient and Hessian of $F: \M_{m\times n}^d \xrightarrow{} \sR$ are derived as 
\begin{align*}
    \grad F(\bGamma) &= {\rm P}_\bGamma([\bGamma_{i,j} \{ \nabla F(\bGamma_{i,j}) \}_{\rm S} \bGamma_{i,j}]_{m\times n}),\\
    \hess F(\bGamma)[\bU] &= {\rm P}_\bGamma([\D \grad F(\bGamma_{i,j})[\bU_{i,j}] - \{ \bU_{i,j} \bGamma_{i,j}^{-1} \grad F(\bGamma_{i,j}) \}_{\rm S}]_{m\times n}),
\end{align*}
where $\bU \in T_\bGamma\M_{m,n}^d$ and $\nabla F(\bGamma_{i,j})$ is the block partial derivative of $F$ with respect to $\bGamma_{i,j}$. Here, $\D \grad F(\bGamma_{i,j})[\bU_{i,j}]$ denotes the directional derivative of the Riemannian gradient $\grad F$ along $\bU$ and $\{ \bA\}_{\rm S} \coloneqq (\bA + \bA^\top)/2$.
\end{proposition}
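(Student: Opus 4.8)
The plan is to derive both formulas by the standard recipe for Riemannian gradients and Hessians on an embedded Riemannian submanifold, treating $\M_{m,n}^d$ as an embedded submanifold of the product SPD manifold $\bigtimes_{m,n}\sS_{++}^d$ equipped with the affine-invariant metric \eqref{Riem_metric}. First I would recall the Riemannian gradient and Hessian of $F$ viewed as a function on the \emph{ambient} product manifold $\bigtimes_{m,n}\sS_{++}^d$. For a single affine-invariant SPD factor, the Riemannian gradient of $F$ at $\bGamma_{i,j}$ is $\bGamma_{i,j}\{\nabla F(\bGamma_{i,j})\}_{\rm S}\bGamma_{i,j}$ (the symmetrization accounts for the symmetry constraint), so the ambient Riemannian gradient is the block collection $\overline{\grad}\,F(\bGamma) = [\bGamma_{i,j}\{\nabla F(\bGamma_{i,j})\}_{\rm S}\bGamma_{i,j}]_{m\times n}$. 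The Riemannian gradient on $\M_{m,n}^d$ is then obtained by orthogonally projecting this ambient gradient onto $T_\bGamma\M_{m,n}^d$ using ${\rm P}_\bGamma$ from Proposition \ref{orthogonal_proj}; the defining variational identity $\langle \grad F(\bGamma), \bU\rangle_\bGamma = \D F(\bGamma)[\bU]$ for all $\bU\in T_\bGamma\M_{m,n}^d$ follows because projection is self-adjoint with respect to the metric and fixes tangent vectors. This gives the first displayed formula.

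For the Hessian, I would use the general identity for embedded submanifolds, $\hess F(\bGamma)[\bU] = {\rm P}_\bGamma\big(\D(\overline{\grad}\,F)(\bGamma)[\bU] + \mathfrak{W}_\bGamma(\bU, {\rm P}_\bGamma^\perp \overline{\grad}\,F(\bGamma))\big)$, where $\mathfrak{W}$ is the Weingarten map, but it is cleaner to work factor-wise: on each affine-invariant SPD factor the Riemannian Hessian is known to be $\D\,\widetilde{\grad}\,F(\bGamma_{i,j})[\bU_{i,j}] - \{\bU_{i,j}\bGamma_{i,j}^{-1}\,\widetilde{\grad}\,F(\bGamma_{i,j})\}_{\rm S}$, where $\widetilde{\grad}\,F(\bGamma_{i,j}) = \bGamma_{i,j}\{\nabla F(\bGamma_{i,j})\}_{\rm S}\bGamma_{i,j}$ is the factor Riemannian gradient; this is a classical computation using the affine-invariant Christoffel symbols $\Gamma_X(U,V) = -\{UX^{-1}V\}_{\rm S}$. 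Summing over the factors gives the Riemannian Hessian of $F$ on the product manifold, and then, since $\M_{m,n}^d$ is an embedded submanifold cut out by \emph{linear} constraints (so the second fundamental form contribution involves only the normal part of the gradient, which is handled automatically by the projection-of-the-directional-derivative-of-the-Riemannian-gradient formula — the connection on $\M_{m,n}^d$ is the tangential part of the ambient connection), the Riemannian Hessian on $\M_{m,n}^d$ is ${\rm P}_\bGamma$ applied to the ambient Riemannian Hessian evaluated at a tangent vector $\bU$. Writing $\grad F(\bGamma_{i,j})$ for the $(i,j)$ block of the projected (manifold) gradient — as the statement does — and noting that along tangent directions the extra Weingarten/curvature terms are captured correctly, one arrives at the second displayed formula.

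The key steps, in order, are: (i) identify $F$ restricted to each affine-invariant SPD factor and write down its factorwise Riemannian gradient and Hessian from the known formulas in \cite{bhatia2009positive} (or equivalently from the affine-invariant Christoffel symbol $-\{UX^{-1}V\}_{\rm S}$); (ii) assemble these into the ambient Riemannian gradient and Hessian on $\bigtimes_{m,n}\sS_{++}^d$ using that the product metric \eqref{Riem_metric} is the block-diagonal sum; (iii) invoke the embedded-submanifold formulas $\grad F(\bGamma) = {\rm P}_\bGamma(\overline{\grad}\,F(\bGamma))$ and $\hess F(\bGamma)[\bU] = {\rm P}_\bGamma(\D(\overline{\grad}\,F)(\bGamma)[\bU] - \{\text{correction}\})$, using Proposition \ref{orthogonal_proj} for ${\rm P}_\bGamma$ and the fact that the tangent space is defined by linear constraints so the induced connection is the tangential projection of the ambient one; (iv) simplify, replacing the ambient gradient blocks by the manifold gradient blocks $\grad F(\bGamma_{i,j})$ inside the correction term as the statement does.

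The main obstacle I anticipate is step (iii): carefully justifying that the displayed Hessian formula, which only projects the directional derivative of the (manifold) Riemannian gradient plus a single correction term, is indeed the correct Riemannian Hessian on the submanifold — i.e., that the curvature/Weingarten contributions from the embedding are either zero (because the constraints are linear, so the second fundamental form of $\M_{m,n}^d$ inside $\bigtimes_{m,n}\sS_{++}^d$ with respect to the \emph{Euclidean} structure vanishes) or are exactly the ones already accounted for by differentiating the factorwise Riemannian gradient and re-projecting. Concretely, one must verify that $\D({\rm P}_\bGamma \overline{\grad}\,F)(\bGamma)[\bU]$, once projected back, equals ${\rm P}_\bGamma$ of the factorwise-Hessian expression with the manifold gradient substituted in — this hinges on the interplay between differentiating the projection ${\rm P}_\bGamma$ (whose auxiliary variables $\bLambda_i,\bTheta_j$ depend on $\bGamma$) and the affine-invariant correction term, and is where the bookkeeping is heaviest; the rest of the argument is routine.
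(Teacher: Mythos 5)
Your approach matches the paper's: treat $\M_{m,n}^d$ as a Riemannian submanifold of the product $\bigtimes_{m,n}\sS_{++}^d$ with the affine-invariant metric, obtain the gradient by projecting the factorwise ambient gradient $\bGamma_{i,j}\{\nabla F(\bGamma_{i,j})\}_{\rm S}\bGamma_{i,j}$, and obtain the Hessian by applying the projected ambient connection to the manifold gradient. The paper does exactly this, citing the factorwise ambient-connection formula $\tilde{\nabla}_{U}V = \D V[U] - \{UX^{-1}V\}_{\rm S}$ from the SPD/affine-invariant literature and projecting.

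Where your proposal drifts is in the ``main obstacle'' you anticipate. The displayed Hessian formula is not something that needs to be reconciled against the Weingarten identity: it \emph{is} the definition. For a Riemannian submanifold, the Levi-Civita connection is $\nabla_\bU = {\rm P}_\bGamma\circ\tilde{\nabla}_\bU$, so $\hess F(\bGamma)[\bU] = \nabla_\bU\grad F(\bGamma) = {\rm P}_\bGamma\big(\tilde{\nabla}_\bU \grad F(\bGamma)\big)$, with $\grad F$ the \emph{manifold} gradient vector field. Expanding $\tilde{\nabla}_\bU\grad F$ blockwise via the affine-invariant Christoffel tensor gives precisely the statement's expression; no additional Weingarten/curvature correction is required or missing, and nothing needs to be proved about the linearity of the constraints or about the normal component of the ambient gradient. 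The Weingarten form you quote is an \emph{alternative} identity that expresses the same Hessian in terms of the \emph{ambient} gradient $\overline{\grad}F$ (trading the derivative of the projection for the Weingarten map), and you would only need it if you insisted on working with $\overline{\grad}F$ instead of $\grad F$. Likewise, your concern about having to explicitly differentiate the projection ${\rm P}_\bGamma$ (and hence the auxiliary $\bLambda_i,\bTheta_j$) is moot at the level of the proposition: that dependence is bundled into the abstract term $\D\grad F(\bGamma_{i,j})[\bU_{i,j}]$, which is all the statement claims. So the proof is shorter than you feared: two applications of the submanifold recipe plus the known affine-invariant gradient and connection formulas on a single SPD factor.
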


\subsection{Retraction and block matrix balancing algorithm}
\label{retraction_sinkhorn}

% \paragraph{Retraction and block matrix balancing algorithm.}
The retraction operation on $\M_{m, n}^d$ is given by a composition of two operations. The first operation is to ensure positive definiteness of the blocks in the coupling matrix. In particular, we use the exponential map associated with the affine-invariant metric on the SPD manifold $\sS_{++}^d$ \cite{bhatia2009positive}. The second operation is to ensure that the summation of the row blocks and column blocks respect the block-SPD marginals. Given an initialized block SPD matrix $[\bA_{i,j}]_{m \times n}$, where $\bA_{i,j} \in \sS_{++}^d$, the goal is to find a `closest' block SPD coupling matrix $\bB \in \M_{m,n}^d$. This is achieved by alternatively normalizing the row and column blocks to the corresponding marginals. The procedure is outlined in Algorithm~\ref{bmb_algorithm}. The solution for the row and column normalization factors $\bR_j, \bL_i$, which are SPD matrices, are computed by solving the Riccati equation $\bT \bX \bT = \bY$ for given $\bX, \bY \in \sS_{++}^d$. Here, $\bT$ admits a unique solution \cite{bhatia2009positive,malago2018wasserstein}. Different from the scalar marginals case where the scaling can be expressed as a diagonal matrix, we need to symmetrically normalize each SPD block matrix. Algorithm~\ref{bmb_algorithm} is a generalization of the RAS algorithm for balancing non-negative matrices \cite{sinkhorn1967diagonal}, which is related to the popular Sinkhorn-Knopp algorithm \cite{sinkhorn1964relationship,knight2008sinkhorn}. We also use Algorithm \ref{bmb_algorithm} to test feasibility of the set $\M_{m, n}^d$ by checking whether Algorithm \ref{bmb_algorithm} outputs a balanced block SPD matrix for a random block SPD matrix $\bA$.

It should be noted that a similar matrix balancing algorithm has been introduced for positive operators \cite{gurvits2004classical,georgiou2015positive}, where the convergence is only established in limited cases. Algorithm~\ref{bmb_algorithm} is different from the quantum Sinkhorn algorithm proposed in \cite{peyre2016quantum} that applies to the {unbalanced} setting. Although we do not provide a theoretical convergence analysis for Algorithm \ref{bmb_algorithm}, we empirically observe quick convergence of this algorithm in various settings (see Appendix \ref{convergence_bmb_retr_appendix}).

% \alertAH{It can be seen that Algorithm~\ref{bmb_algorithm} is a generalization of the popular Sinkhorn-Knopp algorithm \cite{knight2008sinkhorn} which deals with non-negative scalar entries (instead of SPD blocks).}

% \alertAH{RAS algorithm}

% {Here we overload the notation $\sS_{++}^d$ to mean both the set of SPD matrices and SPD manifold with the AI metric.} 
% which is $\bX \exp(\bX^{-1} \bU)$ for any $\bX \in \sS_{++}^d, \bU \in T_\bX\sS_{++}^d$. The $\exp(\cdot)$ denotes matrix exponential.

% \begin{algorithm}[H]
%  \caption{Block matrix balancing algorithm}
%  \label{bmb_algorithm}
%  \begin{algorithmic}[1]
%   \STATE Initialize $\bA_{i,j}^0 = \bA_{i,j}$.
%   \FOR{$t = 1, \ldots, T$}
%   \STATE Find $\bR_j^{t-1}$ such that $\sum_i \bR_j^{t-1} \bA_{i,j}^{t-1} \bR_j^{t-1} = \bQ_j$, $\forall j$.
%   \STATE Update $\bA_{i,j}^{t-1} \xleftarrow{} \bR_j^{t-1} \bA_{i,j}^{t-1} \bR_j^{t-1}$, $\forall j$. 
%   \STATE Find $\bL_i^{t-1}$ such that $\sum_j \bL_i^{t-1} \bA_{i,j}^{t-1} \bL_i^{t-1} = \bP_i$, $\forall i$.
%   \STATE Update $\bA_{i,j}^t \xleftarrow{} \bL_i^{t-1} \bA_{i,j}^{t-1} \bL_i^{t-1}, \forall i$. 
%   \ENDFOR
%  \end{algorithmic} 
% \end{algorithm}

\begin{algorithm}[t]
 \caption{Block matrix balancing algorithm}
 \label{bmb_algorithm}
 \begin{algorithmic}[1]
  \STATE \textbf{Input:} $[\bA_{i,j}]_{m \times n}$, where $\bA_{i,j} \in \sS_{++}^d$ and block SPD marginals $\bP$ and $\bQ$.
  \STATE Initialize $\bB = [\bA_{i,j}]_{m \times n}$. 
  \WHILE{not converging}
  \STATE Find $\bR_j \in \sS_{++}^d$ such that $\sum_i \bR_j \bB_{i,j} \bR_j = \bQ_j$, $\forall j$.
  \STATE Update $\bB_{i,j} \xleftarrow{} \bR_j \bB_{i,j} \bR_j$, $\forall j$. 
  \STATE Find $\bL_i \in \sS_{++}^d$ such that $\sum_j \bL_i \bB_{i,j} \bL_i = \bP_i$, $\forall i$.
  \STATE Update $\bB_{i,j} \xleftarrow{} \bL_i \bB_{i,j} \bL_i, \forall i$. 
  \ENDWHILE
  \STATE \textbf{Output:} The balanced matrix $\bB$.
 \end{algorithmic} 
\end{algorithm}

Based on Algorithm \ref{bmb_algorithm}, we define a retraction $R_\bGamma(\bU)$ at $\bGamma \in  \M_{m, n}^d$ for any $\bU \in T_\bGamma \M_{m,n}^d$ as
\begin{equation}
    R_\bGamma(\bU) = {\rm MBalance} \big([\bGamma_{i,j} {\rm exp}(\bGamma_{i,j}^{-1} \bU_{i,j})]_{m \times n} \big), \label{retr_define}
\end{equation}
where {\rm MBalance} calls the matrix balancing procedure in Algorithm~\ref{bmb_algorithm} and $\exp(\cdot)$ denotes the matrix exponential. The retraction proposed in \eqref{retr_define} is valid (i.e., satisfy the two conditions) for diagonal marginals and empirically we also see the retraction is well-defined for arbitrary block-SPD marginals. See Appendix \ref{convergence_bmb_retr_appendix} for more details. 

% \alertPJ{Is our matrix balancing algorithm similar-to/different-from quantum Sinkhorn algorithm?}

% When each block $\bA_{i,j} \in \sR$ is a positive scalar, i.e. $d = 1$, Sinkhorn's Theorem \cite{sinkhorn1964relationship} proves that the balanced matrix $\bB$ has a standard form $\bB = \bD_1 \bA \bD_2$ for diagonal matrices $\bD_1, \bD_2$. The convergence of the balancing algorithm can be easily established. 

% However, when $d > 1$, proving convergence of Algorithm \ref{bmb_algorithm} is challenging given we need to symmetrically normalize each block. Further, finding the standard form of the balanced block matrix is nontrivial. Nevertheless, we assume a standard form of the resulting block matrix as $\bB_{i,j} = \tilde{\bL}_i \tilde{\bR}_j \bA_{i,j} \tilde{\bR}_j \tilde{\bL}_i$ (or $\tilde{\bR}_j \tilde{\bL}_i \bA_{i,j} \tilde{\bL}_i \tilde{\bR}_j$) $\forall i,j$.

% This assumption is needed to establish a valid retraction based on Algorithm \ref{bmb_algorithm}. 
% Although no theoretical analysis is given, we observe quick empirical convergence of the balancing algorithm, which is illustrated in Figure ..., with randomly generated SPD marginals and block identity initialization.

% \alertAH{The convergence analysis is referred to the Riemannian optimization. However, it seems abrupt to directly move from retraction to convergence for Riemannian optimization.}

\subsection{Convergence and computational complexity}
\label{convergence_complexity_sect}

{\bf Convergence of Riemannian optimization.}
Similar to Euclidean optimization, the necessary first-order optimality condition for any differentiable $F$ on $\M_{m,n}^d$ is $\grad F(\bGamma^*) = 0$, i.e., where the Riemannian gradient vanishes. We call such $\bGamma^*$ the stationary point. The Riemannian methods are known to converge to a stationary point \cite{absil2009optimization,boumal2020intromanifolds} under standard assumptions. Additionally, we show the following.

\begin{theorem}
\label{Prop_Riem_opt_optimal}
Suppose the objective function of the problem $\min_{\bGamma \in \bPi(m,n,d,\bP, \bQ)} F(\bGamma)$ is strictly convex and the optimal solution $\bGamma^*$ is positive definite, i.e., it lies in the interior of $\bPi(m,n,d, \bP, \bQ)$. Then, Riemannian optimization (Algorithm~\ref{Riemannian_optimizer}) for (\ref{eq:manifold_problem}) converges to the same global optimal solution $\bGamma^*$. 
% \alertBM{rephrase this properly}
\end{theorem}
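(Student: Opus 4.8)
The plan is to connect the constrained convex problem over $\bPi(m,n,d,\bP,\bQ)$ with the Riemannian problem over its interior $\M_{m,n}^d(\bP,\bQ)$, using strict convexity to guarantee uniqueness of the global minimizer and the interiority hypothesis to guarantee that this minimizer is itself a point of the manifold. First I would observe that $\bPi(m,n,d,\bP,\bQ)$ is a compact convex set (it is closed and bounded: the blocks are PSD with fixed block-row and block-column sums $\bP_i,\bQ_j$, so each $\bGamma_{i,j}\preceq\bP_i$ is bounded), and $F$ is continuous, so a minimizer exists; strict convexity of $F$ on this convex set then forces the minimizer $\bGamma^*$ to be unique. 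By hypothesis $\bGamma^*$ is positive definite, i.e., each block $\bGamma^*_{i,j}\in\sS_{++}^d$, so $\bGamma^*\in\M_{m,n}^d(\bP,\bQ)$, which is nonempty (Assumption~\ref{assumption:existence}) and, by Proposition~\ref{smooth_set_prop}, a smooth manifold whose tangent space is the linearization of the affine constraints.

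Next I would show that $\bGamma^*$ is the \emph{unique} stationary point of the Riemannian problem~\eqref{eq:manifold_problem}. One direction: since $\bGamma^*$ is the unconstrained-on-$\bPi$ minimizer lying in the relative interior, the Euclidean gradient $\nabla F(\bGamma^*)$ (viewed in the ambient symmetric-block space) is orthogonal to the affine hull of $\bPi$, i.e., to the tangent space $T_{\bGamma^*}\M_{m,n}^d$; because the Riemannian gradient from Proposition~\ref{Riem_grad_hess} is obtained by the metric-orthogonal projection ${\rm P}_{\bGamma^*}$ of (a congruence transform of) $\nabla F(\bGamma^*)$ onto that same tangent space, and the projection kernel is exactly the metric-orthogonal complement of $T_{\bGamma^*}\M_{m,n}^d$, a short computation with the first-order optimality inequality $\langle \nabla F(\bGamma^*), \bGamma - \bGamma^*\rangle \ge 0$ for all $\bGamma\in\bPi$ (which becomes an equality on the subspace $T_{\bGamma^*}\M_{m,n}^d$ by taking $\bGamma$ on both sides of $\bGamma^*$) yields $\D F(\bGamma^*)[\bU]=0$ for all $\bU\in T_{\bGamma^*}\M_{m,n}^d$, hence $\grad F(\bGamma^*)=0$. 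Conversely, if $\bar\bGamma\in\M_{m,n}^d$ is any stationary point, then $\D F(\bar\bGamma)[\bU]=0$ for every tangent $\bU$, i.e., $\nabla F(\bar\bGamma)$ annihilates the affine hull of $\bPi$; since $\bPi$ is convex and $F$ is convex on it, this first-order condition is sufficient for global optimality over $\bPi$, so $\bar\bGamma$ minimizes $F$ over $\bPi$, and by uniqueness $\bar\bGamma=\bGamma^*$. Finally, invoking the standard convergence theory for Riemannian first-order (and trust-region) methods \cite{absil2009optimization,boumal2020intromanifolds}, Algorithm~\ref{Riemannian_optimizer} converges to a stationary point of~\eqref{eq:manifold_problem}; since $\bGamma^*$ is the only such point, the limit is $\bGamma^*$, the global optimum of the constrained problem.

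The main obstacle I anticipate is the bookkeeping in the equivalence ``$\bGamma^*$ is the constrained optimum over $\bPi$ $\iff$ $\grad F(\bGamma^*)=0$ on $\M_{m,n}^d$.'' The subtle point is that the Riemannian gradient in Proposition~\ref{Riem_grad_hess} is not simply the ambient projection of $\nabla F$ but the projection of the affine-invariant ``preconditioned'' gradient $[\bGamma_{i,j}\{\nabla F(\bGamma_{i,j})\}_{\rm S}\bGamma_{i,j}]$; one must check that this congruence scaling by the (invertible, since $\bGamma^*\succ\bzero$) blocks does not change which tangent vectors are annihilated, equivalently that $\grad F(\bGamma^*)=0$ is equivalent to $\langle\nabla F(\bGamma^*),\bU\rangle_2=0$ for all $\bU\in T_{\bGamma^*}\M_{m,n}^d$ — which follows from the defining identity $\langle\grad F(\bGamma),\bU\rangle_{\bGamma}=\langle\nabla F(\bGamma),\bU\rangle_2$ of the Riemannian gradient, so this is really a non-issue once stated carefully. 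A secondary point requiring care is that the $\grad F=0$ condition is \emph{sufficient} for global optimality on $\bPi$, which uses convexity of both $F$ and $\bPi$ together with the fact that the feasible directions into $\bPi$ from an interior point $\bar\bGamma$ span exactly the tangent space $T_{\bar\bGamma}\M_{m,n}^d$; I would state this as a short lemma rather than belabor it.
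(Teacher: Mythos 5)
Your proof is correct and takes a genuinely different, arguably more complete, route than the paper's. The paper argues via the explicit Lagrangian/KKT machinery: it writes down the dual variables $\bLambda_i,\bTheta_j,\bPsi_{i,j}$, kills $\bPsi^*_{i,j}$ by complementary slackness using $\bGamma^*_{i,j}\succ\bzero$, obtains the stationarity condition $\bGamma^*_{i,j}(\nabla F(\bGamma^*_{i,j})+\bLambda^*_i+\bTheta^*_j)\bGamma^*_{i,j}=\bzero$, and then matches this against the explicit projection formula from Proposition~\ref{orthogonal_proj} to read off $\grad F(\bGamma^*)=\bzero$. You instead work abstractly: interior optimality over the convex polytope-like set $\bPi$ forces $\langle\nabla F(\bGamma^*),\bU\rangle_2=0$ on the tangent space, and the defining identity $\langle\grad F,\bU\rangle_{\bGamma}=\langle\nabla F,\bU\rangle_2$ immediately gives $\grad F(\bGamma^*)=0$, bypassing any need to unwind the congruence preconditioning or the projection system. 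This is cleaner, and — more importantly — you explicitly prove the \emph{converse}: that every Riemannian stationary point of~\eqref{eq:manifold_problem} must equal $\bGamma^*$, via convexity and uniqueness of the constrained minimizer. The paper's proof as written only establishes the forward implication (that $\bGamma^*$ is a stationary point), which by itself does not rule out the Riemannian method stalling at some other stationary point; your uniqueness argument fills that gap.

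One caveat, which applies equally to the paper's argument and to yours and which neither fully resolves: the citations to Riemannian convergence theory guarantee that accumulation points of the iterates are stationary, but since $\M_{m,n}^d$ is an open manifold the iterates could in principle drift toward the boundary (rank-deficient blocks) where no stationary point exists. A fully rigorous statement would add a hypothesis — e.g., that the sublevel set of $F$ at the initialization has compact closure inside $\M_{m,n}^d$, which strict convexity plus $\bGamma^*$ being interior makes plausible but does not automatically deliver. You may wish to note this as an assumption rather than leaving it implicit, but this is not a defect relative to the paper.
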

% \begin{proof}
% The main idea is to write out the Riemannian gradient at optimality, which we show coincides with the KKT optimality conditions of (\ref{mot_define}). See Appendix \ref{proof_sect} for more details. 
% \end{proof}
Theorem \ref{Prop_Riem_opt_optimal} guarantees the quality of the solution obtained by Riemannian optimization for a class of objective functions which includes the SPD matrix-valued OT problem with convex regularization.

% \subsection{Computational complexity}
{\bf Computational complexity.}
The complexity of each iteration of the Riemannian optimization algorithm is dominated by the computations of retraction, the Riemannian gradient, the Riemannian Hessian. These also make use of the orthogonal projection operation. All these operations cost $O(mnd^3)$. 
Since the number of parameters to be learned \revision{is} $N=mnd^2$ (size of the coupling block SPD matrix $\bGamma$), the above cost is almost linear in $N$.

%\section{Block SPD coupling matrix and optimal transport}
%\section{Barycenter learning and Gromov-Wasserstein discrepancy}
\section{Applications of {\manname}}
\label{sec:applications}

As discussed earlier, we employ the proposed {\manname} optimization approach to solve the block SPD matrix valued balanced OT problem (\ref{reformulated_mw}). We now present two other OT related applications of the {\manname}: learning Wasserstein barycenters and the Gromov-Wasserstein averaging of distance matrices. 

\subsection{Block SPD Wasserstein barycenter learning}
\label{block_spd_barycenter}
We consider the problem of computing the Wasserstein barycenter of a set of block SPD matrix-valued measures. 
%Let $\Delta_n(\sS_{++}^d)$ denote $n$ SPD matrices of size $d\times d$, i.e., $\Delta_n(\sS_{++}^d) := \{ \bP = [\bP_i]_{n \times 1} : \sum_i \bP_i = \bI \}$. 
Let $\Delta_n(\sS_{++}^d) \coloneqq \{ \bP = [\bP_i]_{n \times 1} : \bP_i\in\sS_{++}^d, \sum_i \bP_i = \bI \}$ denotes the space of $n\times 1$ block SPD marginals. 
%Let $\{\bP^\ell\}_{\ell=1}^K \in \Delta_{n_\ell}(\sS_{++}^d)$ be the given set of measures. 
Then, the Wasserstein barycenter $\bar{\bP}$ of a set $\bP^\ell \in \Delta_{n_\ell}(\sS_{++}^d)$ for all $\ell=\{1, \ldots, K \}$ is computed as follows:
%Given a set of SPD matrix-valued measures, $\bP^\ell \in \Delta_{n_\ell}(\sS_{++}^d)$ for $\ell = 1,...,K$, the block SPD Wasserstein barycenter learning problem is to identify
\begin{equation}
    % \gW\gB_\epsilon(\{ \bP^\ell \}, \bomega)
    \bar{\bP}= \argmin_{{\bP} \in \Delta_{n}(\sS_{++}^d)} \sum_{\ell =1}^K \omega_\ell {\rm MW}^2_\epsilon({\bP}, \bP^\ell) , \label{w_barycenter}
\end{equation}
where the given non-negative weights satisfy $\sum_\ell \omega_\ell =1$. 
It should be noted that we employ a regularized version of the proposed block SPD OT problem (\ref{reformulated_mw}) to ensure the differentiability of the objective function near boundary in~\eqref{w_barycenter}. The regularized block SPD OT problem is defined as 
%For the purpose of barycenter learning, we consider a regularized matrix-valued OT problem:
\begin{equation}\label{eqn:regMWOT}
    {\rm MW}^2_\epsilon(\bP, \bQ) \coloneqq \min_{\bGamma \in \M_{m,n}^d(\bP, \bQ)} \sum_{i,j} \Big( \trace( \bC_{i,j}\bGamma_{i,j})+ \epsilon \, \Omega(\bGamma_{i,j}) \Big), 
\end{equation}
where $\epsilon>0$ is the regularization parameter and  $\Omega(\cdot)$ is a strictly convex regularization  (e.g., entropic regularization) on the block SPD coupling matrices. 
%As long as the regularization is convex, Theorem \ref{Prop_Riem_opt_optimal} guarantees the optimal solution by Riemannian optimization. 

%\alertBM{$\gW\gB_\epsilon$ is not used anywhere, removing it}

To solve for $\bar{\bP}$ in (\ref{w_barycenter}), we consider Riemannian optimization on $\Delta_n(\sS_{++}^d)$, which has recently been studied in~\cite{mishra2019riemannian}. 
%To solve $\bar{\bP}$ in (\ref{w_barycenter}), we consider Riemannian optimization in the space of $\Delta_n(\sS_{++}^d)$, which has recently been shown to have a Riemannian manifold structure~\cite{mishra2019riemannian}. 
%that allows optimization of a general objective function. 
%To that end, we only need to compute the Euclidean gradient of the barycenter problem \eqref{w_barycenter} with respect to $\bar{\bP}$ (the Riemannian gradient and other optimization-related ingredients for $\Delta_n(\sS_{++}^d)$ follow from the work of \cite{mishra2019riemannian}. 
%The next proposition shows the Euclidean gradient is given by evaluating the projection of Euclidean gradient of ${\rm MW}_\epsilon$ at optimality.
The following result provides an expression for the Euclidean gradient of the objective function in problem~(\ref{w_barycenter}). 
% with the dual variables $\bLambda_i, \bTheta_j, \bPsi_{i,j}$. From the dual problem, we can verify the Euclidean gradient of the barycenter problem \eqref{w_barycenter} with respect to $\bar{\bX}_i$ is $\bLambda_i^*$ (which is unique up to a translation matrix of size $d \times d$), the optimal dual variable. As long as the optimal dual variable $\bLambda_i^*$ can be computed, we can update $\bar{\bX}_i$ via Riemannian gradient descent on the simplex of SPD manifold \cite{mishra2019riemannian}. 
\begin{proposition}
\label{dual_var_solution_prop}
The Euclidean gradient of \eqref{w_barycenter} with respect to ${\bP}_i$, for $i \in [n]$ is
\begin{equation*}
    \sum_{\ell=1}^K \omega_\ell \nabla_{{\bP}_i}{\rm MW}_\epsilon({\bP}, \bP^\ell) = - \sum_{\ell=1}^K \omega_\ell (\bLambda^\ell_i)^*,
\end{equation*}
where $(\bLambda^\ell_i)^*$ is given by evaluating the orthogonal projection ${\rm P}_{(\bGamma^\ell)^*}(\nabla_{(\bGamma^\ell)^*} {\rm MW}_\epsilon)$, where $\nabla_{(\bGamma^\ell_{i,j})^*} {\rm MW}_\epsilon = \bC_{i,j}^\ell + \epsilon \nabla \Omega((\bGamma^\ell_{i,j})^*)$ and $(\bGamma^\ell)^*$ is the optimal coupling for $\bP^\ell$. That is, $(\bLambda^\ell_i)^*$ is the auxiliary variable obtained during the solving of the system of matrix linear equations in Proposition \ref{orthogonal_proj}. %\alertPJ{Some notational issues - will discuss it over call.}
% The optimal dual variables $\bLambda_i^*, \bTheta_j^*$, $\forall i,j$ of the regularized block matrix-valued OT problem $min_{\bGamma \in \M_{m \times n}^d(\bP, \bQ)} \gW_\epsilon(\bP, \bQ)$ is given by evaluating the orthogonal projection ${\rm P}_{\bGamma^*}(\nabla_{\bGamma^*} \gW_\epsilon)$, where $\nabla_{\bGamma^*_{i,j}} \gW_\epsilon = \bC_{i,j} + \epsilon \nabla \Omega(\bGamma_{i,j}^*)$. The $\bLambda_i^*, \bTheta_j^*$ are given by solving the system of matrix linear equations in Proposition \ref{orthogonal_proj}.
\end{proposition}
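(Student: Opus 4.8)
The plan is to regard ${\rm MW}^2_\epsilon(\,\cdot\,,\bP^\ell)$ as the optimal value of a parametrized strictly convex program, differentiate it by an envelope (Danskin-type) argument, and then identify the relevant Lagrange multiplier with the auxiliary variable produced by the orthogonal projection of Proposition~\ref{orthogonal_proj}. By linearity of the objective of \eqref{w_barycenter} in its summands it suffices to establish $\nabla_{\bP_i}{\rm MW}^2_\epsilon(\bP,\bP^\ell)=-(\bLambda^\ell_i)^*$ for each fixed $\ell$ and take the $\omega_\ell$-weighted sum. Fix $\ell$ and set $f^\ell(\bGamma)\coloneqq\sum_{i,j}\big(\trace(\bC^\ell_{i,j}\bGamma_{i,j})+\epsilon\,\Omega(\bGamma_{i,j})\big)$, so that ${\rm MW}^2_\epsilon(\bP,\bP^\ell)=\min f^\ell(\bGamma)$ subject to $\bGamma_{i,j}\in\sS_{++}^d$, $\sum_j\bGamma_{i,j}=\bP_i$ and $\sum_i\bGamma_{i,j}=\bP^\ell_j$ (feasibility guaranteed by Assumption~\ref{assumption:existence}). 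Strict convexity of $\Omega$ makes $f^\ell$ strictly convex on the convex feasible set, so the minimizer $(\bGamma^\ell)^*$ is unique; by hypothesis it lies in the interior of the positive-semidefinite cone, hence the conic constraint is inactive and $(\bGamma^\ell)^*$ is characterized by the KKT conditions of the equality-constrained problem. With symmetric multipliers $\bLambda^\ell_i$ for the row constraints and $\bTheta^\ell_j$ for the column constraints, stationarity reads $\bC^\ell_{i,j}+\epsilon\,\nabla\Omega((\bGamma^\ell_{i,j})^*)=\bLambda^\ell_i+\bTheta^\ell_j$; that is, the Euclidean gradient of $f^\ell$ at the optimum lies in the normal space $\{[\bLambda_i+\bTheta_j]:\bLambda_i,\bTheta_j\in\sS^d\}$ of the constraint affine subspace.

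Next I would differentiate the value function. The KKT map is smooth, and once the single linear dependence among the constraints (forced by $\sum_i\bP_i=\sum_j\bP^\ell_j=\bI$) is removed its Jacobian is nonsingular by strict convexity, so the implicit function theorem makes $(\bGamma^\ell)^*$ and the multipliers depend smoothly on $\bP$ and ${\rm MW}^2_\epsilon(\,\cdot\,,\bP^\ell)$ be $C^1$. Perturbing $\bP_i\mapsto\bP_i+t\bE$ and differentiating ${\rm MW}^2_\epsilon=f^\ell((\bGamma^\ell)^*(\bP))$ at $t=0$, the chain rule gives $\langle\nabla f^\ell((\bGamma^\ell)^*),\dot{\bGamma}\rangle$ in the Euclidean inner product, where $\dot{\bGamma}$ solves the linearized constraints. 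Writing $\dot{\bGamma}=\bV+\bW$ with $\bV\in T_{(\bGamma^\ell)^*}\M$ and $\bW$ a fixed particular solution (e.g.\ $\bW_{i',j}=\delta_{i'i}\bE/n$), the tangential part $\bV$ contributes nothing because $\nabla f^\ell((\bGamma^\ell)^*)$ is normal to $T_{(\bGamma^\ell)^*}\M$ by optimality, and the stationarity relation collapses the remaining term to $\trace(\bLambda^\ell_i\bE)$ once the splitting of the (non-unique) multipliers is fixed. Hence $\nabla_{\bP_i}{\rm MW}^2_\epsilon(\bP,\bP^\ell)$ equals the row-$i$ multiplier, up to the sign convention of Proposition~\ref{orthogonal_proj}.

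It remains to match this multiplier with $(\bLambda^\ell_i)^*$. Since $(\bGamma^\ell)^*$ minimizes $f^\ell$ over $\M$, its Riemannian gradient there vanishes, so by Propositions~\ref{orthogonal_proj} and~\ref{Riem_grad_hess} the orthogonal projection ${\rm P}_{(\bGamma^\ell)^*}$ of the (suitably scaled) Euclidean gradient $[\bC^\ell_{i,j}+\epsilon\,\nabla\Omega((\bGamma^\ell_{i,j})^*)]$ is zero; substituting this into the defining matrix linear system of Proposition~\ref{orthogonal_proj} and comparing with the stationarity relation identifies the auxiliary variable associated with row $i$ as exactly $(\bLambda^\ell_i)^*$, up to the same harmless shift ambiguity. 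Combining with the previous step gives $\nabla_{\bP_i}{\rm MW}^2_\epsilon(\bP,\bP^\ell)=-(\bLambda^\ell_i)^*$, and summing against $\omega_\ell$ yields the claimed formula.

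The main obstacle is the constraint redundancy: because $\sum_i\bP_i=\sum_j\bP^\ell_j=\bI$, linear independence of the row and column constraints fails, so the multipliers---equivalently the auxiliary variables of Proposition~\ref{orthogonal_proj}---are only determined up to a shift $\bLambda_i\mapsto\bLambda_i+\bZ$, $\bTheta_j\mapsto\bTheta_j-\bZ$, and one must verify that the value-function gradient is insensitive to this freedom and that the projection returns the correct representative. A secondary point, and the reason for the $\epsilon\,\Omega$ regularization, is ensuring that the optimal coupling stays in the SPD interior so that the value function is differentiable near the boundary of $\Delta_n(\sS_{++}^d)$; granting that, the remaining computations are routine applications of the chain rule and the linear system of Proposition~\ref{orthogonal_proj}.
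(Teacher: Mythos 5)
Your proposal is correct and follows essentially the same route as the paper: use Lagrangian sensitivity (envelope theorem) to show the value function's gradient with respect to $\bP_i$ equals the negative of the row-$i$ Lagrange multiplier, then invoke the vanishing Riemannian gradient at the optimum (as in Theorem~\ref{Prop_Riem_opt_optimal}) to identify that multiplier with the auxiliary variable $(\bLambda^\ell_i)^*$ produced by the orthogonal projection of Proposition~\ref{orthogonal_proj}. Your version is somewhat more detailed than the paper's terse argument, and usefully flags the gauge ambiguity $\bLambda_i\mapsto\bLambda_i+\bZ$, $\bTheta_j\mapsto\bTheta_j-\bZ$ that the paper leaves implicit; the only small imprecision is that the stated particular solution $\bW_{i',j}=\delta_{i'i}\bE/n$ does not satisfy the linearized column constraints, but since the extra term it produces is pure gauge this does not affect the conclusion.
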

% This proposition is a direct result from Theorem \ref{Prop_Riem_opt_optimal}, based on which the Euclidean gradient of problem \eqref{w_barycenter} with respect to the barycenter $\bar{\bX}_i$ is given by $\sum_\ell \omega_\ell (\bLambda_i^\ell)^*$.
The complete algorithm for computing the barycenter in \eqref{w_barycenter} is outlined in Algorithm \ref{wb_algorithm} (Appendix \ref{geometry_spd_simplex_appendix}). 

% \alertPJ{Is this algorithm required or can we simply refer to~\cite{mishra2019riemannian}.} 
% \alertAH{Yes it is required. It is for the purpose of general audience.} \alertPJ{If its general audience, we should put it in supplementary unless we need to fill the main paper - as all the steps can be mapped to the general Algorithm 1.}

% \alertBM{Relook at the remark}
% \begin{remark}
% \label{fast_compute_barycenter_rmk}
% When there are only two input marginals $\sum_{i=1}^m \bX_i \delta_{x_i}$ and $\sum_{j=1}^n \bY_j \delta_{y_j}$, and the distance matrix is scalar-valued, i.e., $\bC_{i,j} = d(x_i, y_j) \bI$ for some ground cost metric $d(\cdot, \cdot)$ on $\gX = \gY$, the barycenter computation can be simplified by extending the displacement interpolation \cite{mccann1997convexity} to matrix-valued marginals. See \cite[Section~2.2]{peyre2016quantum} for more information. Note that due to the balanced nature of our proposed formulation, we do not need to adjust the couplings after matching as required in \cite{peyre2016quantum}.
% \end{remark}

%\subsection{Matrix-valued Gromov-Wasserstein optimal transport and distance averaging}
\subsection{Block SPD Gromov-Wasserstein discrepancy}
\label{sec:proposedGW}
The Gromov-Wasserstein (GW) distance~\cite{memoli2011gromov} generalizes the optimal transport to the case where the measures are supported on possibly different metric spaces $\gX$ and $\gY$. 
Let $\bD^x\in\sR^{m\times m}$ and $\bD^y\in\sR^{n\times n}$ represent the similarity (or distance) between elements in metric spaces $\gX$ and $\gY$ respectively. Let $\bp\in \Sigma_m$ and $\bq \in \Sigma_n$ be the marginals corresponding to the elements in $\gX$ and $\gY$, respectively. 
%Let two set of metric-measure pairs be represented by $(\bD^x, \bp)$ and $(\bD^y, \bq)$. Here, $\bp\in \Sigma_m$ and $\bq \in \Sigma_n$ are the measures and  where $\bD^x$ and $\bD^y$ represent distance (or similarity) matrices 
%Suppose we are given two sets of pairs $(\bD^x, \bp), (\bD^y, \bq)$, where $\bD^x, \bD^y$ represent either similarity or distance matrices and $\bp \in \Sigma_m, \bq \in \Sigma_n$ are the given marginals. 
Then, the GW discrepancy between the two distance-marginal pairs $(\bD^x, \bp)$ and $(\bD^y, \bq)$ is defined as 
\begin{equation*}
\label{eqn:GW}
{\rm GW} \big( (\bD^x, \bp), ( \bD^y, \bq ) \big)  \coloneqq \min_{\bgamma \in \Pi(\bp, \bq)} \sum_{i,i',j,j'} \L(D^x_{i,i'}, D^y_{j,j'}) \gamma_{i,j} \gamma_{i',j'},
\end{equation*}
where $D_{k,l}$ denotes the $(k,l)$-th element in the matrix $\bD$ and $\L$ is a loss between the distance pairs. Common choices of $\L$ include the $L_2$ distance and the KL divergence.

% \paragraph{GW for cross-domain quantum optimal transport.} 
We now generalize the GW framework to our setting where the marginals are SPD matrix-valued measures.  
%(i.e., tensor-valued by the terminology in \cite{peyre2016quantum}). 
% This can be used for matching tensor fields supported on different domains. 
Let $(\bD^x, \bP)$ and $(\bD^y, \bQ)$ be two distance-marginal pairs, where the Dirac measures are given by $\sum_i \bP_i \delta_{x_i}$, $\sum_j \bQ_j \delta_{y_j}$ respectively, for $\{x_i\}_{i \in [m]} \subset \gX, \{y_j\}_{j \in [n]} \subset \gY$. The marginals are tensor-valued with $\bP \in \Delta_m(\sS_{++}^d)$, $\bQ \in \Delta_n(\sS_{++}^d)$. 
%Different to setting in \cite{peyre2016quantum}, we consider the general case where $\gX \neq \gY$ while the SPD-valued marginals are assumed to have the same dimension $d$. 
%
% We now generalize the GW distance to different metric spaces having block SPD measures. 
% $\bPsi\coloneqq\{[\bPsi_i]_{m\times 1}:\bPsi_i \in \sS_{+}^d,\sum_i \bPsi_i=\bI\}$ and $\bPhi\coloneqq\{[\bPhi_j]_{n\times 1}:\bPhi_i \in \sS_{+}^d,\sum_j \bPhi_j=\bI\}$. 
%Consider two dirac measures $\bPsi = \sum_i \bPsi_i \delta_{x_i}$ and $\bPhi = \sum_j \bPhi_j \delta_{y_j}$ where $\{x_i\}_{i \in [m]} \in \gX, \{y_j\}_{j \in [n]} \in \gY$ and the measures are tensor-valued, i.e., $\bPsi_i, \bPhi_j \in \sS_{++}^d$. Different to \cite{peyre2016quantum}, we consider the general case where $\gX \neq \gY$ and thus the classic matrix-valued OT cannot apply. 
% In \cite{peyre2016quantum}, the domains are assumed to be identical, i.e., $\gX = \gY$ and thus the classic matrix-valued OT can be applied. Here we consider the general case where $\gX \neq \gY$, with $D^x_{i,i'} = d_\gX(x_i, x_{i'}), D^y_{j, j'} = d_{\gY}(y_j, y_{j'})$ for $i,i'\in [m], j,j' \in [n]$ computed from the ground distance on $\gX, \gY$. 
We define the SPD generalized GW discrepancy as
\begin{equation}\label{scalar_gw_ot}
    {\rm MGW} \big( (\bD^x, \bP), ( \bD^y, \bQ ) \big)  \coloneqq \min\limits_{\bGamma \in \M^d_{m \times n}} \sum\limits_{{i,i',} {j, j'}} \L\big( D^x_{i,i'},D^y_{j, j'}  \big) \trace( \bGamma_{i,j} \bGamma_{i',j'}),
\end{equation}
% where $\bD^x, \bD^y$ are computed from the ground distance on $\gX, \gY$, respectively.
% Notice in \eqref{scalar_gw_ot}, the cost can be extended to be matrix-valued, i.e., $\bC_{i,i',j,j'} \in \sS_{+}^d$ between pair $i,i'$ and $j,j'$, which gives a transport cost $\sum_{i,i',j',j'}\trace(\bC_{i,i',j,j'} \bGamma_{i,j} \bC_{i,i',j,j'} \bGamma_{i',j'})$. 
where we use Riemannian optimization (Algorithm \ref{Riemannian_optimizer}) to solve problem~\eqref{scalar_gw_ot}.

% is non-convex and quadratic in $\bGamma$ and it can be solved via Riemannian optimization (Algorithm \ref{Riemannian_optimizer}).

% We employ the proposed Algorithm~\ref{Riemannian_optimizer} to solve~\eqref{scalar_gw_ot}. 

%\alertPJ{It should be noted that the proposed GW setting assumes both the metric spaces $\gX$ and $\gY$ to be $d$-dimensional. However, if the spaces are not aligned (alvares-reference), the elements cannot be directly compared across spaces. Question - can we work with different dimensional SPDs? Why are we using Psi and Phi instead of P and Q?}
%\alertAH{Please note that $\gX, \gY$ is not the space of SPD. It is any metric-measured space. Only the marginals are SPD-valued. So the $d$ dimension has nothing to do with $\gX, \gY$.}

% Similarly, we can consider a generalized matrix-valued GW OT problem. Suppose the cost between the distances $D_{i,i'}^x, D^y_{j,j'}$ is matrix-valued, i.e., $\bC_{i,i',j,j'} \in \sS_{+}^d$, $\forall i,i' \in [m], j,j' \in [n]$ and we are given some block SPD-valued marginals, $\bP, \bQ$ on two domains. Then the generalized GW discrepancy is thus given by 
% \begin{equation}
%     \min_{\bGamma \in \M^d_{m \times m}(\bP, \bQ)} \sum_{i,i',j,j'} \trace(\bC_{i,i',j,j'} \bGamma_{i,j}  \bC_{i,i',j,j'} \bGamma_{i',j'}). \label{tensor_GW_problem}
% \end{equation}
% 

{\bf Gromov-Wasserstein averaging of distance matrices.} The GW formulation with scalar-valued probability measures has been used for averaging distance matrices~\cite{peyre2016gromov}. Building on \eqref{scalar_gw_ot}, we consider the problem of averaging distance matrices where the marginals are SPD-valued.  
% corresponding GW formulation to tensor-valued marginals for quantum distance averaging. 
Let $\{(\bD^\ell, \bP^\ell)\}_{\ell = 1}^K$ with $\bP^\ell \in \Delta_{n_\ell}(\sS_{++}^d)$, be a set of distance-marginal pairs on $K$ incomparable domains.
%Given a set of distance-marginal pair $\{(\bD^\ell, \bP^\ell)\}_{\ell = 1}^K$ where $\bP^\ell \in \Delta_{n_\ell}(\sS_{++}^d)$ on $K$ incomparable domains.
Suppose the barycenter marginals $\bar{\bP} \in \Delta_{n}(\sS_{++}^d)$ are given, the goal is to find the average distance matrix $\bar{\bD}$ by solving 
\begin{equation}\label{gw_barycenter_problem}
\begin{array}{ll}
    \bar{\bD} = \argmin\limits_{\bD \in \sS^n : D_{i,j} \geq 0} \sum_{\ell=1}^K \omega_\ell \, {\rm MGW} \big( (\bD, \bar{\bP}), ( \bD^\ell, \bP^\ell ) \big),
\end{array}    
\end{equation}
where the given weights satisfy $\sum_\ell \omega_\ell = 1$.  Problem \eqref{gw_barycenter_problem} can be solved via a block coordinate descent method, that iteratively updates the couplings $\{ \bGamma^\ell \}_{\ell=1}^K$ and the distance matrix $\bar{\bD}$. The update of the coupling is performed via Algorithm \ref{Riemannian_optimizer}. For the update of the distance matrix, we show when the loss $\L$ is decomposable, including the case of $L_2$ distance or the KL divergence, the optimal $\bar{\bD}$ admits a closed-form solution. This is a generalization of the result \cite[Proposition 3]{peyre2016gromov} to SPD-valued marginals.
% The update of barycenter concerns the optimization problem 
% \begin{equation}
%     \min_{ \bD \in \sS^n : \bD_{i,j} \geq 0} \sum_{\ell =1}^K \omega_\ell \, \Big( \sum_{i,i',j,j'} \L(\bar{D}_{i,i'}, D^\ell_{j,j'}) \trace(\bGamma^{\ell}_{i,j} \bGamma^{\ell}_{i',j'}) \Big) \label{gw_barycenter_problem}
% \end{equation}
% which can be solved by gradient descent for reparameterized $\bD$. 
\begin{proposition}
\label{average_distance_update}
Suppose the loss $\L$ can be decomposed as $\L(a, b) = f_1(a) + f_2(b) - h_1(a) h_2(b)$ with $f_1'/h_1'$ invertible, then \eqref{gw_barycenter_problem} has a closed form solution given by $\bar{D}_{i,i'} = \big(\frac{f_1'}{h_1'} \big)^{-1} \big(h_{i,i'} \big)$ with
\begin{equation*}
   h_{i,i'} = \Big( \frac{\sum_{\ell=1}^K \omega_\ell \trace \big(\sum_j \bGamma_{i,j}^\ell \sum_{j'} h_2( D^\ell_{j,j'}) \bGamma^\ell_{i', j'} \big) }{\trace(\bar{\bP}_i \bar{\bP}_{i'})} \Big).
\end{equation*}
\end{proposition}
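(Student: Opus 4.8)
### Proof proposal for Proposition \ref{average_distance_update}

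The plan is to mimic the scalar-valued argument of \cite[Proposition 3]{peyre2016gromov}: for fixed couplings $\{\bGamma^\ell\}_{\ell=1}^K$, expand the objective of \eqref{gw_barycenter_problem} using the decomposition $\L(a,b)=f_1(a)+f_2(b)-h_1(a)h_2(b)$, isolate the dependence on each scalar entry $D_{i,i'}$ of $\bD$, set the partial derivative to zero, and invert $f_1'/h_1'$. The only genuinely new feature relative to the scalar case is that the transport ``masses'' are now SPD blocks coupled through $\trace(\bGamma_{i,j}\bGamma_{i',j'})$ rather than products $\gamma_{i,j}\gamma_{i',j'}$, so I need to track the trace bookkeeping carefully.

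First I would substitute the decomposition into $\mathrm{MGW}\big((\bD,\bar\bP),(\bD^\ell,\bP^\ell)\big)$, writing the inner sum as
\begin{equation*}
  \sum_{i,i',j,j'} \big( f_1(D_{i,i'}) + f_2(D^\ell_{j,j'}) - h_1(D_{i,i'}) h_2(D^\ell_{j,j'}) \big)\, \trace(\bGamma^\ell_{i,j}\bGamma^\ell_{i',j'}).
\end{equation*}
The $f_2$ term does not depend on $\bD$ and can be dropped. For the $f_1$ term, I use the marginal constraints of $\M^d_{m,n}(\bar\bP,\bP^\ell)$: summing over $j$ gives $\sum_j \bGamma^\ell_{i,j}=\bar\bP_i$ and over $j'$ gives $\sum_{j'}\bGamma^\ell_{i',j'}=\bar\bP_{i'}$, and since $\trace$ is bilinear, $\sum_{j,j'}\trace(\bGamma^\ell_{i,j}\bGamma^\ell_{i',j'}) = \trace\big((\sum_j\bGamma^\ell_{i,j})(\sum_{j'}\bGamma^\ell_{i',j'})\big) = \trace(\bar\bP_i\bar\bP_{i'})$, which is independent of $\ell$. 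Hence the $f_1$ contribution to $\sum_\ell \omega_\ell(\cdot)$ collapses to $\sum_{i,i'} f_1(D_{i,i'})\,\trace(\bar\bP_i\bar\bP_{i'})$. For the cross term, I would fold the sums over $j,j'$ inward to define, for each $(i,i')$,
\begin{equation*}
  g_{i,i'} \coloneqq \sum_{\ell=1}^K \omega_\ell\, \trace\!\Big(\textstyle\sum_j \bGamma^\ell_{i,j}\sum_{j'} h_2(D^\ell_{j,j'})\,\bGamma^\ell_{i',j'}\Big),
\end{equation*}
again using bilinearity of the trace to pull the scalars $h_2(D^\ell_{j,j'})$ and the sums through. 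The objective, up to the constant $f_2$-part, becomes $\sum_{i,i'}\big( f_1(D_{i,i'})\trace(\bar\bP_i\bar\bP_{i'}) - h_1(D_{i,i'})\, g_{i,i'}\big)$, which is separable across the entries $D_{i,i'}$.

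Then I would optimize entrywise: differentiating the $(i,i')$ summand in $D_{i,i'}$ and setting it to zero gives $f_1'(D_{i,i'})\trace(\bar\bP_i\bar\bP_{i'}) = h_1'(D_{i,i'})\, g_{i,i'}$, i.e. $(f_1'/h_1')(D_{i,i'}) = g_{i,i'}/\trace(\bar\bP_i\bar\bP_{i'}) = h_{i,i'}$ in the notation of the statement. Invertibility of $f_1'/h_1'$ then yields $\bar D_{i,i'} = (f_1'/h_1')^{-1}(h_{i,i'})$, and symmetry $\bar\bD\in\sS^n$ follows from $\trace(\bar\bP_i\bar\bP_{i'})=\trace(\bar\bP_{i'}\bar\bP_i)$ and the symmetry of $g_{i,i'}$ under swapping $(i,j)\leftrightarrow(i',j')$. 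One should also note $\trace(\bar\bP_i\bar\bP_{i'})>0$ since $\bar\bP_i,\bar\bP_{i'}\in\sS_{++}^d$, so the division is legitimate.

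The main obstacle I anticipate is not the algebra but the nonnegativity constraint $D_{i,i'}\ge 0$ in \eqref{gw_barycenter_problem}: the argument above produces an unconstrained stationary point, and to claim it is the actual minimizer I need that the closed-form value already lies in the feasible region (or that the objective is such that the constrained optimum coincides with it). As in \cite{peyre2016gromov}, this holds for the standard instantiations — $\L$ the squared $L_2$ loss ($f_1(a)=a^2$, $h_1(a)=a$, $h_2(b)=b$, giving $\bar D_{i,i'}=h_{i,i'}\ge 0$ whenever the $D^\ell$ are nonnegative) and the KL loss — so I would either restrict attention to those cases or add the hypothesis that the resulting $h_{i,i'}$ lies in the range of $f_1'/h_1'$ on $[0,\infty)$. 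A secondary point worth a line is convexity: for the $L_2$ and KL instantiations the entrywise summand is convex in $D_{i,i'}$, so the vanishing-derivative condition is sufficient for a global minimum of the inner problem; otherwise one only gets a stationary point, which is the same caveat present in the scalar result being generalized.
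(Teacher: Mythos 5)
Your proof follows the same route as the paper's: substitute the decomposition of $\L$, collapse the $f_1$- and cross-terms via the marginal constraints $\sum_j\bGamma^\ell_{i,j}=\bar\bP_i$ and $\sum_{j'}\bGamma^\ell_{i',j'}=\bar\bP_{i'}$, separate the objective across entries $D_{i,i'}$, and solve the entrywise first-order conditions $f_1'(D_{i,i'})\trace(\bar\bP_i\bar\bP_{i'})=h_1'(D_{i,i'})\,g_{i,i'}$. Your closing remarks on the nonnegativity constraint $D_{i,i'}\ge 0$ and on convexity as a sufficiency caveat are accurate and more careful than the paper, which silently treats the unconstrained stationary point as the minimizer.
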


\section{Experiments}
\label{Experiment_sect}
In this section, we show the utility of the proposed framework in a number of applications. For empirical comparisons, we refer to our approaches, block SPD OT~\eqref{reformulated_mw}, the corresponding Wasserstein barycenter~\eqref{w_barycenter}, and block SPD Gromov-Wasserstein OT~\eqref{scalar_gw_ot}~\&~\eqref{gw_barycenter_problem}, collectively as RMOT (Riemannian optimized Matrix Optimal Transport). 
For all the experiments, we use the Riemannian steepest descent method using the Manopt toolbox \cite{boumal2014manopt} for implementing Algorithm~\ref{Riemannian_optimizer}. The codes are available at \url{https://github.com/andyjm3/BlockSPDOT}.

\subsection{Domain adaptation}
\label{domain_adapt_sect}
We apply our OT framework to the application of unsupervised domain adaptation where the goal is to align the distribution of the source with the target for subsequent tasks. 
% We first discuss below the \textit{barycentric projection} for the proposed block SPD OT formulation (\ref{reformulated_mw}). 
%Here we extend this formulation to matrix-valued samples.

Suppose we are given the source $\bp \in \Sigma_m$ and target marginals $\bq \in \Sigma_n$, along with samples $\{ \bX_i\}_{i=1}^m, \{ \bY_j\}_{j = 1}^n$ from the source and target distributions. The samples are matrix-valued, i.e., $\bX_i ,\bY_j \in \sR^{d \times s}$. We define the cost as $\bC_{i,j} = (\bX_i - \bY_j)(\bX_i -\bY_j)^\top$. It should be noted that $\trace{(\bC_{i,j})}=\|\bX_i - \bY_j\|_{\rm F}^2$ is the cost function under the $2$-Wasserstein OT setting~(\ref{W22}). 
%which can be viewed as a generalized matrix $L_2$ distance.

For domain adaptation, we first learn an optimal coupling between the source and target samples by solving the proposed OT problem \eqref{reformulated_mw} with marginals $\bP, \bQ$ constructed as $\bP \coloneqq \{[\bP_i]_{m\times 1}:\bP_i = p_i \bI\}$ and $\bQ\coloneqq\{[\bQ_j]_{n\times 1}:\bQ_j = \revision{q_j} \bI\}$. Finally, the source samples are projected to the target domain via barycentric projection. Once the optimal couplings $[\bGamma_{i,j}^*]_{m\times n}$, the barycentric projection of a source sample $\bX_i$ is computed as
\begin{equation}\label{eq:barycenter_SPDOT}
% \begin{array}{rl}
    \hat{\bX}_i = \argmin\limits_{\bX_i \in \sR^{d\times s}} \sum_{i,j} \trace ( (\bX_i - \bY_j)(\bX_i - \bY_j)^\top \bGamma_{i,j}^* ) = \bP_i^{-1}(\sum_j \bGamma_{i,j}^* \bY_j).
% \end{array}
\end{equation}
The above approach also works for structured samples. For instance, when the samples are SPD, i.e., $\bX_i, \bY_j \in \sS_{++}^d$, the projected source sample $\hat{\bX}_i$ is now the solution to the matrix Lyapunov equation: $\{ \bP_i \hat{\bX}_i \}_{\rm S} = \{\sum_j \bGamma^*_{i,j} \bY_j\}_{\rm S}$. Here, $\{ \bA\}_{\rm S} = (\bA + \bA^\top)/2$.

For the scalar-valued OT case, discussed in Section \ref{ot_preliminary_sect}, the barycentric projection of a source sample $\bX_i$ is computed as
\begin{equation}\label{eq:barycenter_scalar}
% \begin{array}{rl}
    \hat{\bX}_i = \argmin\limits_{\bX_i \in \sR^{d\times s}} \sum_{i,j} \| \bX_i - \bY_j\|_F^2 \bgamma_{i,j}^* = p_i^{-1}(\sum_j \gamma_{i,j}^* \bY_j),
% \end{array}
\end{equation}
where $\bgamma^* = [\gamma^*_{i,j}]$ is the optimal coupling matrix of size $m\times n$ for the scalar-valued OT problem.

Contrasting the barycentric projection operations (\ref{eq:barycenter_SPDOT}) with (\ref{eq:barycenter_scalar}), we observe that (\ref{eq:barycenter_SPDOT}) allows to capture feature-specific correlations more appropriately. The benefit of the matrix-valued OT modeling over the scalar-valued OT modeling is reflected in the experiments below.

%\paragraph{Image set classification.} 
{\bf Experimental setup.}
We employ domain adaptation to classify the test sets (target) of multiclass image  datasets, where the training sets (source) have a different class distribution than the test sets. 
% Supervision is available only for training set.  
%We test the above formulation for image set classification . 
    Suppose we are given a training set $\{ \bX_i\}_{i=1}^m$ and a test set $\{  \bY_j\}_{j=1}^n$ where $\bX_i, \bY_j \in \sR^{d \times s}$ are $s$ (normalized) image samples of the same class in $d$ dimension for each image set $i,j$. Instead of constructing the cost directly on the input space, which are not permutation-invariant, we first compute the sample covariances $\bS_{x_i} = \bX_i \bX_i^\top/s$ and $\bS_{y_j} = \bY_j \bY_j^\top/s$, $\forall i,j$. Now the cost between $i, j$ is given by $\bC_{i,j} = (\bS_{x_i} - \bS_{y_j})(\bS_{x_i} - \bS_{y_j})^\top$. Once the block SPD matrix coupling is learnt, the $\bS_{x_i}$ covarinaces are projected using the barycerntric projection to obtain $\hat{\bS}_{x_i}, i \in [m]$. This is followed by nearest neighbour classification of $j$ based on the Frobenius distance $\| \hat{\bS}_{x_i} - \bS_{y_j} \|_{\rm F} \forall i,j$.

We compare the proposed {\algname} (\ref{reformulated_mw}) with the following baselines: (i) sOT: the $2$-Wasserstein OT~(\ref{W22}) with the cost $c_{i,j} = \trace(\bC_{i,j})=\|\bS_{x_i} - \bS_{y_j} \|_{\rm F}^2$ \cite{courty2016optimal}, and (ii) SPDOT: the $2$-Wasserstein OT (\ref{W22}) with the cost as the squared Riemannian geodesic distance between the SPD matrices $\bS_{x_i}$ and $\bS_{y_j}$ \cite{yair2019parallel}. 

\begin{figure}%[!t]
\label{domain_adaptation_figure}
%\captionsetup{justification=raggedright,singlelinecheck=false}
    \centering
    \subfloat[MNIST \label{mnist_fig}]{\includegraphics[width = 0.33\textwidth, height = 0.25\textwidth]{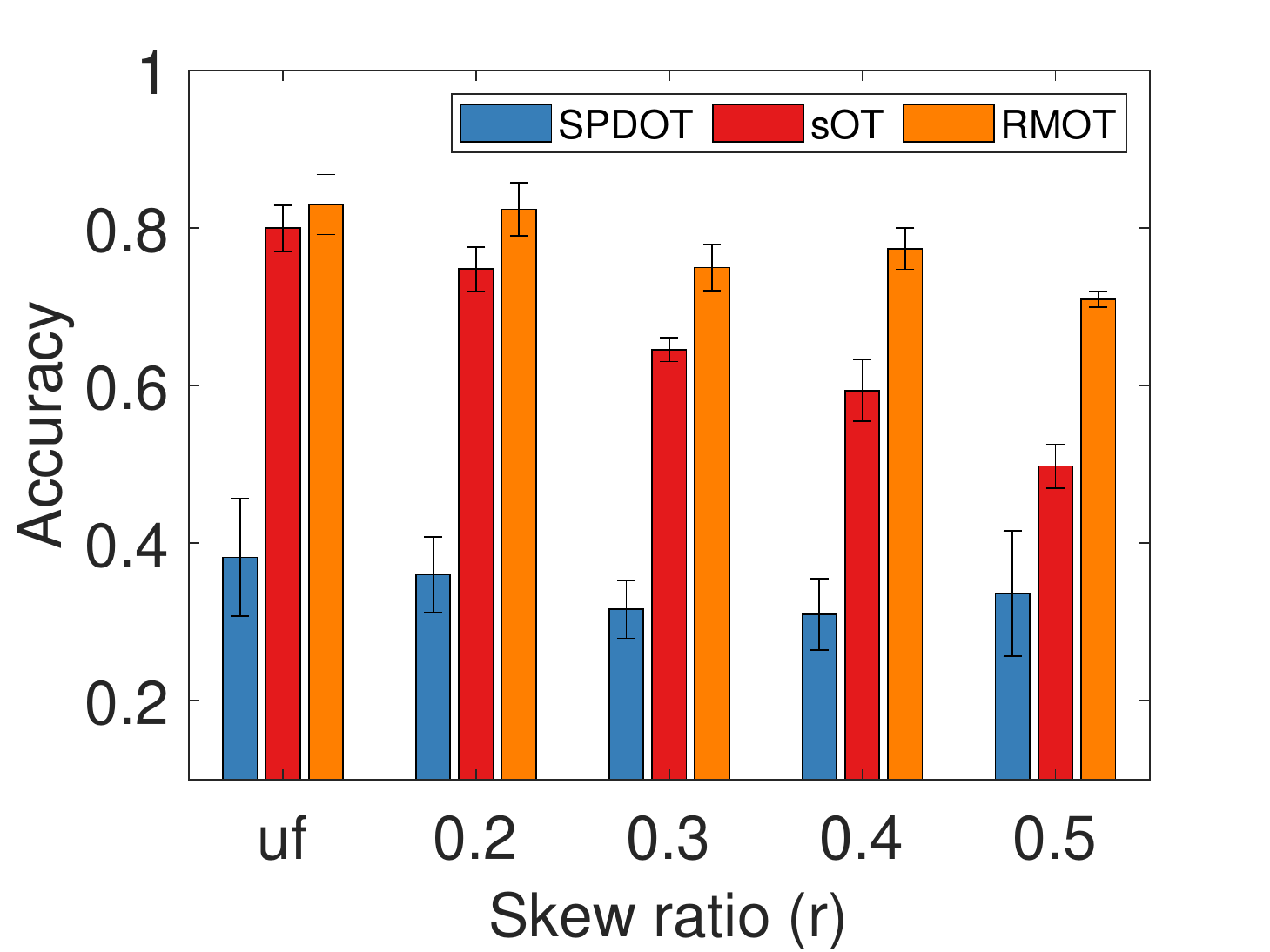}} 
    \subfloat[Fashion MNIST \label{fmnist_fig}]{\includegraphics[width = 0.33\textwidth, height = 0.25\textwidth]{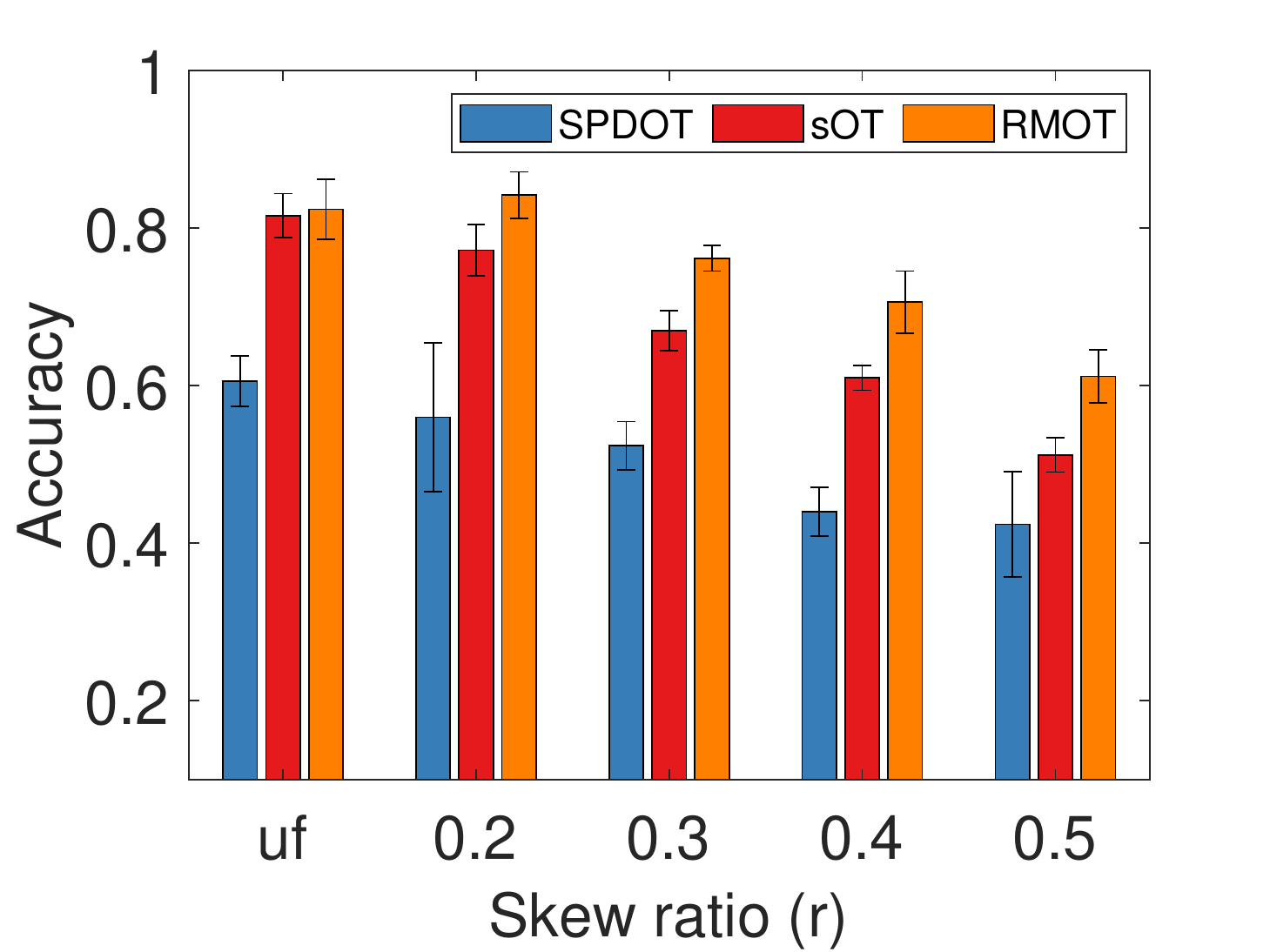}} 
    \subfloat[Letters \label{letters_fig}]{\includegraphics[width = 0.33\textwidth, height = 0.25\textwidth]{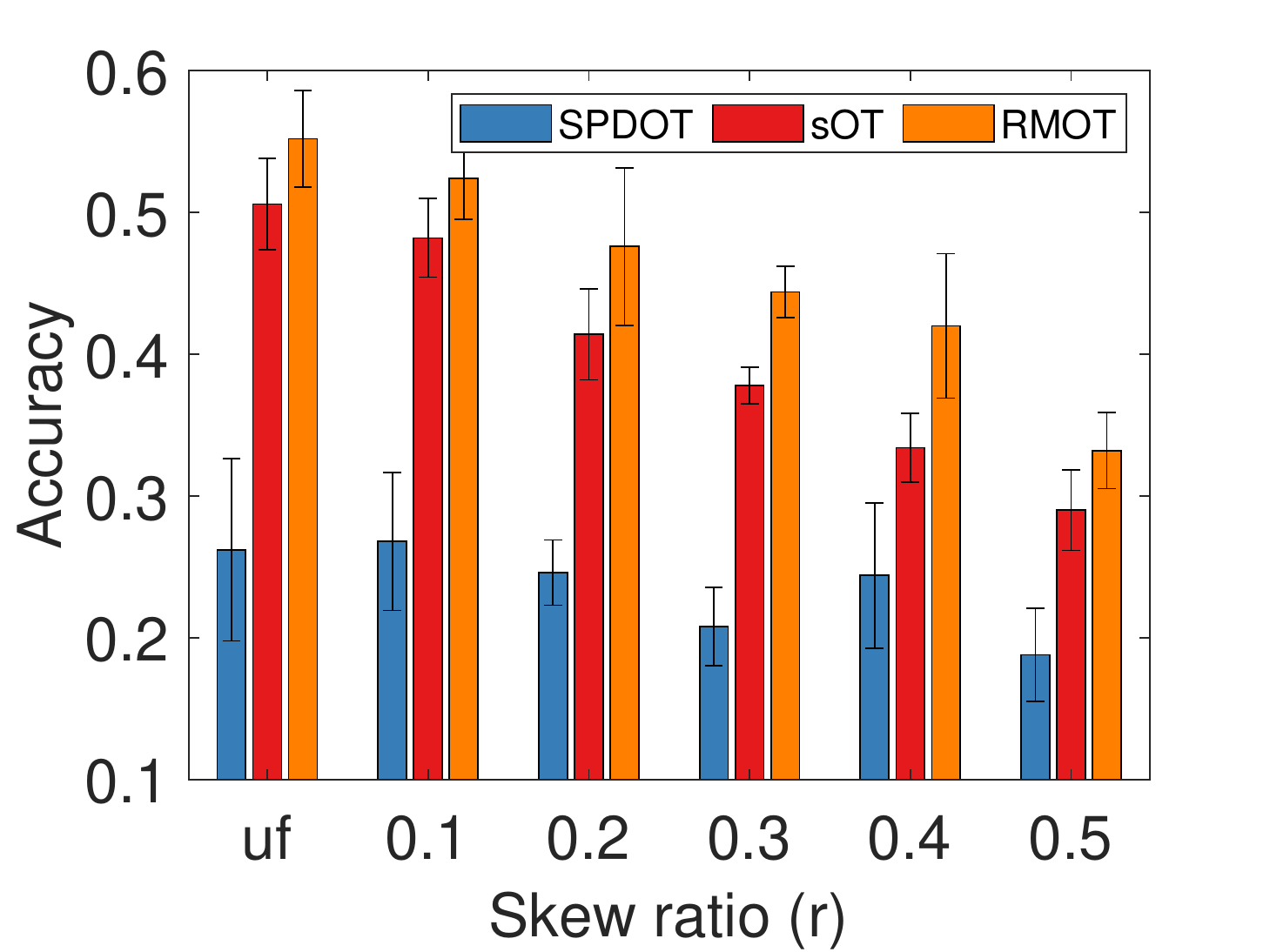}}
    \caption{Domain adaptation and classification results for three datasets: MNIST \eqref{mnist_fig}, Fashion MNIST \eqref{fmnist_fig} and Letters~\eqref{letters_fig}. The skew ratio increases from uniform (uf) to $r=0.5$. For MNIST and Fashion MNIST, uf=$0.1$ and for Letters, uf=$1/26$. We observe that the proposed RMOT performs significantly better than the baselines. 
    %The proposed RMOT outperforms sOT and SPDOT, especially in more challenging domain adaptation settings, i.e., higher skew ratios, implying the usefulness of the non-trivial correlations learned by the SPD matrix valued couplings of {\algname}.
    % Thus, the SPD matrix valued couplings of RMOT seem to better understand the non-trivial correlations. 
    % shows a good performance in harder settings (with higher skew ratios) implying the SPD matrix-valued couplings understand non-trivial correlations.
    }
\end{figure}

\textbf{Datasets.} We experiment on three multiclass image datasets -  handwritten letters \cite{frey1991letter}, MNIST \cite{lecun1998gradient} and Fashion MNIST \cite{xiao2017fashion} - with various skewed distributions for the training set. MNIST and Fashion MNIST have $10$ classes, while Letters has $26$ classes. 
Specifically, we fix the distribution of the test set to be uniform (with the same number of image sets per class). We increase the proportion of the a randomly chosen class in the training set to the ratio $r$, where $r=\{{\rm uf},0.1,0.2,0.3,0.4,0.5\}$ and ${\rm uf}$ is the ratio corresponding to the uniform distribution of all classes. We reduce the dimension of MNIST, fashion MNIST, and Letters by PCA to $d=5$ features. We set $s=d$, $m=250$, and $n=100$ for each dataset. 
%The sample size $s = d$ for each dataset is chosen uniformly. 
%This results in $m = 250, n = 100$ for training and test set, respectively. 

\textbf{Results.} Figures~\ref{mnist_fig}-\ref{letters_fig} shows the classification accuracy on the three datasets. 
We observe that the proposed RMOT outperforms sOT and SPDOT, especially in more challenging domain adaptation settings, i.e., higher skew ratios. This implies the usefulness of the non-trivial correlations learned by the SPD matrix valued couplings of {\algname}. 
%We observe that the proposed SPD-valued OT compared to scalar-valued OT as the skew $r$ increases (harder settings). 

%  which gives $d = 34$ for MNIST and $d = 14$ for fashion MNIST

\subsection{Tensor Gromov-Wasserstein distance averaging for shape interpolation}
% \alertBM{We should refer to the papers}
We consider an application of the proposed block SPD Gromov-Wasserstein OT formulation (Section~\ref{sec:proposedGW}) for interpolating tensor-valued shapes. We are given two distance-marginal pairs $(\bD^0, \bP^0), (\bD^1, \bP^1)$ where $\bD^0, \bD^1 \in \sR^{n \times n}$ are distance matrices computed from the shapes and $\bP^0, \bP^1$ are given tensor fields. The aim is to interpolate between the distance matrices with weights $\bomega = (t, 1-t), t \in [0,1]$. The interpolated distance matrix $\bD^t$ is computed by solving \eqref{gw_barycenter_problem} via Riemannian optimization and Proposition \ref{average_distance_update}, with the barycenter tensor fields ${\bP}^t$ given. Finally, the shape is recovered by performing multi-dimensional scaling to the distance matrix.

\begin{figure}%[ht]
% \settoheight{\tempdima}{\includegraphics[width=.06\textwidth]{example-image-a}}%
\centering
\begin{tabular}{@{}c@{ \hspace*{1.5pt} }c@{ \hspace*{2pt} }c@{ \hspace*{2pt} }c@{ \hspace*{2pt} }c@{ \hspace*{2pt} }c@{ \hspace*{2pt} }c@{ \hspace*{2pt} }}
\raisebox{.025\textwidth}{\footnotesize (a)}&
\includegraphics[width=.11\textwidth]{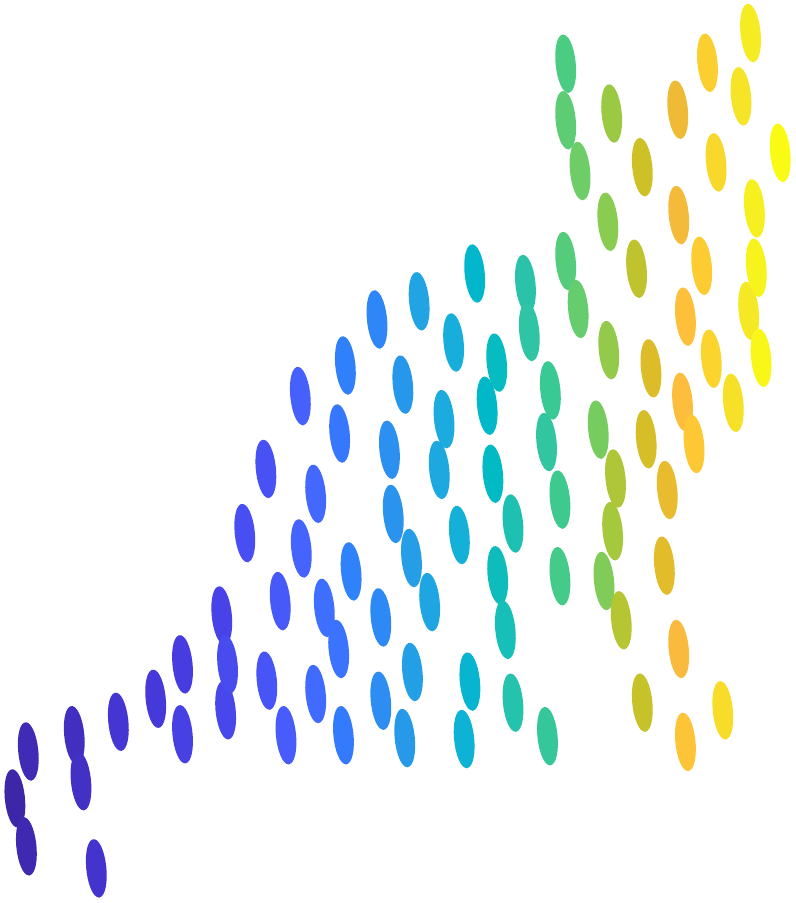}&
\includegraphics[width=.11\textwidth]{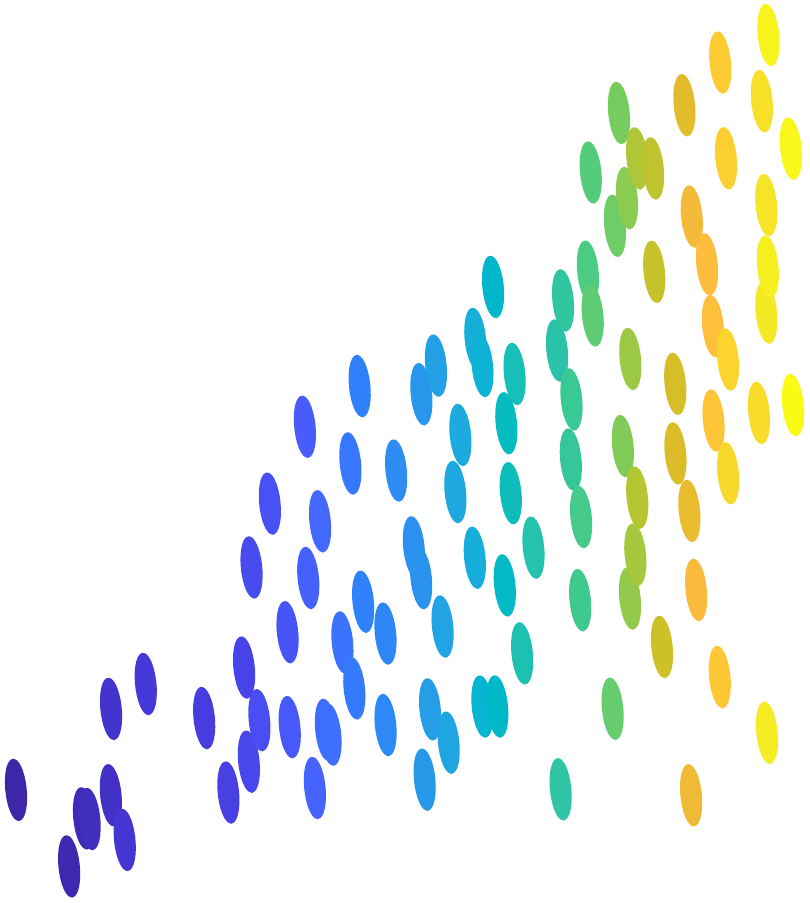}&
\includegraphics[width=.11\textwidth]{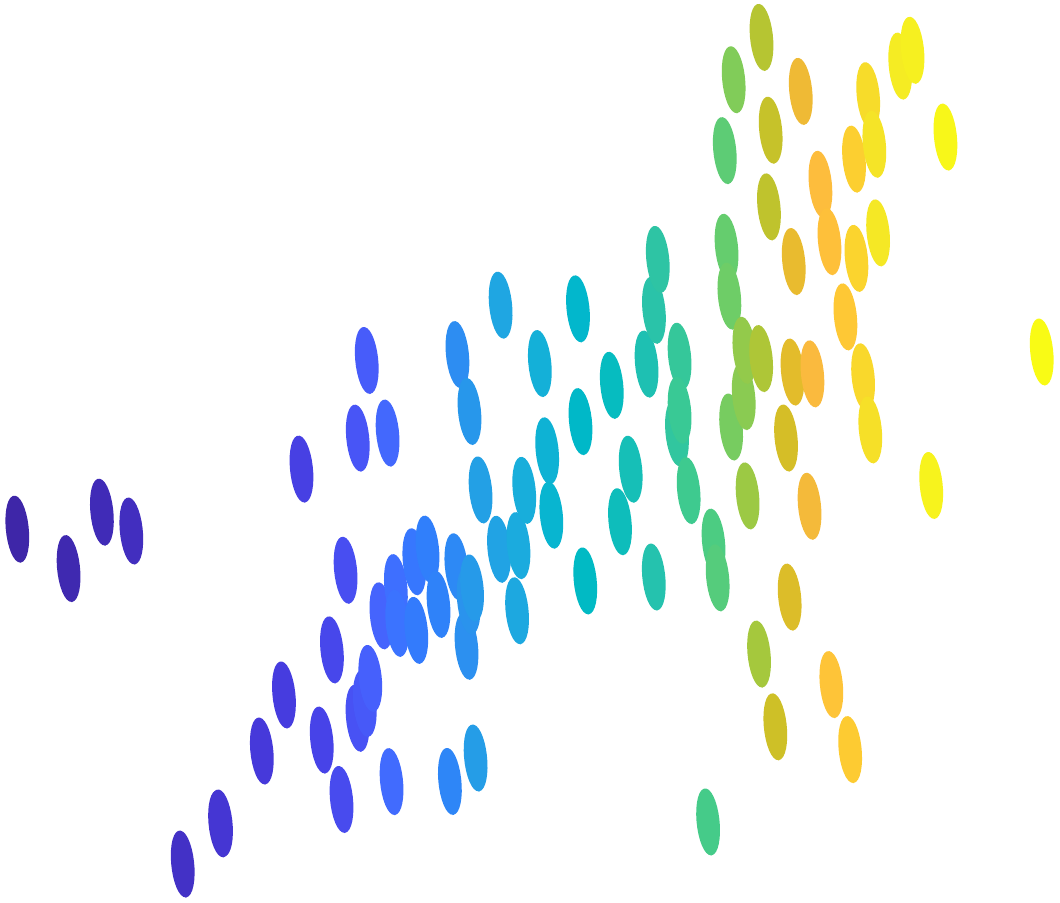}&
\includegraphics[width=.11\textwidth]{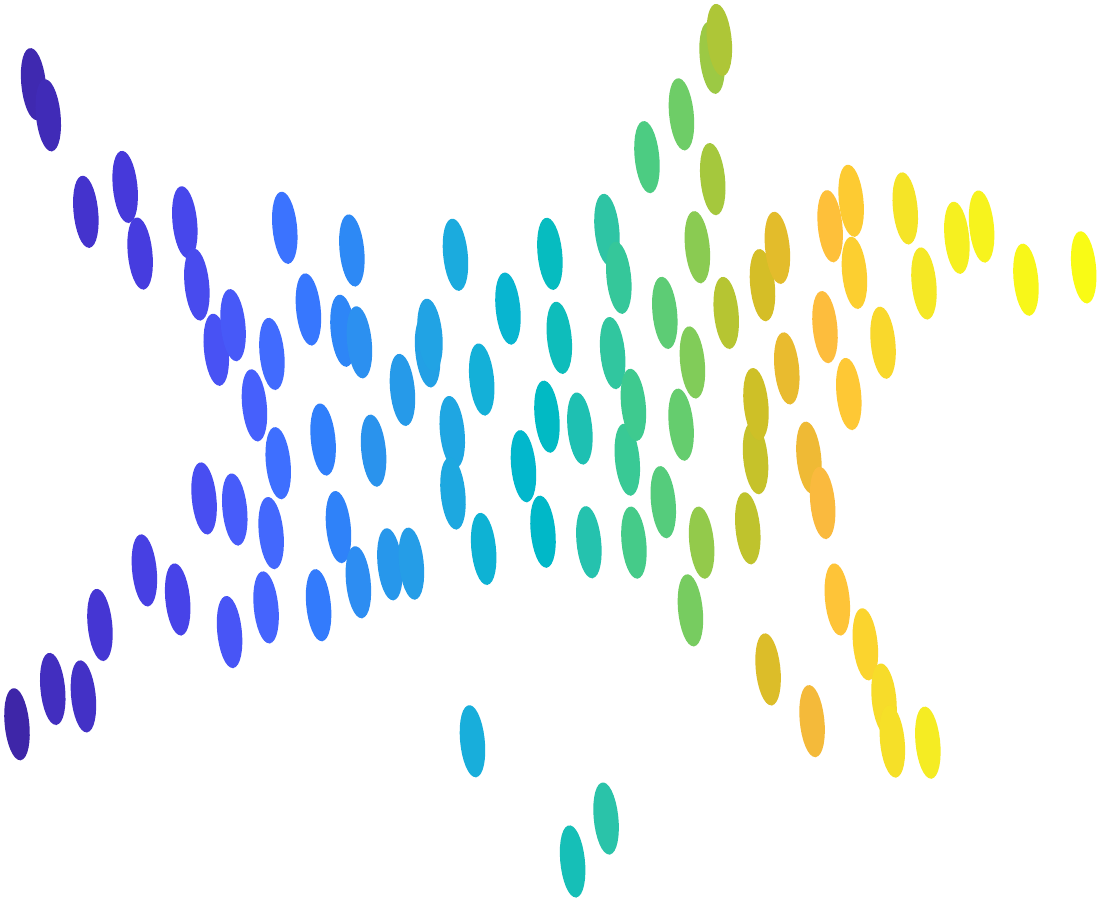}&
\includegraphics[width=.11\textwidth]{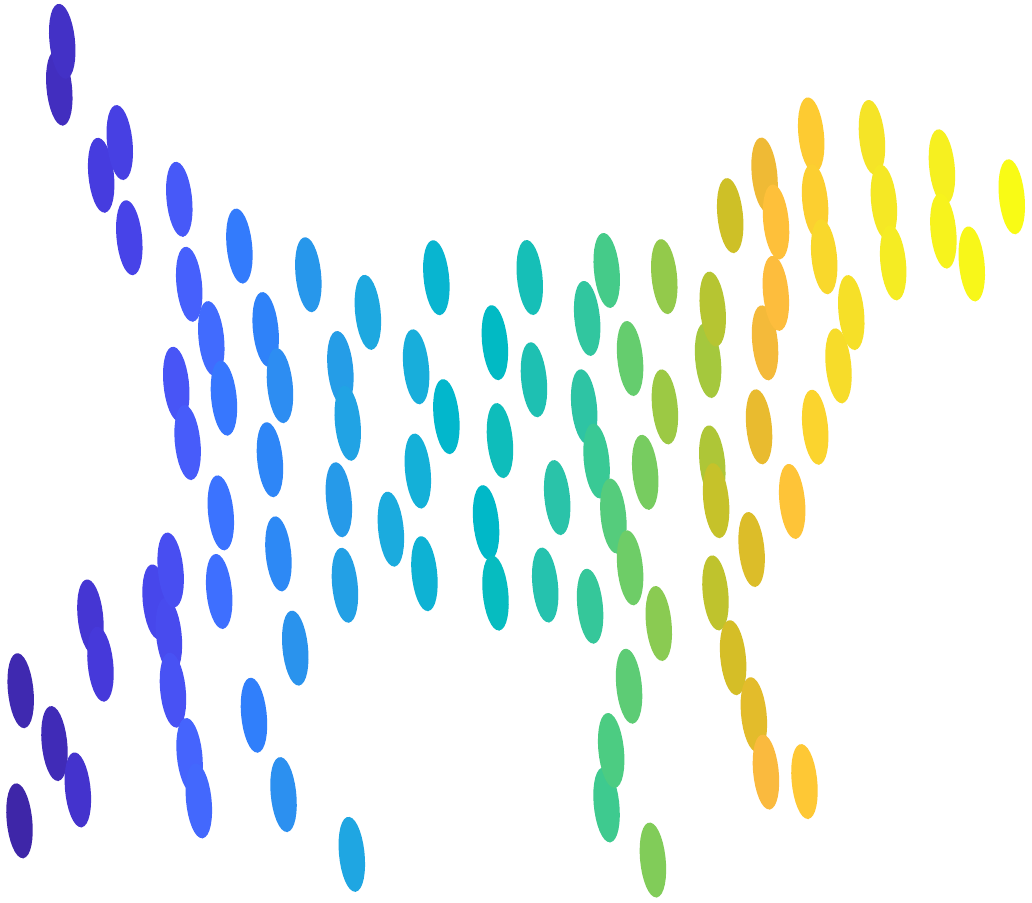}&
\includegraphics[width=.11\textwidth]{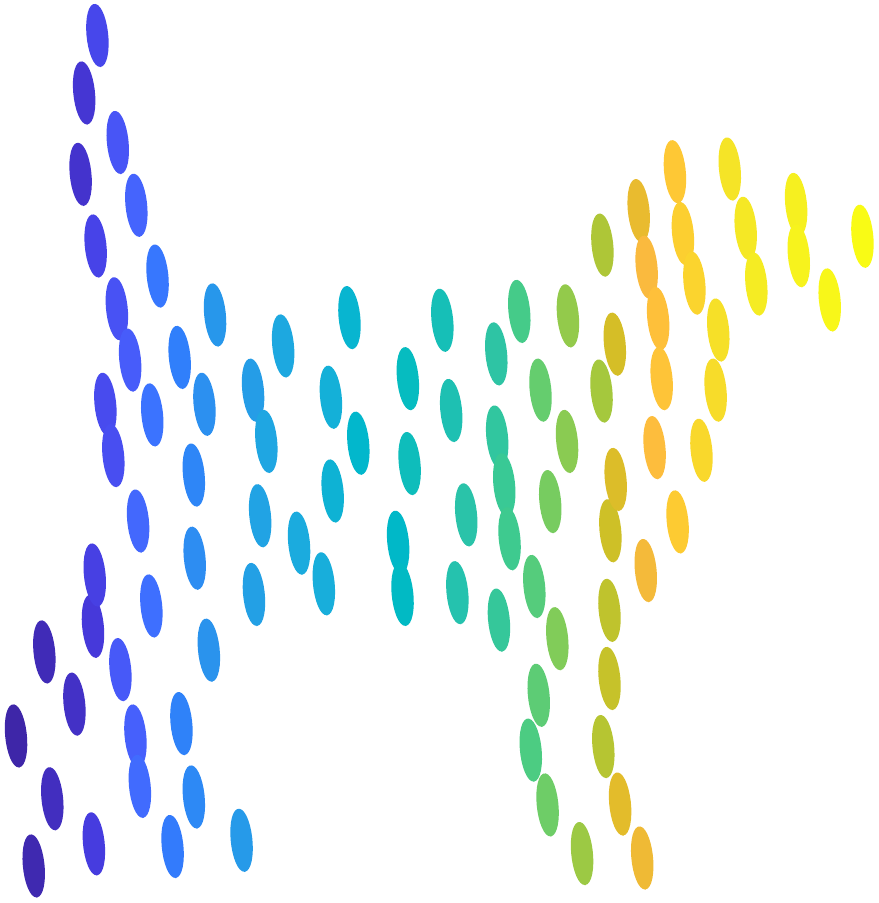}\\[5pt]
% \raisebox{.025\textwidth}{\footnotesize (b)}&
% \includegraphics[width=.06\textwidth]{}&
% \includegraphics[width=.06\textwidth]{}&
% \includegraphics[width=.06\textwidth]{}&
% \includegraphics[width=.06\textwidth]{}&
% \includegraphics[width=.06\textwidth]{}&
% \includegraphics[width=.06\textwidth]{}\\[5pt]

\raisebox{.025\textwidth}{\footnotesize (b)}&
\includegraphics[width=.11\textwidth]{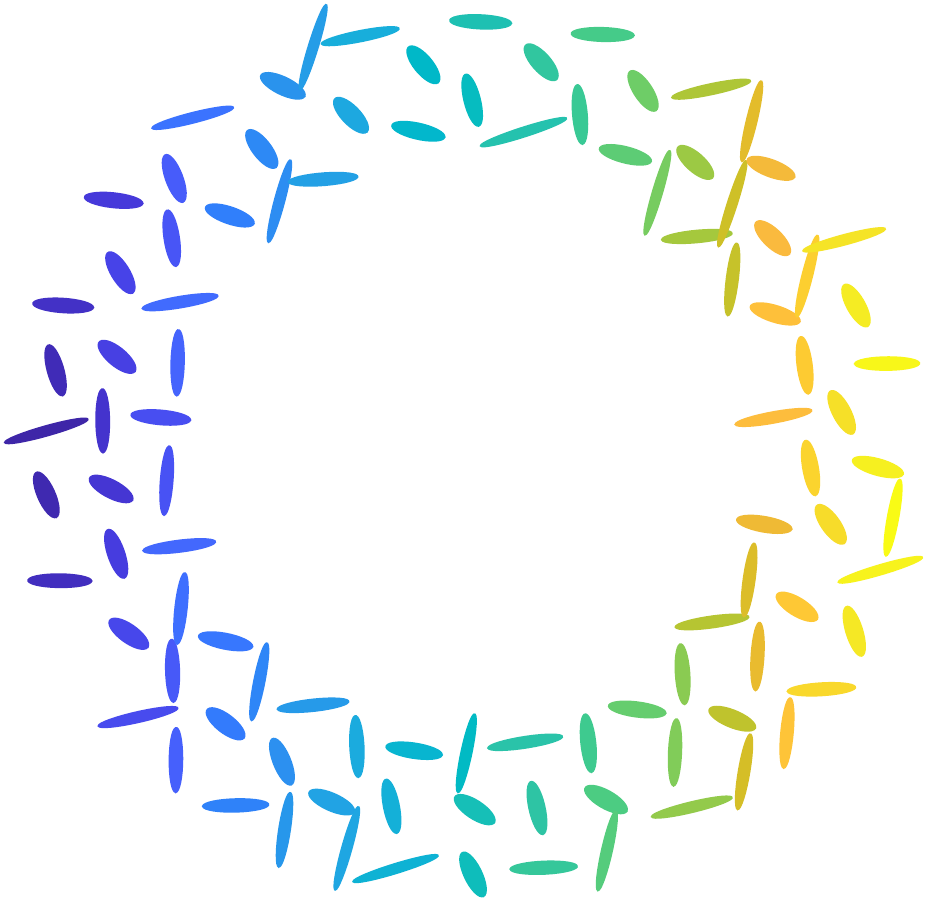}&
\includegraphics[width=.11\textwidth]{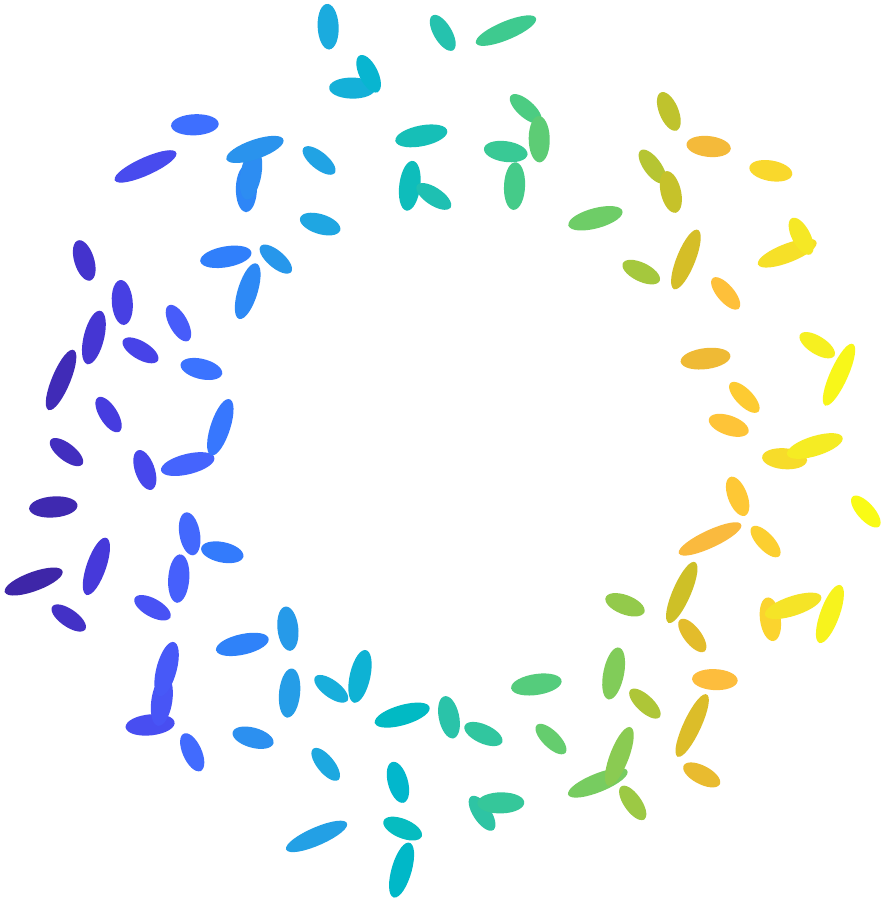}&
\includegraphics[width=.11\textwidth]{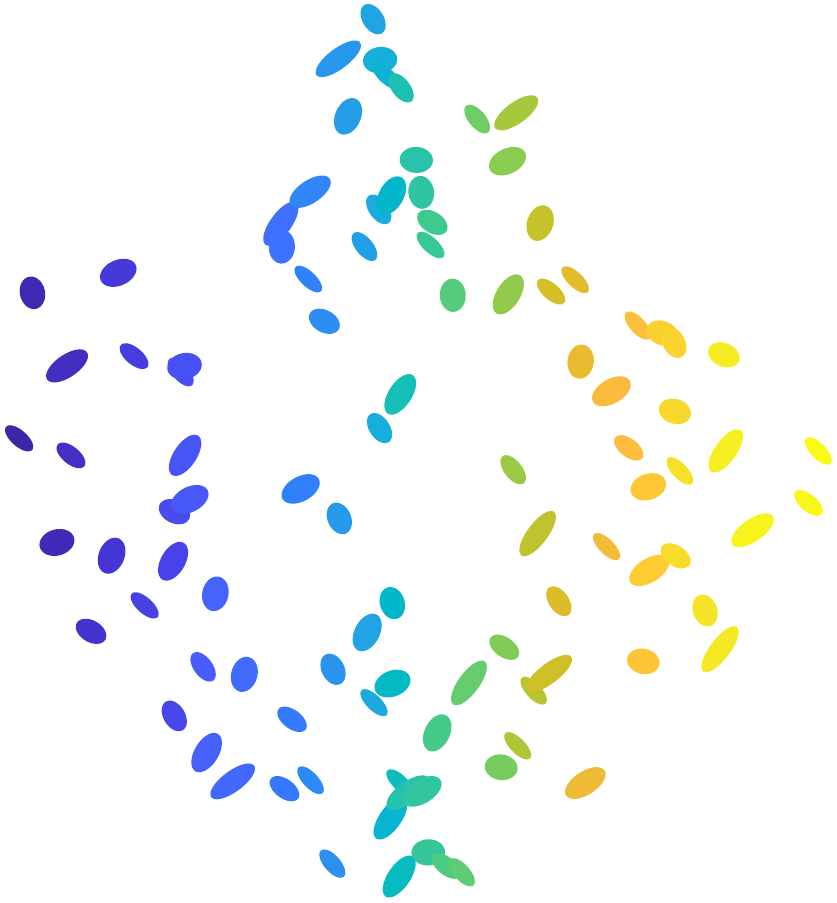}&
\includegraphics[width=.11\textwidth]{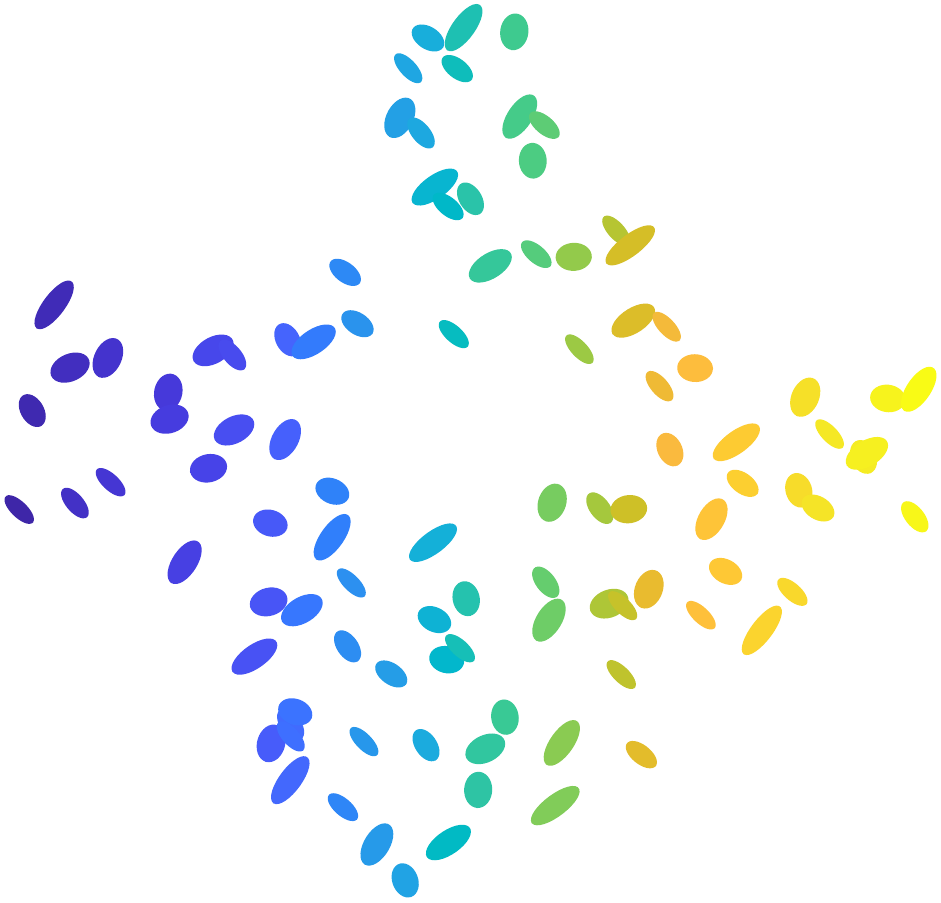}&
\includegraphics[width=.11\textwidth]{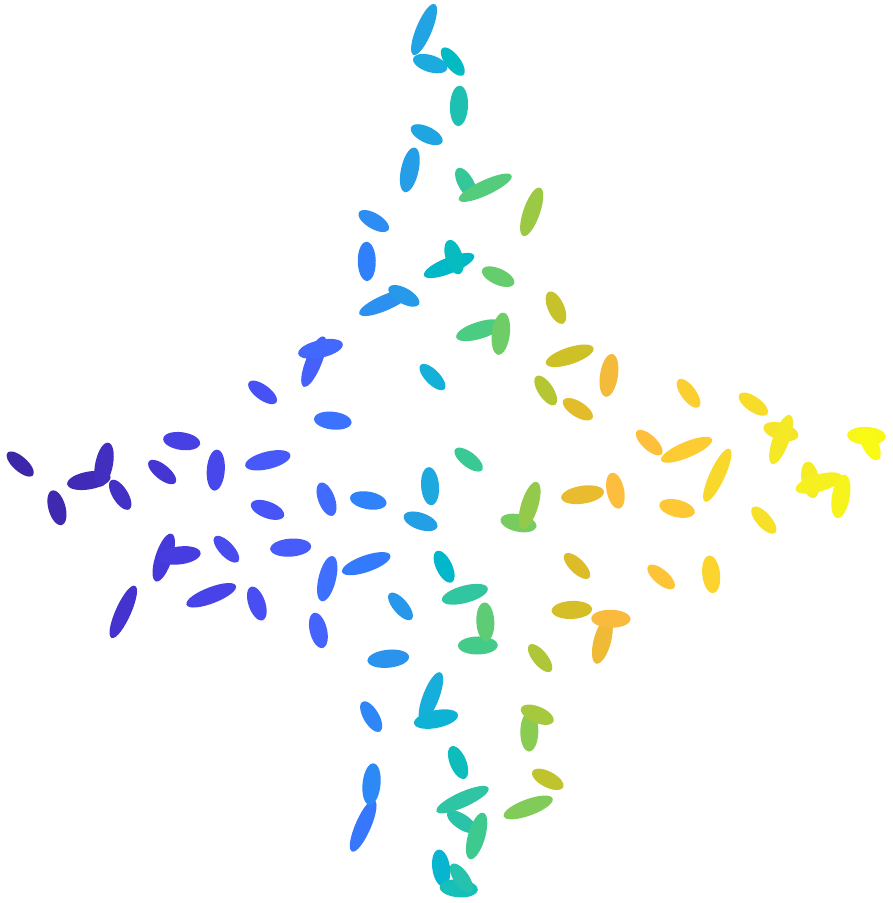}&
\includegraphics[width=.11\textwidth]{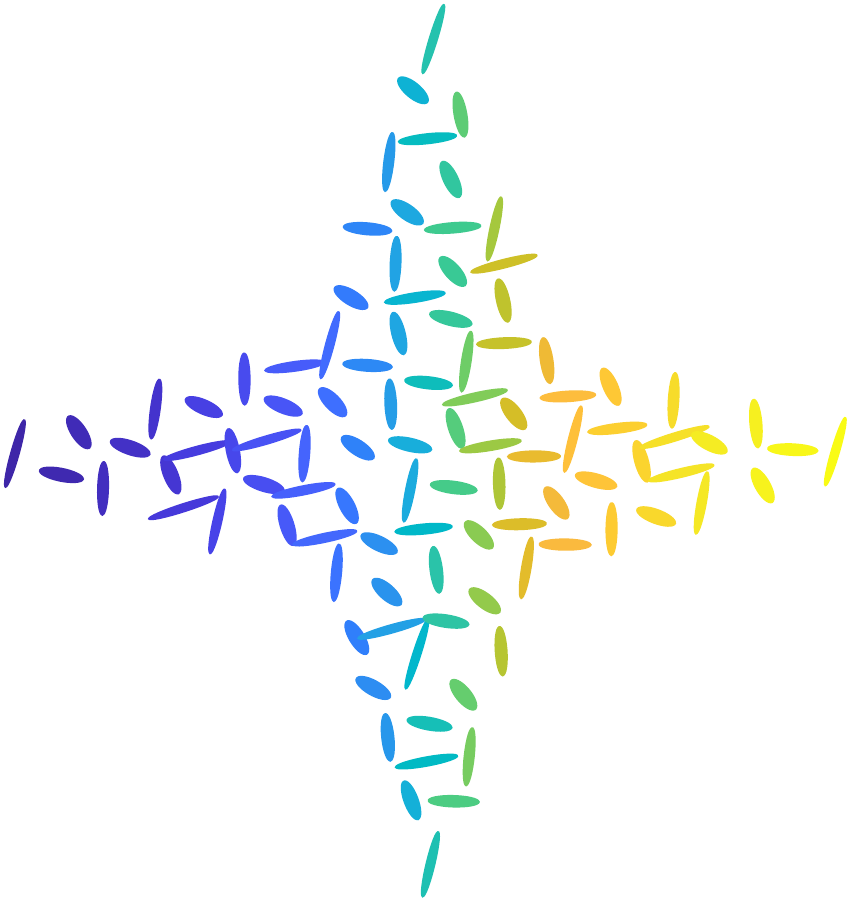}\\[5pt]

\raisebox{.025\textwidth}{\footnotesize (c)}&
\includegraphics[width=.11\textwidth]{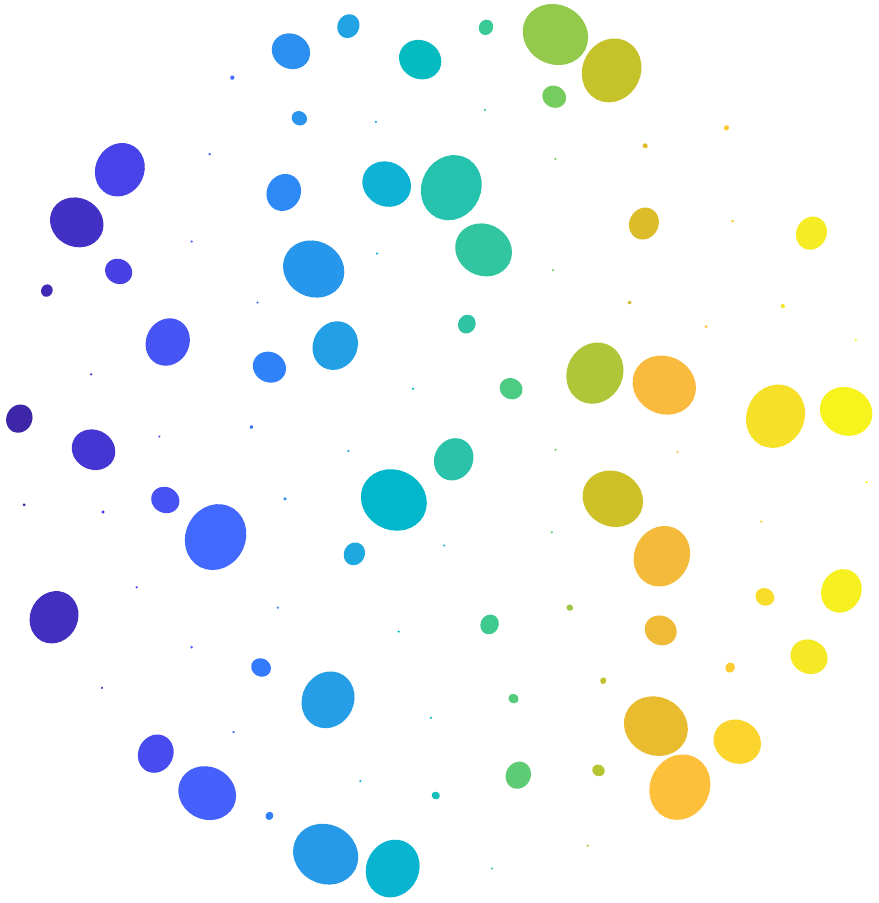}&
\includegraphics[width=.11\textwidth]{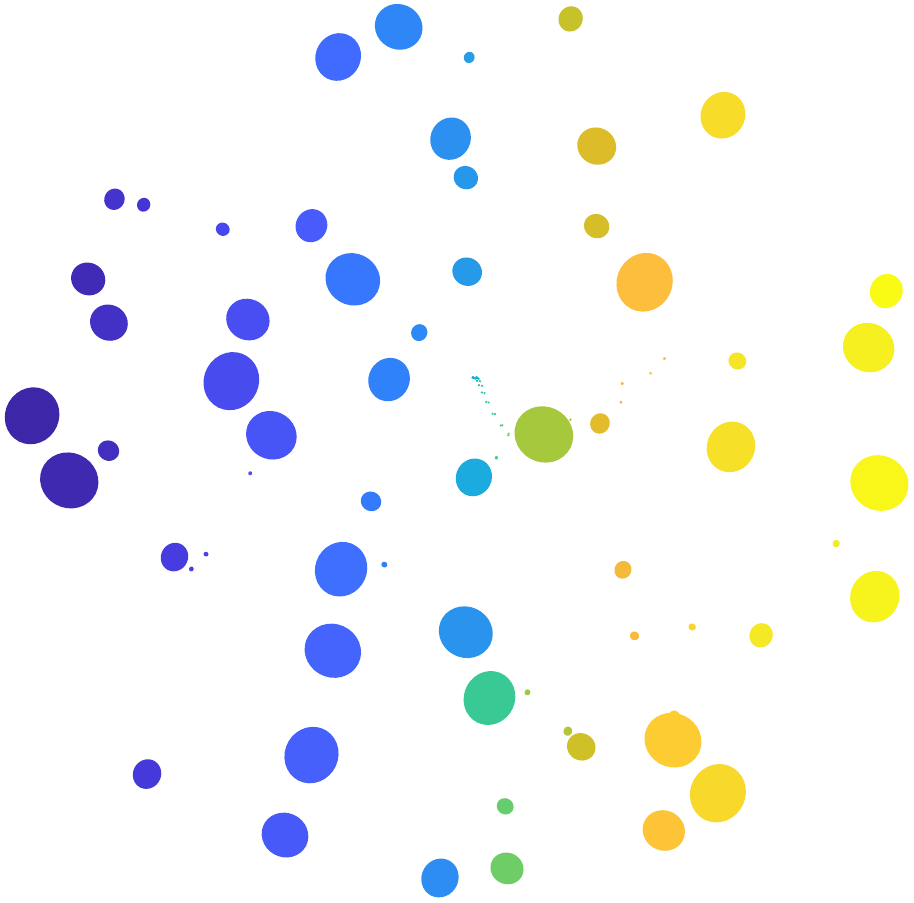}&
\includegraphics[width=.11\textwidth]{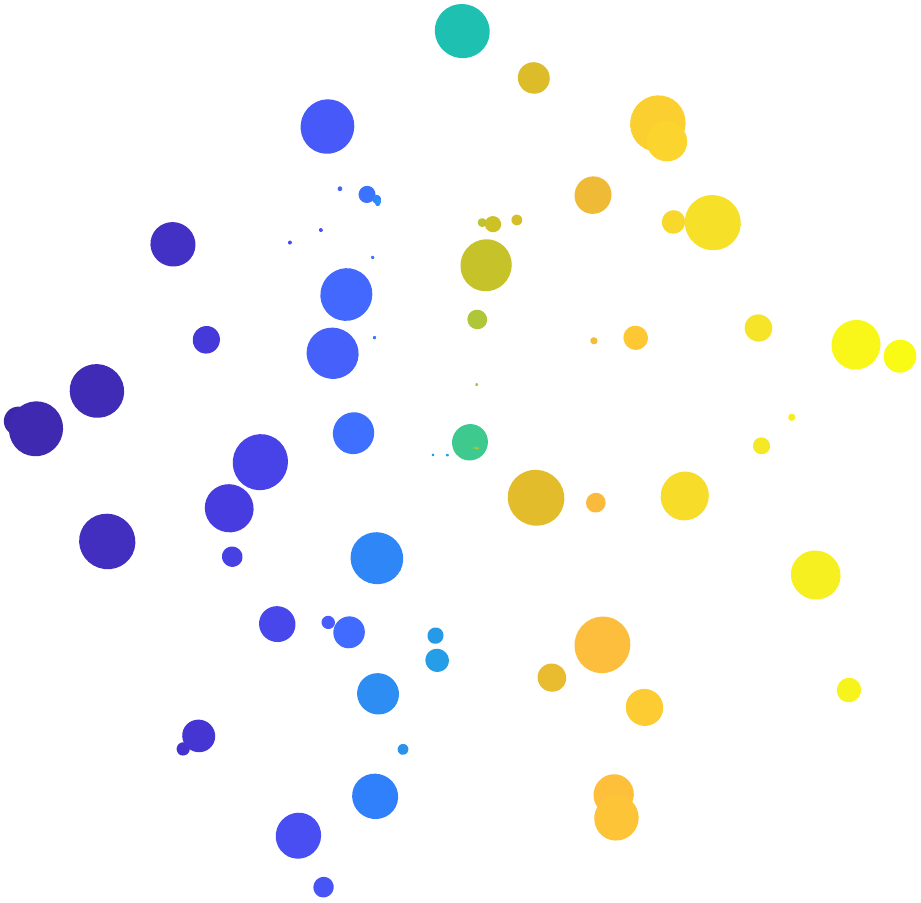}&
\includegraphics[width=.11\textwidth]{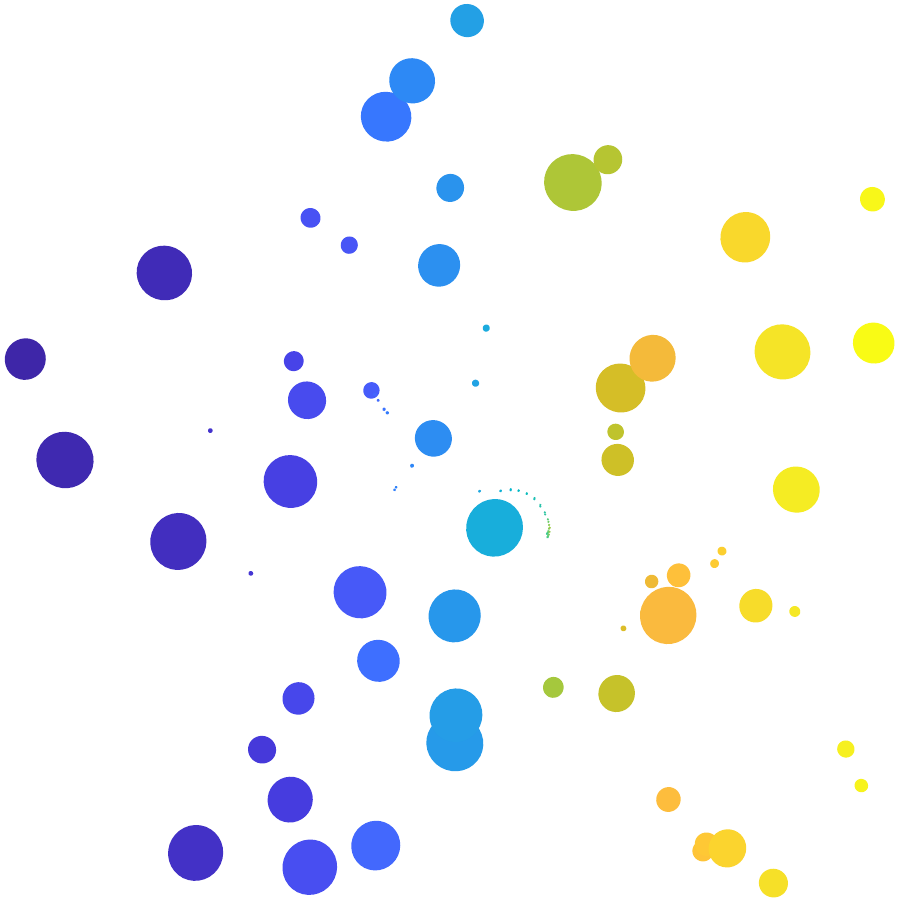}&
\includegraphics[width=.11\textwidth]{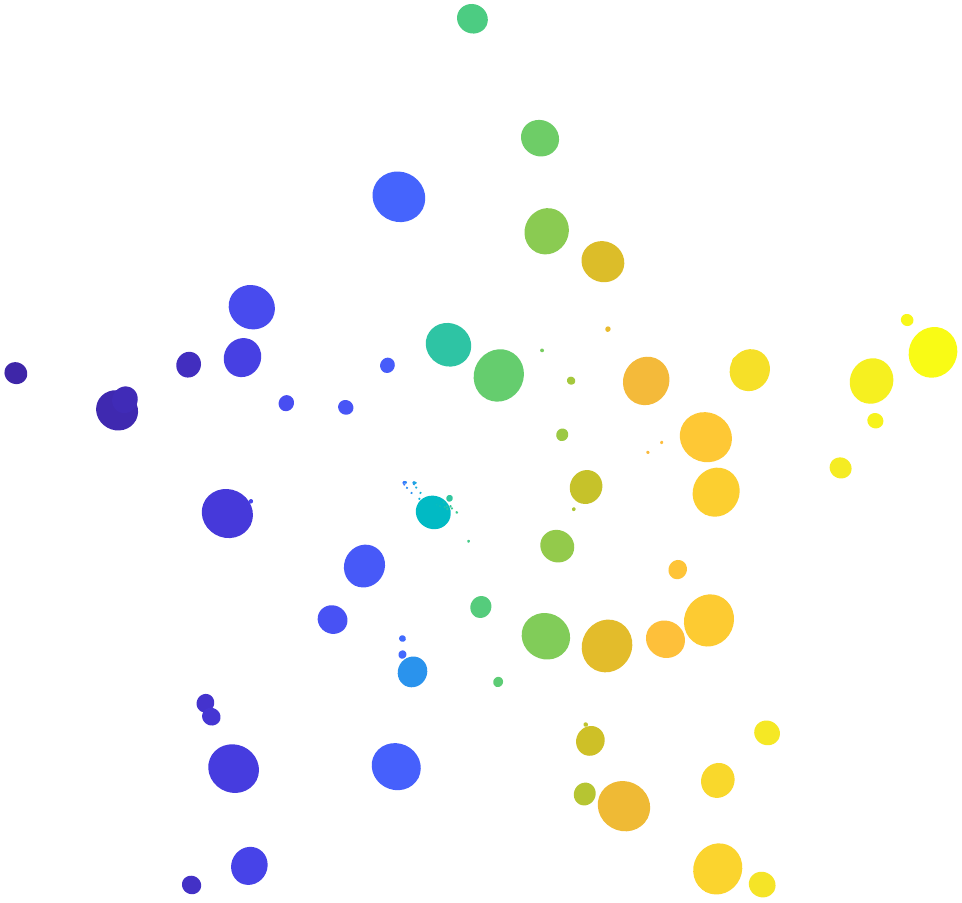}&
\includegraphics[width=.11\textwidth]{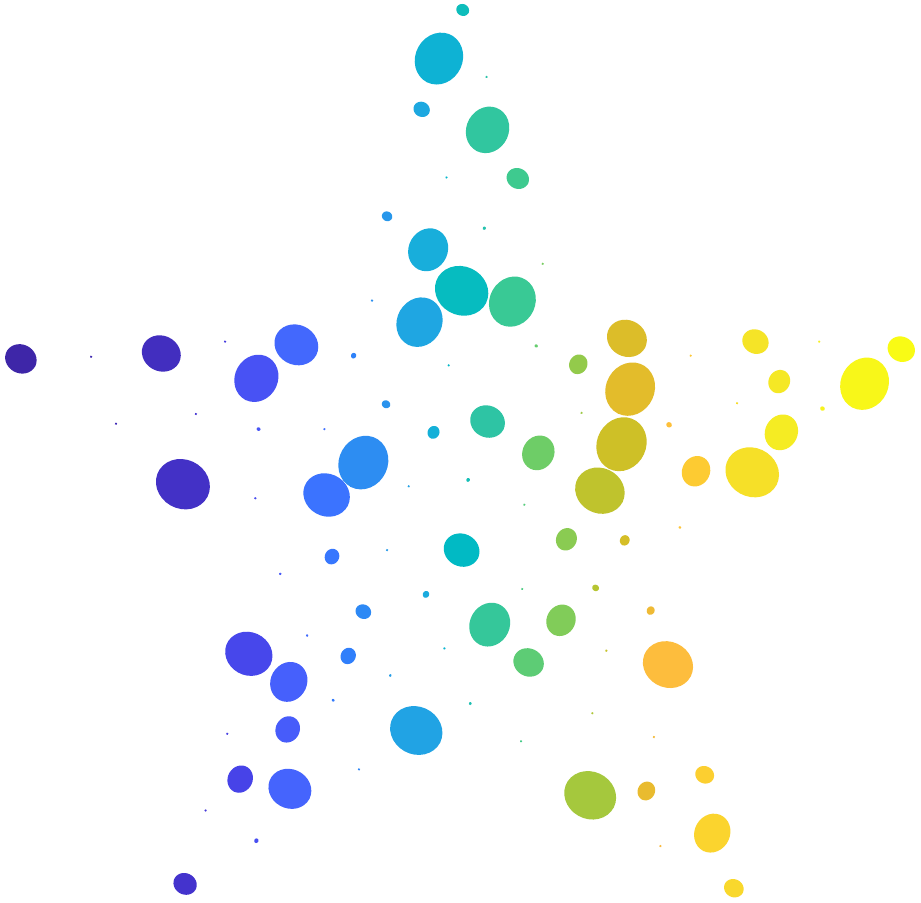}\\[3pt]
&{\footnotesize $t=0$} & {\footnotesize $t=0.2$} & {\footnotesize $t=0.4$} & {\footnotesize $t=0.6$} & {\footnotesize $t=0.8$} & {\footnotesize $t=1$} \\
\end{tabular}
\caption{Tensor-valued shape interpolation obtained using the proposed Gromov-Wasserstein RMOT formulation~(Section~\ref{sec:proposedGW}). We note that each shape consists of several SPD-matrix/tensor valued fields (displayed using ellipses). \revision{In (a), the tensors follow a uniform distribution. In (b), the tensors are generated with multiple orientation and in (c), tensors vary smoothly in both size and orientation.} The proposed approach takes the anisotropy and orientation of tensor fields into account while interpolating shapes. }%
\label{GW_distance_averaging}
\end{figure}

Figure \ref{GW_distance_averaging} presents the interpolated shapes with $n = 100$ sample points for the input shapes. The matrices $\bD^0, \bD^1$ are given by the Euclidean distance and we consider $L_2$ loss for $\L$. The input tensor fields $\bP^0, \bP^1$ are generated as uniformly random in (a), cross-oriented in (b) and smoothly varying in (c). \revision{For simplicity, we consider the barycenter tensor fields given by the linear interpolation of the inputs, i.e., $\bP^t = (1-t) \bP^0 + t \bP^1$. In \cite{peyre2016gromov}, we highlight that the marginals are scalar-valued and fixed to be uniform. Here, on the other hand, the marginals are tensor-valued and the resulting distance matrix interpolation would be affected by the relative mass of the tensors, as shown by Proposition \ref{average_distance_update}.}
% The barycenter tensor fields are given by the linear interpolation of the inputs, i.e., $\bP^t = (1-t) \bP^0 + t \bP^1$. 
The results show the proposed Riemannian optimization approach (Section~\ref{manifold_geometry_sect}) converges to reasonable stationary solutions for non-convex OT problems.

\begin{figure*}%[!th]
%\captionsetup{justification=centering}
    \centering
    \subfloat[Linear interpolation]{\includegraphics[width = 0.19\textwidth, height = 0.1\textwidth]{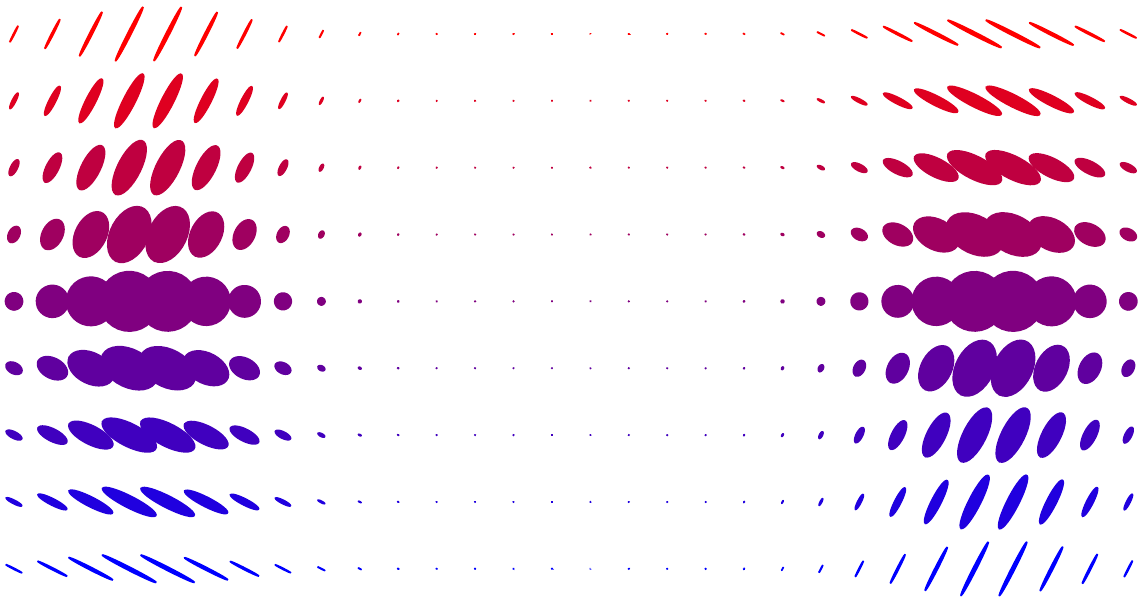}} 
    \hspace*{5pt}
    \subfloat[QOT ($\rho$=50)]{\includegraphics[width = 0.19\textwidth, height = 0.1\textwidth]{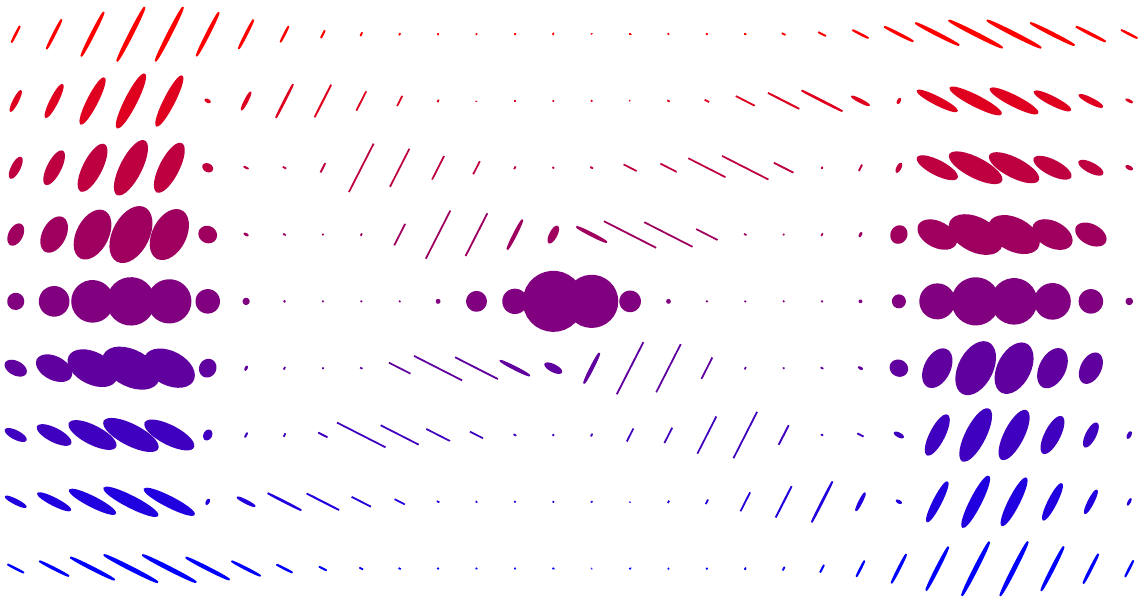}}
    \hspace*{5pt}
    \subfloat[QOT($\rho$=100)]{\includegraphics[width = 0.19\textwidth, height = 0.1\textwidth]{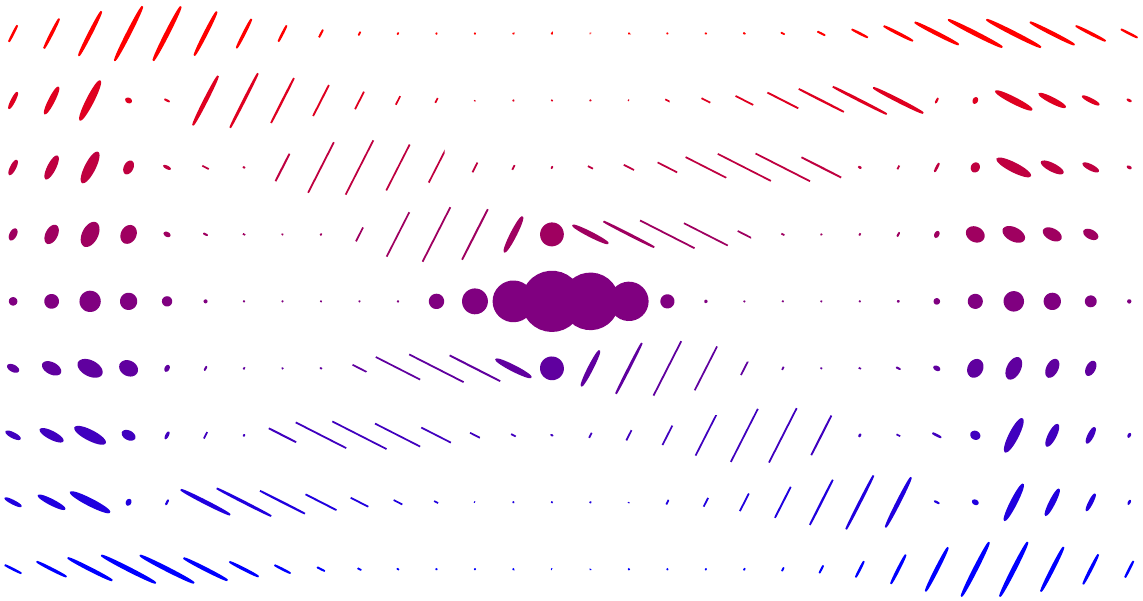}}
    \hspace*{5pt}
    \subfloat[QOT($\rho$=500)]{\includegraphics[width = 0.19\textwidth, height = 0.1\textwidth]{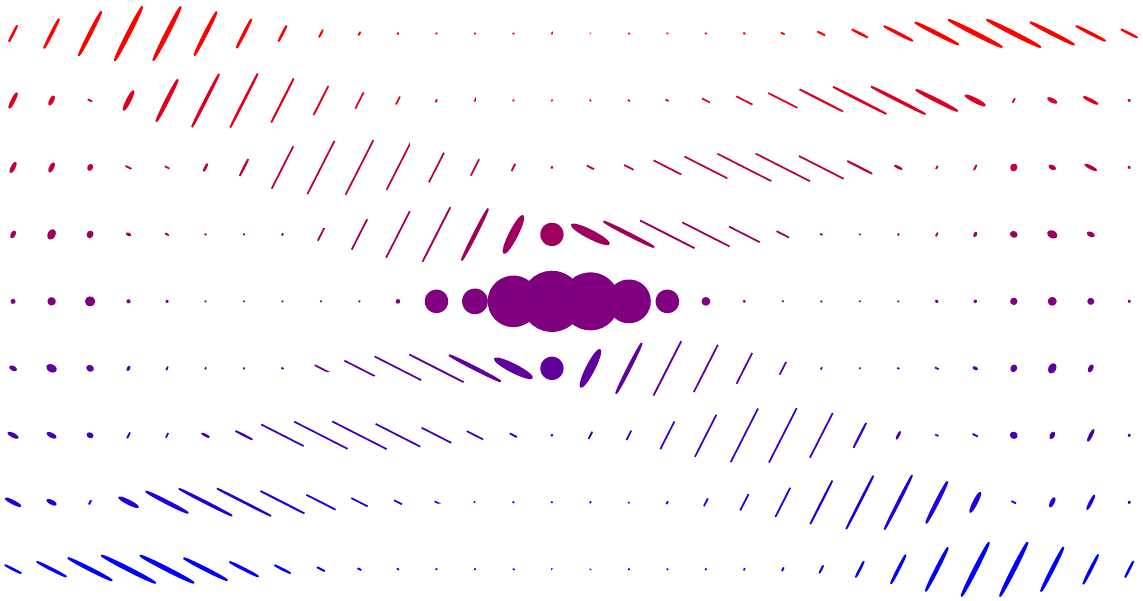}}
    \hspace*{5pt}
    \subfloat[{\algname}]{\includegraphics[width = 0.19\textwidth, height = 0.1\textwidth]{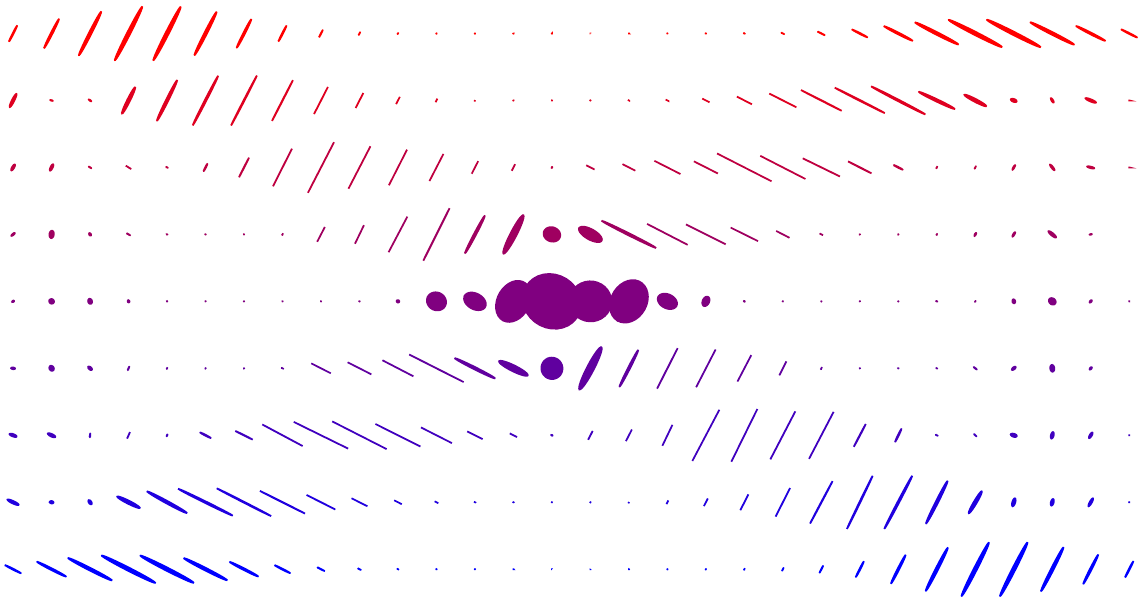}} \\[5pt]
    
    \subfloat[Input ($t = 0$) \label{input_1}]{\includegraphics[width = 0.12\textwidth, height = 0.12\textwidth]{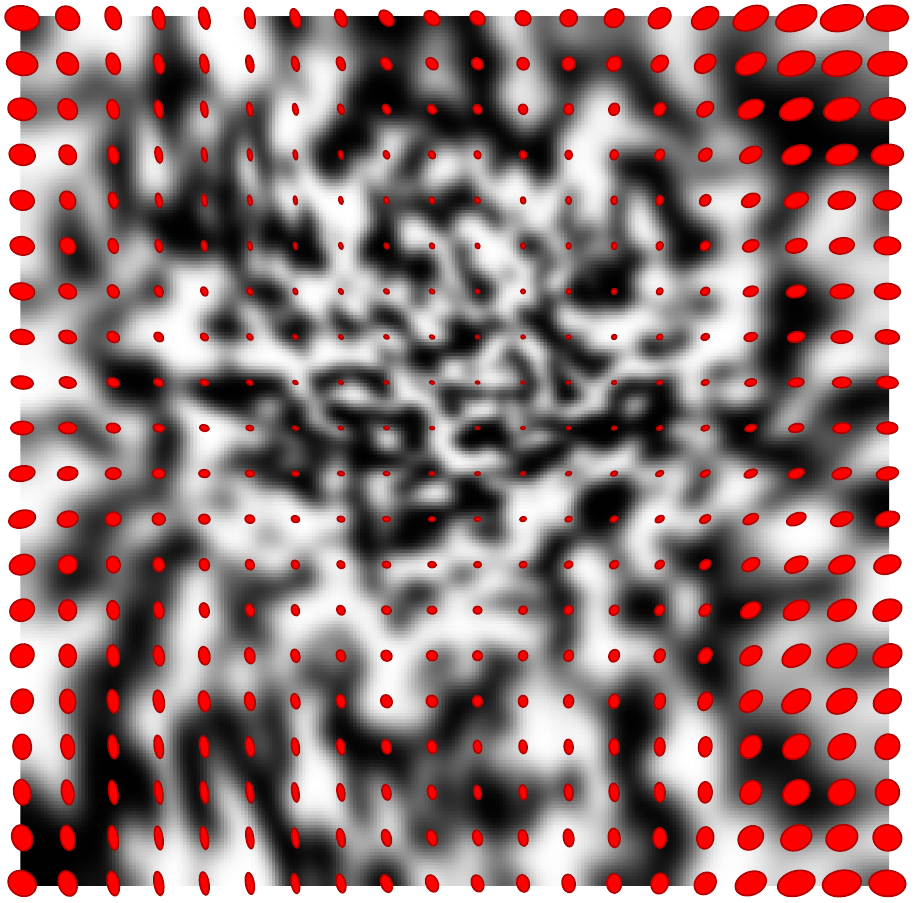}} 
    \hspace*{3pt}
    \subfloat[Input ($t=1$) \label{input_2}]{\includegraphics[width = 0.12\textwidth, height = 0.12\textwidth]{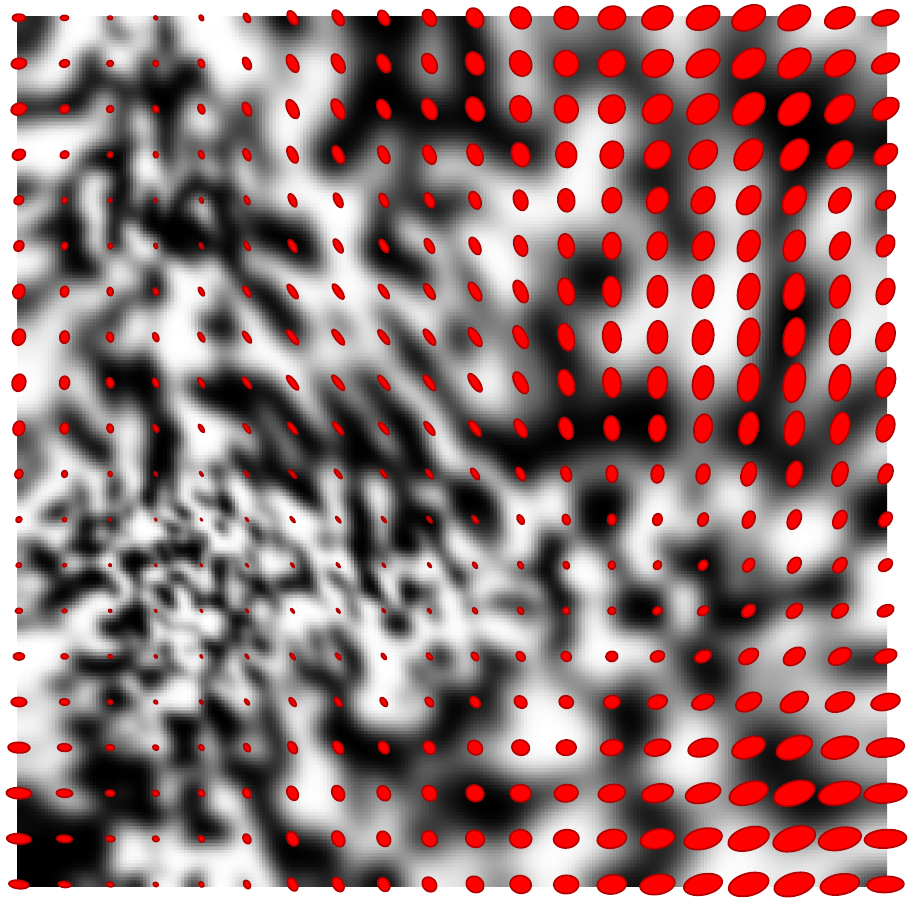}} 
    \hspace*{3pt}
    \subfloat[QOT ($t=0.25$)]{\includegraphics[width = 0.12\textwidth, height = 0.12\textwidth]{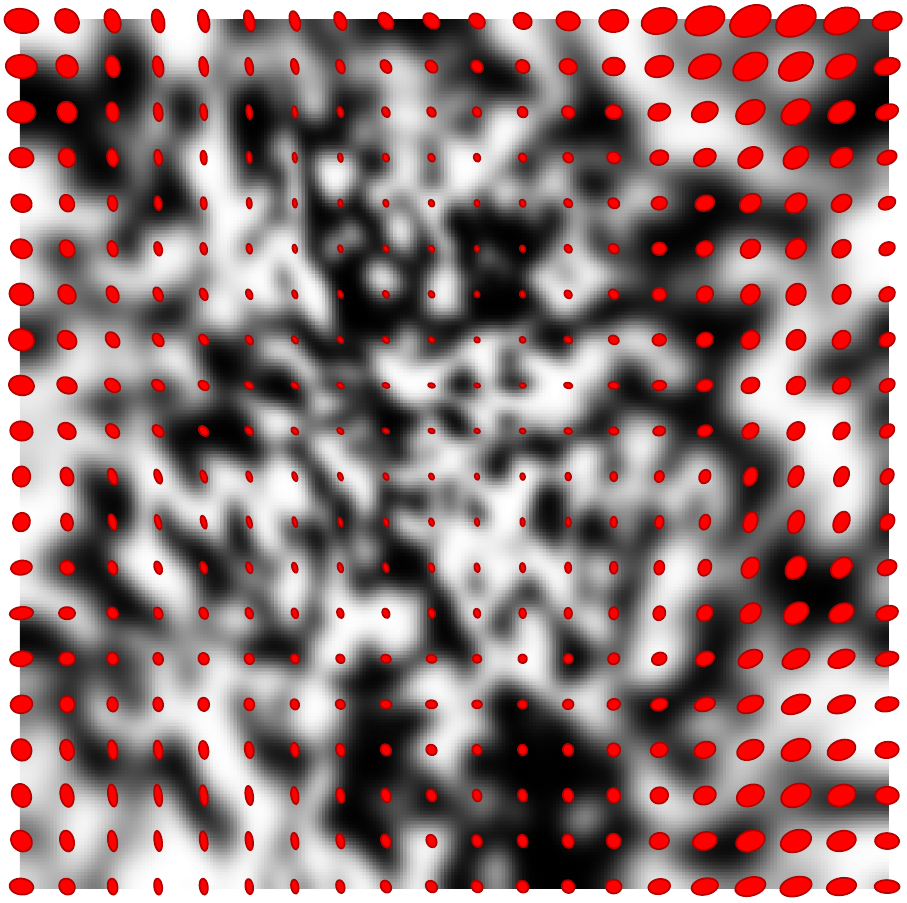}}
    \hspace*{3pt}
    \subfloat[QOT ($t=0.5$)]{\includegraphics[width = 0.12\textwidth, height = 0.12\textwidth]{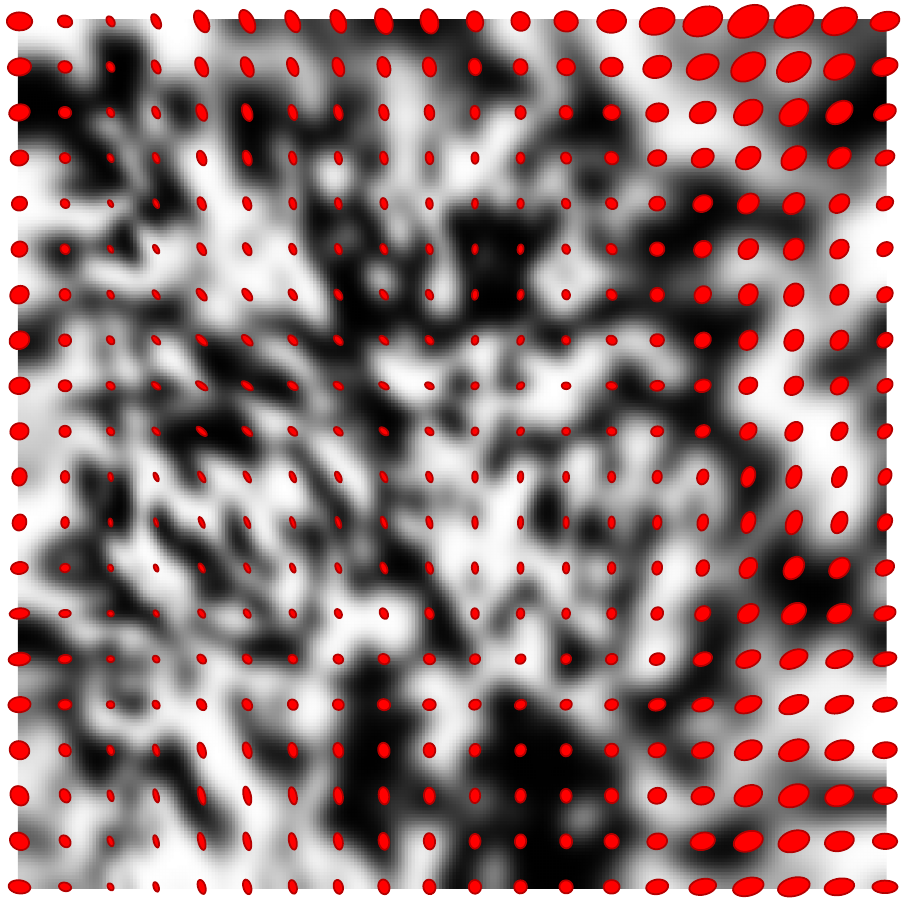}}
    \hspace*{3pt}
    \subfloat[QOT ($t=0.75$)]{\includegraphics[width = 0.12\textwidth, height = 0.12\textwidth]{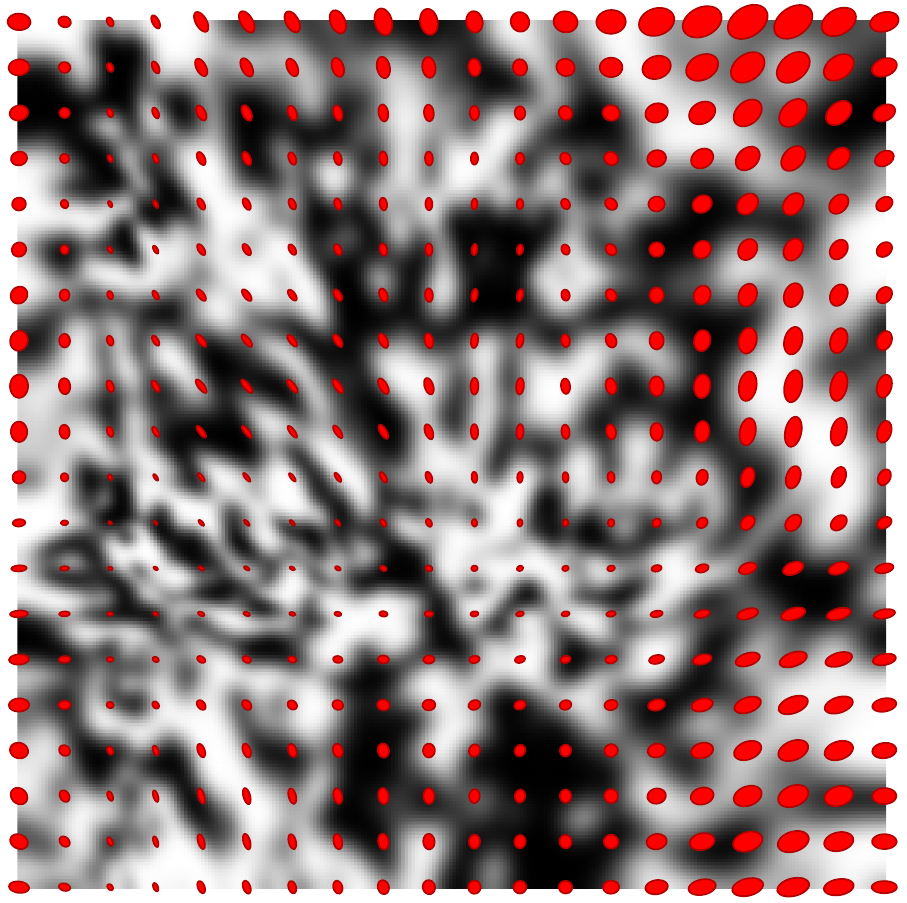}} 
    \hspace*{3pt}
    \subfloat[{\algname} ($t=0.25$)]{\includegraphics[width = 0.12\textwidth, height = 0.12\textwidth]{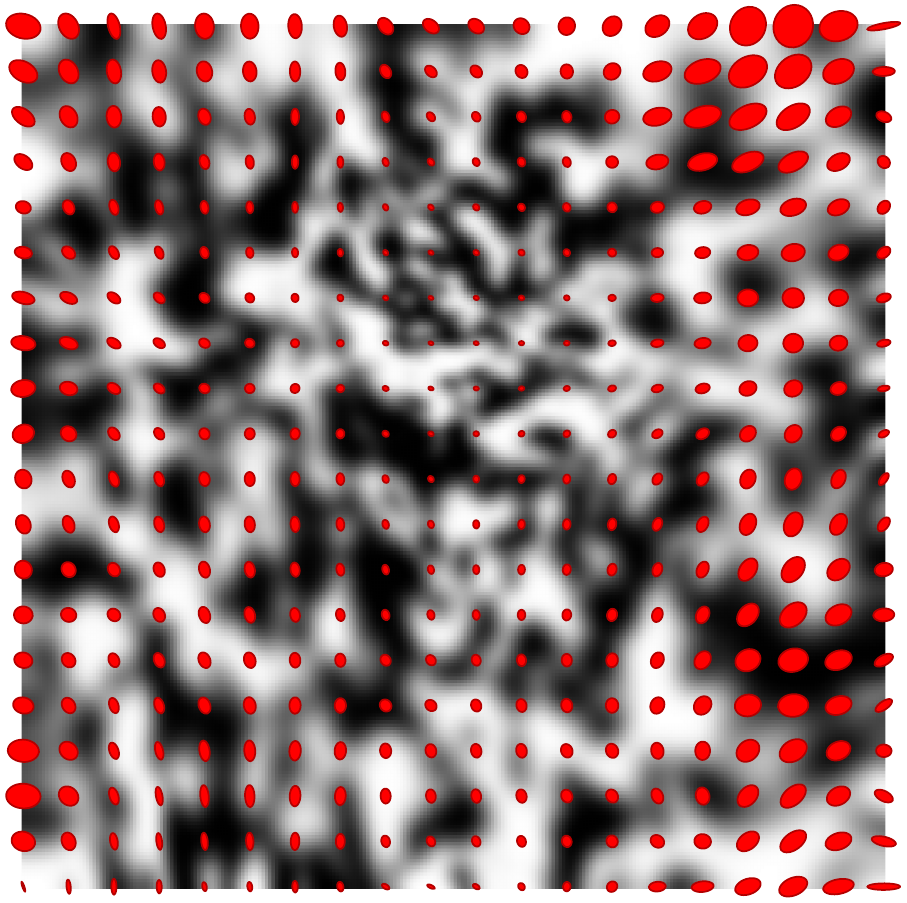}}
    \hspace*{3pt}
    \subfloat[{\algname} ($t=0.5$)]{\includegraphics[width = 0.12\textwidth, height = 0.12\textwidth]{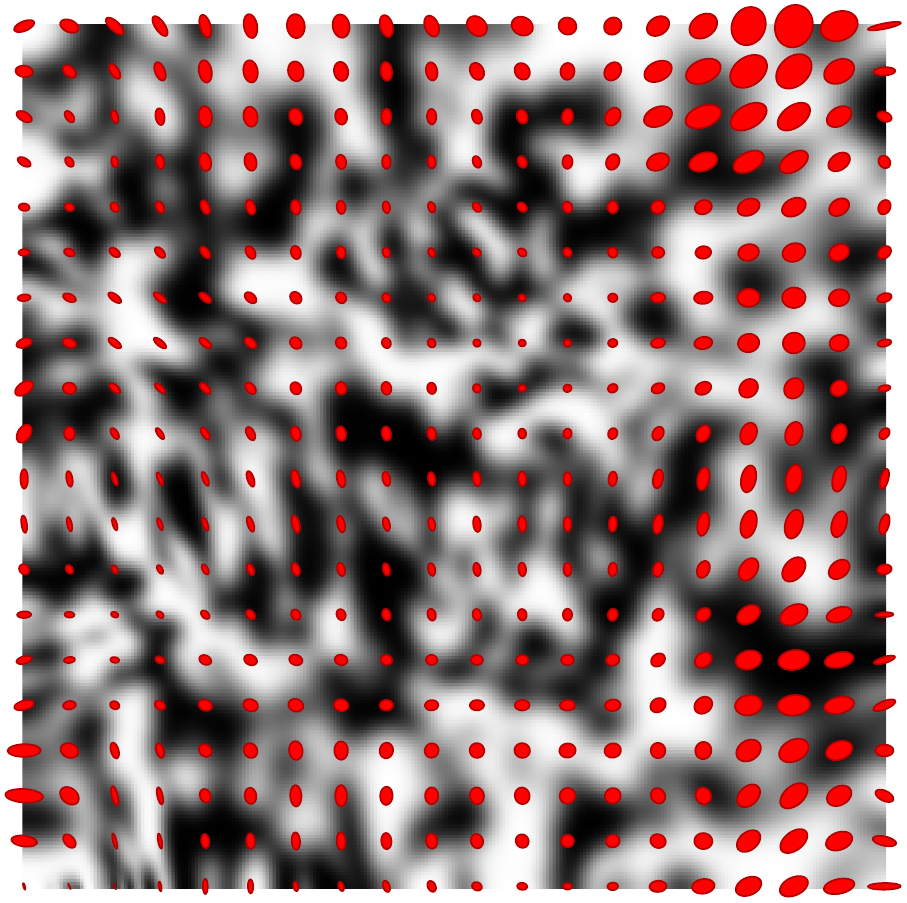}}
    \hspace*{3pt}
    \subfloat[{\algname} ($t=0.75$)]{\includegraphics[width = 0.12\textwidth, height = 0.12\textwidth]{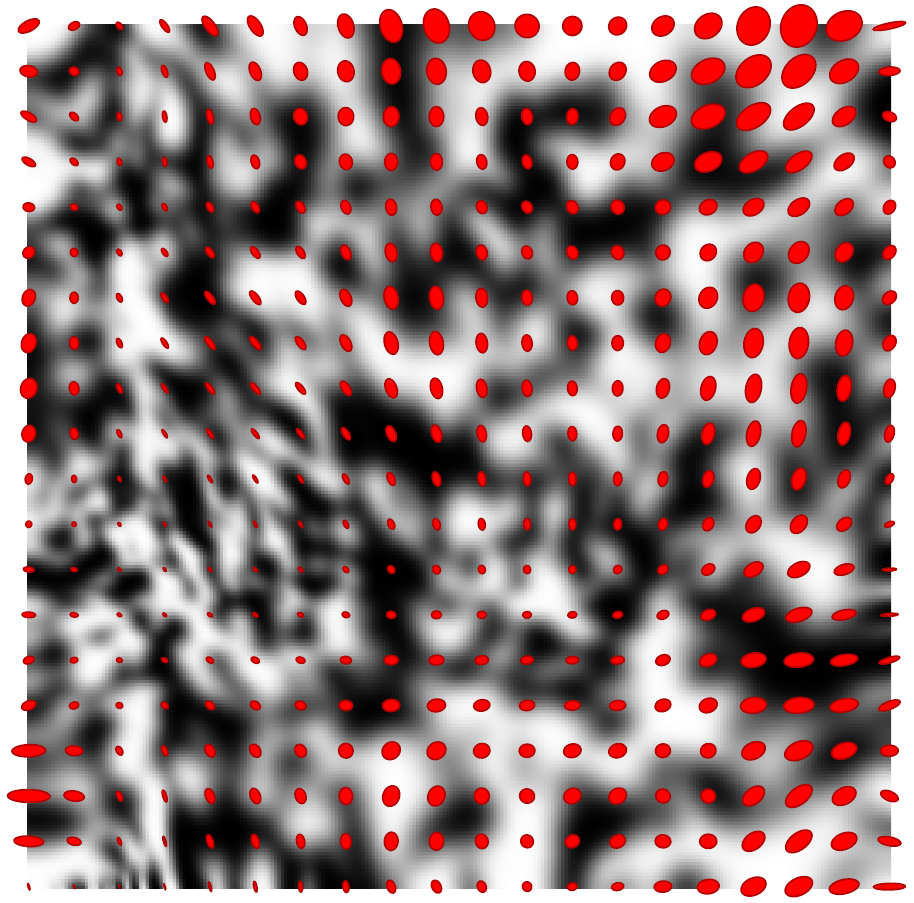}}
    \caption{Tensor field mass interpolation on 1-$d$ (top) and 2-$d$ (bottom) grids. On the top, each row corresponds to an interpolation where we show $7$ evenly-spaced interpolated tensor fields. On the bottom, the inputs are given in (f) and (g). We set $\rho = 100$ for QOT and show $3$ evenly-spaced interpolated tensor fields.} 
    \label{interpolation_figure}
\end{figure*}

\subsection{Tensor field optimal transport mass interpolation}
We consider performing optimal transport and displacement interpolation between two tensor fields supported on regular 1-$d$ (or 2-$d$) grids~\cite{peyre2016quantum}. 
We consider a common domain $\gD = [0,1]$ (or $[0,1]^2$) with the cost defined as $\bC_{i,j} = \| \bx_i - \by_j \|^2 \bI$ for $\bx_i, \by_j \in \gD$. The marginals $\bP, \bQ$ are given tensor fields. We first compute the balanced coupling $\bGamma$ by solving an entropy regularized OT problem~(\ref{eqn:regMWOT}): 
\begin{equation*}
    \min_{\bGamma \in \M_{m \times n}^d(\bP, \bQ)} \sum_{i,j} \Big( \trace(\bC_{i,j}\bGamma_{i,j}) - \epsilon \, H(\bGamma_{i,j}) \Big),
\end{equation*} 
where the quantum entropy is defined as $H(\bGamma_{i,j}) \coloneqq - \trace(\bGamma_{i,j} \log(\bGamma_{i,j}) - \bGamma_{i,j})$. 
Then, the coupling is used to interpolate between the two tensor fields by generalizing the displacement interpolation \cite{mccann1997convexity} to SPD-valued marginals. Please refer to~\cite[Section~2.2]{peyre2016quantum} for more details. It should be noted that due to the balanced nature of our formulation, we do not need to adjust the couplings after matching as required in~\cite{peyre2016quantum}.

We compare interpolation results of the proposed (balanced) {\algname} with both linear interpolation $(1-t)\bP + t \bQ$ for $t \in [0,1]$ and the \textit{unbalanced} quantum OT (QOT) of \cite{peyre2016quantum}. The QOT solves the following problem with quantum KL regularization, i.e.,
\begin{equation*}
    \min\limits_{\bGamma} \sum_{i,j} \Big( \trace(\bC_{i,j} \bGamma_{i,j}) - \epsilon \, H(\bGamma_{i,j}) + \rho \, {\rm KL}(\bGamma \sone \vert \bP) + \rho \,{\rm KL}(\bGamma^\top \sone \vert \bQ) \Big), 
\end{equation*}
where ${\rm KL}(\bP \vert \bQ) \coloneqq \sum_i \trace \big(\bP_i \log(\bP_i) - \bP_i \log(\bQ_i) -\bP_i +\bQ_i \big)$  and $\bGamma \sone \coloneqq [\sum_j (\bGamma_{i,j})]_{m \times 1}$  and  $\bGamma^\top \sone \coloneqq [\sum_i (\bGamma_{i,j})]_{n \times 1}$. For comparability, we set the same $\epsilon$ for both QOT and {\algname}.
% and Riemannian optimization stops after $30$ iterations using Riemannian steepest descent. 

Figure \ref{interpolation_figure} compares the mass interpolation for both 1-$d$ (top) and 2-$d$ (bottom) grids. 
%For the 1-$d$ case, we observe the interpolated tensor fields obtained by QOT traverse from the linear interpolation to {\algname} as the parameter $\rho$ increases.  
%{\algname} can be considered as the limit of QOT when $\rho \xrightarrow{} \infty$. 
For the 2-$d$ tensor fields, we further render the tensor fields via a background texture where we perform anisotropic smoothing determined by the tensor direction. \revision{To be specific, we follow the procedures in \cite{peyre2016quantum} by applying the tensor to the gradient vector of the textures on the grid such that the texture is stretched in the main eigenvector directions of the tensor.}
%Similarly, we see the tensor fields generated from {\algname} more closely respect the marginal constraints.
In both the settings, we observe that the tensor fields generated from {\algname} respect the marginal constraints more closely.

\subsection{Tensor field Wasserstein barycenter}
We also analyze the Wasserstein barycenters learned by the proposed RMOT approach and qualitatively compare with QOT barycenter \cite[Section 4.1]{peyre2016quantum}. We test on two tensor fields ($n = 4$) supported 2-$d$ grids.  
%to illustrate the difference between {\algname} and QOT barycenter \cite[Section 4.1]{peyre2016quantum}. 

Figure~\ref{barycenter_fig} compares barycenter from QOT (top) and {\algname} (bottom) initialized from the normalized solution of QOT. We observe that the QOT solution is not optimal when the marginal constraint is enforced and the barycenter obtained does not lie in the simplex of tensors. 
\revision{Such a claim is strengthened by comparing the objective value versus the optimal value, obtained by the CVX toolbox \cite{grant2014cvx}. The objective can be further decreased when initialized from the (normalized) QOT solution, see more discussions in Appendix \ref{appendix:additional_experiments}. }

\begin{figure}%[!th]\
\centering
%\captionsetup{justification=centering}
  \subfloat{
	\begin{minipage}[c][0.08\textwidth]{
	   0.08\textwidth}
	   \centering
	   \includegraphics[width=0.5\textwidth]{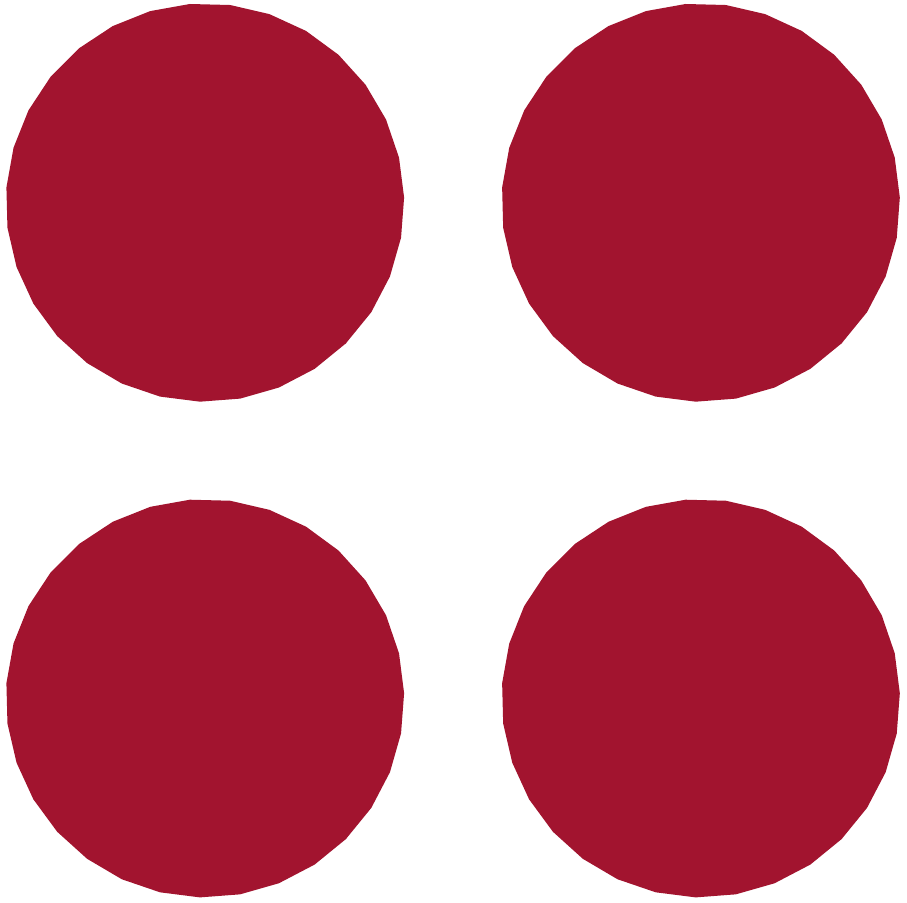}
	\end{minipage}}
 %\hfill 	
  \subfloat{
	\begin{minipage}[c][0.08\textwidth]{
	   0.08\textwidth}
	   \centering
	   \includegraphics[width=0.5\textwidth]{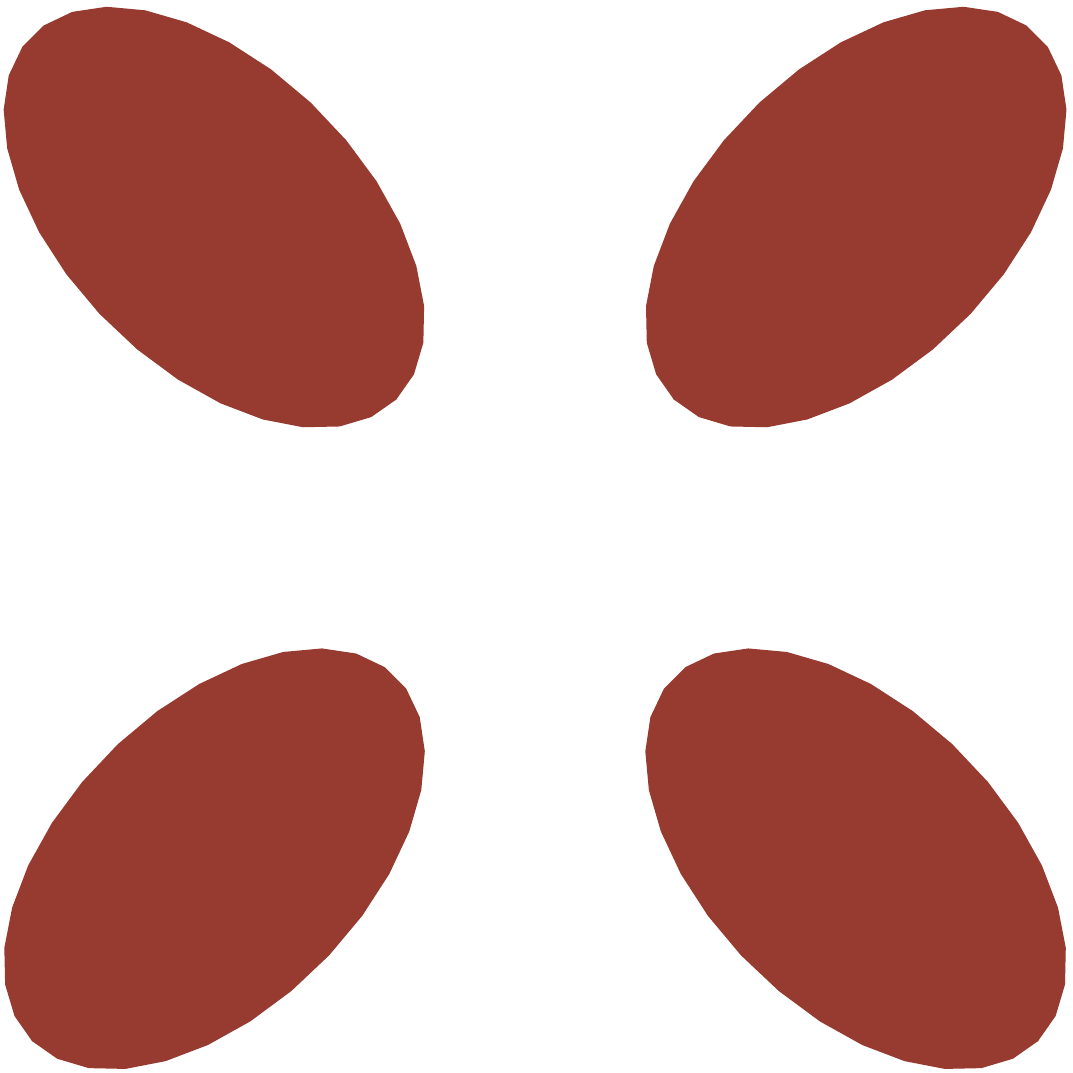} \\[8pt]
	   \includegraphics[width=0.5\textwidth]{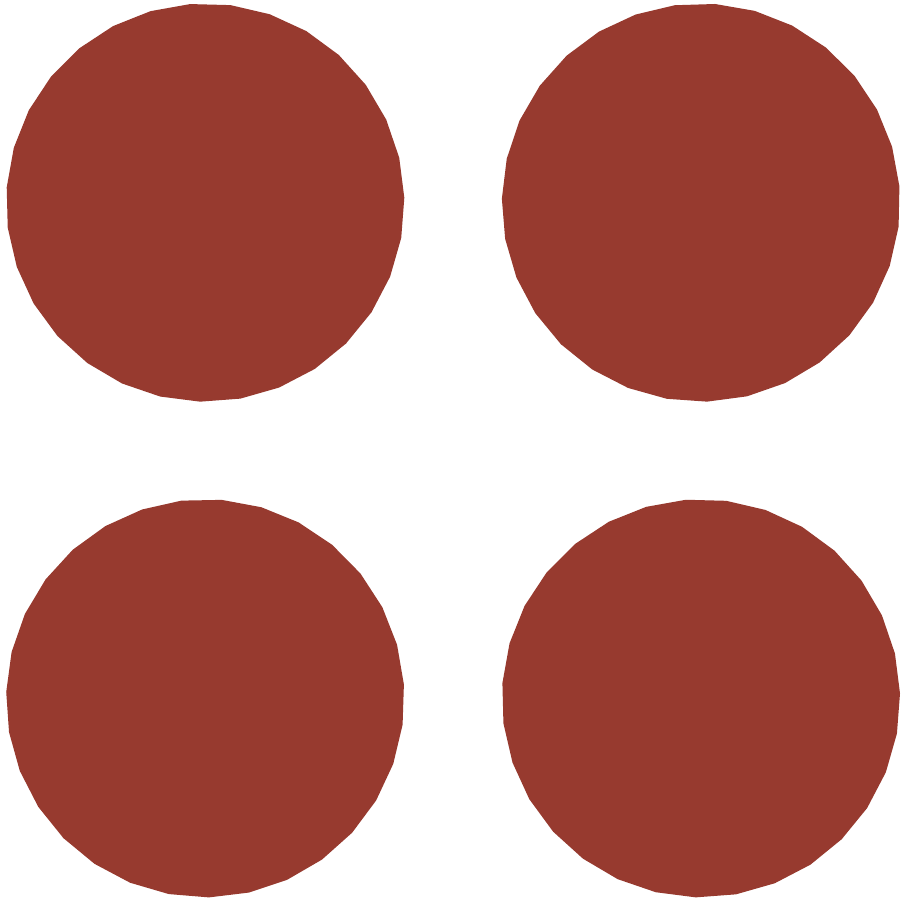}
	\end{minipage}}
	%\hfill
	\subfloat{
	\begin{minipage}[c][0.08\textwidth]{
	   0.08\textwidth}
	   \centering
	   \includegraphics[width=0.5\textwidth]{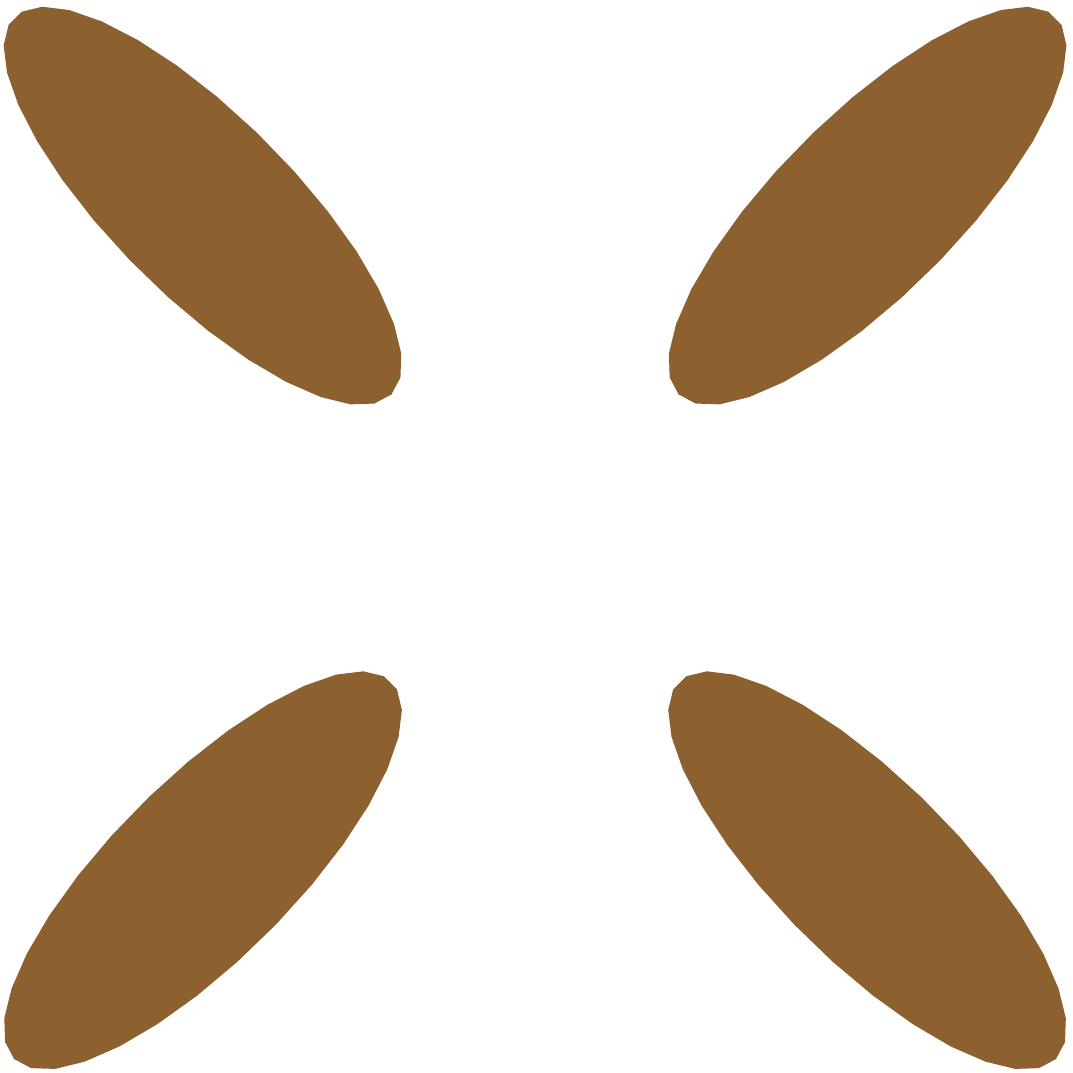} \\[8pt]
	   \includegraphics[width=0.5\textwidth]{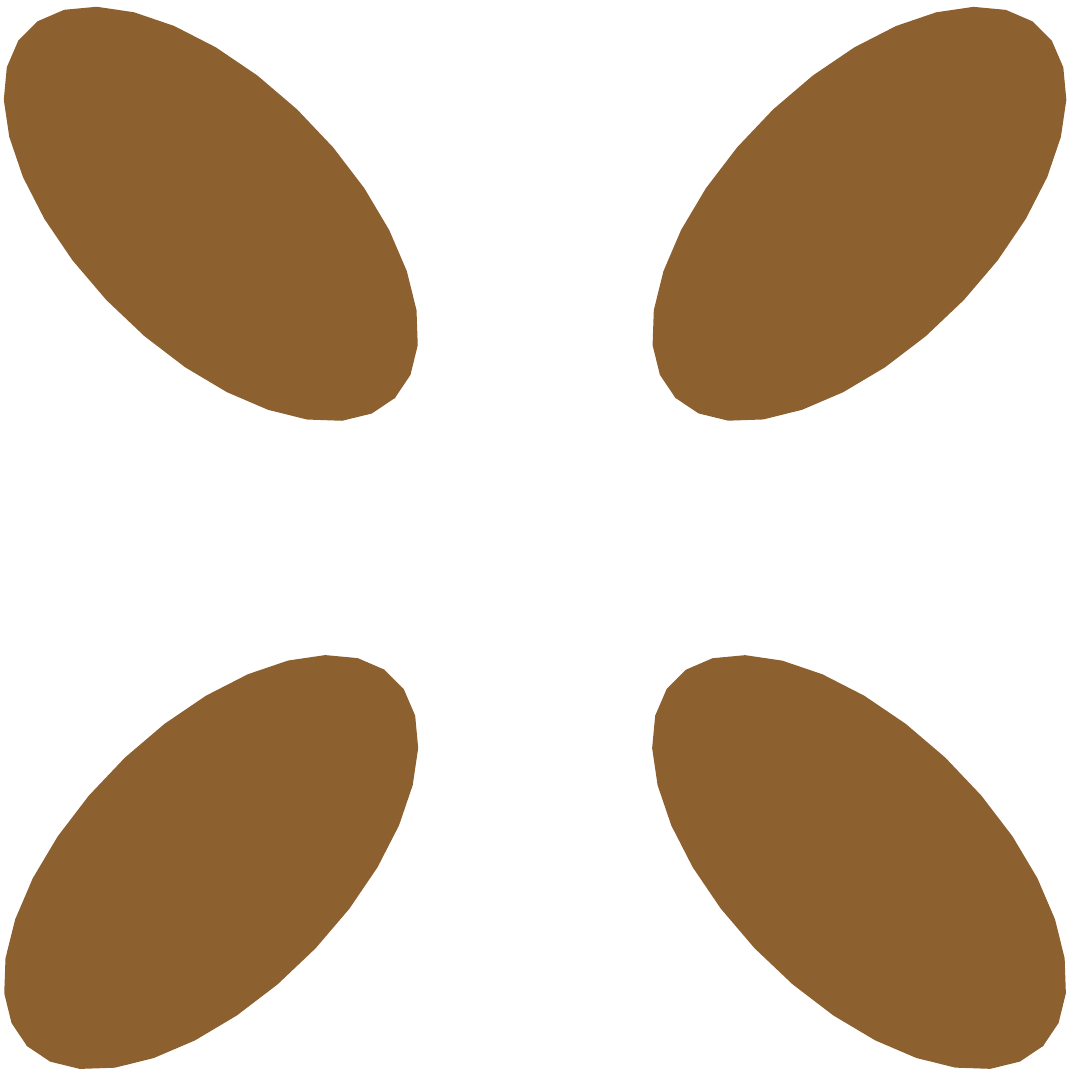}
	\end{minipage}}
	%\hfill
	\subfloat{
	\begin{minipage}[c][0.08\textwidth]{
	   0.08\textwidth}
	   \centering
	   \includegraphics[width=0.5\textwidth]{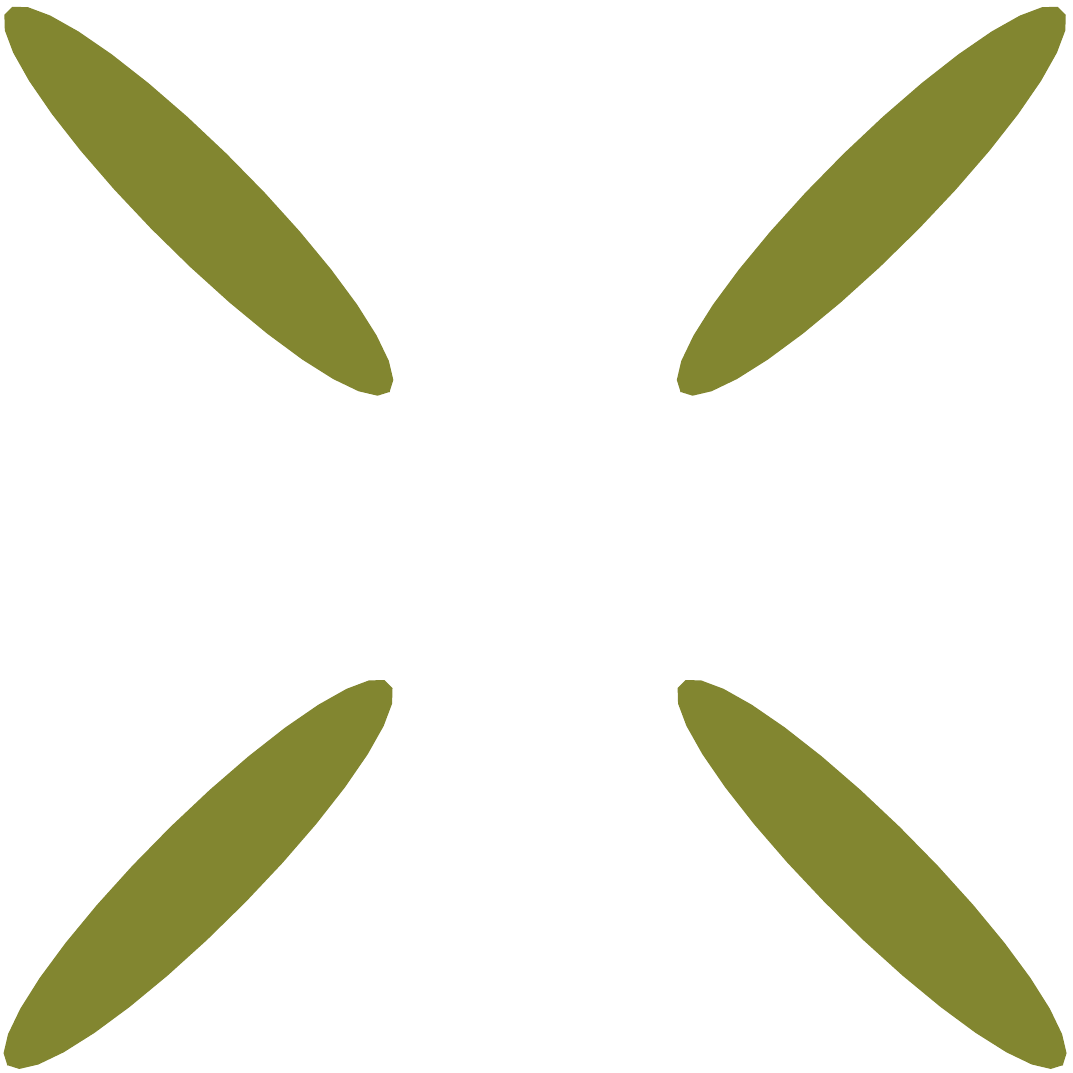} \\[8pt]
	   \includegraphics[width=0.5\textwidth]{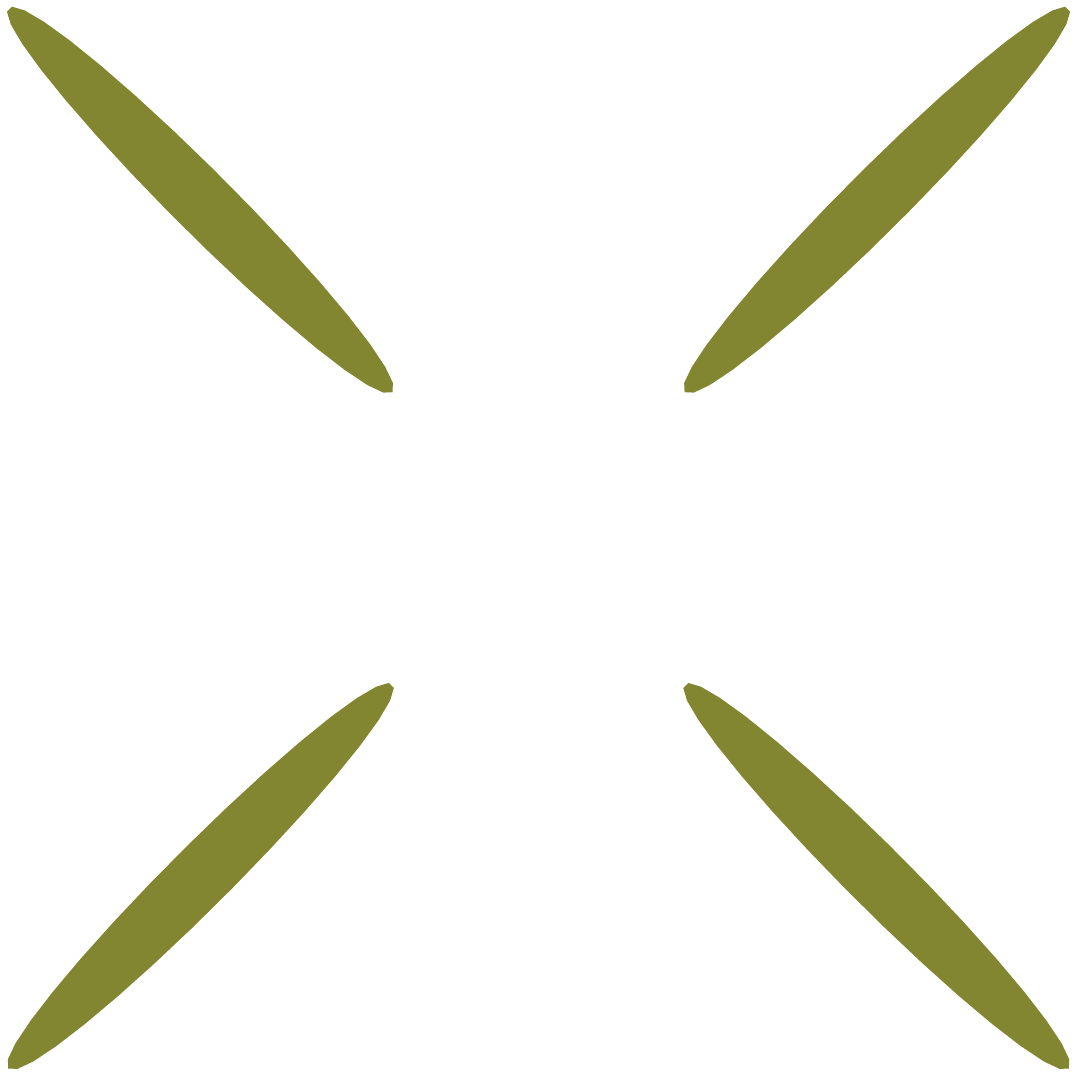}
	\end{minipage}}
 %\hfill	
  \subfloat{
	\begin{minipage}[c][0.08\textwidth]{
	   0.08\textwidth}
	   \centering
	   \includegraphics[width=0.5\textwidth]{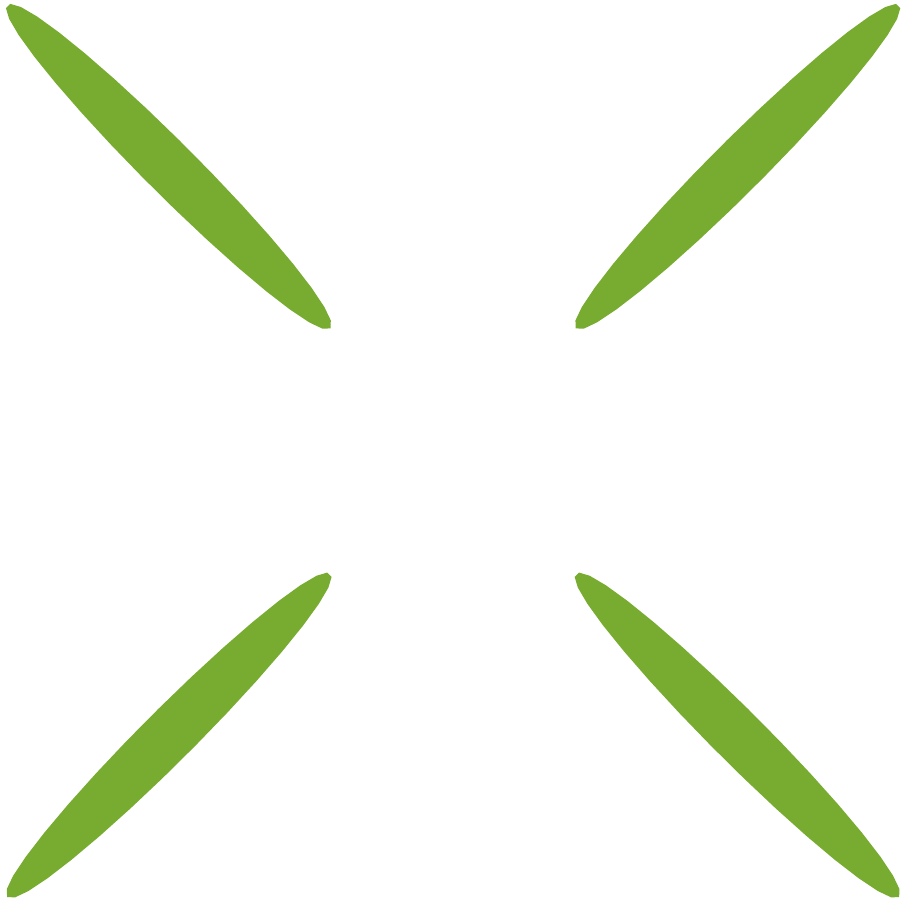}
	\end{minipage}}
\caption{Barycenter interpolation. From left to right $t = 0$ (input), $t= 0.25, 0.5, 0.75$ (barycenters), $t=1$ (input). The top row is QOT and the bottom is {\algname}.}
\label{barycenter_fig}
\end{figure}

\section{Conclusion}\label{sec:conclusion}
In this paper, we have discussed the balanced optimal transport (OT) problem involving SPD matrix-valued measures. For the SPD matrix-valued OT problem, the coupling matrix is a block matrix where each block is a symmetric positive definite matrix. The set of such coupling matrices can be endowed with Riemannian geometry, which enables optimization both linear and non-linear objective functions. We have also shown how the SPD-valued OT setup extend many optimal transport problems to general SPD-valued marginals, including the Wasserstein barycenter and the Gromov-Wasserstein (GW) discrepancy. Experiments in a number of applications confirm the benefit of our approach.

\bibliographystyle{apalike}
% %\setcitestyle{authoryear,open={((},close={))}}
\bibliography{references}

\clearpage

\appendix

\section{Convergence of block matrix balancing algorithm and validity of retraction}
\label{convergence_bmb_retr_appendix}
In Section \ref{manifold_geometry_sect}, we generalize the matrix scaling algorithm to block matrix cases, which is essential to derive the retraction for the manifold $\M_{m,n}^d$. Here, we empirically show that the algorithm quickly converges and the proposed retraction is valid and satisfies the two conditions: 1) $R_x(0) = x$ and 2) $\D R_x(0)[u] = u$, where $\D f(x)[u]$ is the derivative of a function at $x$ along direction $u$.

{\bf Convergence.} We show in Figure \ref{algorithm_retr_fig} the convergence of the proposed block matrix balancing procedure in Algorithm \ref{bmb_algorithm}. We generate the marginals as random SPD matrices for different dimensions $d$ and size $m,n$. The convergence is measured as the relative gap to satisfy the constraints. We observe that the number of iterations for convergence are similar with different parameters while the runtime increases by increasing the dimension and size. 

{\bf Validity of retraction.} The first condition of retraction is easily satisfied as $R_\bGamma(\bzero) = {\rm MBalance}(\bGamma) = \bGamma$. For the second one, we have for any $\bGamma \in \M_{m,n}^d$ and $\bU \in T_\bGamma \M_{m,n}^d$,
\begin{align*}
    \D R_\bGamma(\bzero)[\bU] = \lim_{h \xrightarrow{} 0} \frac{R_\bGamma(h\bU) -R_\bGamma(\bzero)}{h} .
    % = \lim_{t \xrightarrow{} 0} \frac{{\rm MBalance}([\bGamma_{i,j} + t \bU_{i,j} + O(t^2)]_{m \times n}) - \bGamma}{t},
\end{align*}
% where the second equality is from the expansion of the exponential map. 
Hence, we need to numerically verify $R_\bGamma(h\bU) - \bGamma = O(h) \bU$ for any $\bGamma, \bU$. We compute an approximation error in terms of the inner product on the tangent space $T_\bGamma \M_{m,n}^d$ as
\begin{equation*}
    \varepsilon = \left\vert \langle {\rm P}_\bGamma( R_\bGamma(\revision{h \bU)} - \bGamma ), \bV \rangle_\bGamma - \langle  h\bU,\bV\rangle_\bGamma \right\vert,
\end{equation*}
for any $\bV \in T_\bGamma\M_{m,n}^d$ different from $\bU$. In Figure \ref{algorithm_retr_fig}(c), we show that the slope of the approximation error (as a function of $h$) matches the dotted line $h = 0$, which suggests hat the error $\varepsilon = O(1)$, thereby indicating that the retraction is valid.

\begin{figure*}[!th]
%\captionsetup{justification=centering}
    \centering
    \subfloat[]{\includegraphics[width = 0.24\textwidth, height = 0.2\textwidth]{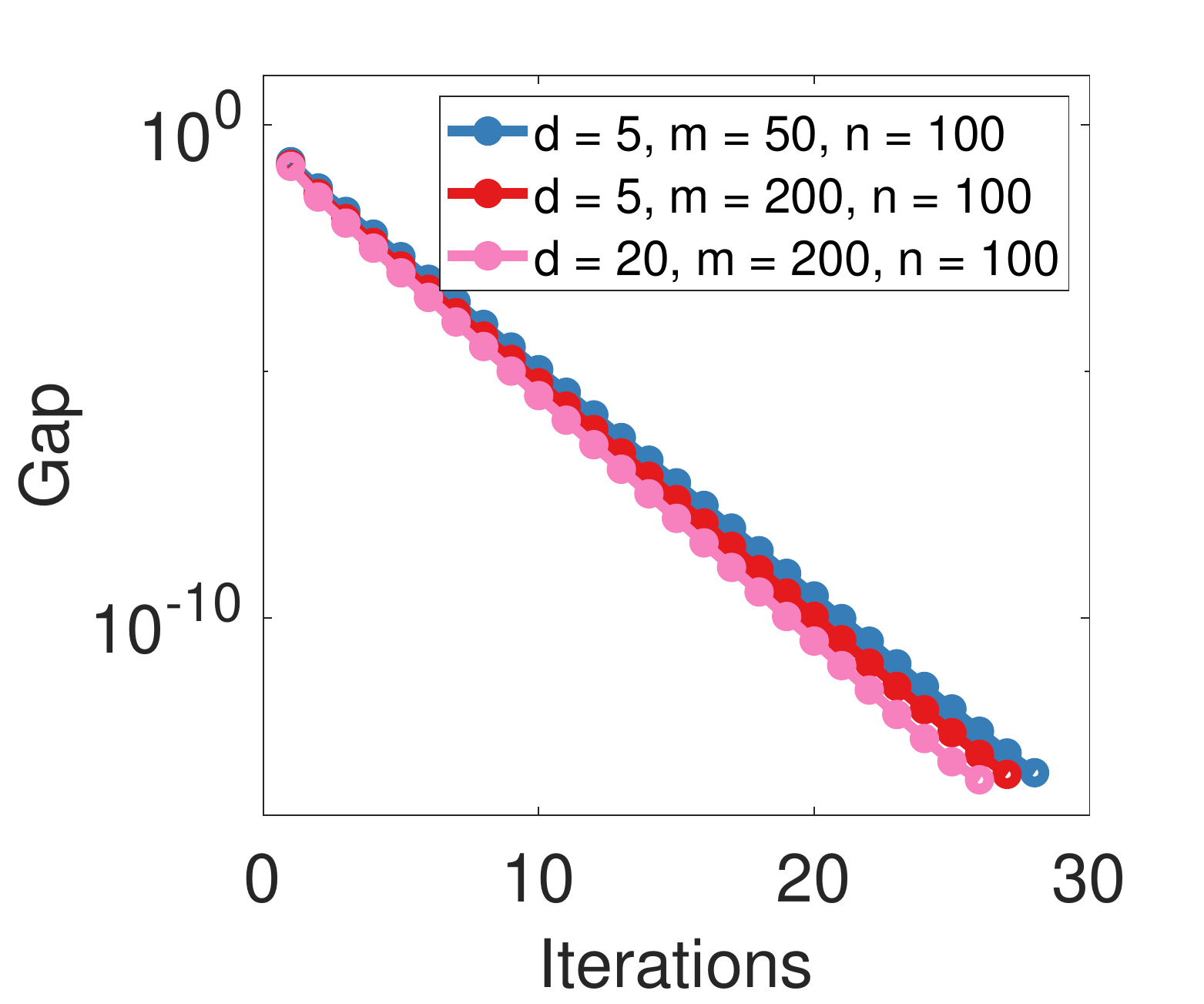}} 
    \hspace*{5pt}
    \subfloat[]{\includegraphics[width = 0.24\textwidth, height = 0.2\textwidth]{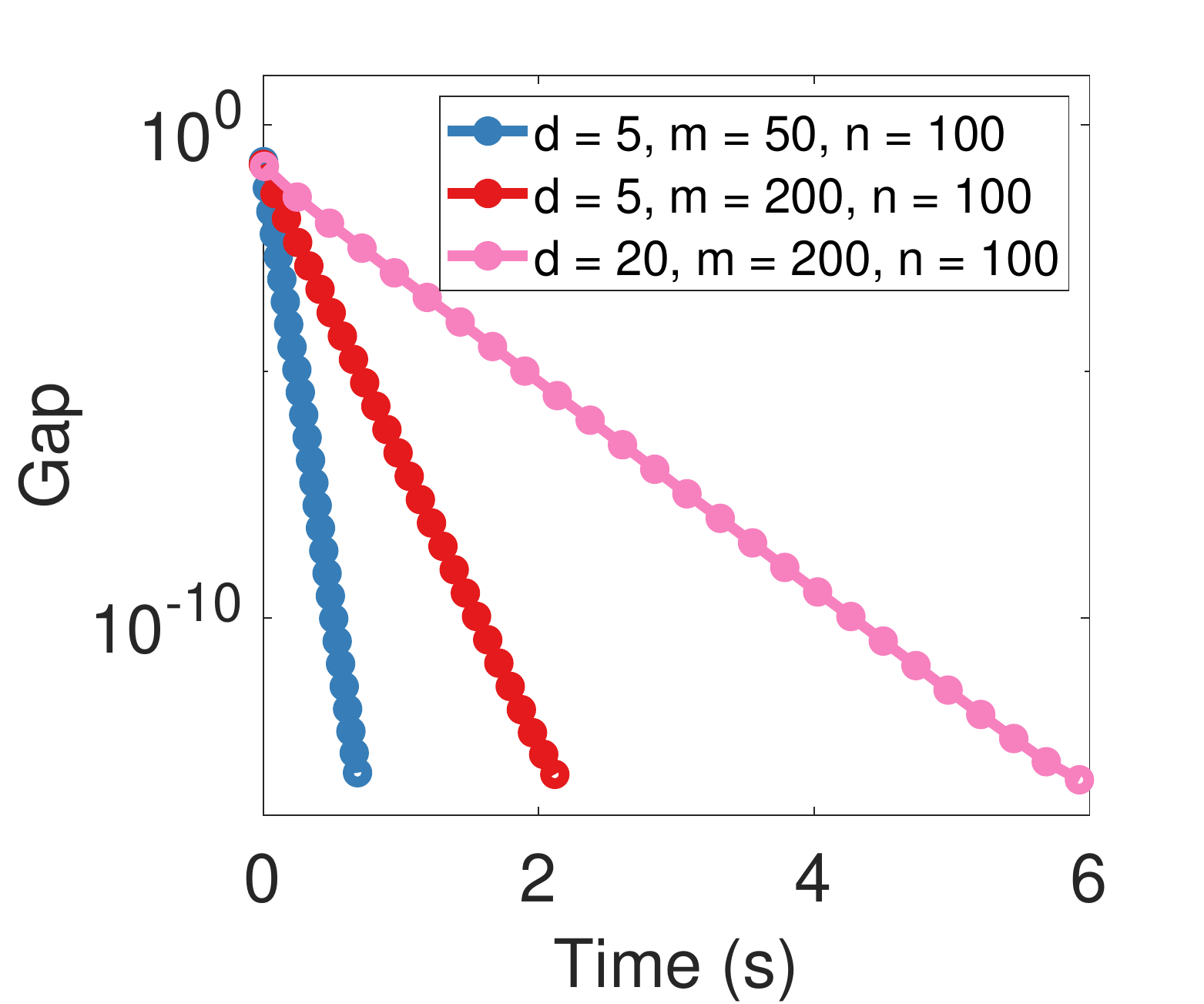}} 
    \hspace*{5pt}
    \subfloat[]{\includegraphics[width = 0.24\textwidth, height = 0.2\textwidth]{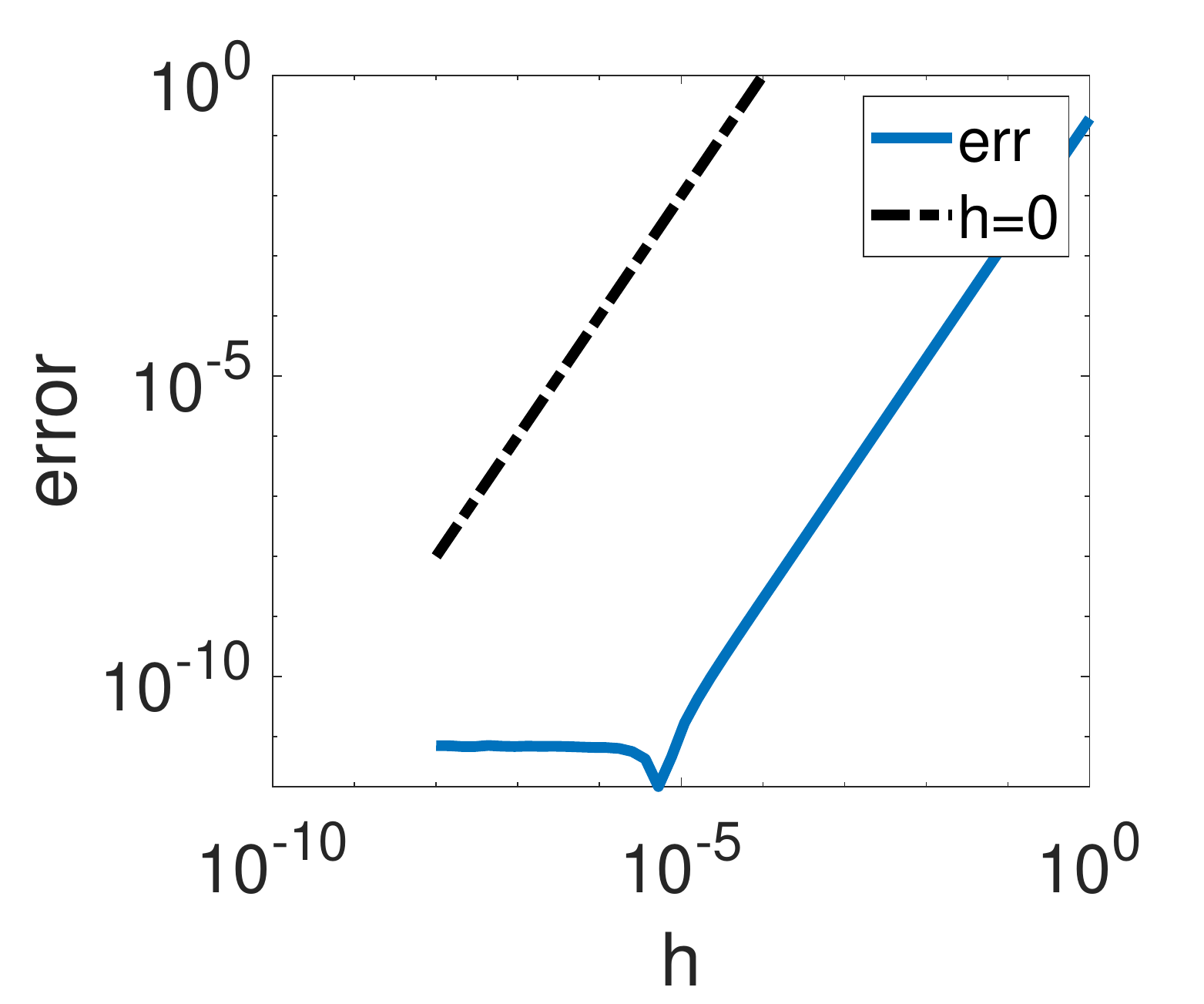}} 
    \caption{\small Convergence of Algorithm \ref{bmb_algorithm} in terms of iterations (a), runtime (b), and validity test for retraction (c). For the retraction to be valid, the slope of the continuous line should match the dotted line (which represents the line $h = 0$) and also start from $0$ when $h$ tends to $0$.}
    \label{algorithm_retr_fig}
\end{figure*}

\section{Discussion on construction of matrix-valued cost}
\label{cost_choice_appendix}
As highlighted in Proposition \ref{mw_distance_prop} for ${\rm MW}(\bP, \bQ)$ to be a metric for probability measures there are some conditions for the cost $[\bC_{i,j}]_{m \times n}$ to satisfy. In the following, we give some examples of how such costs are constructed:
%can be constructed. Suppose the matrix-valued samples are given by $\{ \bX_i \}_{i \in [m]}$, $\{ \bY_j\}_{j\in [n]}$, where $\bX_i, \bY_j \in \sR^{d \times s}$. 
% We first show a specific construction of the cost that satisfies the conditions in Proposition \ref{mw_distance_prop}, and hence, the SPD matrix-valued OT \eqref{reformulated_mw} gives a distance metric ${\rm MW}(\bp, \bq)$.
\begin{enumerate}
    \item Let the samples are given by $\{ \bX_i \}_{i \in [m]}$, $\{ \bY_j\}_{j\in [n]}$, where $\bX_i, \bY_j \in \sR^{d \times s}$.  Define $\bC_{i,j} = d(\bX_i ,\bY_j)^2 \, \bI$, where $d: \sR^{d \times s} \times \sR^{d \times s} \xrightarrow{} \sR_{+}$ is a distance function. 
    \item Let the samples are given by $\{ \bX_i \}_{i \in [m]}$, $\{ \bY_j\}_{j\in [n]}$, where $\bX_i, \bY_j \in \sR^{d \times s}$, where $s\geq d$. Assume the matrix $\bX_i-\bY_j$ has column full rank. Define $\bC_{i,j} = (\bX_i - \bY_j)(\bX_i - \bY_j)^\top$. 
\end{enumerate}
% \begin{proposition}
% \label{cost_mw_prop}
% The following assumptions and constructions of $\bC_{i,j}$ lead to a valid metric ${\rm MW}(\bp, \bq)$:
% \begin{enumerate}[label=(\arabic*)]
%     \item Given a scalar-valued distance function $d: \sR^{d \times s} \times \sR^{d \times s} \xrightarrow{} \sR_{+}$. Define $\bC_{i,j} = d(\bX_i ,\bY_j) \, \bI$.
%     \item Let the samples are given by $\{ \bX_i \}_{i \in [m]}$, $\{ \bY_j\}_{j\in [n]}$, where $\bX_i, \bY_j \in \sR^{d \times s}$. Assume the matrix $\bX_i-\bY_j$ has column full rank and define $\bC_{i,j} = (\bX_i - \bY_j)(\bX_i - \bY_j)^\top$. 
% \end{enumerate}
% \end{proposition}
\begin{proof}
(1) The first definition of cost trivially satisfies all the conditions due to the metric properties of a well-defined scalar-valued distance.

(2) For the second definition of cost, The first two conditions, i.e., symmetric and positive definite conditions are easily satisfied and we only need to verify the third condition in Proposition \ref{mw_distance_prop}. The third condition is also satisfied due to the triangle inequality of Mahalanobis distance metric in the vectorized form. That is, for any $\bA \succeq \bzero$, we consider three sets of samples $\{\bX_i\}, \{ \bY_k\}, \{ \bZ_j\} \subset \sR^{d \times s}$. Then, we have
\begin{align*}
    \sqrt{\trace(\bC_{i,j} \bA)} &= \sqrt{\trace( (\bX_i - \bZ_j)^\top \bA (\bX_i -\bZ_j) )} \\
    &= \sqrt{(\Vec(\bX_i) -\Vec(\bZ_j))^\top (\bI \otimes \bA) (\Vec(\bX_i) - \Vec(\bZ_j))} \\
    %  \sum_i (\bx_i - \bz_i)^\top \bD (\bx_i - \bz_i) \leq \sum_i\Big( \sqrt{(\bx_i - \by_i)^\top \bD (\bx_i - \by_i)} + \sqrt{ (\by_i -\bz_i)^\top \bD (\by_i - \bz_i) } \Big)^2 \\
    &\leq  \sqrt{(\Vec(\bX_i) - \Vec(\bY_k))^\top (\bI \otimes \bA) (\Vec(\bX_i) - \Vec(\bY_k))} \\
    &\quad + \sqrt{(\Vec(\bY_k) - \Vec(\bZ_j))^\top (\bI \otimes \bA) (\Vec(Y_k) - \Vec(\bZ_j))}\\
    &=  \sqrt{ \trace((\bX_i -\bY_k)^\top \bA (\bX_i - \bY_k)) } + \sqrt{ \trace((\bY_k -\bZ_j)^\top \bA (\bY_k -\bZ_j)) } \\
    &= \sqrt{\trace(\bC_{i,k} \bA)} + \sqrt{\trace(\bC_{k,j} \bA)},
\end{align*}
where $\Vec(\bC)$ denotes the vectorization of matrix $\bC$ by stacking the columns.
\end{proof}

% The requirement that $s \geq d$ is to ensure $\bC_{i,j} \succ \bzero$. Proposition \ref{cost_mw_prop} assumes no structure for the matrix space $\sR^{d \times s}$, and therefore, remains valid for symmetric matrices or positive semi-definite matrices. 

\section{Additional experiments} \label{appendix:additional_experiments}
In this section, we give additional experiments to further substantiate the claims made in the main text.

\subsection{Tensor field optimal transport mass interpolation}
\revision{
We first provide more details on displacement interpolation considered in the experiment. After we obtain the optimal $\bGamma^*$, for $t \in [0,1]$, we compute the interpolated measure at $t$ as 
\begin{equation*}
    \sum_{i,j} ( (1 - t) \bP_i + t \bQ_j ) \bGamma_{i,j} \delta_{x^t_{i,j}},
\end{equation*}
where $x^t_{i,j}$ is the interpolated location on the $2$-d grid.

In addition to the experiments presented in the main texts, we also show other examples of tensor fields mass interpolation in \revision{Figures \ref{tensor_field_mass_1d_appendix}} and \ref{tensor_field_mass_2d_appendix}. In Figure \ref{tensor_field_mass_1d_appendix}, the inputs are given as $1$-d tensor fields, which are the first and last row for each subfigure. We compare the interpolation given by the linear interpolation (first column), QOT with different values of $\rho$ and RMOT (last column).
In Figure \ref{tensor_field_mass_2d_appendix}, Input-1 and Input-5 are with $t = 0$ and $t = 1$, respectively. QOT-2 and RMOT-2 are with $t = 0.25$. QOT-3 and RMOT-3 are with $t = 0.5$. QOT-4 and RMOT-4 are with $t = 0.75$.
}

\begin{figure*}[!th]
\captionsetup{justification=centering}

    \centering
    \subfloat{\includegraphics[width = 0.18\textwidth, height = 0.1\textwidth]{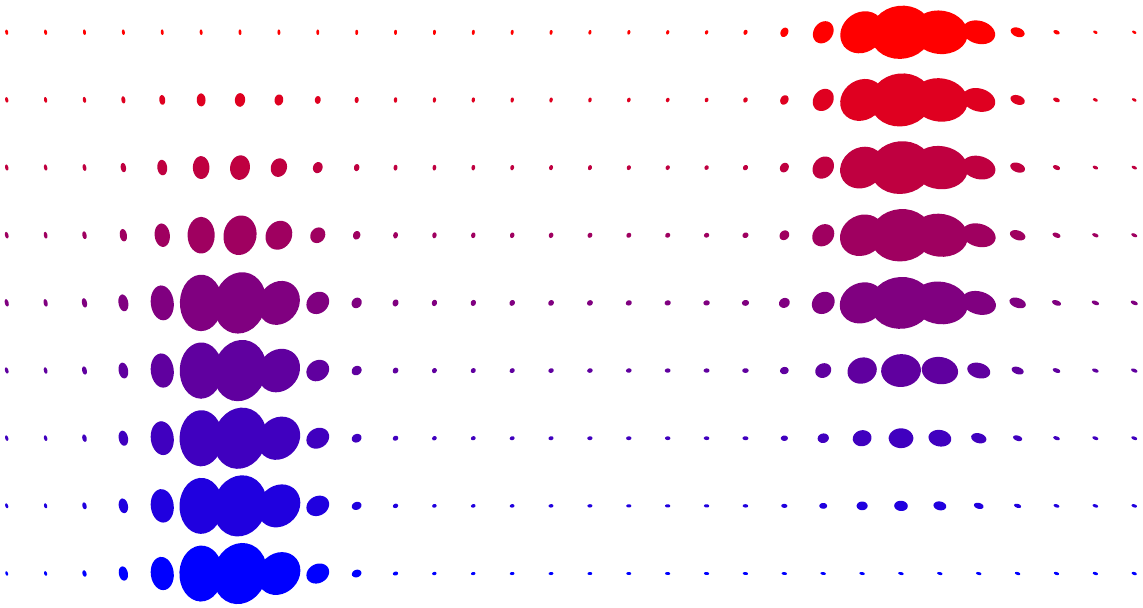}} 
    \hspace*{5pt}
    \subfloat{\includegraphics[width = 0.18\textwidth, height = 0.1\textwidth]{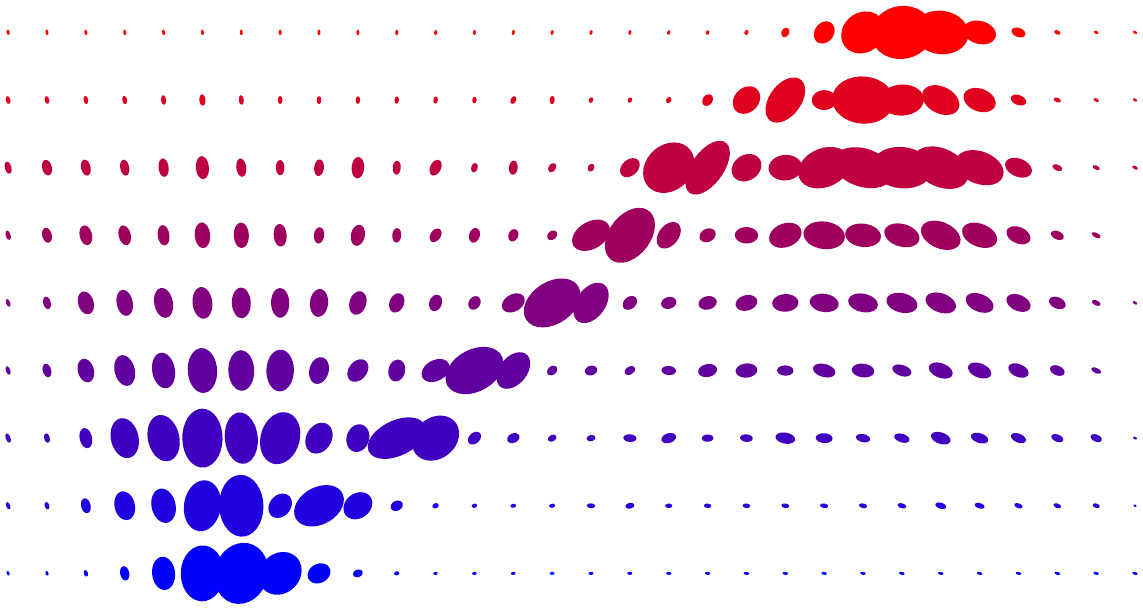}}
    \hspace*{5pt}
    \subfloat{\includegraphics[width = 0.18\textwidth, height = 0.1\textwidth]{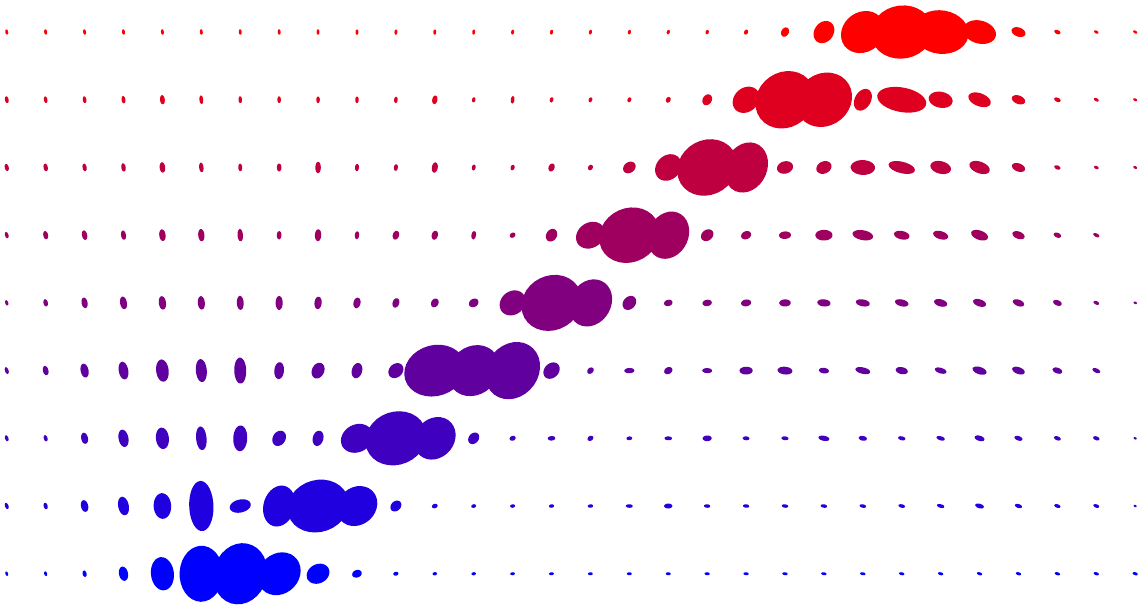}}
    \hspace*{5pt}
    \subfloat{\includegraphics[width = 0.18\textwidth, height = 0.1\textwidth]{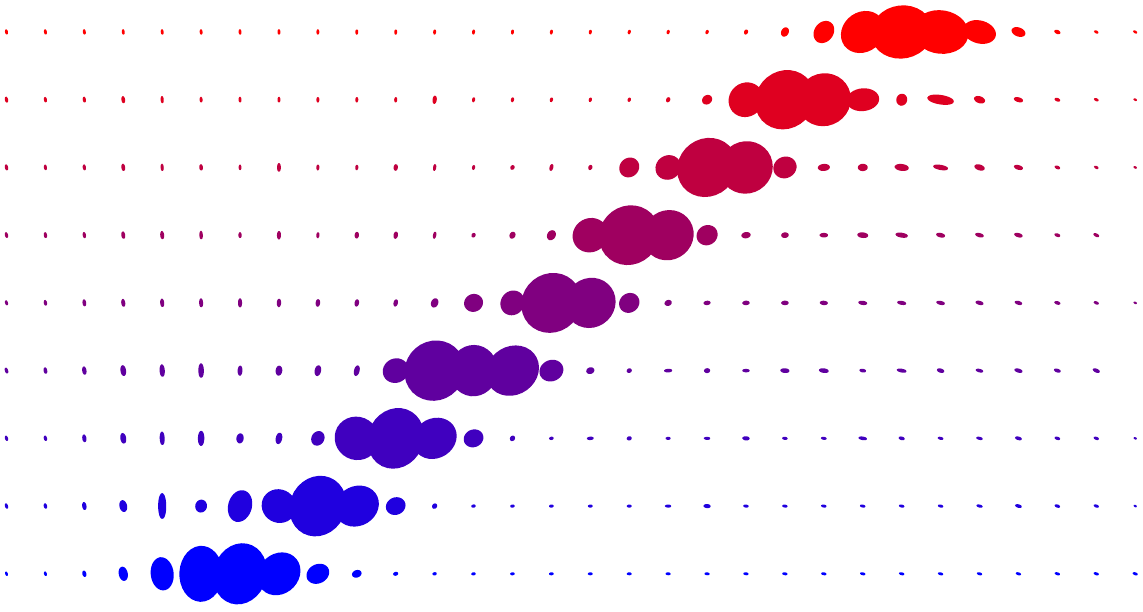}}
    \hspace*{5pt}
    \subfloat{\includegraphics[width = 0.18\textwidth, height = 0.1\textwidth]{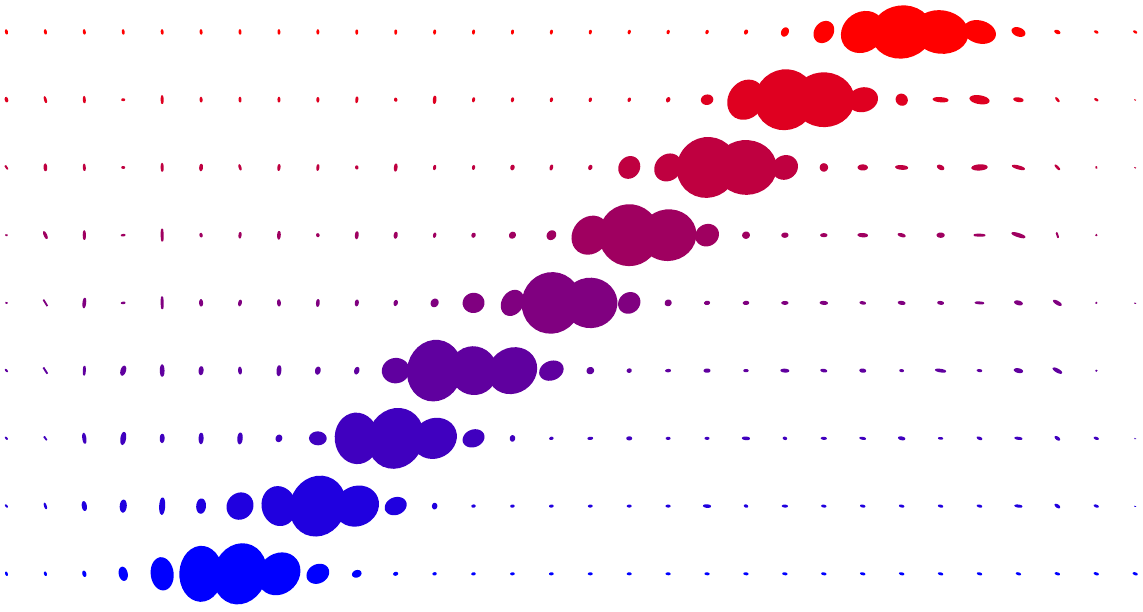}} \\[5pt]
    
    \subfloat{\includegraphics[width = 0.18\textwidth, height = 0.1\textwidth]{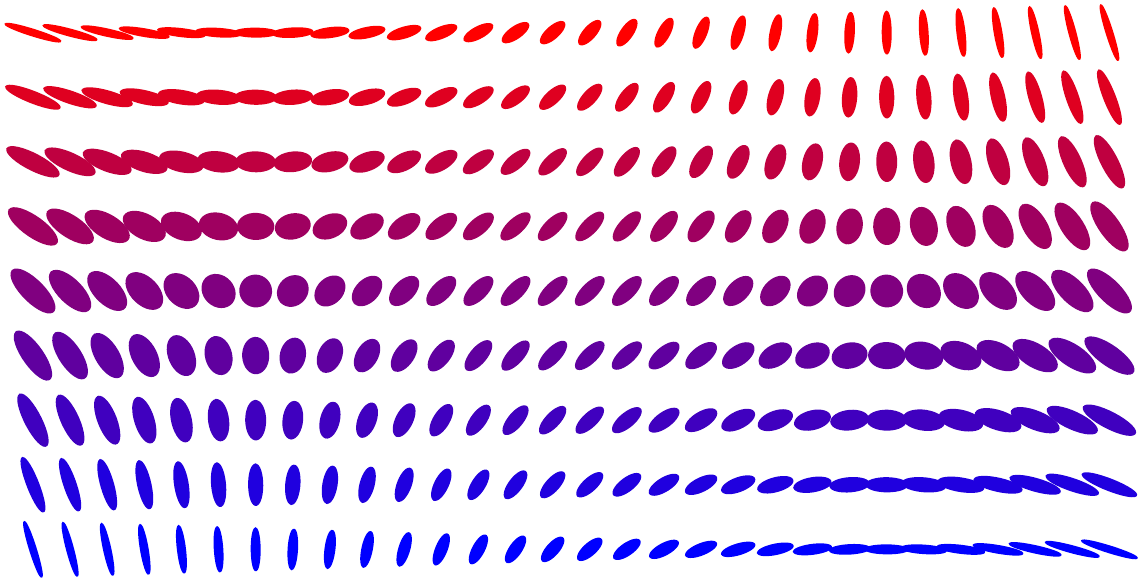}} 
    \hspace*{5pt}
    \subfloat{\includegraphics[width = 0.18\textwidth, height = 0.1\textwidth]{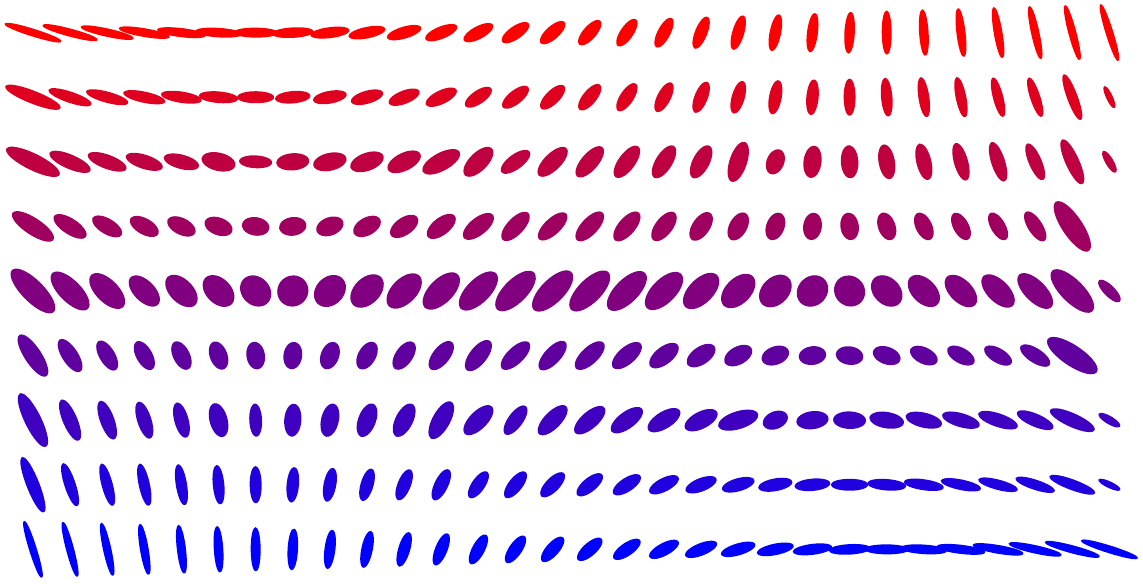}}
    \hspace*{5pt}
    \subfloat{\includegraphics[width = 0.18\textwidth, height = 0.1\textwidth]{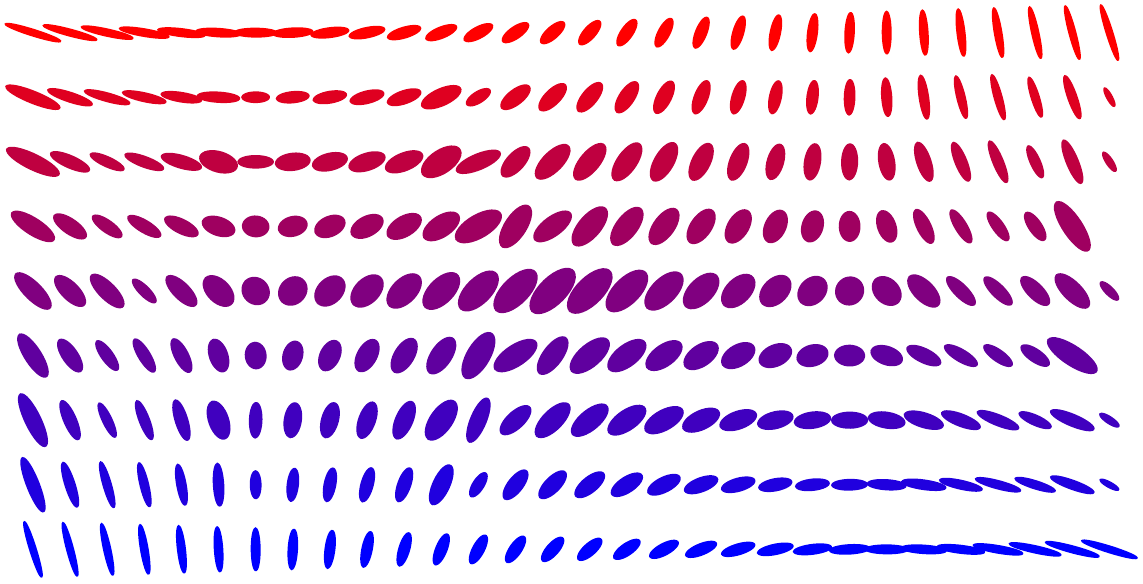}}
    \hspace*{5pt}
    \subfloat{\includegraphics[width = 0.18\textwidth, height = 0.1\textwidth]{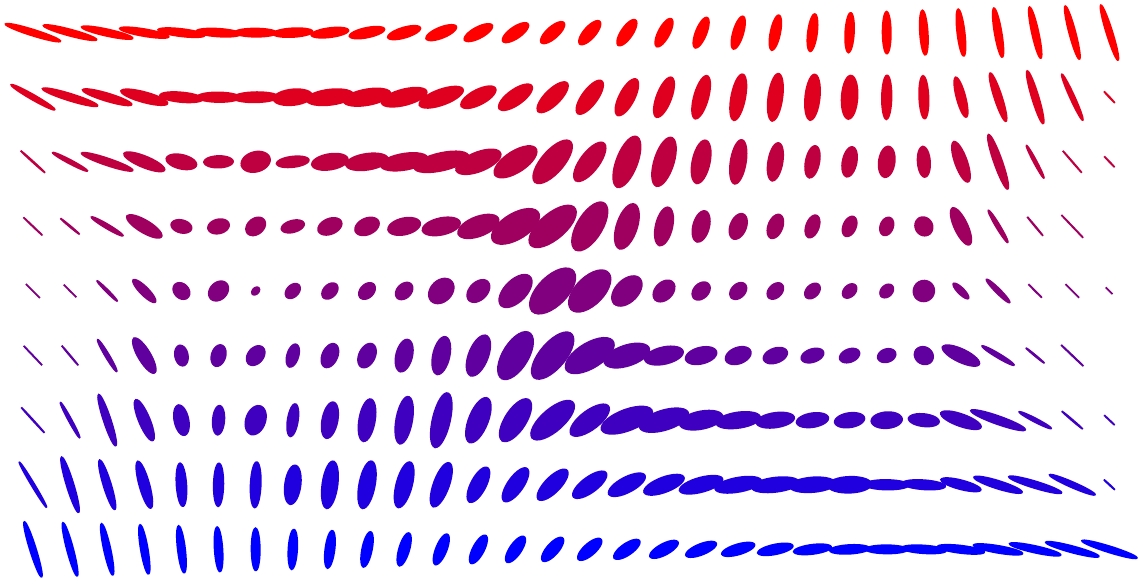}}
    \hspace*{5pt}
    \subfloat{\includegraphics[width = 0.18\textwidth, height = 0.1\textwidth]{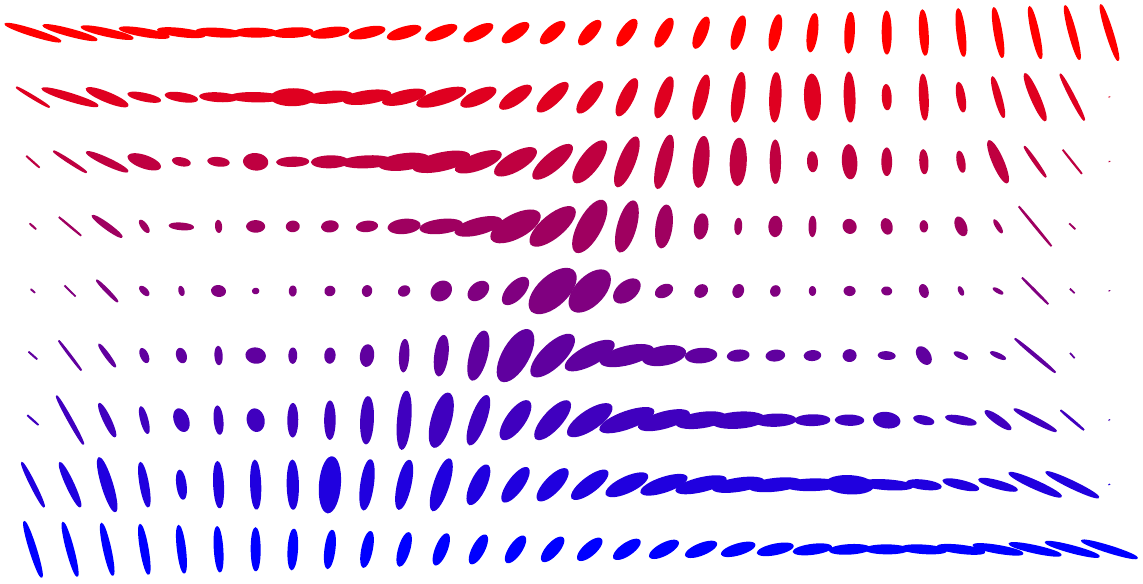}} \\[5pt]
    
    \subfloat{\includegraphics[width = 0.18\textwidth, height = 0.1\textwidth]{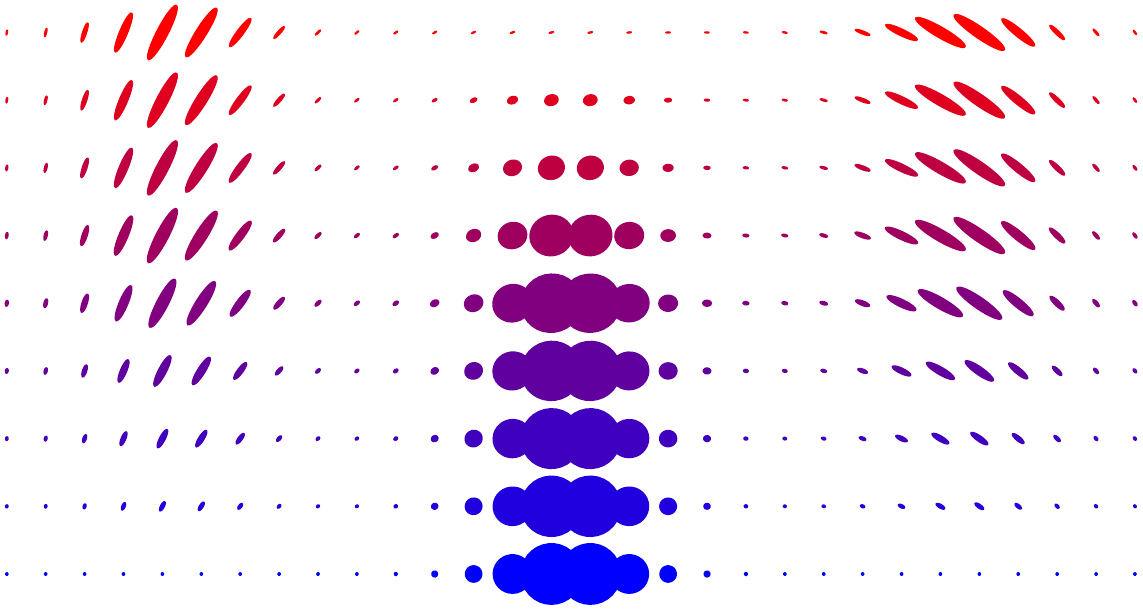}} 
    \hspace*{5pt}
    \subfloat{\includegraphics[width = 0.18\textwidth, height = 0.1\textwidth]{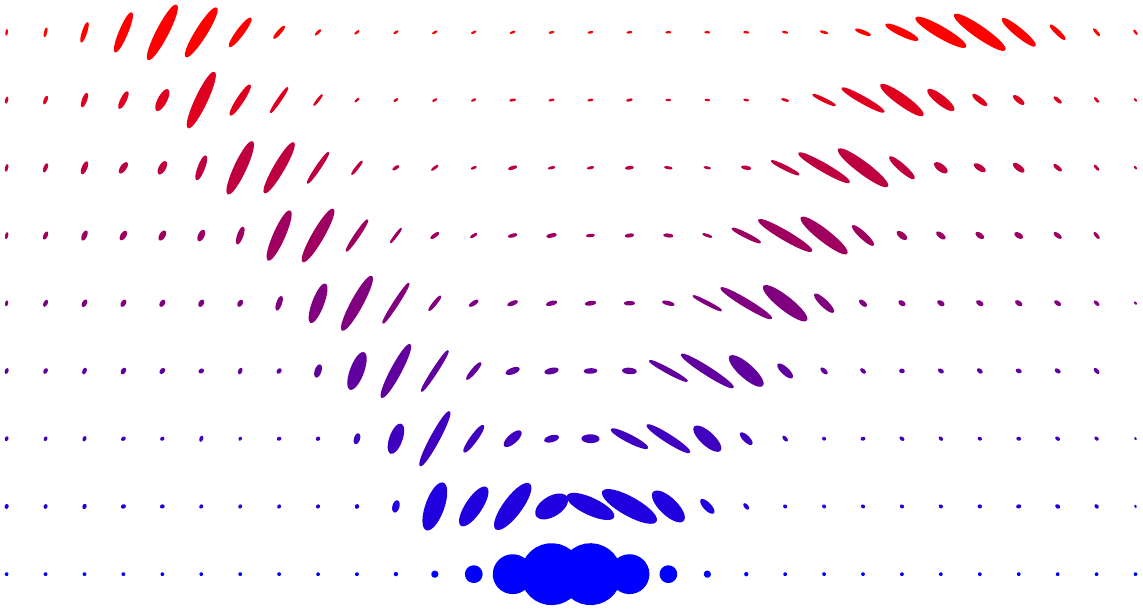}}
    \hspace*{5pt}
    \subfloat{\includegraphics[width = 0.18\textwidth, height = 0.1\textwidth]{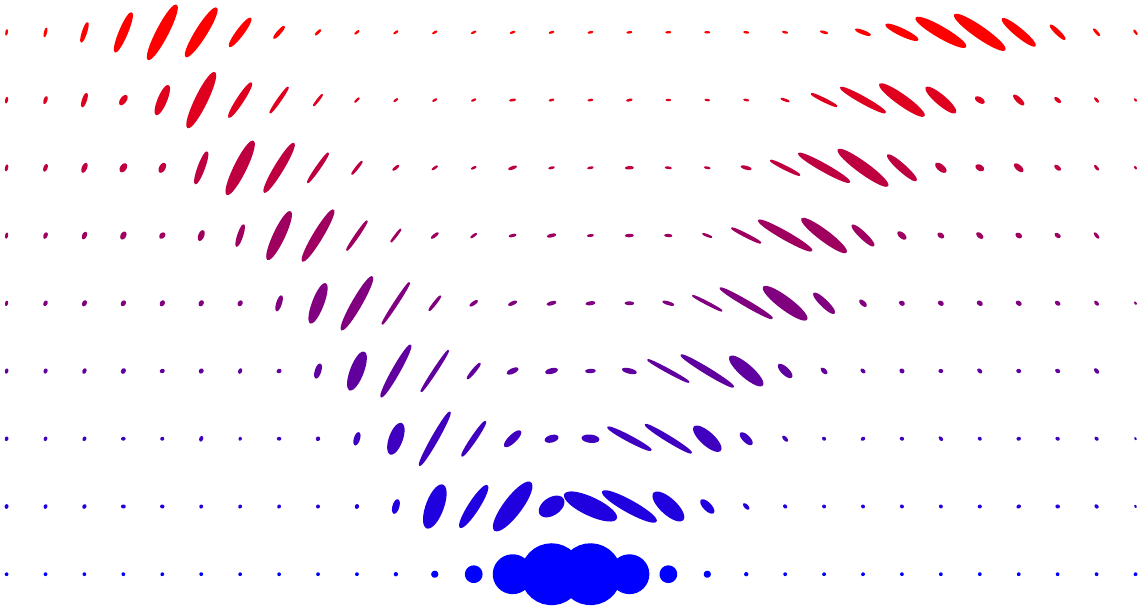}}
    \hspace*{5pt}
    \subfloat{\includegraphics[width = 0.18\textwidth, height = 0.1\textwidth]{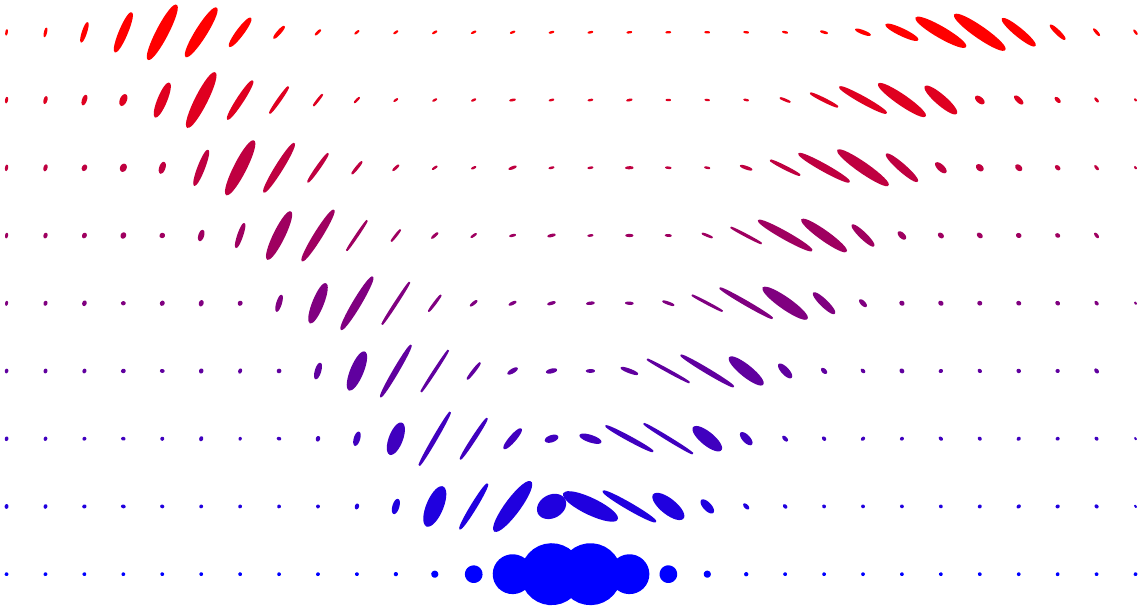}}
    \hspace*{5pt}
    \subfloat{\includegraphics[width = 0.18\textwidth, height = 0.1\textwidth]{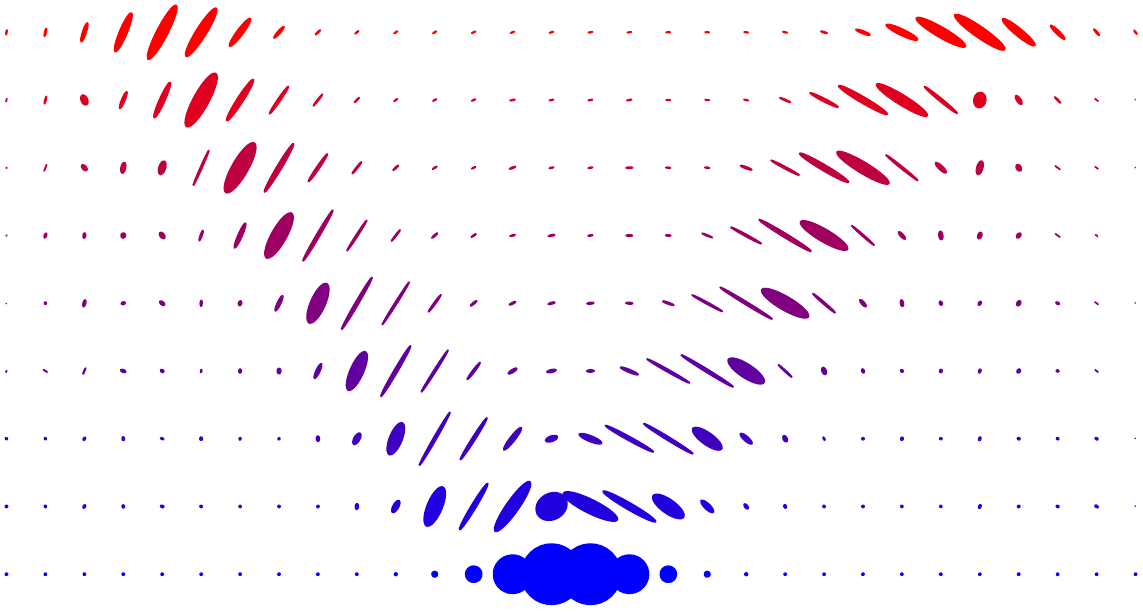}} \\[5pt]
    
    \stackunder[5pt]{\includegraphics[width=0.18\textwidth, height = 0.1\textwidth]{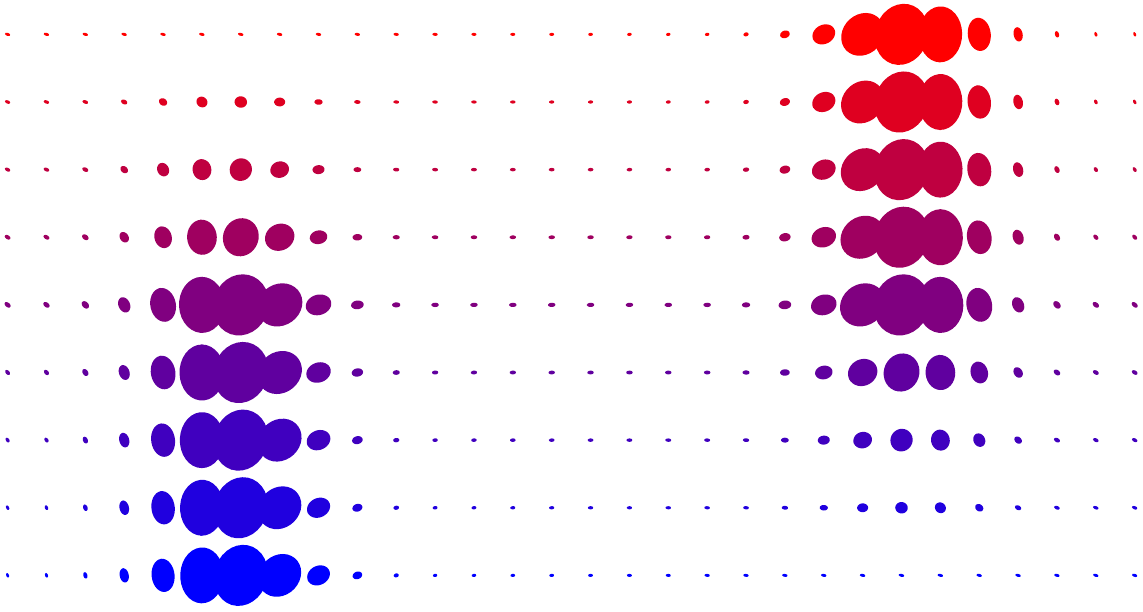}}{\footnotesize Linear}
    \hspace*{2.8pt}
    \stackunder[5pt]{\includegraphics[width=0.18\textwidth, height = 0.1\textwidth]{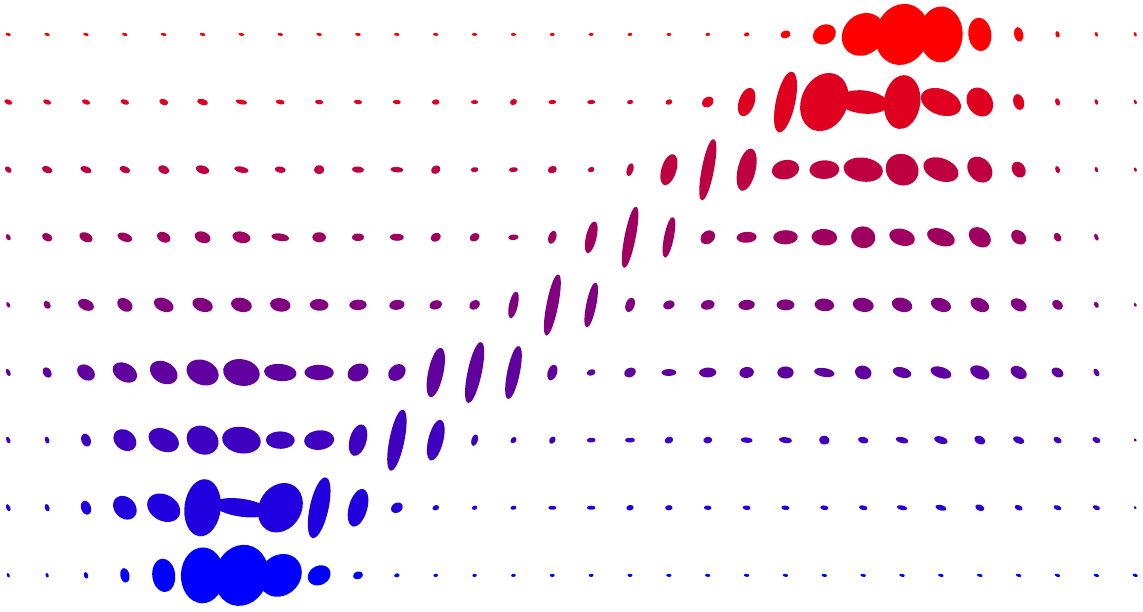}}{\footnotesize QOT ($\rho = 50$)}
    \hspace*{2.8pt}
    \stackunder[5pt]{\includegraphics[width=0.18\textwidth, height = 0.1\textwidth]{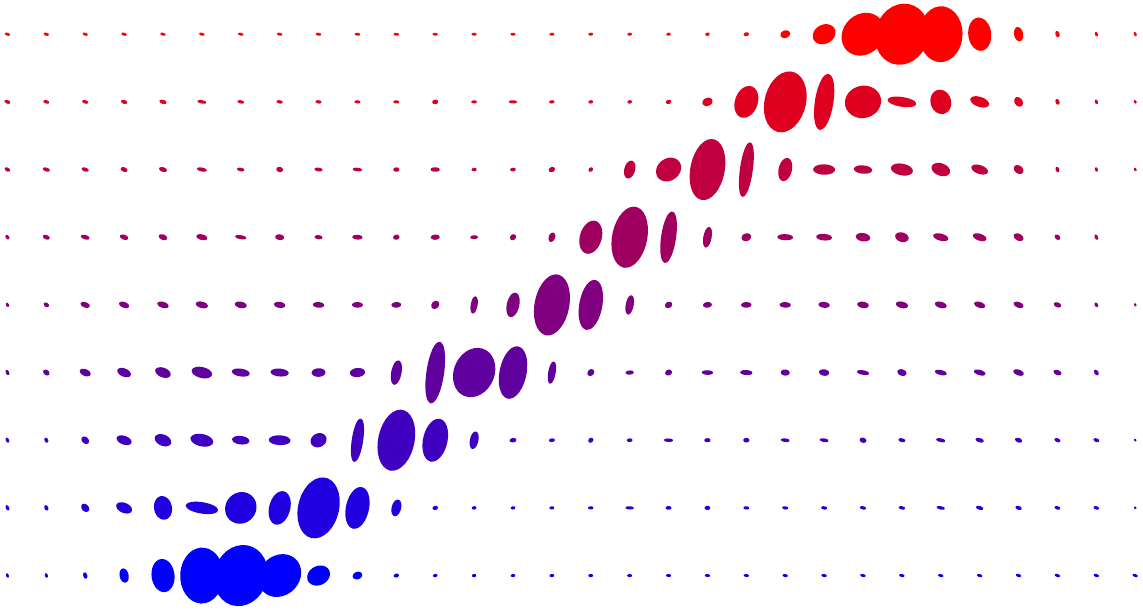}}{\footnotesize QOT ($\rho = 100$)}
    \hspace*{2.8pt}
    \stackunder[5pt]{\includegraphics[width=0.18\textwidth, height = 0.1\textwidth]{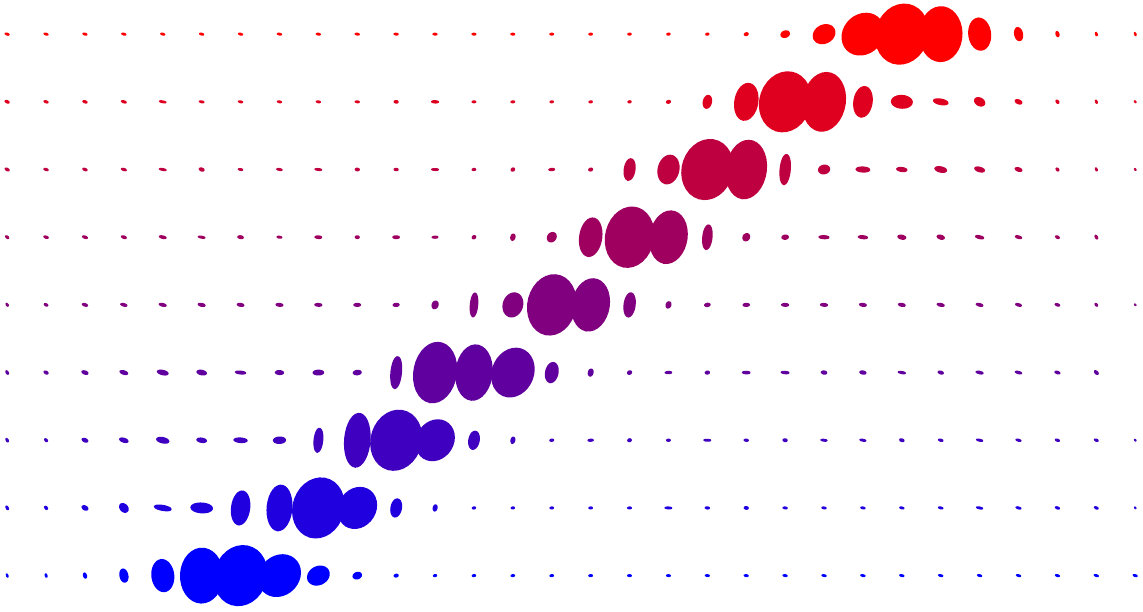}}{\footnotesize QOT ($\rho = 500$)}
    \hspace*{2.8pt}
    \stackunder[5pt]{\includegraphics[width=0.18\textwidth, height = 0.1\textwidth]{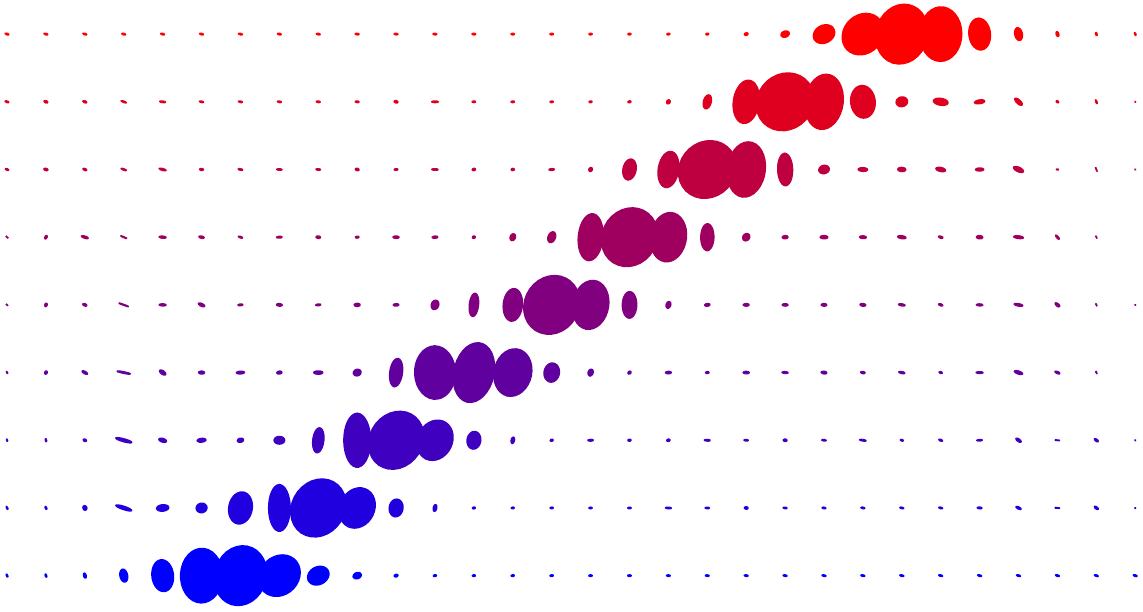}}{\footnotesize {\algname}}
    \caption{\small \revision{1-$d$ tensor fields mass interpolation. Tensor fields are generated as cross-oriented (first row), multi-oriented (second row), split (third row) and iso-oriented (fourth row).}}
    \label{tensor_field_mass_1d_appendix}
\end{figure*}

\begin{figure*}[!th]
\captionsetup{justification=centering}
    \centering
    \subfloat{\includegraphics[width = 0.11\textwidth, height = 0.11\textwidth]{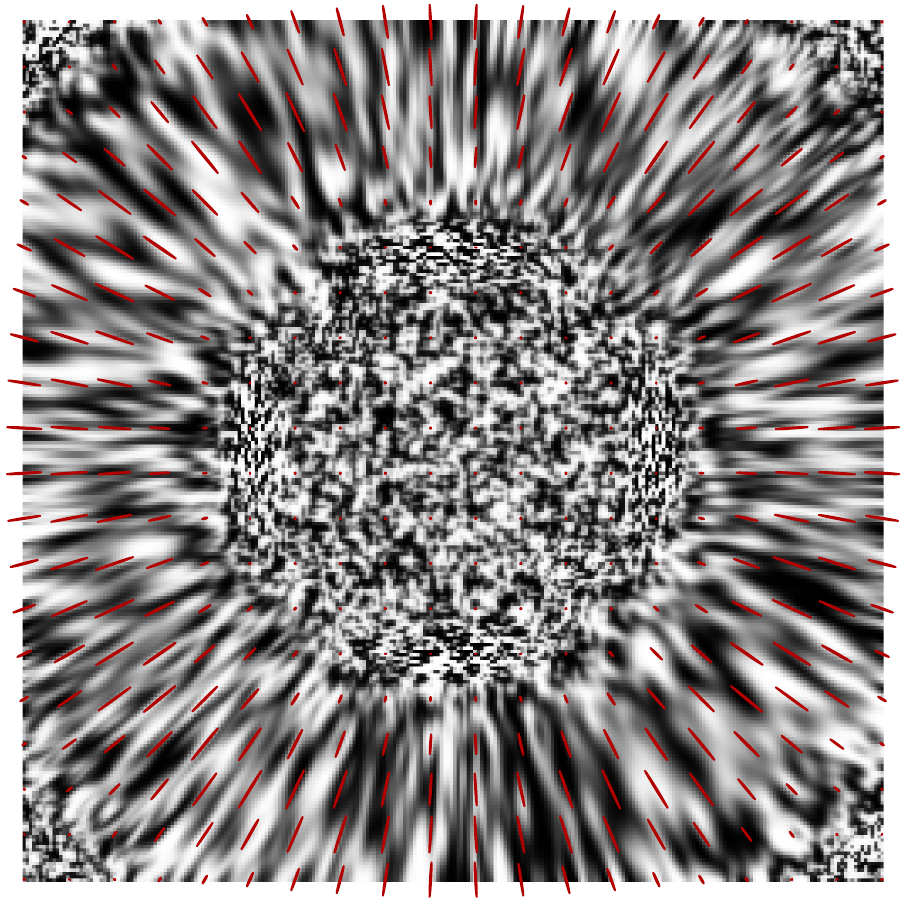}} 
    \hspace*{3pt}
    \subfloat{\includegraphics[width = 0.11\textwidth, height = 0.11\textwidth]{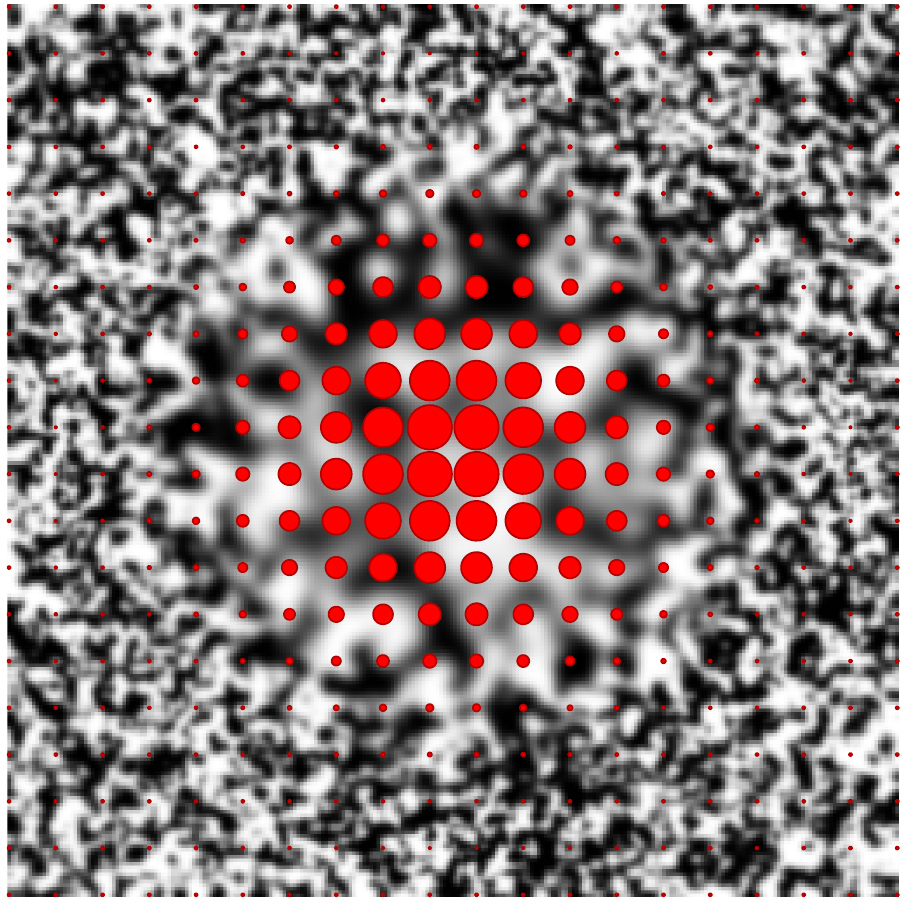}} 
    \hspace*{3pt}
    \subfloat{\includegraphics[width = 0.11\textwidth, height = 0.11\textwidth]{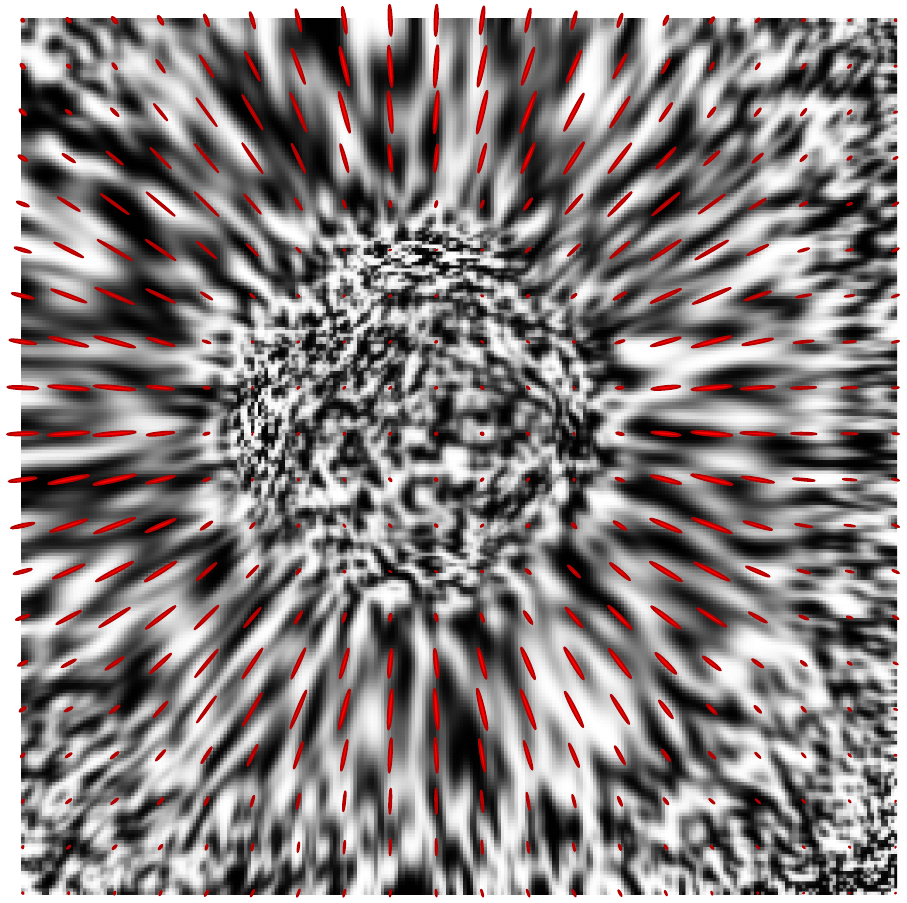}}
    \hspace*{3pt}
    \subfloat{\includegraphics[width = 0.11\textwidth, height = 0.11\textwidth]{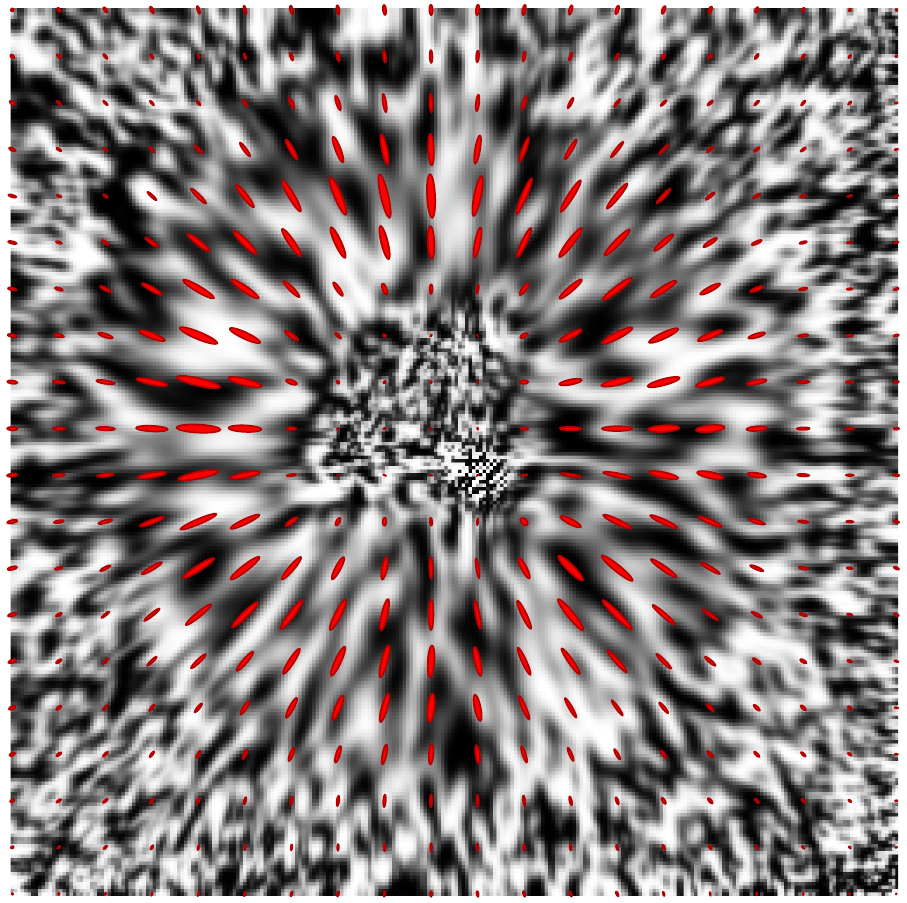}}
    \hspace*{3pt}
    \subfloat{\includegraphics[width = 0.11\textwidth, height = 0.11\textwidth]{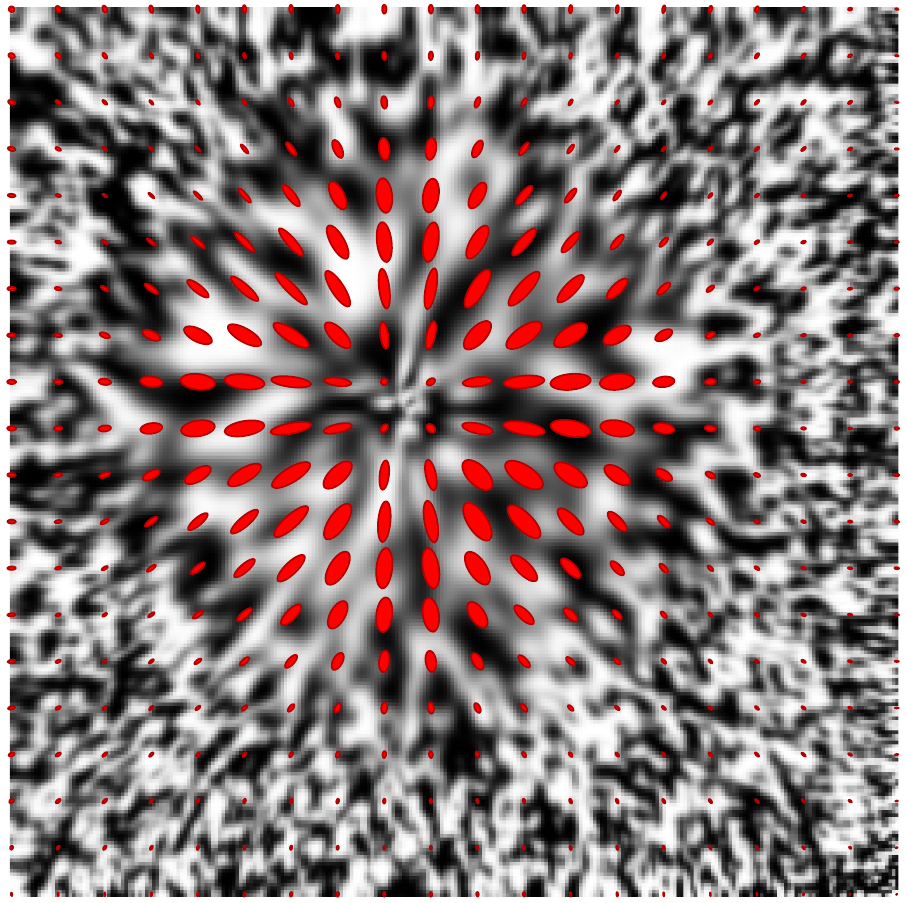}} 
    \hspace*{3pt}
    \subfloat{\includegraphics[width = 0.11\textwidth, height = 0.11\textwidth]{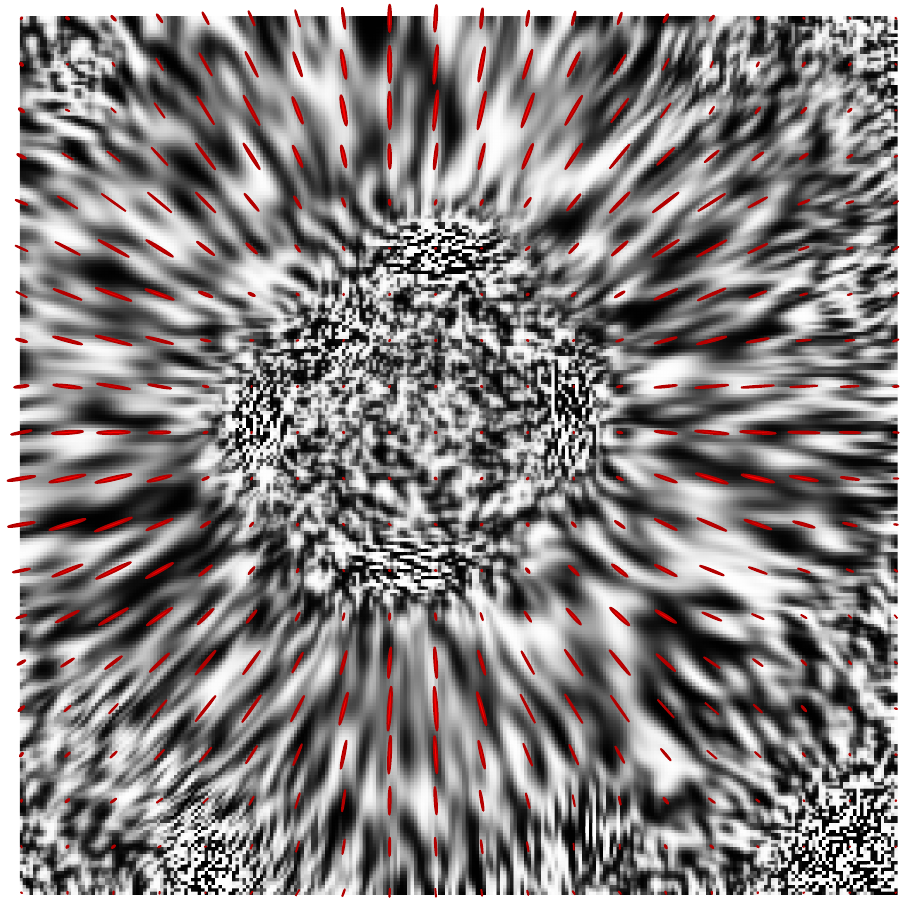}}
    \hspace*{3pt}
    \subfloat{\includegraphics[width = 0.11\textwidth, height = 0.11\textwidth]{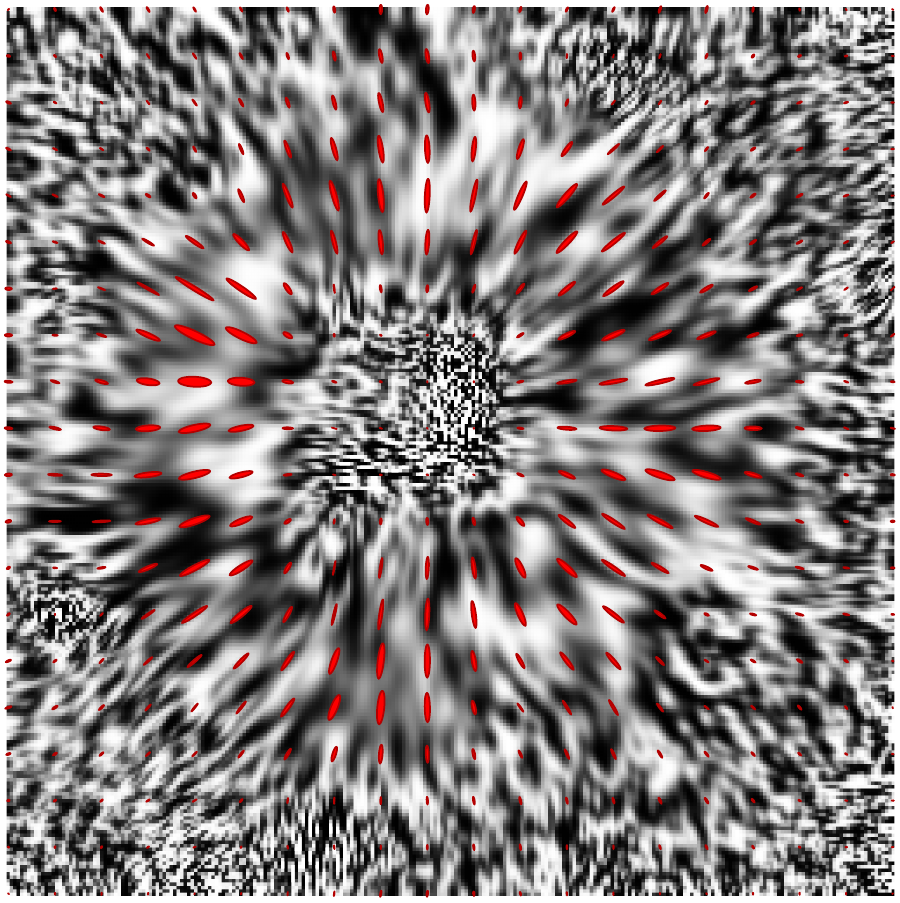}}
    \hspace*{3pt}
    \subfloat{\includegraphics[width = 0.11\textwidth, height = 0.11\textwidth]{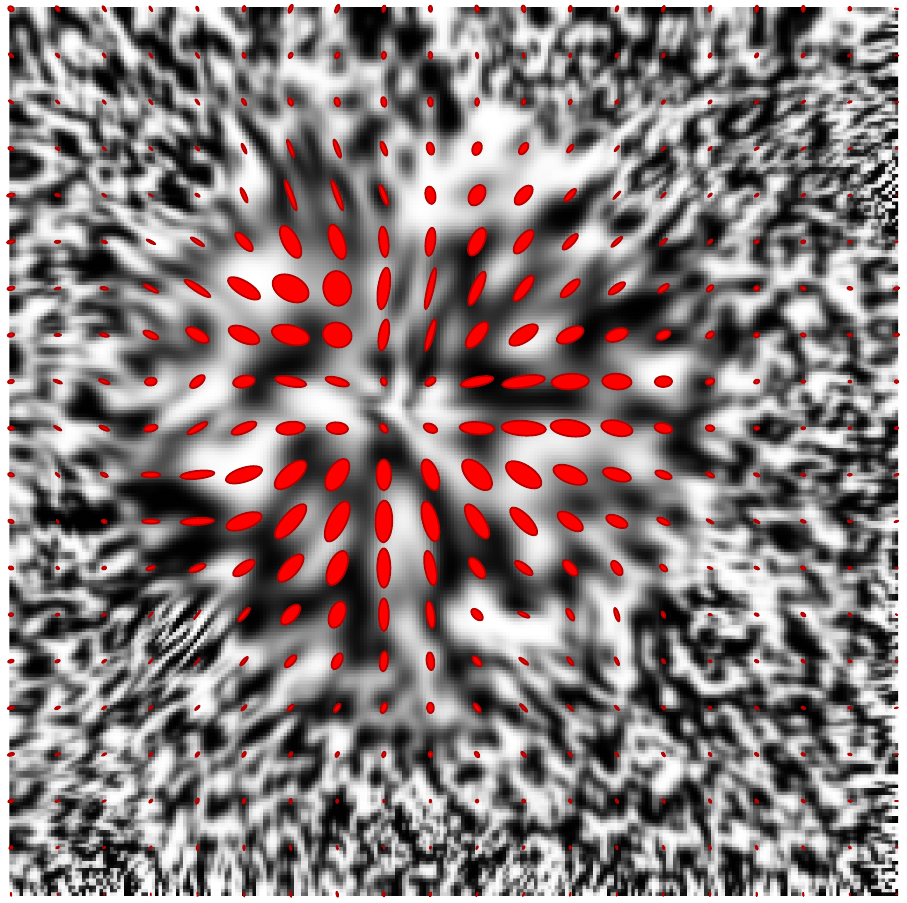}} \\[5pt]
    
    \stackunder[5pt]{\includegraphics[width=0.11\textwidth, height = 0.11\textwidth]{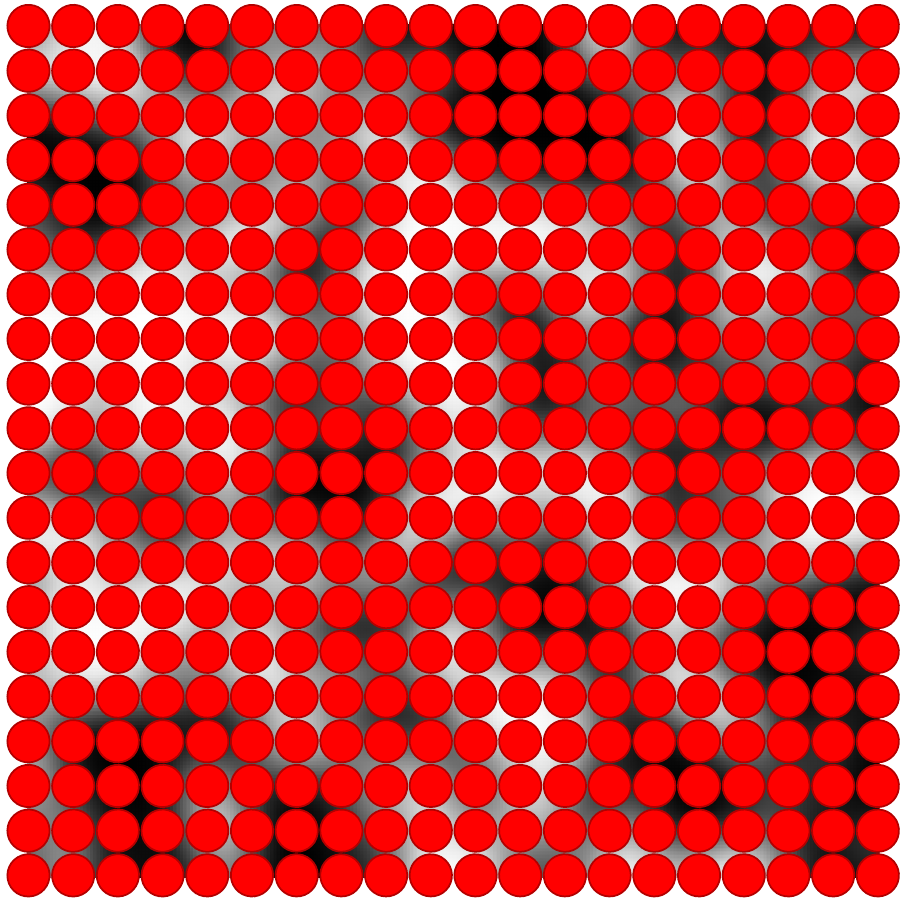}}{\footnotesize Input-1}
    \hspace*{0.7pt}
    \stackunder[5pt]{\includegraphics[width=0.11\textwidth, height = 0.11\textwidth]{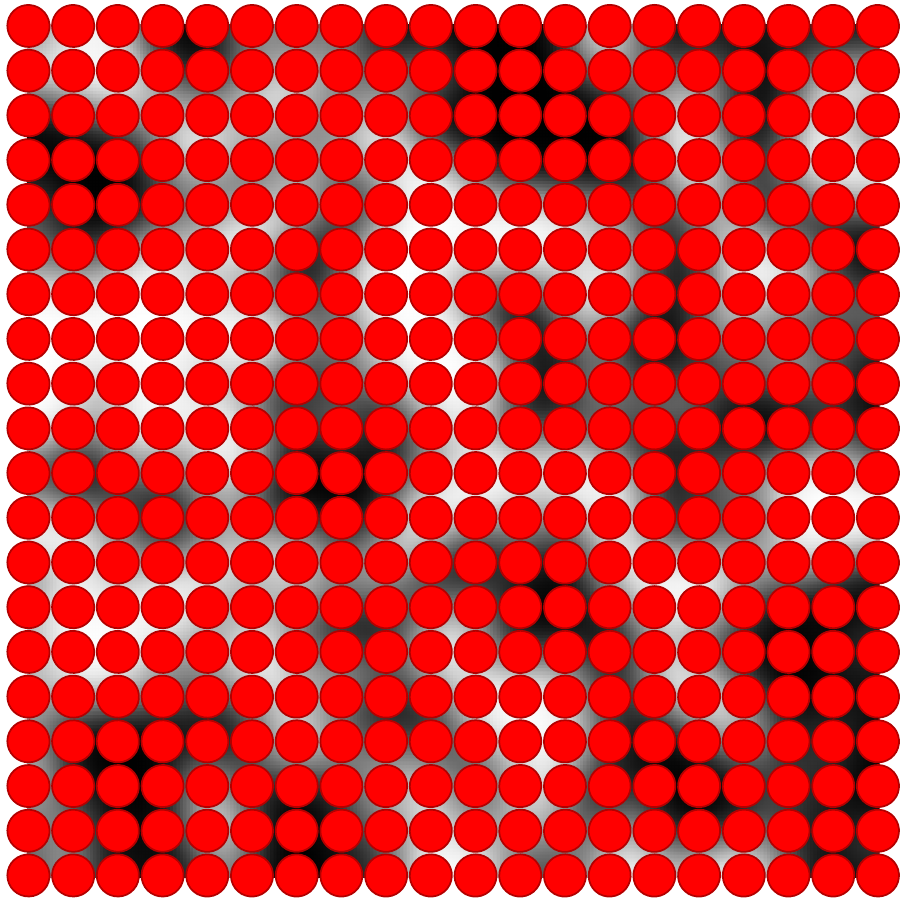}}{\footnotesize Input-5}
    \hspace*{0.7pt}
    \stackunder[5pt]{\includegraphics[width=0.11\textwidth, height = 0.11\textwidth]{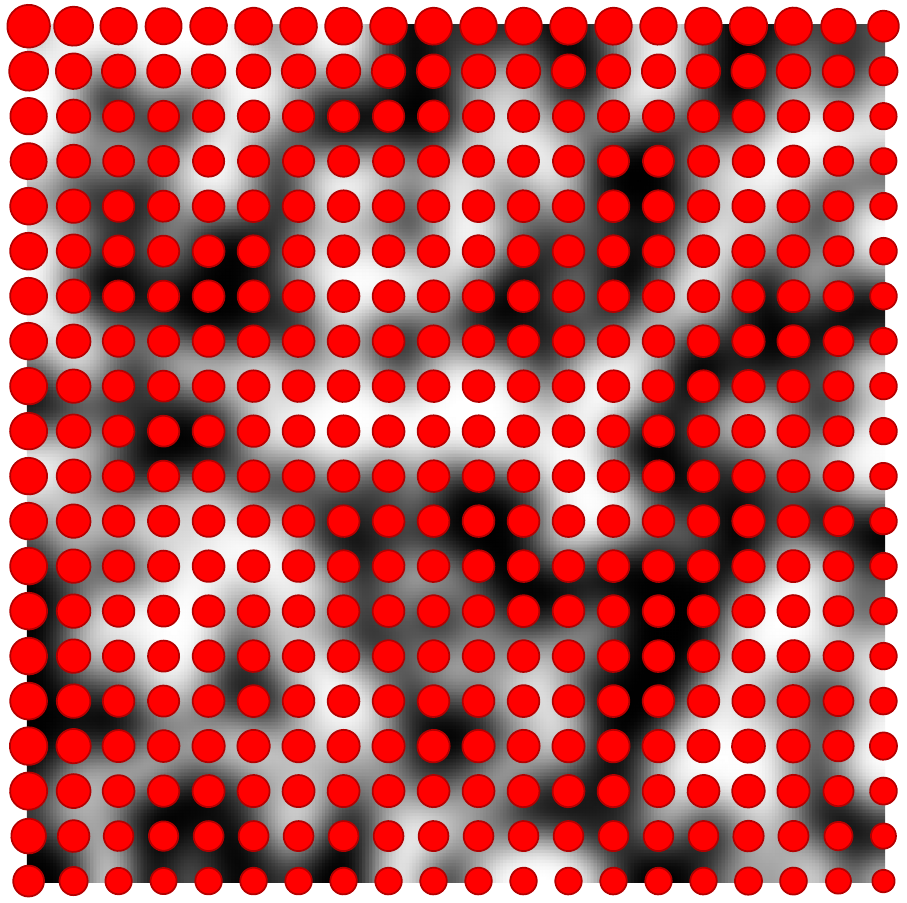}}{\footnotesize QOT-2}
    \hspace*{0.7pt}
    \stackunder[5pt]{\includegraphics[width=0.11\textwidth, height = 0.11\textwidth]{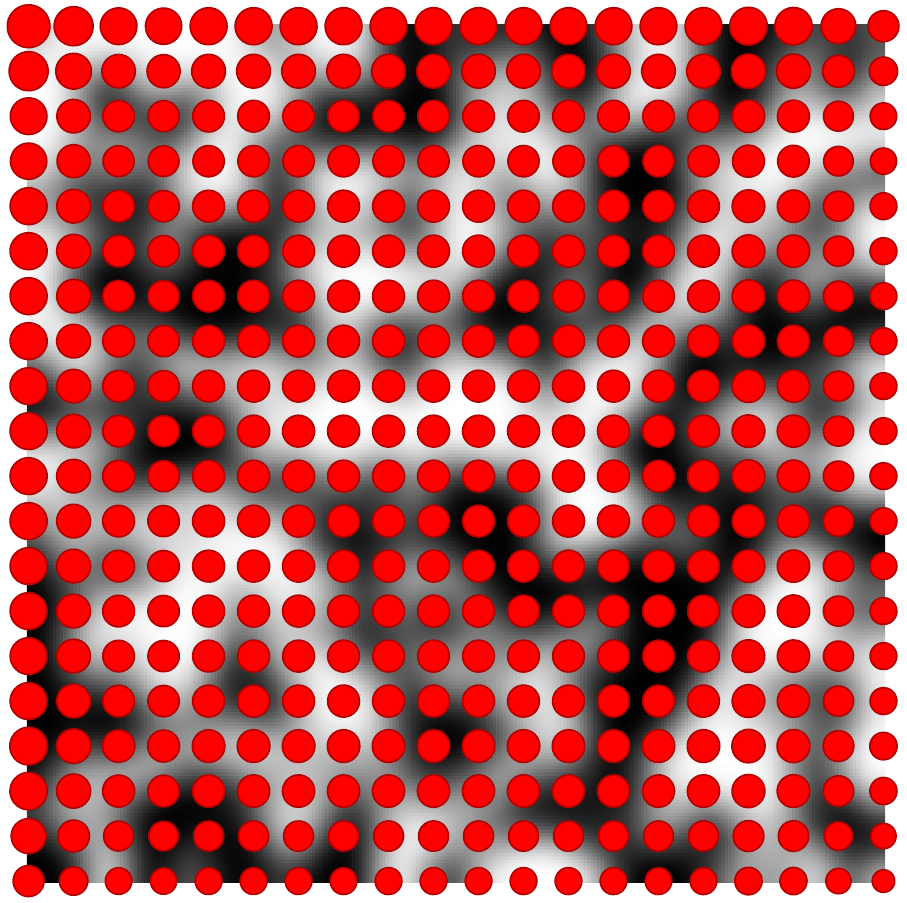}}{\footnotesize QOT-3}
    \hspace*{0.7pt}
    \stackunder[5pt]{\includegraphics[width=0.11\textwidth, height = 0.11\textwidth]{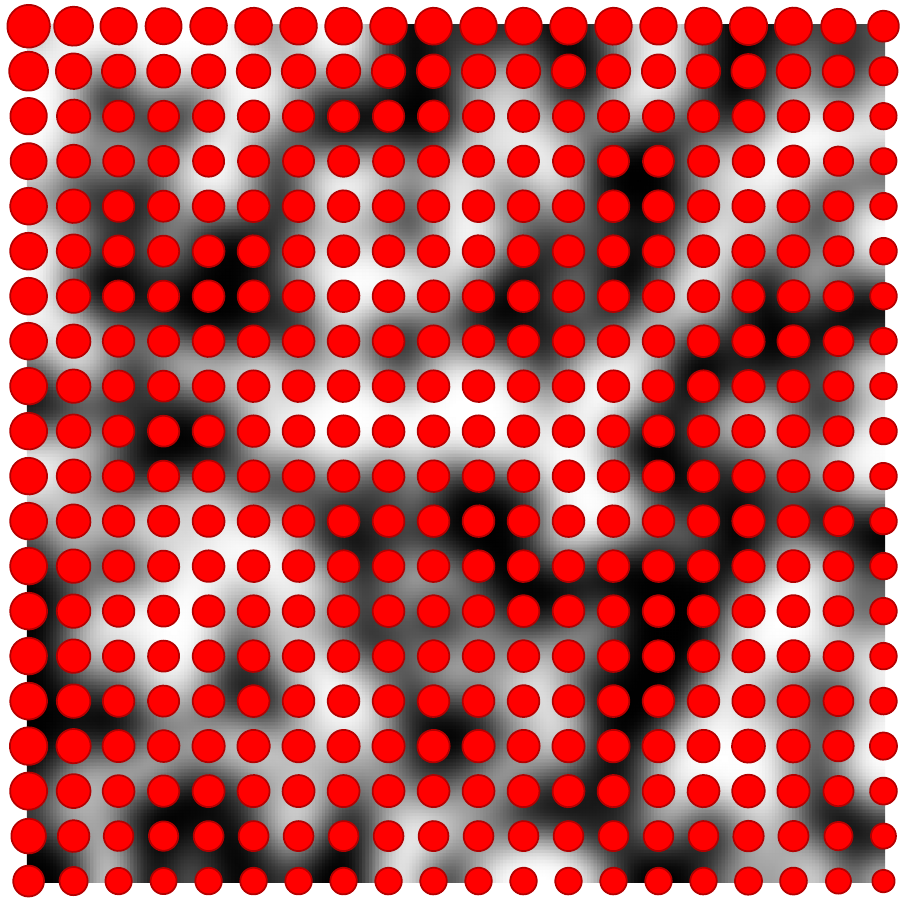}}{\footnotesize QOT-4}
    \hspace*{0.7pt}
    \stackunder[5pt]{\includegraphics[width=0.11\textwidth, height = 0.11\textwidth]{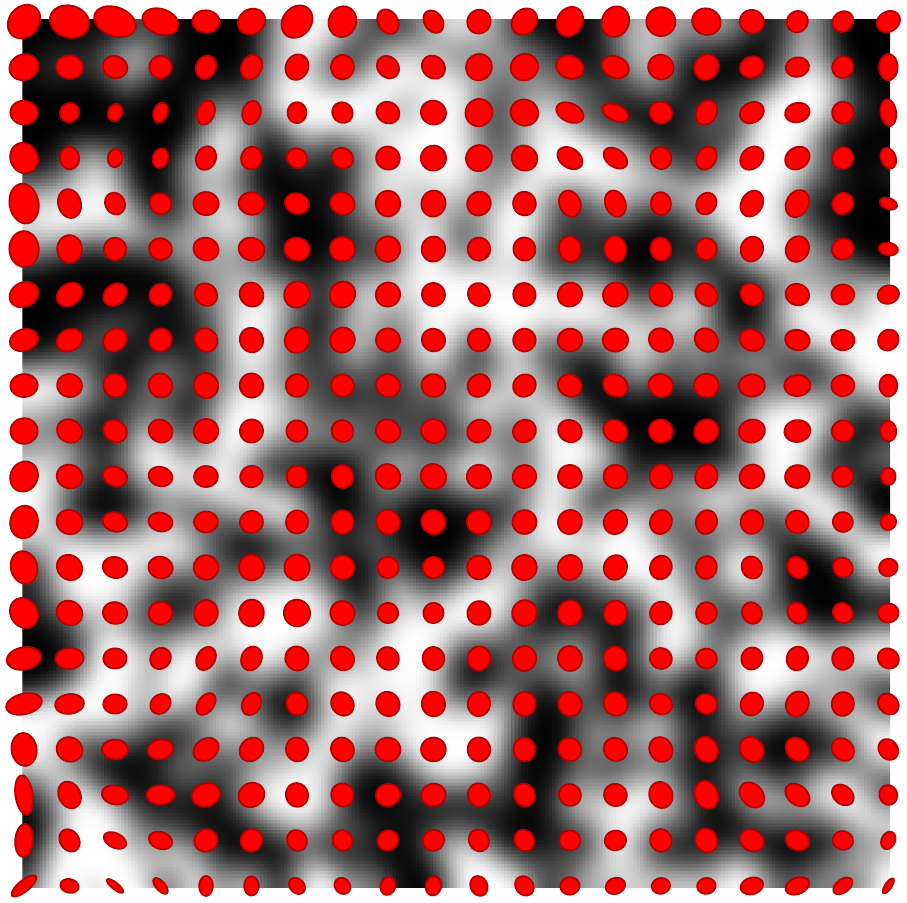}}{\footnotesize {\algname}-2} 
    \hspace*{0.7pt}
    \stackunder[5pt]{\includegraphics[width=0.11\textwidth, height = 0.11\textwidth]{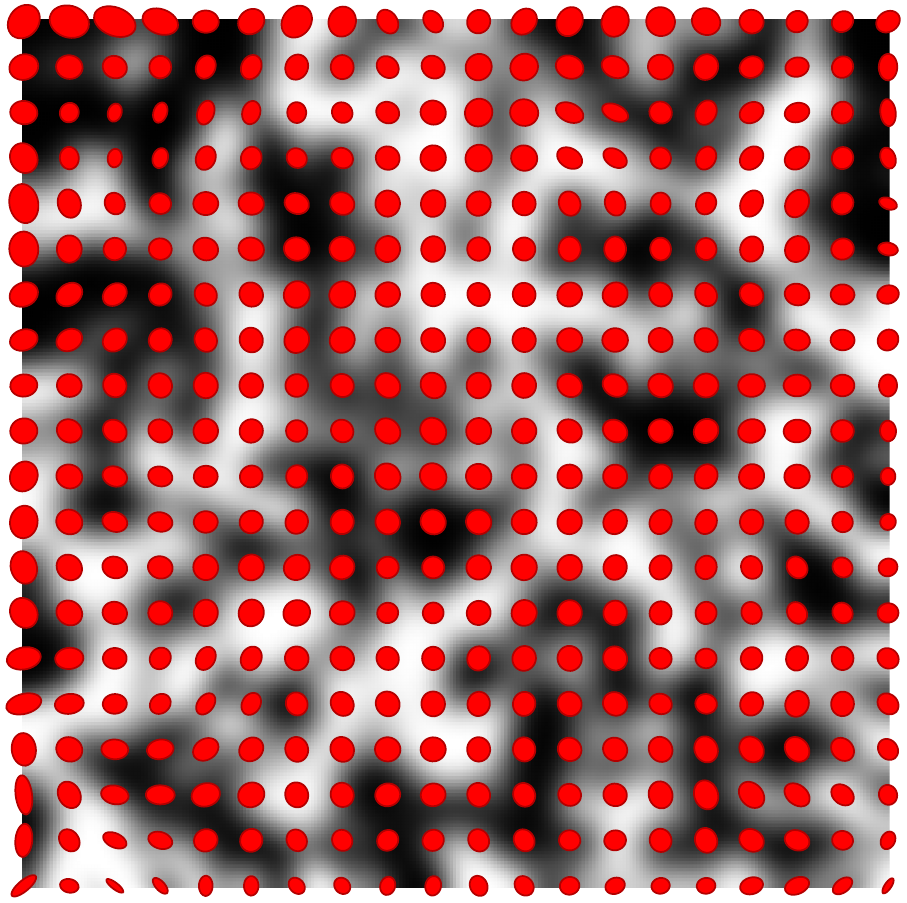}}{\footnotesize {\algname}-3} 
    \hspace*{0.7pt}
    \stackunder[5pt]{\includegraphics[width=0.11\textwidth, height = 0.11\textwidth]{Figures/2d-smooth-rand/BlockMOT-interpol-ellispses-3}}{\footnotesize {\algname}-4} 
    \caption{\small 2-$d$ tensor fields mass interpolation.} 
    \label{tensor_field_mass_2d_appendix}
\end{figure*}

\subsection{Tensor field Wasserstein barycenter}

We first show how both linear interpolation and QOT solutions are not optimal. We initialize our Riemannian optimizers for $\bar{\bP}$ from the linear interpolation and (normalized) QOT. We also include uniform initialization as a benchmark.

We compare the objective value of $\sum_\ell \omega_\ell {\rm MW}_\epsilon(\bar{\bP}, \bP^\ell)$ against the optimal objective value obtained from the CVX toolbox \cite{grant2014cvx}. This allows to compute the optimality gap.

In Figure \ref{obj_compare_barycenter}, we see that the optimality gap keeps reducing with iterations even after properly normalizing the barycenter from linear interpolation and (normalized) QOT. This shows that linear interpolation and (normalized) QOT solutions are not optimal. Also, the performance of RMOT with uniform initialization is competitive to that initialized with linear interpolation and (normalized) QOT, implying that RMOT is a competitive solver in itself and obtains better solutions.

Additionally, we show the barycenter results for $n = 16$ along with convergence of RMOT in Figure \ref{2d_barycenter_n4_main} and \ref{2d_barycenter_n4}. \revision{From Figure \ref{2d_barycenter_n4_main}, we see visually no difference in the solutions obtained by QOT and RMOT, which suggests the solution by QOT (with normalization) is close to optimal. This observation is further validated in Figure \ref{2d_barycenter_n4} where we see the objective value is already quite small when initialized from the QOT solution.}

\begin{figure*}[t]
%\captionsetup{justification=centering}
    \centering
    \subfloat[$t =0.25$]{\includegraphics[width = 0.24\textwidth, height = 0.2\textwidth]{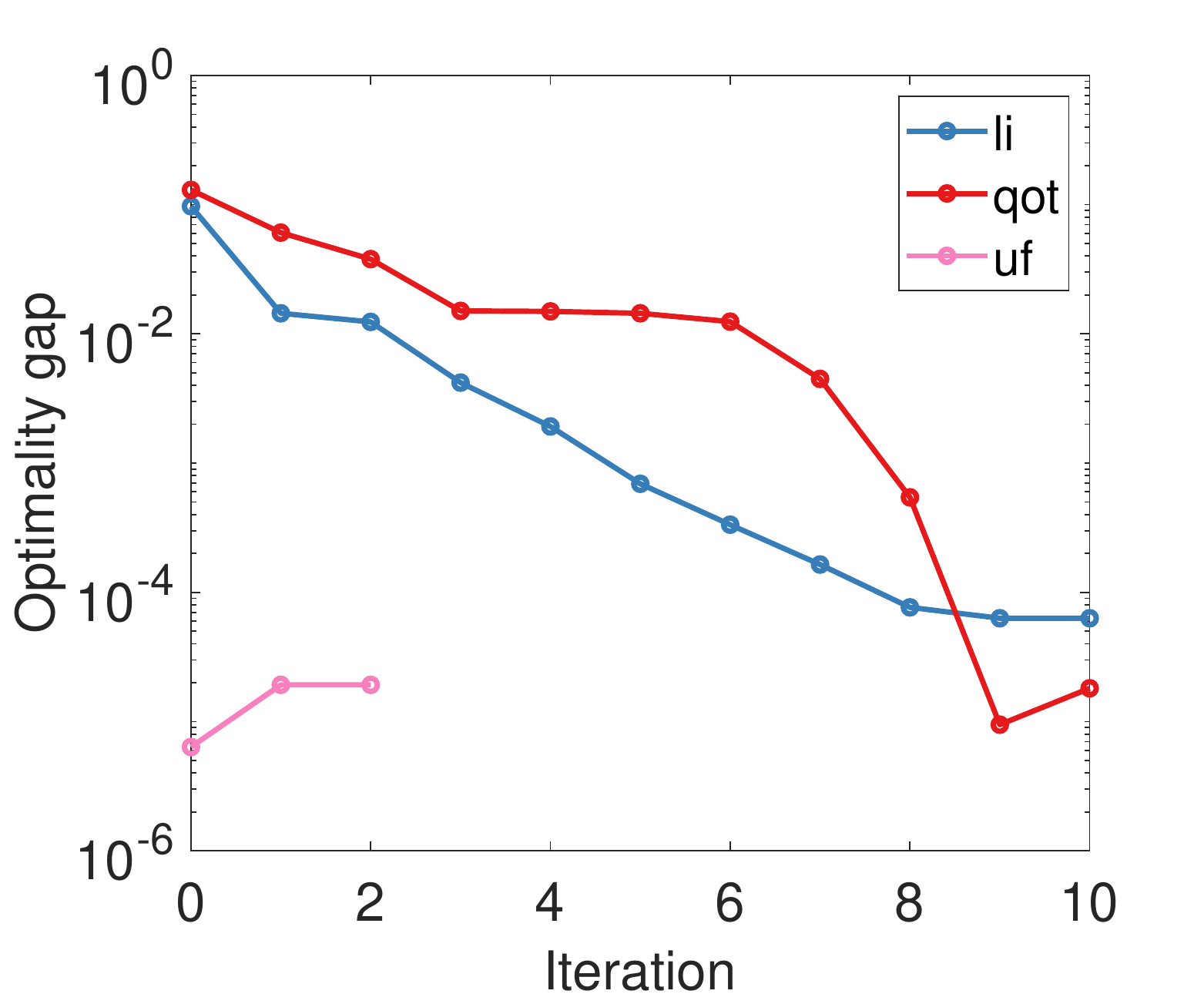}} 
    \hspace*{5pt}
    \subfloat[$t=0.5$]{\includegraphics[width = 0.24\textwidth, height = 0.2\textwidth]{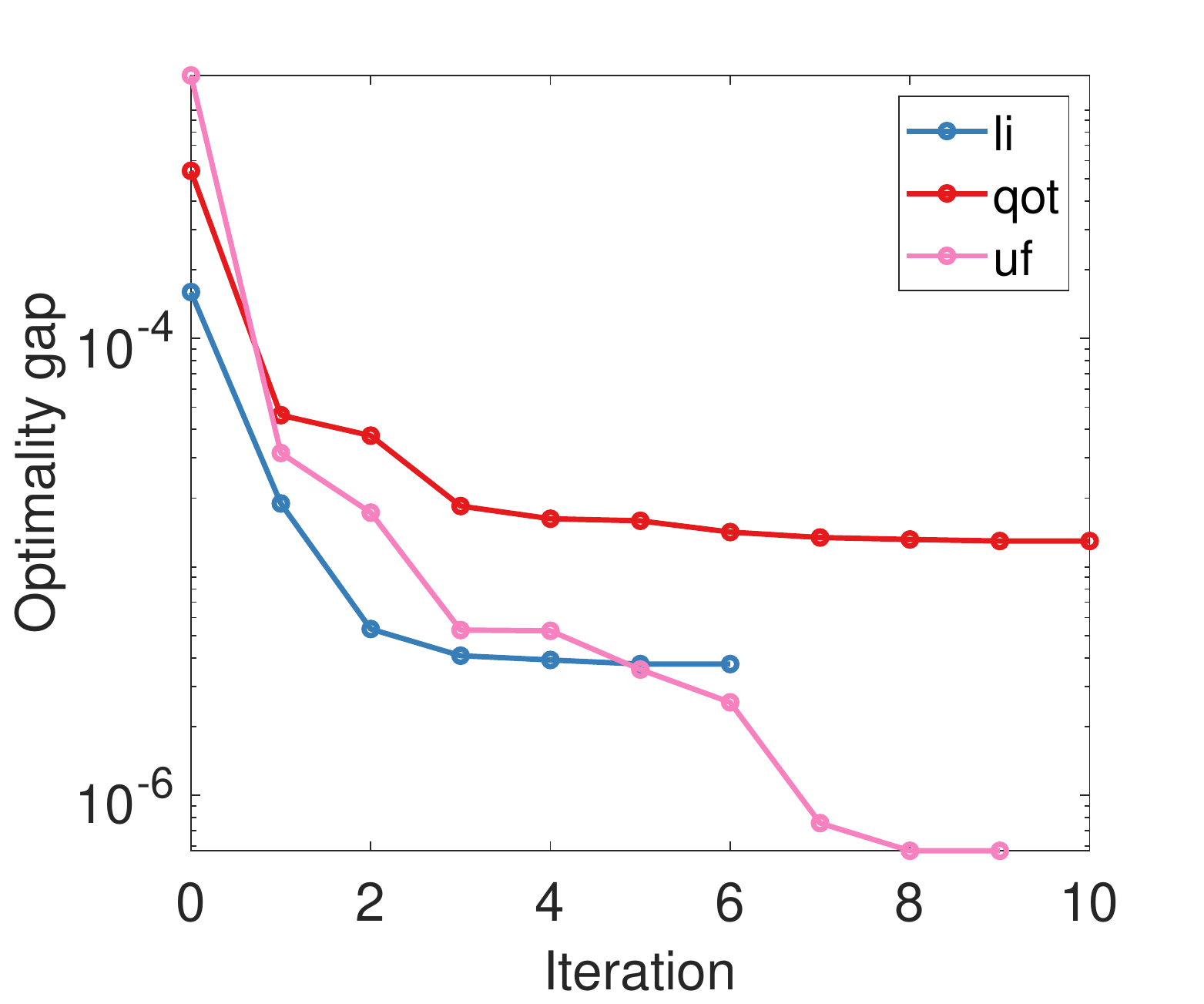}}
    \hspace*{5pt}
    \subfloat[$t = 0.75$]{\includegraphics[width = 0.24\textwidth, height = 0.2\textwidth]{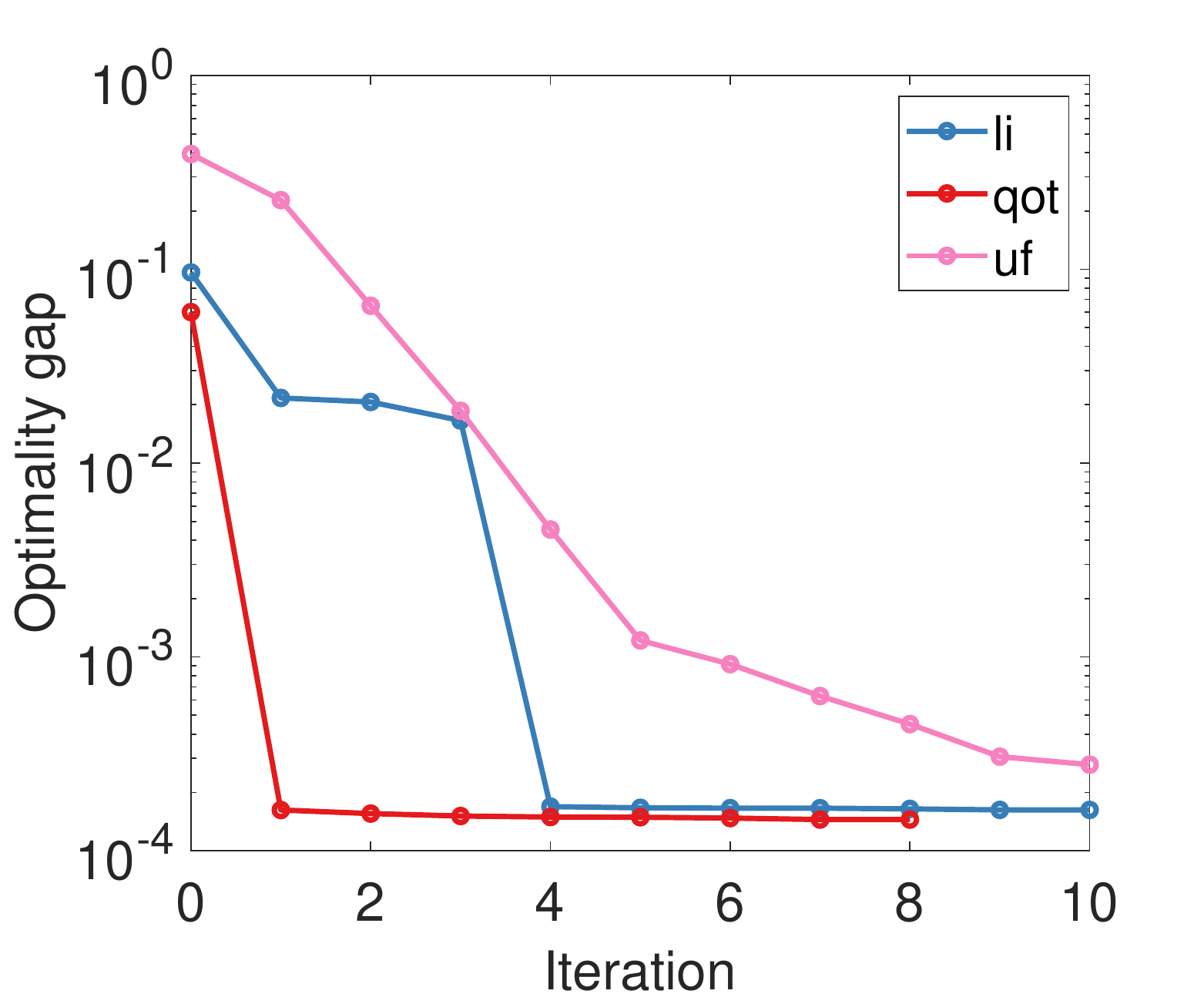}}
    \caption{\small Convergence of ($n=4$) barycenter update initialized from linear interpolation (li), QOT (qot), and uniform identity (uf). Irrespective of the initialization, RMOT continue to achieve better optimality gap (to the CVX optimal solution) with iterations. As the initial optimality gap is high for all the cases, it shows that the linear interpolation (li) and QOT (qot) solutions are not optimal.} 
    \label{obj_compare_barycenter}
\end{figure*}

\begin{figure}[!th]
\centering
%\captionsetup{justification=centering}
  \subfloat{
	\begin{minipage}[c][0.15\textwidth]{
	   0.15\textwidth}
	   \centering
	   \includegraphics[width=0.5\textwidth]{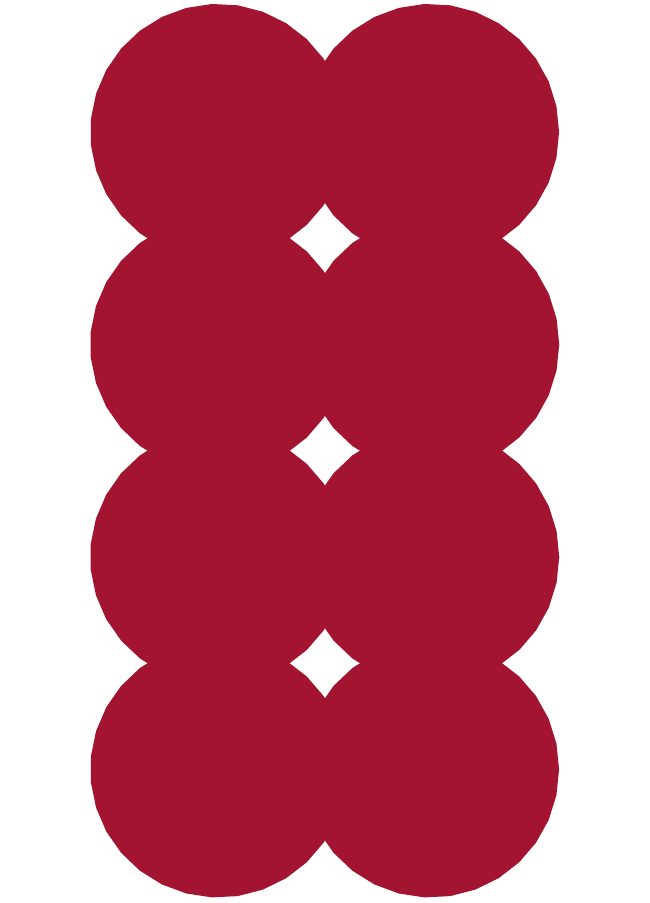}
	\end{minipage}}
 %\hfill 	
  \subfloat{
	\begin{minipage}[c][0.15\textwidth]{
	   0.15\textwidth}
	   \centering
	   \includegraphics[width=0.5\textwidth]{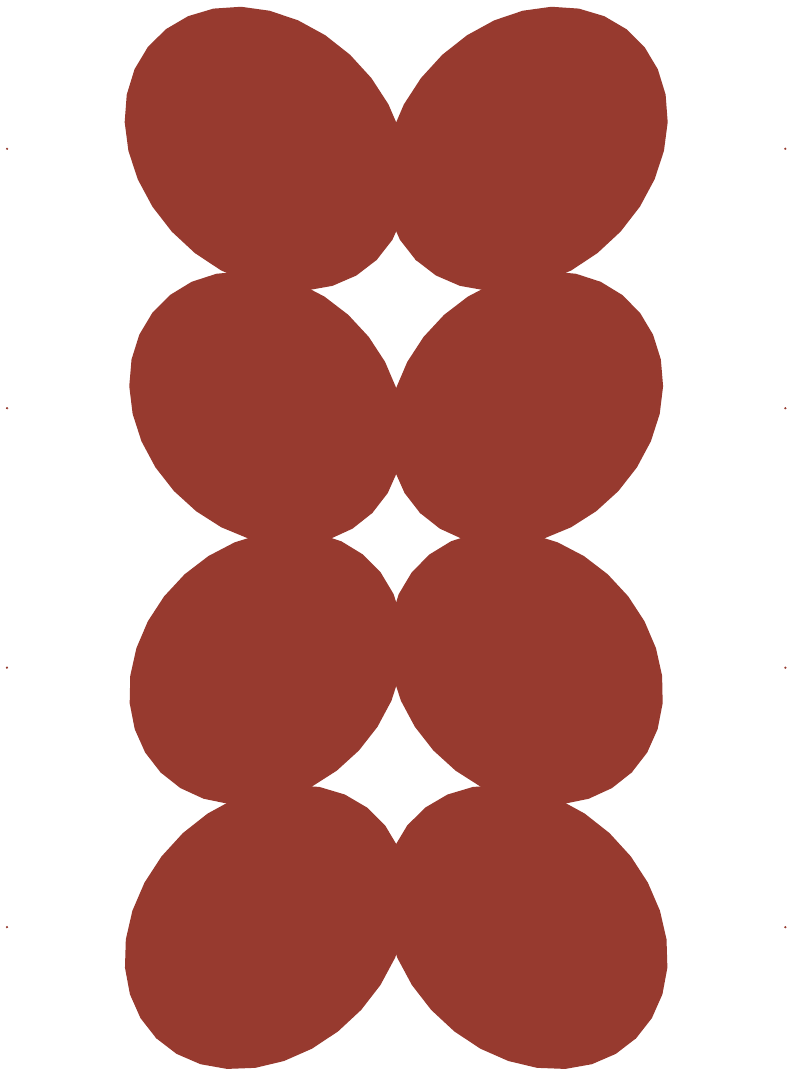} \\[8pt]
	   \includegraphics[width=0.5\textwidth]{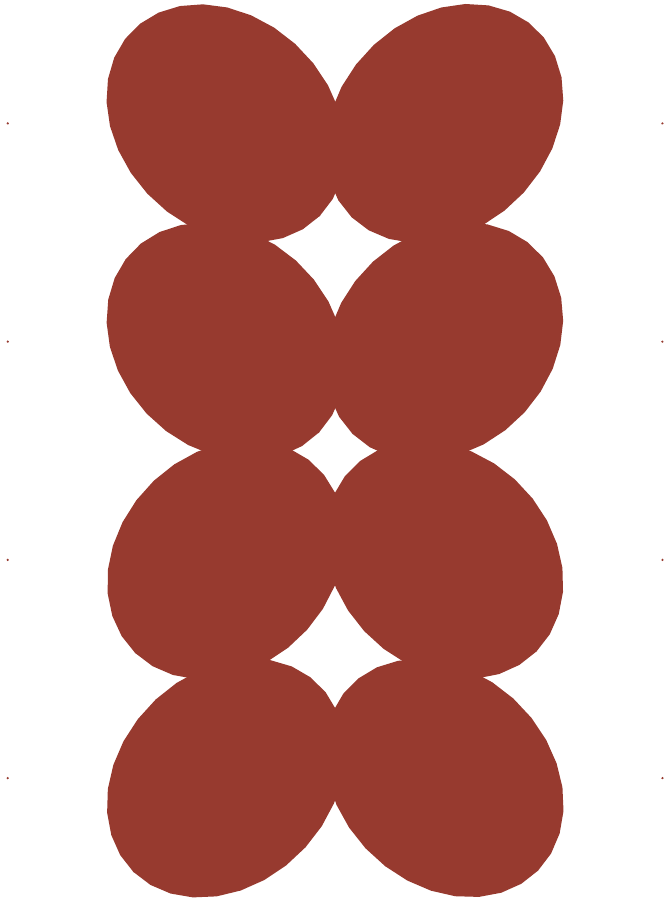}
	\end{minipage}}
	%\hfill
	\subfloat{
	\begin{minipage}[c][0.15\textwidth]{
	   0.15\textwidth}
	   \centering
	   \includegraphics[width=0.5\textwidth]{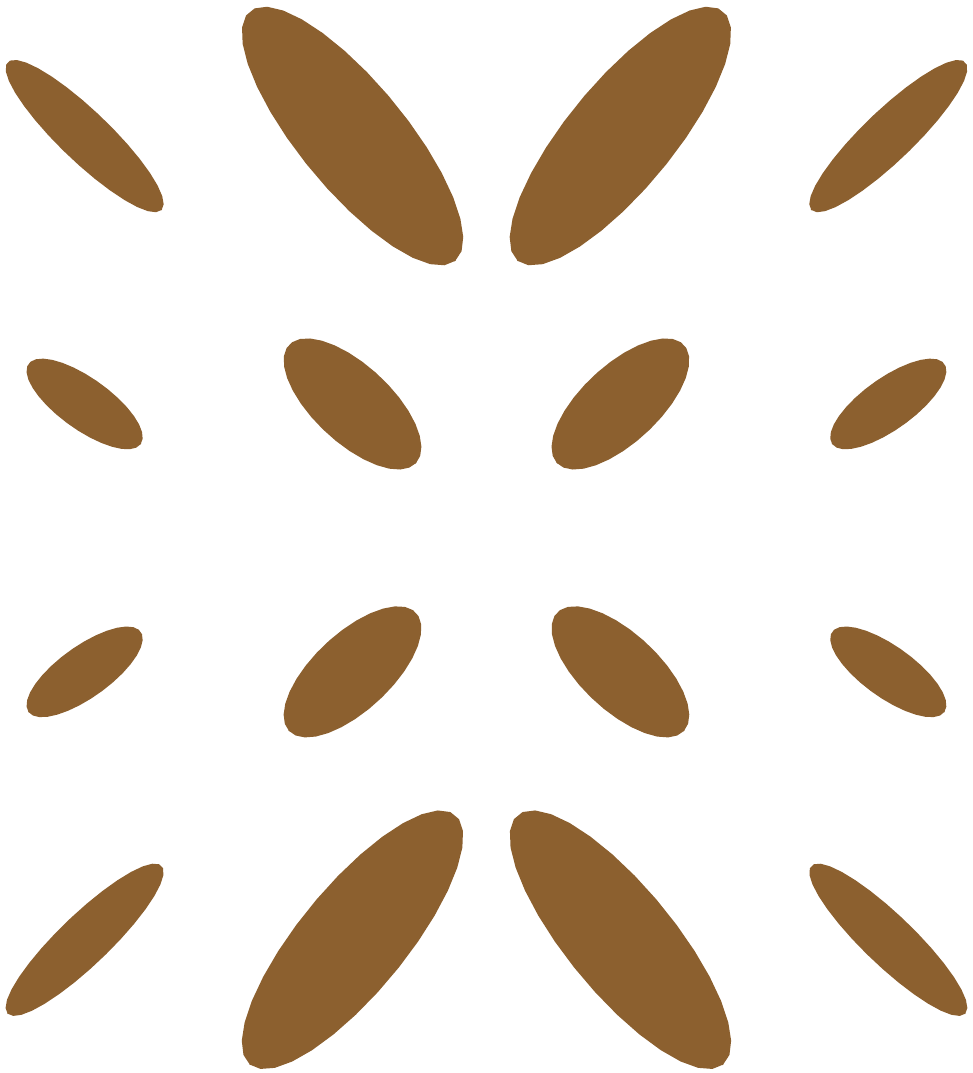} \\[8pt]
	   \includegraphics[width=0.5\textwidth]{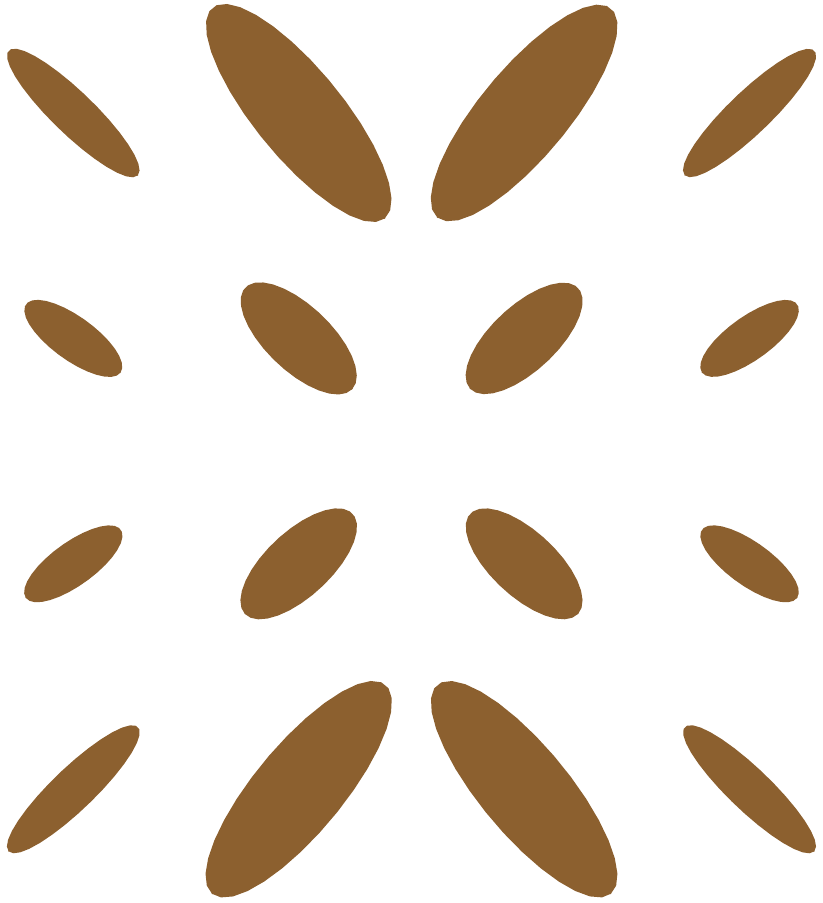}
	\end{minipage}}
	%\hfill
	\subfloat{
	\begin{minipage}[c][0.15\textwidth]{
	   0.15\textwidth}
	   \centering
	   \includegraphics[width=0.5\textwidth]{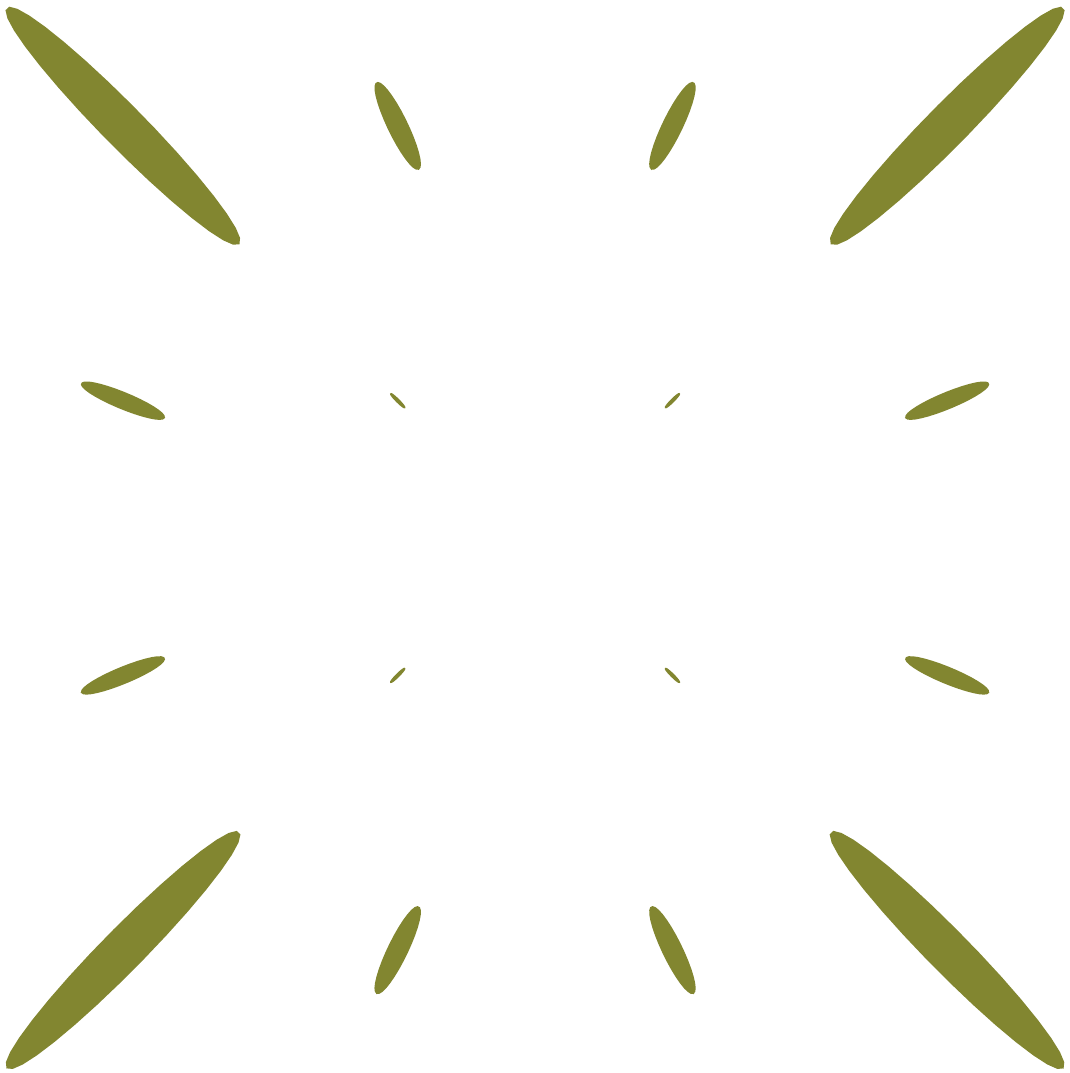} \\[8pt]
	   \includegraphics[width=0.5\textwidth]{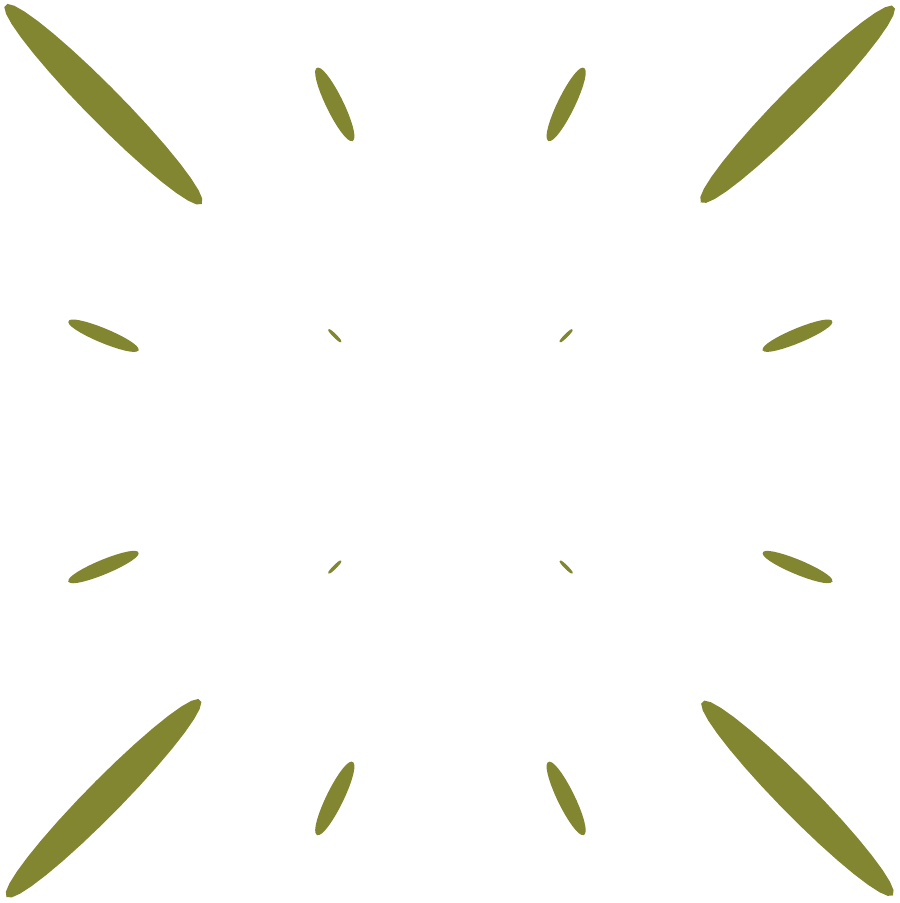}
	\end{minipage}}
 %\hfill	
  \subfloat{
	\begin{minipage}[c][0.15\textwidth]{
	   0.15\textwidth}
	   \centering
	   \includegraphics[width=0.5\textwidth]{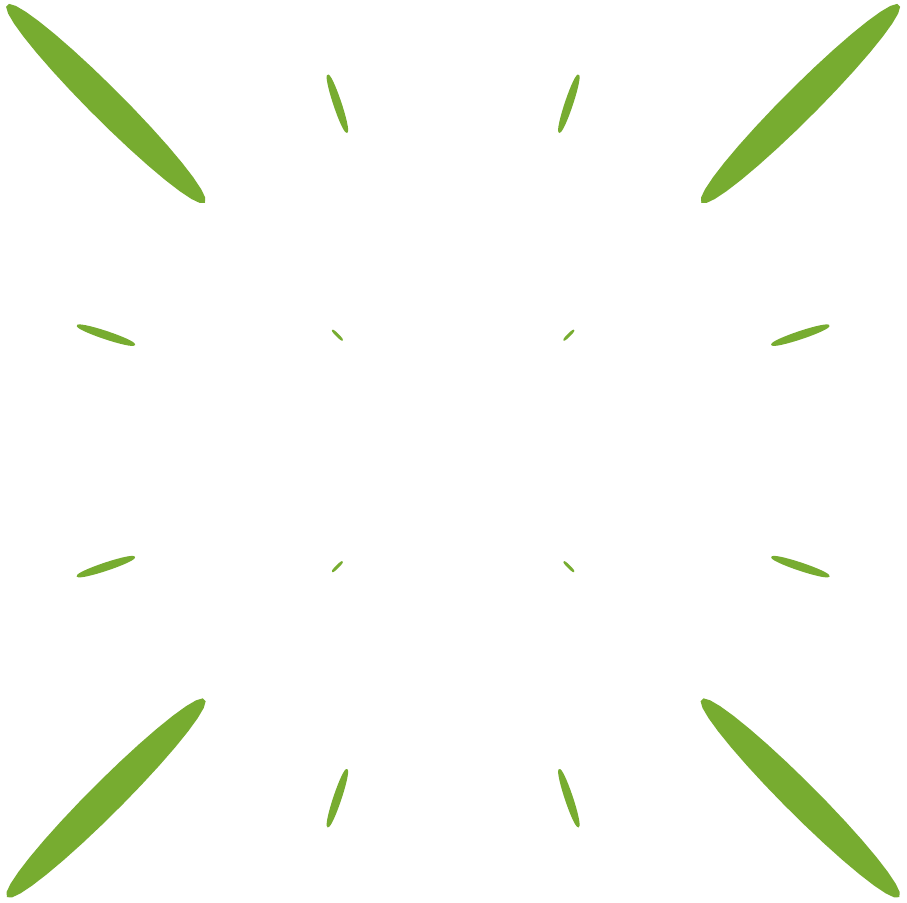}
	\end{minipage}}
	\vspace*{10pt}
\caption{\small \revision{Tensor field Wasserstein barycenter ($n=16$). The barycetners are shown in (a) to (e). From left to right $t = 0$ (input), $t= 0.25, 0.5, 0.75$ (barycenters), $t=1$ (input). The top row is QOT and the bottom is {\algname}.}}
\label{2d_barycenter_n4_main}
\end{figure}

\begin{figure*}[h]
%\captionsetup{justification=centering}
    \centering
    % \subfloat[$t =0$]{\includegraphics[width = 0.1\textwidth, height = 0.1\textwidth]{Figures/2d-bary-n4/input-1.pdf}} 
    % \hspace*{5pt}
    % \subfloat[$t=0.25$]{\includegraphics[width = 0.1\textwidth, height = 0.1\textwidth]{Figures/2d-bary-n4/MOTbarycenter-qot-2-5.pdf}}
    % \hspace*{5pt}
    % \subfloat[$t=0.5$]{\includegraphics[width = 0.1\textwidth, height = 0.1\textwidth]{Figures/2d-bary-n4/MOTbarycenter-qot-3-5.pdf}}
    % \hspace*{5pt}
    % \subfloat[$t=0.75$]{\includegraphics[width = 0.1\textwidth, height = 0.1\textwidth]{Figures/2d-bary-n4/MOTbarycenter-qot-4-5.pdf}}
    % \hspace*{5pt}
    % \subfloat[$t = 1$]{\includegraphics[width = 0.1\textwidth, height = 0.1\textwidth]{Figures/2d-bary-n4/input-2.pdf}} \\[5pt]
    \subfloat[Convergence $t=0.25$]{\includegraphics[width = 0.24\textwidth, height = 0.2\textwidth]{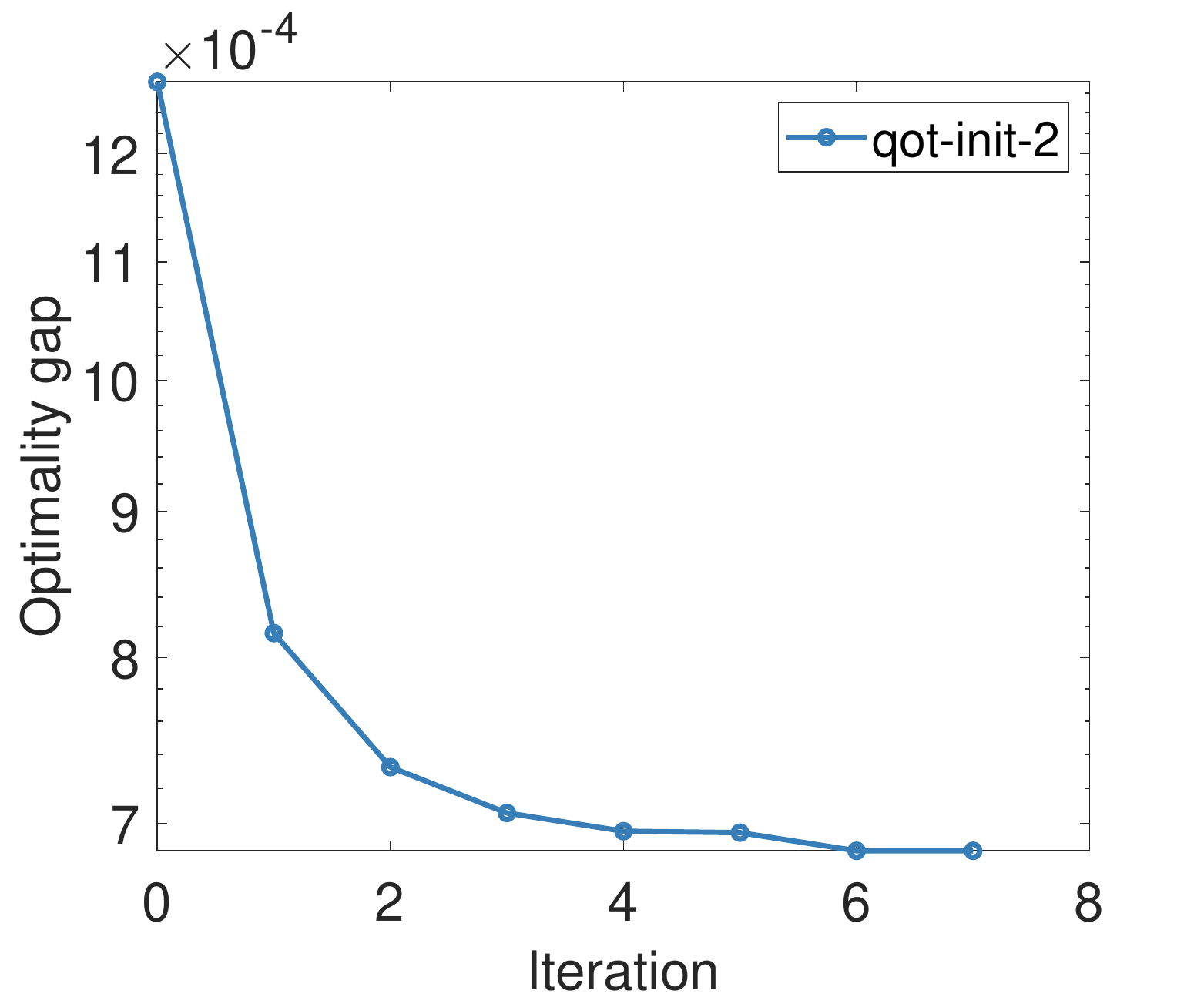}}
    \hspace*{5pt}
    \subfloat[Convergence $t=0.5$]{\includegraphics[width = 0.24\textwidth, height = 0.2\textwidth]{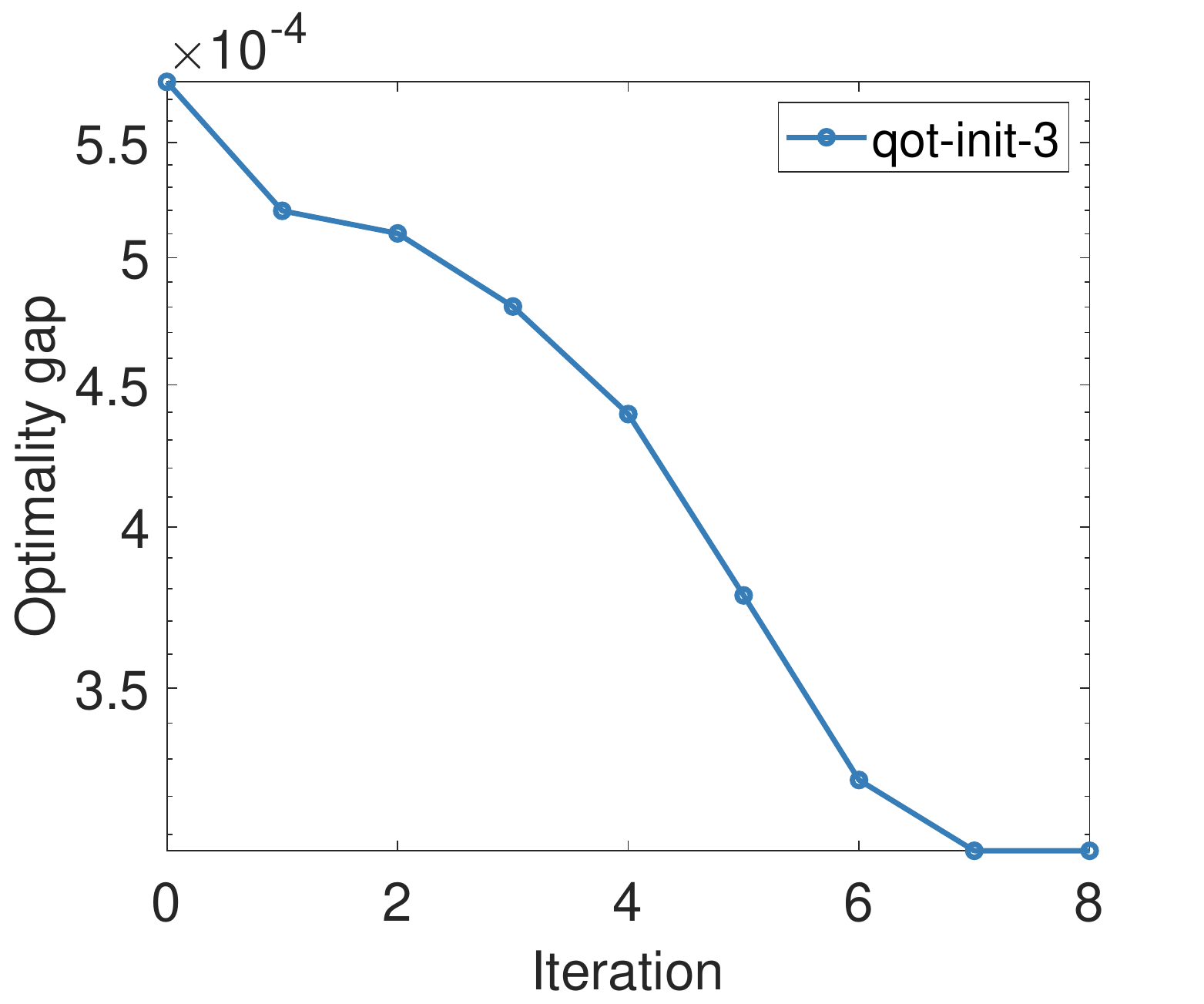}}
    \hspace*{5pt}
    \subfloat[Convergence $t = 0.75$]{\includegraphics[width = 0.24\textwidth, height = 0.2\textwidth]{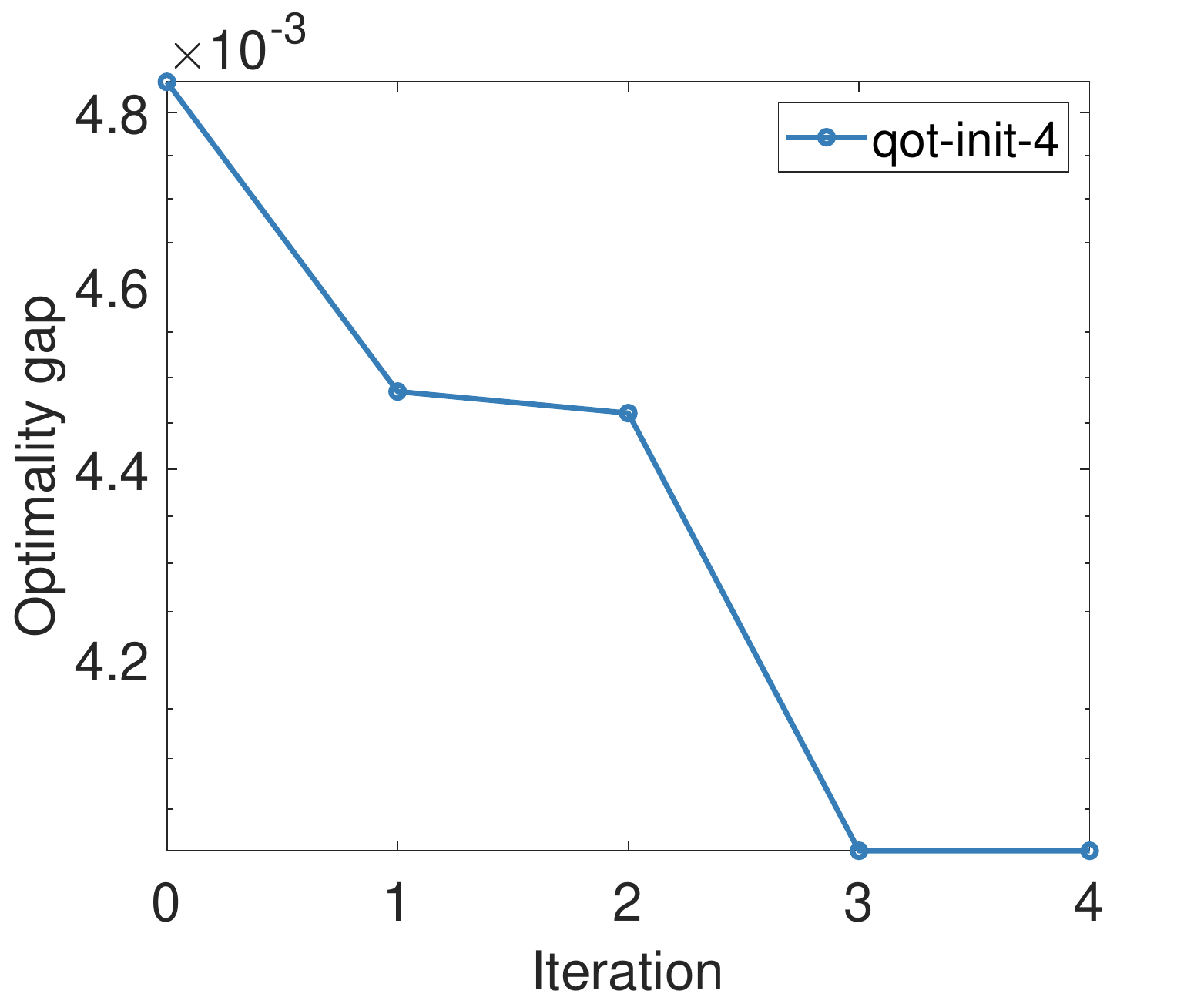}}
    \caption{\small The convergence plots for RMOT initialized from (normalized) QOT solution. For this setting, the solution from QOT is close to optima.} 
    \label{2d_barycenter_n4}
\end{figure*}

\revision{
\subsection{Additional experiments on domain adaptation}

Here, we perform the experiments of domain adaptation on more challenging tasks, including video based face recognition with YouTube Celebrities (YTC) dataset \cite{kim2008face} and texture classification via Dynamic Texture (DynTex) \cite{ghanem2010maximum} dataset, where covariance representation learning has shown great promise \cite{huang2015log,harandi2014manifold}. 

% including image set classification with Cifar-10 dataset \cite{krizhevsky2009learning} as well as video classification with YouTube Celebrities (YTC) \cite{kim2008face} and Dynamic Texture (DynTex) \cite{ghanem2010maximum} datasets.

{\bf Datasets and experimental setup.}
% Cifar-10  \cite{krizhevsky2009learning} is an image collection of $10$ objects, which has been used commonly as a benchmark dataset for deep machine learning. We follow exactly the same procedures in Section \ref{domain_adapt_sect} to process the data. 
YTC \cite{kim2008face} comprises of 1910 low-resolution videos of 47 celebrities from YouTube. Here we only select $9$ persons with video size larger than $15$.
Following standard preprocessing techniques \cite{huang2015log}, we first crop the frames of each video to the detected face regions and resize into $10 \times 10$ intensity images. Then we construct the covariance representation for each video, which is a $100 \times 100$ SPD matrix. We then apply the geometry-aware principal component analysis for SPD manifold \cite{horev2016geometry} via the Bures-Wasserstein Riemannian metric \cite{bhatia2019bures,han2021riemannian,han2021generalized} to reduce the dimensionality to $d = 5$. Finally, we obtain a collection of $194$ SPD covariance matrices of size $5 \times 5$, each representing one video. Given the relatively small sample size, we select $8$ videos per class as the test data and the rest are treated as the training data. Different to the settings in Section \ref{domain_adapt_sect}, we skew the selected class by sub-selecting a ratio $\alpha$ of the samples in the training set, where $\alpha = 0.2, 0.4, 0.6, 0.8, 1.0$. This is again due to the small data size. To further test the robustness of the algorithms, we then randomly truncate the training size to $100$. This results in a training set of $100$ videos against a test set of $72$ videos. Such randomization of process is repeated $5$ times. 

DynTex \cite{ghanem2010maximum} collects video sequences of $36$ moving scenes, such as sea waves, fire, clouds. For our experiment, we choose $10$ classes, each with $20$ videos. The subsequent processing steps are the same as for YTC dataset. 

Finally, we also test on Cifar10 \cite{krizhevsky2009learning} under the same settings as in Section \ref{domain_adapt_sect} in the main text. However, because when $d = 5$, much information is lost for this complex dataset, we choose $d = 17$, which captures $70\%$ of the variance in the samples.

{\bf Results.}
The final results are shown in Figure \ref{domain_adaptation_figure_appendix} where we observe consistent good performance of the proposed RMOT compared to both sOT and SPDOT. This strengthens the findings that matrix-valued OT is able to explore more variations in the dataset compared to scalar-valued OT. 

}

\begin{figure}[!th]
%\captionsetup{justification=raggedright,singlelinecheck=false}
    \centering
    \subfloat[YTC \label{ytc_fig}]{\includegraphics[width = 0.33\textwidth, height = 0.25\textwidth]{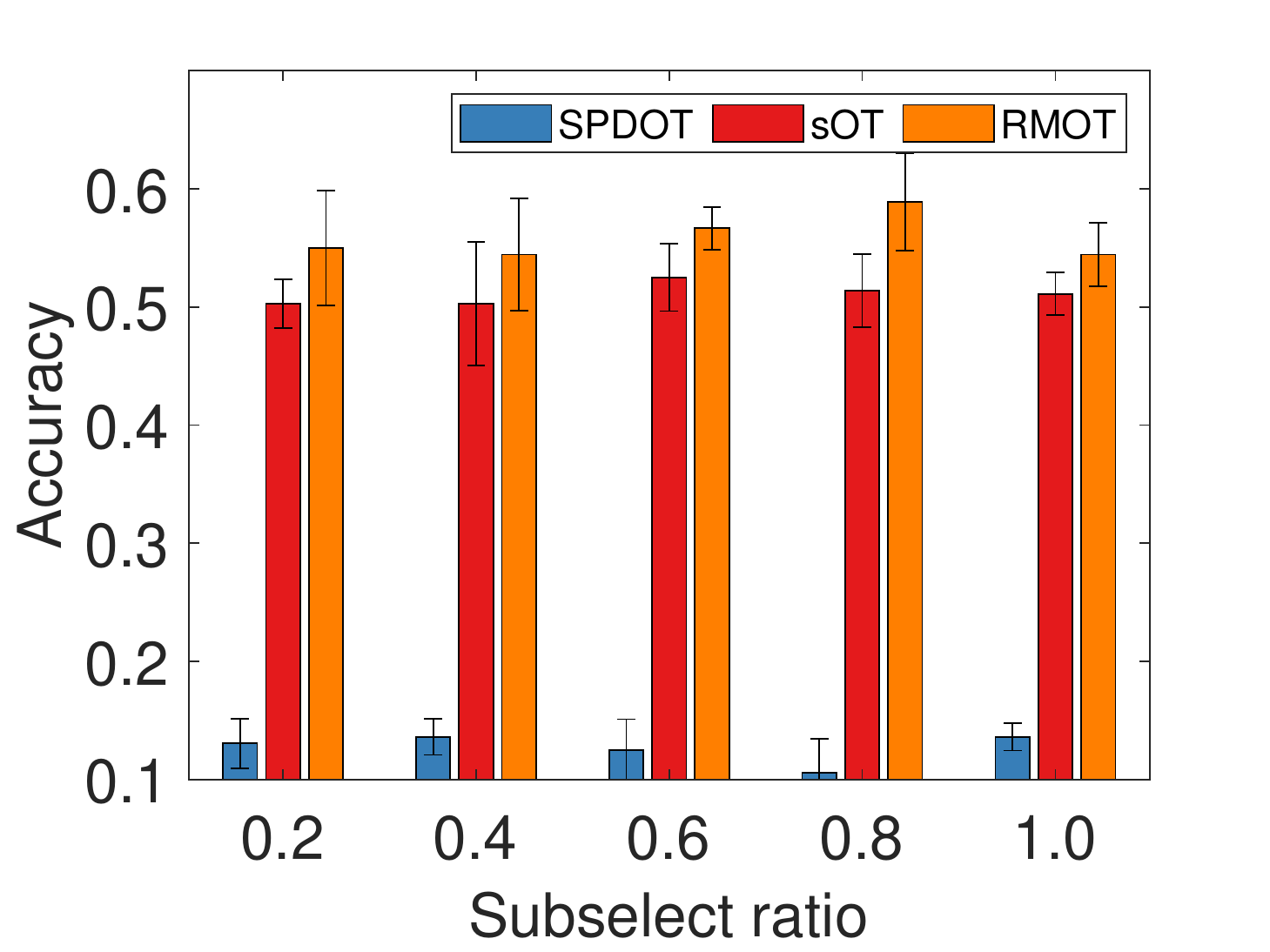}} 
    \subfloat[DynTex \label{dyntex_fig}]{\includegraphics[width = 0.33\textwidth, height = 0.25\textwidth]{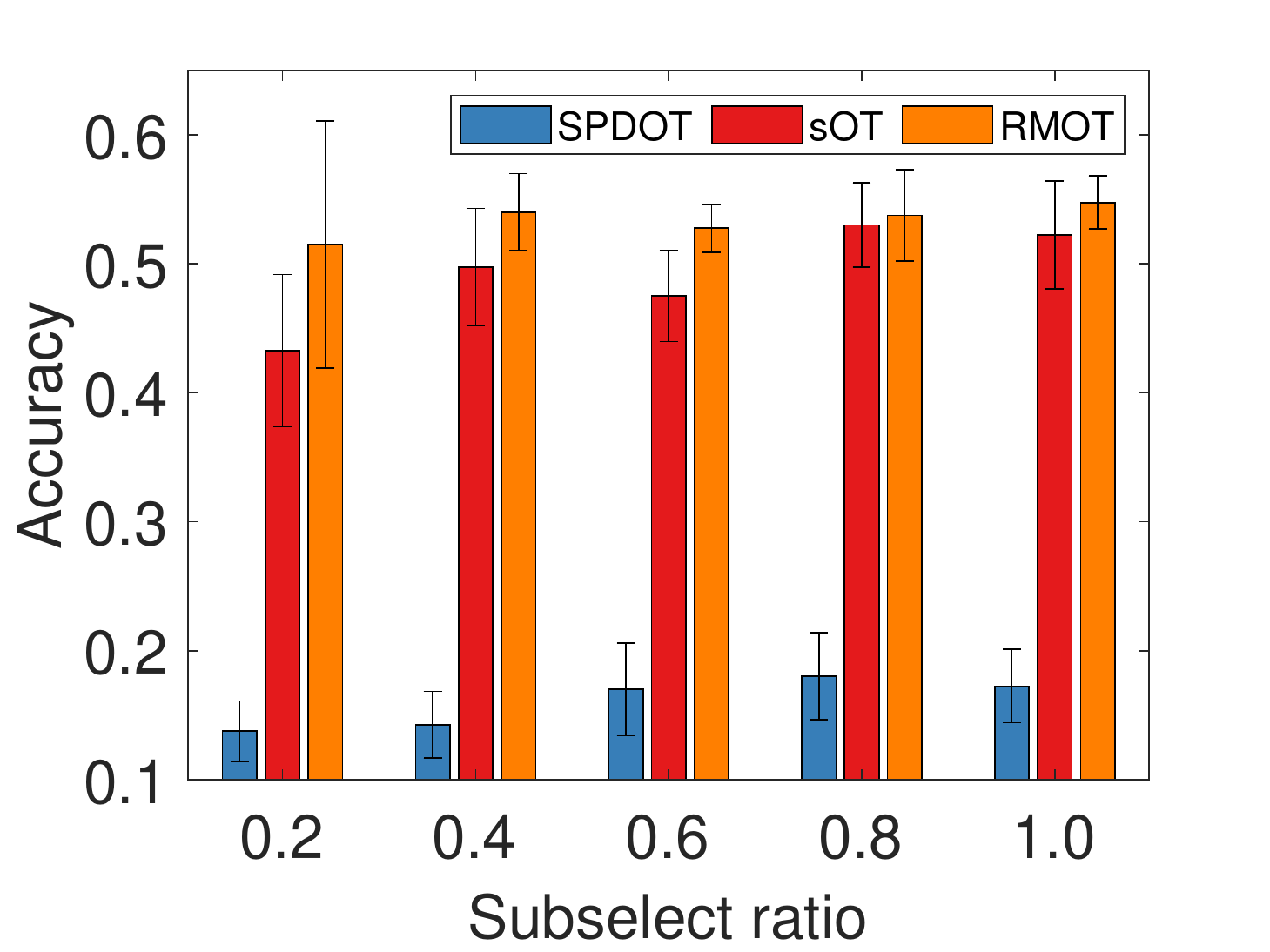}} 
    \subfloat[Cifar10 \label{cifar10_fig}]{\includegraphics[width = 0.33\textwidth, height = 0.25\textwidth]{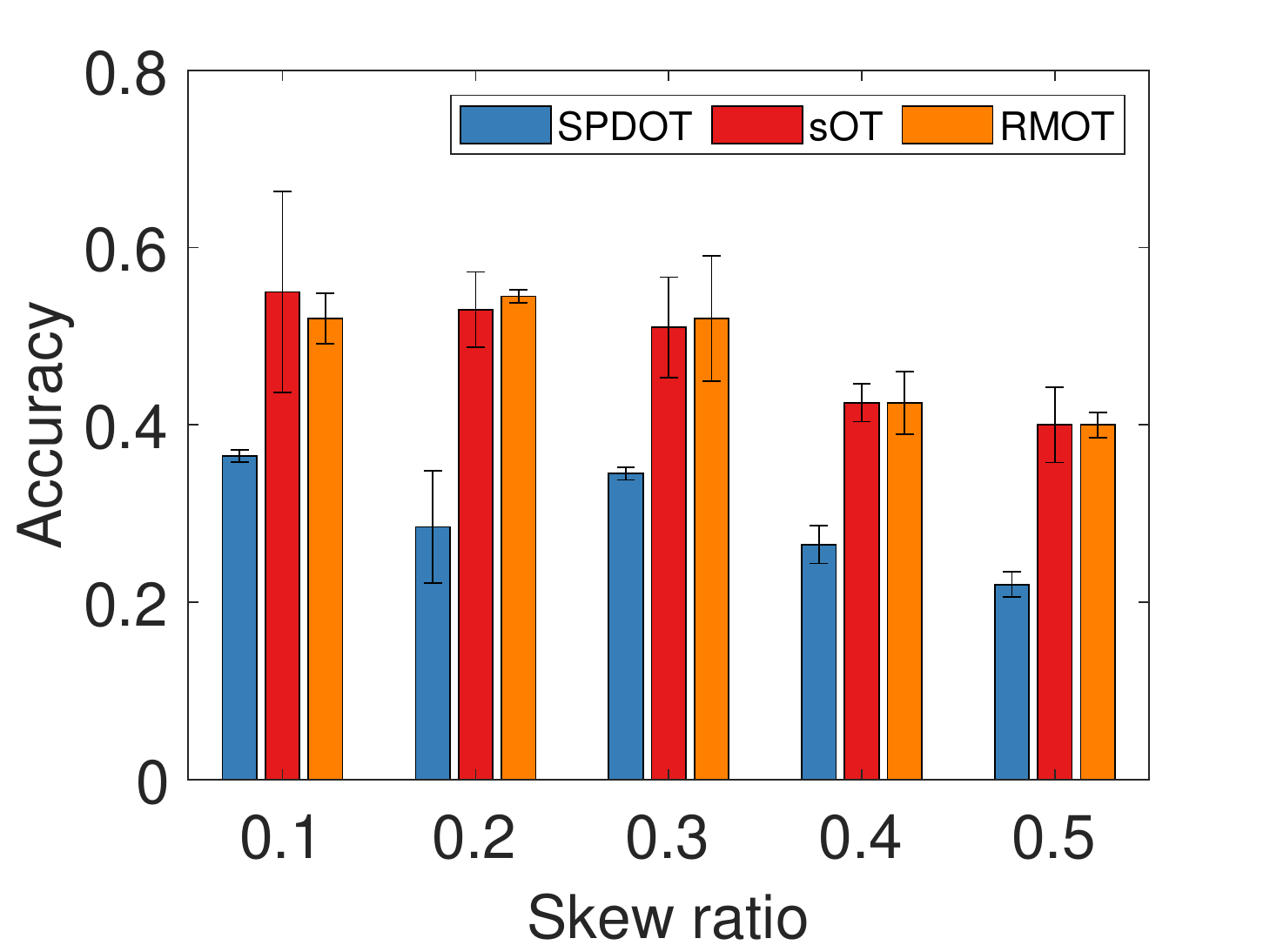}} 
    \caption{\small \revision{Additional results on domain adaptation. On these datastes as well, we see a good performance of the propose MOT modeling.}}
    \label{domain_adaptation_figure_appendix}
\end{figure}

% \section{Additional propositions}

% \begin{proposition}
% \label{smooth_set_prop}
% The set $\M^d_{m,n}(\bP, \bQ)$ is smooth provided there exists at least one feasible element.
% \end{proposition}
% \begin{proof}
% For a given feasible element $\bGamma \in \M^d_{m,n}(\bP ,\bQ)$, we can construct a family of feasible elements. For example, choose $0 < \zeta < \min_{i,j}\{ \lambda_{\min} (\bGamma_{i,j}) \}$. Then we can add/subtract the equal number of $\zeta \bI$ and the result is still feasible. In other words, the set is smooth in an ball around the element $\bGamma$ of radius $\zeta$.
% \end{proof}

\section{Proofs}
\label{proof_sect}

\begin{proof}[Proof of Proposition \ref{mw_distance_prop}]
For simplicity, we assume $\bp, \bq > 0$. Otherwise, we can follow \cite{peyre2019computational} to define $\tilde{p}_j = p_j$ if $p_j > 0$ and $1$ otherwise.

We note that $\bP$ and $\bQ$ are defined as $\bP\coloneqq\{[\bP_i]_{m\times 1}:\bP_i = p_i \bI\}$ and $\bQ\coloneqq\{[\bQ_j]_{n\times 1}:\bQ_j = q_i \bI\}$, where $\bI$ is the $d\times d$ identity matrix. With a slight abuse of notation and for simplicity, we define ${\rm MW}(\bp, \bq)\coloneqq {\rm MW}(\bP, \bQ)$.

First, it is easy to verify the symmetry property, i.e., ${\rm MW}(\bp, \bq) = {\rm MW}(\bq, \bp)$. For the definiteness, when $\bp = \bq$, we have $\bC_{i,i} = \bzero$ and $\bC_{i,j} \succ \bzero$ for $i \neq j$. Hence the optimal coupling is  a block diagonal matrix with $\bGamma_{i,i} = p_i \bI$. Hence ${\rm MW}(\bp, \bq) = 0$. For the opposite direction, if ${\rm MW}(\bp, \bq) = 0$, we always need to have $\bGamma_{i,j} = \bzero$, for $i \neq j$ because $\trace(\bC_{i,j} \bGamma_{i,j}) > 0$ for any $\bC_{i,j} \succ \bzero$ and $i \neq j$. Thus, $\bGamma_{i,i} \neq \bzero$, which gives $\bC_{i,i} = \bzero$ and $\bp = \bq$. 

Finally, for triangle inequality, given $\ba, \bb, \bc \in \Sigma_n$, and optimal matrix coupling $\bGamma, \bDelta$ between $(\ba, \bb)$ and $(\bb, \bc)$, respectively. That is, $\sum_j \bGamma_{i,j} = a_i \bI, \sum_i \bGamma_{i,j} = b_j \bI$ and similarly $\sum_j \bDelta_{i,j} = b_i \bI, \sum_i \bDelta_{i,j} = c_j \bI$. We now follow the same strategy by gluing the coupling $\bGamma, \bDelta$ in \cite{peyre2019computational,villani2021topics}. That is, we define a coupling $\bT$ as
\begin{equation*}
    \bT_{i,j} = \sum_k \frac{1}{2b_k} (\bGamma_{i,k} \bDelta_{k,j} + \bDelta_{k,j} \bGamma_{i,k}), \quad  \forall i,j.
\end{equation*}
We can verify $\bT_{i,j} \in \sS_{+}^d$, given $\bGamma_{i,j}, \bDelta_{i,j} \in \sS_{+}^d$. Furthermore, we have $\forall i,j$,
\begin{align*}
    \sum_{j} \bT_{i,j} &= \sum_k \frac{1}{2b_k} (\bGamma_{i,k} \sum_j \bDelta_{k,j} + \sum_j \bDelta_{k,j} \bGamma_{i,k}) = \sum_k \bGamma_{i,k} = a_i \bI, \\
    \sum_i \bT_{i,j} &= \sum_k \frac{1}{2b_k} (\sum_i\bGamma_{i,k} \bDelta_{k,j} +  \bDelta_{k,j} \sum_i \bGamma_{i,k}) = \sum_k \bDelta_{k,j} = c_j \bI.
\end{align*}
Hence, $[\bT_{i,j}]_{m \times n}$ is a valid coupling between $(\ba, \bc)$. Let $\bP_i = a_i \bI, \bQ_j = c_j \bI$ and the corresponding samples as $\bX, \bY, \bZ$ for measures $\ba, \bb, \bc$ respectively. Then,
\begin{align*}
    {\rm MW}(\ba, \bc) &= \left( \min_{\bA \in \bPi(n,n,d,\bP, \bQ) } \sum_{i,j} \trace \big( \bC_{i,j} \bA_{i,j} \big)   \right)^{1/2} \leq \left( \sum_{i,j} \trace \big( \bC_{i,j} \bT_{i,j} \big) \right)^{1/2} \\
    % &= \left( \sum_{i,j, k} \frac{1}{2 b_k} \trace\Big( (\bX_i - \bZ_j)^\top  (\bGamma_{i,k} \bDelta_{k,j} + \bDelta_{k,j} \bGamma_{i,k})  (\bX_i - \bZ_j) \Big) \right)^{1/2} \\
    &= \left( \sum_{i,j, k} \frac{1}{2 b_k} \trace\Big( \bC_{i,j}  (\bGamma_{i,k} \bDelta_{k,j} + \bDelta_{k,j} \bGamma_{i,k}) \Big) \right)^{1/2} \\
    % &\leq \Big( \sum_{i,j, k} \frac{1}{2 b_k} \Big(\sqrt{ \trace \big((\bX_i - \bY_k)^\top  (\bGamma_{i,k} \bDelta_{k,j} + \bDelta_{k,j} \bGamma_{i,k})  (\bX_i - \bY_k) \big)} \\
    &\leq \Big( \sum_{i,j, k} \frac{1}{2 b_k} \Big(\sqrt{ \trace \big( \bC_{i,k}  (\bGamma_{i,k} \bDelta_{k,j} + \bDelta_{k,j} \bGamma_{i,k})  \big)} + \sqrt{ \trace\big( \bC_{k,j}  (\bGamma_{i,k} \bDelta_{k,j} + \bDelta_{k,j} \bGamma_{i,k}) \big) } \Big)^2 \Big)^{1/2} \\
    % &+ \sqrt{ \trace\big((\bY_k - \bZ_j)^\top  (\bGamma_{i,k} \bDelta_{k,j} + \bDelta_{k,j} \bGamma_{i,k})  (\bY_k - \bZ_j) \big) } \Big)^2 \Big)^{1/2} \\
    % &\leq \Big( \sum_{i,j,k} \frac{1}{2b_k} \trace \big( (\bX_i - \bY_k)^\top  (\bGamma_{i,k} \bDelta_{k,j} + \bDelta_{k,j} \bGamma_{i,k})  (\bX_i - \bY_k) \big) \Big)^{1/2} \\
    % &+ \Big( \sum_{i,j,k} \frac{1}{2b_k} \trace \big( (\bY_k - \bZ_j)^\top  (\bGamma_{i,k} \bDelta_{k,j} + \bDelta_{k,j} \bGamma_{i,k})  (\bY_k - \bZ_j) \big) \Big)^{1/2} \\
    &\leq \Big( \sum_{i,j,k} \frac{1}{2b_k} \trace \big( \bC_{i,k} (\bGamma_{i,k} \bDelta_{k,j} + \bDelta_{k,j} \bGamma_{i,k})  \big) \Big)^{1/2} + \Big( \sum_{i,j,k} \frac{1}{2b_k} \trace \big( \bC_{k,j}  (\bGamma_{i,k} \bDelta_{k,j} + \bDelta_{k,j} \bGamma_{i,k}) \big) \Big)^{1/2}  \\
    &= \Big( \sum_{i,k} \trace\Big( \bC_{i,k} \bGamma_{i,k} \Big) \Big)^{1/2}+ \Big( \sum_{k,j} \trace\Big( \bC_{k,j} \bDelta_{k,j} \Big) \Big)^{1/2}\\
    &= {\rm MW}(\ba, \bb) + {\rm MW}(\bb, \bc),
\end{align*}
% where the \alertPJ{second inequality is by triangle inequality for a generalized Mahalanobis distance}, i.e.,
% \begin{align*}
%     \trace( (\bX - \bZ)^\top \bD (\bX -\bZ) ) = \sum_i (\bx_i - \bz_i)^\top \bD (\bx_i - \bz_i) &\leq \sum_i\Big( (\bx_i - \by_i)^\top \bD (\bx_i - \by_i) + (\by_i -\bz_i)^\top \bD (\by_i - \bz_i) \Big) \\
%     &= \trace((\bX -\bY)^\top \bD (\bX - \bY)) + \trace((\bY -\bZ)^\top \bD (\bY -\bZ)),
% \end{align*}
where the second inequality is by assumption (iii) of the proposition and the third inequality is due to the Minkowski inequality. This completes the proof. 
\end{proof}

\begin{proof}[Proof of Proposition \ref{smooth_set_prop}]
For a given feasible element $\bGamma \in \M^d_{m,n}(\bP ,\bQ)$, we can construct a family of feasible elements. For example, choose $0 \leq \zeta < \min_{i,j}\{ \lambda_{\min} (\bGamma_{i,j}) \}$. Then, we can add/subtract the equal number of $\zeta \bI$ and the result is still feasible. In other words, the set is smooth in a ball around the element $\bGamma$ of radius $\zeta$.
\end{proof}

\begin{proof}[Proof of Proposition \ref{orthogonal_proj}]
Following \cite{mishra2016riemannian}, the projection is derived orthogonal to the Riemannian metric \eqref{Riem_metric} as 
\begin{equation}
    {\rm P}_{\bGamma}(\bS) = \argmin_{\bU \in T_\bGamma \M_{m,n}^d} f(\bU) = - g_\bGamma(\bU, \bS) + \frac{1}{2}g_{\bGamma}(\bU, \bU). \label{argmin_projection}  
\end{equation}
The Lagrangian of problem \eqref{argmin_projection} is 
\begin{equation}
    f(\bU) - \trace(\bLambda_i \sum_j \bU_{i,j}) - \trace( \bTheta_j \sum_i \bU_{i,j}), \label{lagrangian}
\end{equation}
where $\bLambda_i$, $\bTheta_j$ are dual variables for $i \in [m], j \in [n]$. The orthogonal projection follows from the stationary conditions of \eqref{lagrangian}.
\end{proof}

\begin{proof}[Proof of Proposition \ref{Riem_grad_hess}]
Given the manifold $\M_{m,n}^d$ is a submanifold of $\bigtimes_{m, n} \sS_{++}^d$ with affine-invariant (AI) Riemannian metric, the Riemannian gradient is given by
\begin{equation*}
    \grad F(\bGamma) = {\rm P}_{\bGamma}([\grad_{\rm ai} F(\bGamma_{i,j})]_{m \times n}) = {\rm P}_{\bGamma}([ \bGamma_{i,j} \{ \nabla F(\bGamma_{i,j}) \}_{\rm S} \bGamma_{i,j}]_{m \times n}),
\end{equation*}
where $\grad_{\rm ai} F(\bX)$ is the Riemannian gradient of $\bX \in \sS_{++}^d$ with AI metric. Similarly,  the Riemannian Hessian $\hess F(\bGamma)[\bU] = \nabla_{\bU} \grad F(\bGamma)$ where $\nabla$ denotes the Riemannian connection. For submanifolds, the connection $\nabla_\bU \grad F(\bGamma) = {\rm P}_{\bGamma}([\tilde{\nabla}_{\bU_{i,j}}(\grad F(\bGamma_{i,j}))]_{m \times n})$, where $\tilde{\nabla}$ represents the connection of $\sS_{++}^d$. From \revision{\cite{sra2015conic}}, $\tilde{\nabla}_{\bU_{i,j}} \grad F(\bGamma_{i,j}) = \D \grad F(\bGamma_{i,j})[\bU_{i,j}] - \{ \bU_{i,j} \bGamma_{i,j}^{-1} \grad F(\bGamma_{i,j}) \}_{\rm S}$. Hence, the proof is complete.
\end{proof}

\begin{proof}[Proof of Theorem \ref{Prop_Riem_opt_optimal}]
We first write the Lagrange dual function as
\begin{equation*}
\begin{array}{lll}
    g(\bLambda, \bTheta, \bPsi)
    = &\min_{\bGamma = [\bGamma_{i,j}]_{m \times n} } F(\bGamma) + \sum_i \trace \Big( \bLambda_i \big( \sum_j \bGamma_{i,j} - \bP_i \big) \Big) \\
   \qquad \qquad \qquad \qquad & + \sum_j \trace\Big(  \bTheta_j \big( \sum_{i} \bGamma_{i,j} - \bQ_j \big) \Big)  - \sum_{i,j} \trace \Big( \bPsi_{i,j} \bGamma_{i,j} \Big),
    \end{array}
\end{equation*}
where we relax the SPD constraint on $\bGamma_{i,j}$ to the semidefinite constraint, i.e. $\bGamma_{i,j} \succeq \bzero$, for some dual variable $\bLambda_i, \bTheta_j \in \sS^d$ and $\bPsi_{i,j} \succeq \bzero$. Given the function $F$ is convex with non-empty constraint set, by Slater's condition, strong duality holds and the primal and dual variables should jointly satisfy the KKT conditions. 

First, we notice by complementary slackness, $\trace (\bPsi_{i,j}^* \bGamma_{i,j}^*) = 0$ for $\bGamma_{i,j}^* \succ \bzero$. This implies that $\bPsi_{i,j}^* =\bzero$ since $\bPsi_{i,j}^* \succeq \bzero$. Note that in some cases $\bGamma_{i,j}^*$ may be rank-deficient (i.e., some eigenvalues are close to zero), which gives rise to non-zero $\bPsi_{i,j}^*$. Regardless, from the optimality condition, it always satisfies for optimal $\bGamma_{i,j}^*$, $\bLambda_i^*$, $\bTheta_j^*$,
\begin{equation}
    \bGamma_{i,j}^* (\nabla F(\bGamma_{i,j}^*) + \bLambda_i^* + \bTheta_j^*) \bGamma_{i,j}^* = \bzero, \label{main_kkt_cond}
\end{equation}
due to that $\bGamma_{i,j}^*$ is orthogonal to $\bPsi_{i,j}^*$. $\nabla F(\bGamma_{i,j}^*)$ denotes the block partial derivative of $F$ with respect to $\bGamma_{i,j}$ at optimality. On the other hand, to perform Riemannian optimization, the Riemannian gradient is first computed for the primal objective $F$ as  
\begin{align*}
    {\rm grad} F(\bGamma) = {\rm P}_\bGamma( [\bGamma_{i,j} \{ \nabla F(\bGamma_{i,j}) \}_{\rm S} \bGamma_{i,j} ]_{m\times n}), 
\end{align*}
which from the definition of orthogonal projection, gives 
\begin{equation*}
    {\rm grad} F(\bGamma_{i,j}) = \bGamma_{i,j} \Big( \nabla F(\bGamma_{i,j}) + \tilde{\bLambda}_i + \tilde{\bTheta}_j  \Big) \bGamma_{i,j},
\end{equation*}
where ${\rm grad} F(\bGamma_{i,j})$ represents the Riemannian partial derivative and $\tilde{\bLambda}_i, \tilde{\bTheta}_j \in \sS^d$ are computed such that 
\begin{align}
\begin{cases}
    \sum_{i} \bGamma_{i,j} ( \bC_{i,j} + \epsilon \nabla \Omega(\revision{\bGamma_{i,j}}) + \tilde{\bLambda}_i + \tilde{\bTheta}_j) \bGamma_{i,j} = \bzero, & \forall j \\
    \sum_{j} \bGamma_{i,j} (\bC_{i,j} + \epsilon \nabla \Omega(\revision{\bGamma_{i,j}}) + \tilde{\bLambda}_i + \tilde{\bTheta}_j) \bGamma_{i,j} = \bzero, & \forall i.
\end{cases}
\label{gradient_proj_cond}
\end{align}
Comparing \eqref{gradient_proj_cond} to \eqref{main_kkt_cond}, we see that at optimality, there exists $\bLambda_i^*, \bTheta_j^*$ such that for all $i, j$, the conditions \eqref{gradient_proj_cond} are satisfied, with $\tilde{\bLambda}_i =  \bLambda_i^* + \Delta, \tilde{\bTheta}_j = \bTheta_j^* - \Delta$, for any symmetric matrix $\Delta$, i.e., the Riemannian gradient ${\rm grad} F(\bGamma_{i,j}^*) = \bzero$, thus completing the proof. 
\end{proof}

\begin{proof}[Proof of Proposition \ref{dual_var_solution_prop}]
For each regularized OT problem, we consider the Lagrange dual problem of $\min_{\bGamma \in \M_{m,n}^d} {\rm MW}_\epsilon(\bar{\bP}, \bP^\ell)$, which is given as 
\begin{equation}
    \begin{array}{lll}
    \L_{{\rm MW}_\epsilon} = & \max\limits_{\substack{\bLambda_i^\ell, \, \bTheta_j^\ell \in \sS^d \\ \bPsi_{i,j}^\ell \succeq \bzero}}  \min\limits_{\bGamma_{i,j}^\ell} \sum\limits_{i,j} \big( \trace(\bC_{i,j}^\ell \bGamma_{i,j}^\ell) + \epsilon \, \Omega(\bGamma_{i,j}^\ell) \big)  + \sum\limits_i \trace\big( \bLambda_i^\ell \big(\sum\limits_j \bGamma_{i,j}^\ell - \bar{\bP}_i \big) \big) \\
    & \qquad \qquad \qquad + \sum\limits_j \trace \big( \bTheta_j^\ell \big( \sum\limits_{i} \bGamma_{i,j}^\ell - \bP^\ell_j \big) \big) - \sum\limits_{i,j} \trace \big( \bPsi_{i,j}^\ell \bGamma_{i,j}^\ell \big)  . \label{dual_problem_dual_var}
    \end{array}
\end{equation}
From the Lagrangian (\ref{dual_problem_dual_var}), it is easy to see the Euclidean gradient of the barycenter problem with respect to $\bar{\bP}_i$ is $-\sum_\ell \bLambda_i^\ell$ with the dual optimal $\bLambda_i^\ell$ for problem \eqref{dual_problem_dual_var}. The proof is complete by substituting the objective $F(\bGamma) = \sum_{i,j} \big( \trace(\bC_{i,j} \bGamma_{i,j} + \epsilon \Omega(\bGamma_{i,j})) \big)$ as in Theorem~\ref{Prop_Riem_opt_optimal}.
\end{proof}

\begin{proof}[Proof of Proposition \ref{average_distance_update}]
First we rewrite SPD matrix-valued GW discrepancy as 
\begin{equation*}
\begin{array}{lll}
    &{\rm MGW}\big((\bar{\bD}, \bar{\bP}), (\bD^\ell, \bP^\ell) \big) \\
    &= \sum_{i,i',j,j'} \Big(  f_1(\bar{D}_{i,i'}) + f_2(D^\ell_{j,j'}) -h_1(\bar{D}_{i,i'}) h_2({D}^\ell_{j,j'}) \Big) \trace(\bGamma^\ell_{i,j} \bGamma^\ell_{i',j'}) \\
    &= \sum_{i,j} \trace \Big( \bGamma_{i,j} \Big( \sum_{i'} f_1(\bar{D}_{i,i'}) \bar{\bP}_{i'}  + \sum_{j'} f_2(D^\ell_{j,j'}) \bP^\ell_{j'} - \sum_{i'} h_1(\bar{D}_{i,i'}) \sum_{j'} h_2(D^\ell_{j,j'}) \bGamma^\ell_{i',j'} \Big)  \Big),
\end{array}    
\end{equation*}
where we use the fact that $\bGamma^\ell_{i,j}$ are optimal and satisfy the constraints $\sum_j \bGamma^\ell_{i,j} = \bar{\bP}_i$ and $\sum_i \bGamma^\ell_{i,j} = \bP^\ell_j$. By the first order condition, we have 
\begin{equation*}
    \trace \Big( \bar{\bP}_i  f_1'(\bar{D}_{i,i'}) \bar{\bP}_{i'} - h_1'(\bar{D}_{i,i'}) \sum_j \bGamma_{i,j}  \big( \sum_{j'} h_2(D^\ell_{j,j'}) \bGamma^\ell_{i',j'} \big) \Big) = 0, \quad \forall i,i' \in [m],
\end{equation*}
which gives the desired result.
\end{proof}

\section{Riemannian geometry for block SPD Wasserstein barycenter}
\label{geometry_spd_simplex_appendix}

{\bf Riemannian geometry of $\Delta_n (\sS_{++}^d)$.} In \cite{mishra2019riemannian}, the authors endow a Riemannian manifold structure for the set $\Delta_n (\sS_{++}^d) \coloneqq \{ \bP = [\bP_i]_{n \times 1} : \sum_i \bP_i = \bI \}$. 
Its tangent space is given by 
$
    T_\bP \Delta_n (\sS_{++}^d) = \{ (\bU_1, ..., \bU_n) : \bU_i \in \sS^d, \sum_i \bU_i = \bzero \}.
$
By introducing the affine-invariant metric $\langle \bU, \bV \rangle_\bP = \sum_i \trace(\bP_i^{-1} \bU_i \bP_i^{-1} \bV_i)$,  $\Delta_n (\sS_{++}^d)$ has a submanifold structure. The retraction from the tangent space to the manifold is derived as 
\begin{equation*}
    R_\bP(\bU) = (\hat{\bP}^{-1/2}_{\rm sum} \hat{\bP}_1 \hat{\bP}^{-1/2}_{\rm sum}, ..., \hat{\bP}^{-1/2}_{\rm sum} \hat{\bP}_n \hat{\bP}^{-1/2}_{\rm sum}),
\end{equation*}
where $\hat{\bP}_i = \bP_i (\exp(\bP_i^{-1} \bU_i) )$ and $\hat{\bP}_{\rm sum} = \sum_i \hat{\bP}_i$. 

The Riemannian gradient of a function $F:\Delta_n (\sS_{++}^d) \to \mathbb{R}$ is computed as 
\begin{equation*}
    \grad F(\bP) = {\rm Proj}_{\bP} \Big( \big(\bP_1 \{ \nabla F(\bP_1) \}_{\rm S} \bP_1, ..., \bP_n \{ \nabla F(\bP_n) \}_{\rm S} \bP_n \big) \Big),
\end{equation*}
where the orthogonal projection $\rm{P}_{\bP}$ of a of $\bS = (\bS_1,  \bS_2, ..., \bS_n)$ such that $\bS_i \in \sS^d$ is
\begin{equation*}
    {\rm Proj}_\bP(\bS) = (\bS_1 + \bP_1 \bLambda \bP_1, ..., \bS_n + \bP_n \bLambda \bP_n),
\end{equation*}
where $\bLambda \in \sS^d$ is the solution to the linear equation $\sum_i \bP_i \bLambda \bP_i = - \sum_i \bS_i$.

{\bf Optimization for Wasserstein barycenter.} With the Riemannian geometry defined for the simplex of SPD matrices, we can update the barycenter by Riemannian optimization as shown in Algorithm \ref{wb_algorithm}.

\begin{algorithm}[t]
 \caption{\small Block-SPD Wasserstein barycenter}
 \label{wb_algorithm}
 \begin{algorithmic}[1]
  \STATE \textbf{Input:} block-SPD marginals $\{\bP^\ell\}_{\ell = 1}^K$, cost matrices $\{ \bC^\ell\}_{\ell=1}^K$.
  \STATE Initialize $\bar{\bP}^{0}$ as uniform distribution.
  \FOR{$t = 1,...,T$}
  \FOR{$\ell = 1,...,K$}
  \STATE Compute $(\bLambda^\ell)^*$ as in Proposition \ref{dual_var_solution_prop}.
  \ENDFOR
  \STATE Compute the Riemannian gradient of \eqref{w_barycenter} by orthogonally projecting $\sum_\ell \omega_\ell (\bLambda^\ell)^*$ onto the tangent space of $\Delta_n(\sS_{++}^d)$.
  \STATE Update $\bar{\bP}^t$ by retracting on $\Delta_n(\sS_{++}^d)$ with a step size.
%   \alertBM{We should write the concrete updates in appendix and point to those}
  \ENDFOR
  \STATE \textbf{Output:} Barycenter $\bar{\bP}^T$.
 \end{algorithmic} 
\end{algorithm}

\end{document}